\documentclass[a4paper,french]{article}

\usepackage[utf8]{inputenc}
\usepackage[T1]{fontenc}
\usepackage{babel}
\usepackage[babel]{csquotes}

\usepackage{amsmath}
\usepackage{amsfonts}
\usepackage{amsthm}
\usepackage{bbm}
\usepackage{amssymb}

\usepackage{float}
\usepackage{tikz}
\usetikzlibrary{shapes.misc}
\tikzset{cross/.style={cross out, draw, 
         minimum size=2*(#1-\pgflinewidth), 
         inner sep=0pt, outer sep=0pt}}

\usepackage{url}

\usepackage{hyperref}

\newcommand{\nsnote}[1]{}
\newcommand{\wknote}[1]{}
\newcommand{\comm}[1]{}

\newtheorem{theorem}{Théorème}[section]
\newtheorem{proposition}[theorem]{Proposition}
\newtheorem{lemma}[theorem]{Lemme}
\newtheorem{corollary}[theorem]{Corollaire}

\newtheorem{thm}{Théorème}
\newtheorem{prop}{Proposition}
\newtheorem*{cor}{Corollaire}

\theoremstyle{definition}

\newtheorem*{remark}{Remarque}

\DeclareMathOperator{\Tr}{Tr}

\DeclareMathOperator{\GL}{GL}
\DeclareMathOperator{\SL}{SL}
\DeclareMathOperator{\PSL}{PSL}
\DeclareMathOperator{\PGL}{PGL}
\DeclareMathOperator{\End}{End}

\DeclareMathOperator{\Supp}{Supp}
\DeclareMathOperator{\diag}{diag}

\DeclareMathOperator{\Stab}{Stab}
\DeclareMathOperator{\Aut}{Aut}
\DeclareMathOperator{\stab}{\mathfrak{stab}}

\DeclareMathOperator{\Ad}{Ad}
\DeclareMathOperator{\ad}{ad}

\DeclareMathOperator{\Hom}{Hom}

\DeclareMathOperator{\Id}{Id}

\DeclareMathOperator{\card}{card}

\newcommand{\ssl}{\mathfrak{sl}}
\newcommand{\gl}{\mathfrak{gl}}

\newcommand{\bracket}[1]{\langle#1\rangle}
\newcommand{\dd}{\mathrm{d}}

\newcommand{\ssum}{\Sigma}

\newcommand{\pgcd}{\mathrm{pgcd}}

\newcommand{\R}{\mathbb{R}}

\newcommand{\C}{\mathbb{C}}
\newcommand{\Z}{\mathbb{Z}}
\newcommand{\ZZ}{\widehat{\mathbb{Z}}}
\newcommand{\N}{\mathbb{N}}
\newcommand{\Q}{\mathbb{Q}}
\newcommand{\E}{\mathbb{E}}
\newcommand{\PP}{\mathbb{P}}

\newcommand{\T}{\mathbb{T}}

\newcommand{\cI}{\mathcal{I}}

\newcommand{\cF}{\mathcal{F}}

\newcommand{\cP}{\mathcal{P}}
\newcommand{\cG}{\mathcal{G}}

\newcommand{\g}{\mathfrak{g}}
\newcommand{\h}{\mathfrak{h}}

\newcommand{\sr}{\mathsf{r}}

\newcommand{\eps}{\varepsilon}

\newcommand{\abs}[1]{\lvert#1\rvert}    
\newcommand{\norm}[1]{\lVert#1\rVert}   
\newcommand{\normHS}[1]{\lVert#1\rVert_{\mathrm{HS}}}   
\newcommand{\normop}[1]{\lVert#1\rVert_{\mathrm{op}}}   

\newcommand{\floor}[1]{\left\lfloor#1\right\rfloor}   


\newcommand\dash{\nobreakdash-\hspace{0pt}} 

\title{Trou spectral dans les groupes simples}
\author{Weikun He et Nicolas de Saxcé}



\begin{document}

\maketitle

\begin{abstract}
Nous montrons la propriété du trou spectral pour la famille des graphes de Cayley obtenus par réduction modulo $q$ d'un sous-groupe de $\SL_d(\Z)$ dont l'adhérence de Zariski est un $\Q$-groupe simple.
\end{abstract}

\tableofcontents

\section*{Introduction}

Dans $\SL_d(\Z)$, on considère un sous-groupe $\Gamma$, dont on note $\Omega$ l'adhérence dans le groupe profini $\SL_d(\ZZ)$ des matrices de déterminant 1 à coefficients dans l'anneau profini $\ZZ=\underleftarrow{\lim}\ \Z/q\Z$.
Le groupe $\Omega$ est un groupe topologique compact, que l'on munit naturellement de sa probabilité de Haar, notée $m_\Omega$.
On s'intéresse à l'action de $\Gamma$ sur l'espace $L^2(\Omega)$ définie par
\begin{equation}\label{tg}
\forall g \in \Gamma,\; \forall f \in L^2(\Omega),\; \forall x \in \Omega, \quad T_gf(x) = f(g^{-1}x).
\end{equation}
Comme $\Gamma$ est dense dans $\Omega$, toute fonction $\Gamma$-invariante dans $L^2(\Omega)$ est constante; on dit que l'action $\Gamma \curvearrowright \Omega$ est \emph{ergodique}.
De manière équivalente, dans l'espace
\[
L_0^2(\Omega) = \bigl\{\, \xi \in L^2(\Omega) \mid \int_\Omega \xi \dd m_\Omega = 0\,\bigr\},
\]
le groupe $\Gamma$ n'a pas de vecteur invariant non nul.
Nous dirons que l'action $\Gamma \curvearrowright \Omega$ a un \emph{trou spectral} si $\Gamma$ n'a pas de vecteur \emph{presque} invariant dans $L_0^2(\Omega)$, i.e. s'il existe une constante $\eps > 0$ et une partie finie $S \subset \Gamma$ tels que 
\[
\forall v \in V_\rho,\; \exists g \in S,\quad \norm{\rho(g)v - v} \geq \eps\norm{v}.
\]
Cette propriété implique bien sûr l'ergodicité de l'action, mais est en fait strictement plus forte.
Le but principal de cet article est de démontrer le théorème suivant.

\begin{thm}[Propriété du trou spectral]
\label{trouspectral}
Soit $\Gamma$ un sous-groupe de $\SL_d(\Z)$ et $\Omega$ son adhérence dans $\SL_d(\ZZ)$.
Supposons que l'adhérence de Zariski de $\Gamma$ soit un groupe algébrique simple, alors l'action de $\Gamma$ sur $\Omega$ admet un trou spectral.
\end{thm}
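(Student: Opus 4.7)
\medskip

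\noindent\textbf{Plan of proof.} I would follow the Bourgain--Gamburd strategy, reducing the spectral gap on $\Omega$ to a \emph{uniform} spectral gap on its finite quotients $\Omega_q := \Omega / \ker(\SL_d(\ZZ) \to \SL_d(\Z/q\Z))$, and then treating each prime power modulus separately.

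\emph{Reduction to finite quotients.} Every irreducible unitary representation of the compact profinite group $\Omega$ factors through some finite quotient $\Omega_q$, so the space $L_0^2(\Omega)$ decomposes as a Hilbert direct sum indexed by such representations. Hence the spectral gap on $\Omega$ is equivalent to the existence of $\eps > 0$ and of a finite subset $S \subset \Gamma$ such that, for every $q \geq 1$, the Cayley graph of the image of $S$ in $\Omega_q$ has spectral gap at least $\eps$. Using strong approximation à la Matthews--Vaserstein--Weisfeiler and Nori--Weisfeiler, the quotient $\Omega_q$ with $q = \prod p_i^{n_i}$ decomposes, up to finitely many exceptional primes, as a direct product $\prod_i \Omega_{p_i^{n_i}}$; a standard tensor-product argument then combines spectral gaps on the factors into a spectral gap on the product. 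It therefore suffices to establish a spectral gap uniform in $p$ and $n$ for the action of $\Gamma$ on $\Omega_{p^n}$.

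\emph{Bourgain--Gamburd machine at prime powers.} Fix $(p,n)$ and let $\mu$ be the uniform probability measure on $S \cup S^{-1}$. The Bourgain--Gamburd scheme reduces the existence of a spectral gap for the $\Gamma$-action on $L_0^2(\Omega_{p^n})$ to three conditions. First, a \emph{quasi-randomness} statement: every nontrivial irreducible representation of $\Omega_{p^n}$ has dimension at least $|\Omega_{p^n}|^\kappa$ for some absolute $\kappa > 0$. Second, a \emph{non-concentration} estimate: for some fixed $\alpha$ and all $k \geq C \log |\Omega_{p^n}|$, the convolution $\mu^{*k}$ gives mass at most $|\Omega_{p^n}|^{-\alpha}$ to any coset of any proper closed subgroup of $\Omega_{p^n}$. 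Third, a \emph{product theorem}: any symmetric subset $A \subset \Omega_{p^n}$ not concentrated in a proper closed subgroup satisfies $|A \cdot A \cdot A| \geq |A|^{1+\delta}$ for some $\delta > 0$.

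\emph{Transfer to prime powers and main obstacle.} Quasi-randomness and the product theorem over $\F_p$ follow from Breuillard--Green--Tao and Pyber--Szabó; to handle prime power moduli one must climb the congruence filtration $\Omega_{p^n} \supset \ker(\Omega_{p^n} \to \Omega_p) \supset \cdots$, whose successive quotients are essentially copies of the Lie algebra $\g \otimes \F_p$ on which the simple reduction $\Omega_p$ acts by the adjoint representation. A Lie-algebraic analogue of the product theorem at each scale, combined with the simplicity of the Zariski closure (which forces this adjoint action to be irreducible), propagates growth from the residue field level up to $\Omega_{p^n}$. The hardest step is the non-concentration estimate at intermediate scales of the filtration: it requires a quantitative $p$-adic analogue of Nori's theorem to classify the possible concentration loci, together with a diophantine escape estimate that exploits the fact that $\Gamma$ is Zariski-dense in a simple group and hence does not get trapped in any proper analytic subgroup of $\Omega$. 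Combining these three ingredients yields a spectral gap on $\Omega_{p^n}$ uniform in $p$ and $n$, and therefore on $\Omega$ itself.
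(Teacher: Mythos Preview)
Your proposal has a genuine gap at the very first reduction step. You claim that once one has a uniform spectral gap for each prime-power quotient $\Omega_{p^n}$, ``a standard tensor-product argument then combines spectral gaps on the factors into a spectral gap on the product''. There is no such argument: although $\Omega_q \simeq \prod_i \Omega_{p_i^{n_i}}$ as groups, the measure $\mu$ (the image of the walk on $\Gamma$) is \emph{not} a product measure under this isomorphism, so $\rho(\mu)$ does not factor along $\rho = \bigotimes_i \rho_i$, and a bound $\normop{\rho_i(\mu)} \le 1-\eps$ on each factor says nothing about $\normop{\rho(\mu)}$ when several $\rho_i$ are nontrivial. This is precisely why the square-free case (Bourgain--Gamburd--Sarnak, Salehi Golsefidy--Varj\'u) and then the general-modulus case (Bourgain--Varj\'u, and this paper) required substantial new ideas beyond the prime case of Bourgain--Gamburd. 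Any argument that reduced the problem to individual prime powers would have rendered those works unnecessary.

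The paper instead works directly modulo an arbitrary $q$. The Bourgain--Gamburd flattening argument reduces matters to a growth statement (Proposition~\ref{global}): any $A \subset \Omega$ with $\mu^{*n}(A) \ge q^{-\eps}$ satisfies $N(\Pi_C A, q) \ge q^{-\tau} N(\Omega,q)$. The proof of this growth statement has two essentially different ingredients, glued together by the exponential map modulo $q$. For the square-free radical $r$ of $q$ the paper uses the Salehi Golsefidy--Varj\'u theorem as a black box. For the ``depth'' $q/r$ the key input is not a product theorem in $G(\Z/p^n\Z)$ but the quantitative equidistribution of linear random walks on the torus (Bourgain--Furman--Lindenstrauss--Mozes, extended in \cite{hs_nonprox}), applied to the adjoint representation on $\g(\Z)/q\g(\Z)$; the exponential map and a Campbell--Hausdorff computation then transfer additive growth in $\g$ to multiplicative growth in $\Omega$. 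The non-concentration step is the elementary ``almost diophantine'' bound (Proposition~\ref{presquedioph}), based only on the non-amenability of $\Gamma$, rather than any $p$-adic Nori-type classification.
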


De façon plus concrète, on peut comprendre cet énoncé en termes des graphes de Cayley des projections de $\Gamma$ modulo $q$, où $q$ est un entier naturel non nul.
Pour cela, rappelons qu'étant donné un entier $k\geq 2$, une famille de graphes $(\cG_q)_{q\in\N^*}$ est dite \emph{famille d'expanseurs} s'il existe une constante $c>0$ tels que pour tout $q\in\N^*$, pour toute partie $X \subset \cG_q$ telle que $\abs{X}\leq \frac{\abs{\cG_q}}{2}$, on ait
\[
\abs{\partial X} \geq c\abs{X}
\]
où $\partial X = \{\, y \in \cG_q \setminus X \mid \exists x \in X,\ x\leftrightarrow y\}$ est la frontière de $X$ dans le graphe $\cG_q$.
On renvoie au livre de Lubotzky \cite{Lubotzky} pour une introduction à la théorie des graphes expanseurs.
Ci-dessous, on note $\pi_q \colon \SL_d(\Z) \to \SL_d(\Z/q\Z)$ la réduction modulo $q$.
Le théorème~\ref{trouspectral} est équivalent à l'énoncé suivant.

\begin{thm}[Graphes de Cayley expanseurs]
\label{expanseur}
Soit $S$ une partie symétrique finie de $\SL_d(\Z)$ et $\Gamma$ le sous-groupe engendré par $S$.
On suppose que l'adhérence de Zariski de $\Gamma$ dans $\SL_d$ est un groupe simple.
Alors, la famille de graphes de Cayley $\cG(\pi_q(\Gamma),\pi_q(S))_{q\in\N^*}$ forme une famille d'expanseurs.
\end{thm}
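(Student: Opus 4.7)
Le plan consiste à déduire le Théorème~\ref{expanseur} du Théorème~\ref{trouspectral} par le dictionnaire standard entre trou spectral de l'opérateur de moyenne et expansion des graphes de Cayley, appliqué séparément à chaque quotient fini.

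On commence par remarquer que, pour chaque $q \in \N^*$, l'espace $L^2(\pi_q(\Gamma))$ (muni de la probabilité uniforme) se plonge isométriquement dans $L^2(\Omega)$ comme le sous-espace des fonctions invariantes par le sous-groupe ouvert compact normal $N_q = \ker(\Omega \to \pi_q(\Gamma))$. Cette identification est compatible avec l'action de translation de $\Gamma$, de sorte que la restriction de la représentation $T$ définie en~\eqref{tg} coïncide avec la représentation régulière de $\pi_q(\Gamma)$, et $L_0^2(\pi_q(\Gamma))$ se trouve alors inclus dans $L_0^2(\Omega)$. Par ailleurs, $\Omega$ étant profini, toute fonction continue sur $\Omega$ se factorise par un quotient fini, et l'on a donc $L_0^2(\Omega) = \overline{\bigcup_q L_0^2(\pi_q(\Gamma))}$; cette densité donne également l'implication réciproque, établissant l'équivalence annoncée entre les deux théorèmes.

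En admettant alors le Théorème~\ref{trouspectral}, il existe une partie finie $S' \subset \Gamma$ et $\eps > 0$ tels que $\max_{g \in S'} \norm{T_g v - v} \geq \eps \norm{v}$ pour tout $v \in L_0^2(\Omega)$. En spécialisant au sous-espace des fonctions $N_q$-invariantes, on obtient la même inégalité sur $L_0^2(\pi_q(\Gamma))$, uniformément en $q$. Comme $\Gamma$ est engendré par $S$, chaque élément de $S'$ s'écrit comme un mot en $S$ de longueur bornée par un entier $L$ indépendant de $q$; un argument télescopique par inégalité triangulaire transforme alors l'inégalité précédente en $\max_{s \in S} \norm{T_s v - v} \geq \eps' \norm{v}$ sur chaque $L_0^2(\pi_q(\Gamma))$, avec $\eps' > 0$ ne dépendant que de $\eps$, $L$ et $\abs{S'}$.

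Enfin, cette inégalité signifie que l'opérateur de moyenne $M_S = \frac{1}{\abs{S}}\sum_{s \in S} T_s$ agissant sur $L_0^2(\pi_q(\Gamma))$ a une norme d'opérateur majorée par une constante strictement inférieure à $1$ indépendante de $q$; autrement dit, la seconde plus grande valeur propre de l'opérateur de Markov associé au graphe $\cG(\pi_q(\Gamma), \pi_q(S))$ est uniformément bornée au-dessous de $1$. L'inégalité de Cheeger discrète fournit alors une borne inférieure uniforme sur la constante d'expansion des arêtes de ce graphe, ce qui est précisément l'énoncé du Théorème~\ref{expanseur}. Aucune étape n'est véritablement difficile une fois le Théorème~\ref{trouspectral} acquis; le seul point légèrement délicat est le passage de la partie génératrice abstraite $S'$ fournie par le Théorème~\ref{trouspectral} à la partie $S$ prescrite dans l'énoncé, qui est routinier mais demande un peu de soin dans le suivi des constantes.
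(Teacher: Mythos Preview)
Your proposal is correct and follows exactly the route the paper intends: the paper does not give a separate proof of Théorème~\ref{expanseur} but simply asserts that Théorèmes~\ref{trouspectral}, \ref{expanseur} and \ref{mesures} are equivalent (\enquote{Il n'est pas difficile de montrer que les trois théorèmes énoncés ci-dessus sont équivalents}), and your argument spells out precisely this standard equivalence between trou spectral on $L^2_0(\Omega)$ and uniform expansion of the finite Cayley graphs. One small imprecision: the inequality $\max_{s\in S}\norm{T_s v - v}\geq \eps'\norm{v}$ gives directly a uniform upper bound on the \emph{second eigenvalue} $\lambda_2(M_S)$ (via $\langle M_S v,v\rangle \leq (1-\tfrac{\eps'^2}{2\abs{S}})\norm{v}^2$), not on the full operator norm of $M_S$ on $L^2_0$, but since you immediately pass to $\lambda_2$ and invoke Cheeger, the argument is unaffected.
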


Un corollaire notable de ce théorème est une borne logarithmique sur le diamètre de ces graphes de Cayley.

\begin{cor}[Diamètre des graphes de Cayley]
\label{diametre}
Sous les hypothèses du théorème~\ref{expanseur}, il existe une constante $C\geq 0$ telle que pour tout $q\in\N^*$,
\[
\mathrm{diam}\, \cG(\pi_q(\Gamma),\pi_q(S)) \leq C \log q.
\]
\end{cor}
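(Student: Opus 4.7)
Mon approche est l'argument classique montrant qu'une famille de graphes expanseurs a un diamètre qui croît au plus logarithmiquement en la taille des graphes. D'après le théorème~\ref{expanseur}, il existe une constante $c > 0$, indépendante de $q$, telle que pour tout $q \in \N^*$ et toute partie $X$ du graphe $\cG_q := \cG(\pi_q(\Gamma),\pi_q(S))$ vérifiant $\abs{X} \leq \abs{\cG_q}/2$, on ait $\abs{\partial X} \geq c\abs{X}$. Comme $\abs{\cG_q} \leq \abs{\SL_d(\Z/q\Z)} \leq q^{d^2}$, il suffit d'établir $\mathrm{diam}\,\cG_q = O(\log \abs{\cG_q})$, la constante implicite ne dépendant que de $c$ et $d$.

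Le cœur de l'argument consiste à étudier la croissance géométrique des boules. Je fixerais un sommet $x \in \cG_q$ et noterais $B_n(x)$ la boule de rayon $n$ centrée en $x$. Par définition de la frontière, $B_{n+1}(x) = B_n(x) \cup \partial B_n(x)$ est une réunion disjointe, donc tant que $\abs{B_n(x)} \leq \abs{\cG_q}/2$, la propriété d'expansion donne $\abs{B_{n+1}(x)} \geq (1+c)\abs{B_n(x)}$. À partir de $\abs{B_0(x)} = 1$, une récurrence immédiate donne $\abs{B_n(x)} \geq (1+c)^n$ tant que cette quantité n'a pas dépassé $\abs{\cG_q}/2$. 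Il existe donc un entier $n_0 = O(\log \abs{\cG_q})$, uniforme en $x$, tel que $\abs{B_{n_0}(x)} > \abs{\cG_q}/2$.

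Pour conclure, je considérerais deux sommets quelconques $x, y \in \cG_q$. Les boules $B_{n_0}(x)$ et $B_{n_0}(y)$ contenant chacune strictement plus de la moitié des éléments, elles s'intersectent ; il existe donc un chemin reliant $x$ à $y$ via leur intersection, de longueur au plus $2n_0 = O(\log \abs{\cG_q}) = O(\log q)$, ce qui fournit la borne annoncée.

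Cette démonstration ne présente pas de difficulté sérieuse : tout le travail a été fait dans la preuve du théorème~\ref{expanseur}, et il ne reste ici qu'à appliquer itérativement la propriété d'expansion. Le seul point à surveiller est l'uniformité en $q$ de la constante d'expansion $c$, qui assure que le taux géométrique de croissance des boules, et donc la constante $C$ finale, ne dépendent pas de $q$.
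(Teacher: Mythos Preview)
Your argument is correct and is the standard derivation of a logarithmic diameter bound from the expander property. The paper itself does not give a proof of this corollary: it is stated immediately after the théorème~\ref{expanseur} as a known consequence, without further justification. Your write-up supplies exactly the routine argument the authors had in mind, and the only point worth noting explicitly is the one you already flag, namely that the expansion constant $c$ is uniform in $q$.
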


On peut enfin voir le théorème~\ref{trouspectral} comme une propriété forte d'équidistribution des marches aléatoires sur $\Omega$ engendrées par des éléments de $\Gamma$.
Soit $\mu$ une probabilité dont le support engendre le groupe $\Gamma$.
On s'intéresse à la marche aléatoire $(x_n)_{n\geq 1}$ sur $\Omega$ définie par
\[
\forall n\geq 1,\quad x_n = g_n\dots g_1,
\]
où $(g_n)_{n\geq 1}$ est une suite de variables aléatoires indépendantes identiquement distribuées de loi $\mu$ sur $\Gamma$.
Si $\mu$ est symétrique cette marche aléatoire $(x_n)_{n\geq 1}$ converge en loi vers la mesure de Haar sur $\Omega$:
\[
\forall f\in C(\Omega),\quad \lim_{n\to\infty}\E[f(x_n)] = \int_\Omega f(x)\,\dd m_\Omega(x).
\] 
\comm{
\begin{remark}
On dit qu'une mesure dont le support engendre un sous-groupe dense de $\Omega$ est \emph{adaptée}.
L'hypothèse de symétrie n'est pas vraiment nécessaire pour obtenir la convergence ci-dessus, il suffit en fait de supposer que $\mu$ est \emph{apériodique} sur $\Omega$, i.e. que son support n'est pas inclus dans une classe à gauche d'un sous-groupe fermé distingué dans $\Omega$.
Si $\check{\mu}$ est l'image de $\mu$ par l'application $g\mapsto g^{-1}$, alors $\mu$ est apériodique si et seulement si le support de $\check{\mu}*\mu$ engendre $\Omega$ topologiquement.
\end{remark}
}

Pour $g\in\Omega$, l'opérateur $T_g$ sur l'espace $L^2(\Omega)$ a été définit en \eqref{tg} ci-dessus.
Plus généralement, on définit \emph{l'opérateur de convolution} associé à la probabilité $\mu$ par la formule
\[
T_\mu = \int_G T_g\,\dd\mu(g),
\]
et on note $T_\mu^0$ sa restriction au sous-espace $L^2_0(\Omega)$ des fonctions de moyenne nulle.
Le résultat d'équidistribution mentionné ci-dessus montre que la suite d'opérateurs $((T^0_\mu)^n)_{n\geq 1}$ converge simplement vers $0$.
Dire que l'action $\Gamma\curvearrowright\Omega$ admet un trou spectral revient à dire que cette convergence a lieu à vitesse exponentielle.

\begin{thm}[Rayon spectral des marches aléatoires]
\label{mesures}
Soit $\mu$ une probabilité symétrique sur $\SL_d(\Z)$, $\Gamma$ le sous-groupe engendré par le support de $\mu$, et $\Omega$ l'adhérence de $\Gamma$ dans $\SL_d(\ZZ)$.
Si l'adhérence de Zariski de $\Gamma$ dans $\SL_d$ est simple, alors 
\[
\normop{T^0_\mu} < 1.
\]
\end{thm}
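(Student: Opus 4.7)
La stratégie consiste à déduire ce théorème du théorème~\ref{trouspectral}, dont il est une conséquence formelle via un argument classique de transfert entre trou spectral et rayon spectral de marche aléatoire.

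Plus précisément, le théorème~\ref{trouspectral} fournit $\eps > 0$ et une partie finie symétrique $S \subset \Gamma$ telles que, pour tout $v \in L^2_0(\Omega)$, $\max_{g\in S}\norm{T_g v - v}\geq\eps\norm{v}$. On introduit la probabilité symétrique $\nu_0$ uniforme sur $S\cup\{e\}$. L'identité $\norm{T_g v - v}^2 = 2(\norm{v}^2 - \mathrm{Re}\langle T_g v, v\rangle)$, valable pour tout opérateur unitaire, fournit
\[
\langle T^0_{\nu_0}v,v\rangle \leq \Bigl(1-\frac{\eps^2}{2(|S|+1)}\Bigr)\norm{v}^2,
\]
tandis que le poids $\frac{1}{|S|+1}$ sur $\delta_e$ donne la minoration uniforme $\langle T^0_{\nu_0}v,v\rangle \geq -\frac{|S|-1}{|S|+1}\norm{v}^2$. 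Comme $T^0_{\nu_0}$ est auto-adjoint, on en déduit $\normop{T^0_{\nu_0}}<1$.

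Pour étendre cette borne à $\mu$ symétrique arbitraire à support générateur, on montre que pour $N$ assez grand, $\Supp(\mu^{*N})$ contient $S\cup\{e\}$ et il existe $c>0$ tel que $\mu^{*N}\geq c\nu_0$ comme mesures positives. En décomposant $\mu^{*N} = c\nu_0 + (1-c)\mu'$ avec $\mu'$ probabilité symétrique, on obtient
\[
\normop{T^0_{\mu^{*N}}}\leq c\normop{T^0_{\nu_0}}+(1-c)<1,
\]
puis $\normop{T^0_\mu}^N = \normop{T^0_{\mu^{*N}}} < 1$ par auto-adjonction. Les éventuelles obstructions de parité (par exemple lorsque $\mu*\mu$ n'engendre qu'un sous-groupe d'indice~$2$) se contournent grâce à l'hypothèse de simplicité de l'adhérence de Zariski, qui interdit tout caractère non trivial $\Gamma\to\{\pm 1\}$ s'étendant continûment à $\Omega$.

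L'obstacle central est bien sûr le théorème~\ref{trouspectral} lui-même, cœur de l'article; cette déduction n'est qu'une réduction formelle standard.
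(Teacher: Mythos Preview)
Your reduction is precisely the step the paper uses to pass from the special case (finitely supported $\mu$, $G$ connected and simply connected) to the general statement; the paper packages it as a citation of Lemma~\ref{lm:vectpresqueinv}. But in the paper's logical flow the main implication runs the other way: théorème~\ref{trouspectral} is itself \emph{deduced from} the special case of théorème~\ref{mesures}, which is established directly via the Bourgain--Gamburd machine (Proposition~\ref{global} $\Rightarrow$ lemme~\ref{lm:aplaniss} $\Rightarrow$ Proposition~\ref{presque}, combined with the quasi-random bound Proposition~\ref{qr} and Parseval on $\Omega$). Read inside the paper, your argument is therefore circular; the substantive content --- the direct spectral estimate through expansion in $\Omega/\Omega_q$ --- is exactly what you have deferred.

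Your treatment of the parity obstruction is also incorrect. Simplicity of the Zariski closure does \emph{not} forbid a nontrivial continuous character $\Omega\to\{\pm 1\}$: for $\Gamma=\SL_2(\Z)$ one has $\Omega=\SL_2(\ZZ)\twoheadrightarrow\SL_2(\Z/2\Z)\simeq S_3\xrightarrow{\mathrm{sgn}}\{\pm 1\}$. If $\mu$ is uniform on $\{S^{\pm 1},T^{\pm 1}\}$ with $S=\bigl(\begin{smallmatrix}0&-1\\1&0\end{smallmatrix}\bigr)$ and $T=\bigl(\begin{smallmatrix}1&1\\0&1\end{smallmatrix}\bigr)$, all four generators reduce to transpositions mod~$2$, so the resulting sign character $\sigma\in L^2_0(\Omega)$ satisfies $T^0_\mu\sigma=-\sigma$ and hence $\normop{T^0_\mu}=1$. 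Your claimed fix therefore fails; this example in fact shows that théorème~\ref{mesures} as stated needs an aperiodicity hypothesis (e.g.\ $e\in\Supp\mu$, or replacing $\mu$ by $\tfrac12(\delta_e+\mu)$), and the same $-1$-eigenvalue issue is glossed over in the paper's own handling of the finitely many low-level irreducible representations.
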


\comm{
\begin{remark}
Si le support de $\check{\mu}*\mu$ n'engendre pas un sous-groupe dense dans $\Omega$, le rayon spectral de $T^0_\mu$ n'est pas strictement inférieur à $1$.
Et cela peut arriver.
Pour s'en convaincre, notons $p\geq 2$ un nombre premier quelconque, $g_0=\begin{pmatrix} 1 & 1\\ 0 & 1\end{pmatrix}$, et prenons une mesure $\mu$ supportée par 
\[ g_0U_p = \{g\in\SL_2(\Z)\ |\ g\equiv g_0  \mod p\}.\]
Comme $g_0^{-1}\Supp\mu$ contient le premier sous-groupe de congruence $U_p$, le sous-groupe engendré par $\mu$ est Zariski dense.
Mais les puissances de convolution $\mu^n$ ne s'équidistribuent pas vers la mesure de Haar sur $\Omega$, car à chaque étape $n$, la mesure $\mu^n$ est supportée par $g_0^nU_p$.
En fait, $\Gamma/\Gamma_p\simeq\Z/p\Z$ et la marche aléatoire induite par $\mu$ sur ce quotient est cyclique.
\end{remark}
}

Il n'est pas difficile de montrer que les trois théorèmes énoncés ci-dessus sont équivalents.
Le but du présent article est de donner une démonstration de ces théorèmes.
Nous commencerons par montrer le théorème~\ref{mesures}, avant d'en déduire les théorèmes~\ref{trouspectral} et \ref{expanseur}.

\bigskip

Pour la démonstration, nous suivrons la stratégie proposée par Bourgain et Varjú \cite{bv} dans le cas où l'adhérence de Zariski de $\Gamma$ est égale à $\SL_d$ tout entier.
Cette approche se fonde d'ailleurs sur la méthode développée par Bourgain et Gamburd dans la série d'articles \cite{bg_su2,bg_sud,bg0,bg1,bg2} pour montrer la propriété du trou spectral dans certains groupes compacts simples.
La méthode de Bourgain et Gamburd consiste à ramener la propriété du trou spectral à une propriété d'expansion combinatoire dans les groupes.
Dans leur premier article \cite{bg0} sur le sujet, dédié à l'étude des familles de graphes de Cayley de $\SL_2(\Z/p\Z)$ obtenus par projection modulo un nombre premier $p$ d'un système de générateurs fixé dans $\SL_2(\Z)$, cette propriété combinatoire provenait des travaux de Helfgott \cite{helfgott}.
Pour nous, le résultat prend la forme de la proposition ci-dessous.

\begin{prop}
\label{global}
Soit $\mu$ une probabilité symétrique à support fini sur $\SL_d(\Z)$.
On suppose que l'adhérence de Zariski du sous-groupe $\Gamma$ engendré par le support de $\mu$ est simple, connexe, et simplement connexe.
Alors, pour tout $\tau > 0$, il existe deux constantes $\eps > 0$ et $C \geq 1$ telles que pour tout $q \in \N^*$ suffisamment grand, pour tout $n \geq C \log q$, et toute partie symétrique $A \subset \Omega$, si $\mu^{*n}(A) \geq q^{-\eps}$, alors l'ensemble produit $\Pi_CA=\{a_1\dots a_C\ ;\ a_1,\dots,a_C\in A\}$ vérifie
\[N(\Pi_C A,q) \geq q^{-\tau} N(\Omega,q).\]
\end{prop}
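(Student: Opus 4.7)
Le plan, inspiré de \cite{bg0} et \cite{bv}, est de raisonner par l'absurde en supposant que $N(\Pi_C A, q) < q^{-\tau} N(\Omega, q)$ pour une suite d'ensembles $A$ admissibles. La symétrie de $A$ et l'inégalité de Plünnecke--Ruzsa non commutative forcent alors $\pi_q(A)$ à se comporter dans $\pi_q(\Omega)$ comme un \emph{sous-groupe approché}, au sens où ses produits itérés à bornes bornées ne grossissent plus.

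\emph{Première étape : structure via un théorème de produit.} L'hypothèse que l'adhérence de Zariski de $\Gamma$ est simple, connexe et simplement connexe permet d'appliquer le théorème d'approximation forte (Matthews--Vaserstein--Weisfeiler, Nori) : pour $q$ sans petits facteurs premiers exceptionnels, le groupe fini $\pi_q(\Omega)$ s'identifie à la réduction modulo $q$ d'un schéma en groupes de type Lie, produit de groupes quasi-simples selon les facteurs premiers de $q$. On invoque alors un théorème de produit quantitatif dans $\pi_q(\Omega)$ — celui de Pyber--Szabó et Breuillard--Green--Tao au niveau d'un premier, étendu à $\Z/q\Z$ par Varjú et Salehi Golsefidy--Varjú. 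La conclusion est une dichotomie : si les produits de $\pi_q(A)$ restent de taille $\leq q^{-\tau}|\pi_q(\Omega)|$, alors $\pi_q(A)$ est essentiellement contenu dans un petit voisinage d'un sous-groupe propre $H \leq \pi_q(\Omega)$, correspondant via l'approximation forte à la trace modulo un diviseur $q_0 \mid q$ d'un sous-schéma en groupes propre de l'adhérence de Zariski $\mathbf{G}$ de $\Gamma$.

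\emph{Deuxième étape : incompatibilité avec l'échappement des sous-variétés.} Combinée à l'hypothèse $\mu^{*n}(A) \geq q^{-\eps}$, cette conclusion structurelle force la marche $\mu^{*n}$ à charger d'un poids $\geq q^{-\eps}$ un voisinage tubulaire d'un sous-groupe algébrique propre. Or la densité Zariski de $\Gamma$ dans le groupe simple $\mathbf{G}$ fournit un contrôle d'échappement quantitatif : pour toute sous-variété propre $V \subsetneq \mathbf{G}$, le poids de $\mu^{*n}$ sur la préimage modulo $q$ de $V$ décroît exponentiellement en $n$, avec un taux indépendant de $q$ et de $V$ (par exemple via un argument à la Eskin--Mozes--Oh utilisant les exposants de Lyapunov de $\mu$, ou sa variante combinatoire). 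En choisissant $C$ suffisamment grand pour que $n = C\log q$ donne $e^{-cn} \leq q^{-2\eps}$, on obtient la contradiction voulue.

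\emph{Difficulté principale.} Le cœur de l'argument est de rendre les deux estimations — théorème de produit dans $\pi_q(\Omega)$ et échappement des sous-variétés dans $\mathbf{G}$ — compatibles uniformément en $q$. La combinatoire des sous-groupes propres de $\pi_q(\Omega)$ se complique quand $q$ admet de nombreux facteurs premiers : le théorème de produit fournit une structure à une échelle $q_0 \mid q$, et il faut s'assurer que l'argument d'échappement s'applique à tous les sous-groupes algébriques qui peuvent ainsi apparaître, y compris ceux qui ne sont définis qu'en certaines caractéristiques. C'est à ce point que l'analyse multi-échelles demande le plus de soin.
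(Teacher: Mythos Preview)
Votre schéma fonctionne pour $q$ sans facteur carré --- c'est essentiellement l'argument de Salehi Golsefidy--Varjú \cite{sgv} --- mais il présente une lacune réelle pour $q$ général. Le théorème de produit que vous invoquez n'existe pas sous la forme requise : les résultats de Pyber--Szabó et Breuillard--Green--Tao concernent les groupes finis de type Lie, et leur extension par \cite{sgv} ne couvre que les modules sans facteur carré. Pour $q = p^m$ avec $m$ grand, le groupe $\pi_q(\Omega) \simeq G(\Z/p^m\Z)$ possède un grand radical résoluble (le pro-$p$-groupe $\Omega_p/\Omega_{p^m}$), et les sous-groupes approchés dans de tels groupes ne sont pas contrôlés par les sous-groupes algébriques propres de $\mathbf{G}$. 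Concrètement, l'obstruction typique est un sous-groupe de congruence $\Omega_{p^k}/\Omega_{p^m}$, qui ne provient d'aucune sous-variété propre de $\mathbf{G}$ ; votre argument d'échappement des sous-variétés ne s'y applique donc pas. C'est précisément cette difficulté qui fait l'objet de l'article, et l'invoquer comme boîte noire rendrait le raisonnement circulaire.

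La démonstration du papier contourne ce problème par une voie entièrement différente, empruntée à Bourgain--Varjú \cite{bv}. On construit d'abord, via la propriété presque diophantienne et le cas sans facteur carré, un élément $g \in \Pi_C A$ dont les valuations $v_p(g-1)$ sont bien contrôlées (proposition~\ref{constructionx}). On transporte ensuite le problème dans l'algèbre de Lie $\g$ par le logarithme modulo $q$, et on applique le théorème d'équidistribution quantitative sur le tore \cite{bflm,hs_nonprox} dans la représentation adjointe pour montrer que $\ssum_C \Ad(A)\cdot\log g$ remplit presque tout $\g(\Z/q_I\Z)$ (proposition~\ref{irrep}). L'exponentielle et la formule de Campbell--Hausdorff ramènent cela au groupe (proposition~\ref{exp}), et on combine avec le cas sans facteur carré pour traiter le radical. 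L'idée clé --- linéariser via $\Ad$ et utiliser l'équidistribution sur le tore --- est absente de votre esquisse.
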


Cette proposition constitue le cœur de l'article.
Sa démonstration, donnée dans les parties \ref{sec:sqf}, \ref{sec:torus} et \ref{sec:croissance}, reprend les idées de Bourgain et Varjú \cite{bv}, et utilise les résultats d'équidistribution quantitative des marches aléatoires linéaires sur le tore dûs à Bourgain, Furman, Lindenstrauss et Mozes \cite{bflm}, que nous avons généralisés récemment dans \cite{hs_nonprox}.
En un mot, l'idée est d'appliquer le résultat d'équidistribution sur le tore dans la représentation adjointe de $G$ sur son algèbre de Lie $\g$, puis d'utiliser l'application exponentielle pour en déduire la propriété d'expansion dans $G$.
Une complication intervient: l'application exponentielle modulo $q$ n'est définie que pour les éléments de $\g(\Z)$ qui sont divisibles par le double du radical de $q$, produit des diviseurs premiers de $q$.
C'est à cause de cela qu'il faut aussi utiliser les travaux de Salehi Golsefidy et Varjú \cite{sgv} sur l'expansion modulo les entiers sans facteurs carrés pour conclure.

\bigskip

L'article est divisé en quatre parties.
Dans la première, nous rappelons à grands traits l'argument de Bourgain et Gamburd pour déduire les théorèmes~\ref{trouspectral}, \ref{expanseur} et \ref{mesures} de la proposition~\ref{global}.
La deuxième partie est consacrée aux résultats de Salehi Golsefidy et Varjú \cite{sgv} et à certaines de leurs conséquences, nécessaires pour la suite de la démonstration.
Dans la troisième partie, nous énonçons le théorème d'équidistribution quantitative des marches aléatoires linéaires sur le tore, et l'appliquons à l'étude du phénomène d'expansion dans le groupe $\Omega$.
La quatrième et dernière partie termine la démonstration de la proposition~\ref{global}; il s'agit de combiner astucieusement les résultats des deux parties précédentes pour construire à partir de produits d'éléments de $A$ un élément qui satisfasse de bonnes propriétés de congruence.

Pour rendre l'article plus accessible, nous avons ajouté trois appendices.
Le premier résume certains résultats importants dûs à Matthews, Vaserstein et Weisfeiler \cite{MVW} et à Nori \cite{Nori} concernant l'approximation par des points entiers dans les groupes algébriques simples.
Le second donne les propriétés élémentaires de l'application exponentielle modulo un entier $q\in\N^*$.
Dans le dernier appendice, nous démontrons une borne inférieure sur la dimension d'une représentation unitaire irréductible du groupe pro-fini $\Omega$, essentielle pour déduire la propriété du trou spectral de la proposition~\ref{global}.

\bigskip

\noindent\textbf{\large{Résumé des notations}}

\smallskip

\begin{itemize}
\item $\Z$ anneau des entiers relatifs, $\Q$ corps des nombres rationnels
\item $\Z_p$, $\Q_p$ complétions respectives de $\Z$ et $\Q$ pour la valeur absolue $p$-adique
\item $\ZZ=\varprojlim\Z/q\Z\simeq \prod_p \Z_p$ complétion profinie de $\Z$
\item $\mu$ probabilité symétrique (à support fini) sur $\SL_d(\Z)$
\item $\Gamma$ sous-groupe de $\SL_d(\Z)$ engendré par le support de $\mu$
\item $G$ adhérence de Zariski de $\Gamma$ dans $\SL_d$
\item $\Omega$ adhérence de $\Gamma$ dans le groupe profini $\SL_d(\ZZ)$
\item $\gl_d(R)$ anneau des matrices carrées $d\times d$ à coefficients dans l'anneau $R=\Z,\, \Z_p, \ZZ$, ...etc.
\item $q\in\N^*$, $\pi_q:\gl_d(\ZZ)\to\gl_d(\Z/q\Z)$ projection modulo $q\in\N^*$
\item $\Omega_q =\{ g\in \Omega\ |\ \pi_q(g)=1\}$
\item $A\subset\gl_d(\ZZ)$, $N(A,q) = \card\pi_q(A)$
\item $A,B \subset \gl_d(\ZZ)$,
\[A + B = \{\, a + b \mid a \in A,\, b\in B \,\}\]
\[AB = \{\, ab \mid a \in A,\, b\in B \,\}\]
\item $A\subset\gl_d(\ZZ)$, $k\in\N^*$,
\[
\ssum_k A = \underbrace{A+ \dotsb+ A}_{\text{$k$ fois}} = \{\, a_1 + \dotsb + a_k \mid \forall i,\, a_i\in A\,\}
\]
\[
\Pi_k A = \underbrace{A\dotsm A}_{\text{$k$ fois}} = \{\, a_1\dots a_k \mid \forall i,\, a_i\in A\,\}.
\]
\item $\delta_2(p)=\left\{\begin{array}{ll} 1 & \mbox{si}\ p=2\\ 0 & \mbox{sinon}\end{array}\right.$, symbole de Kronecker.
\item $x\in\gl_d(\ZZ)$, $p$ premier, $v_p(x)$ valuation $p$-adique de $x$ dans $\gl_d(\ZZ)$
\item $q=\prod_p p^{m_p}$, de radical $r=\prod_{p|q} p$.
\item $I$ ensemble de nombres premiers, $q_I = \prod_{p \in I} p^{m_p}$ et $r_I = \prod_{p\in I} p$.
\item $\delta\in]0,1[$,  $q_\delta = \prod_p p^{\floor{\delta m_p}}$.
\item $\Hom(H,S^1)$ ensemble des caractères unitaires d'un groupe $H$.
\end{itemize}

\section{La stratégie de Bourgain-Gamburd}

Dans cette partie, nous rappelons la stratégie mise au point par Bourgain et Gamburd pour démontrer la propriété du trou spectral.
Cette méthode est robuste et s'applique aussi bien aux groupes profinis \cite{bg0, bg1, bg2, sgv, sg1, sg2} qu'aux groupes de Lie compacts \cite{bg_su2,bg_sud,bs}.
Pour plus de simplicité, nous restreindrons notre attention au cadre présenté dans l'introduction, et qui est celui du théorème~\ref{mesures}, que nous voulons démontrer.
Ainsi, dans toute la suite, $\mu$ désigne une probabilité symétrique sur $\SL_d(\Z)$, $\Gamma$ le sous-groupe engendré par le support de $\mu$, et $\Omega$ l'adhérence de $\Gamma$ dans $\SL_d(\ZZ)$.
Nous supposerons en outre pour commencer que le support de $\mu$ est fini, et ne lèverons cette hypothèse qu'au paragraphe~\ref{pts}.

\subsection{Aplanissement et trou spectral}

La première étape de la démonstration du théorème~\ref{mesures} consiste à montrer un lemme d'aplanissement pour les puissances de convolution de la mesure $\mu$.
Soit $m_\Omega$ la probabilité de Haar sur $\Omega$ et $L^2(\Omega)$ l'espace $L^2$ associé sur $\Omega$.
La norme usuelle sur $L^2(\Omega)$ est notée $\norm{\,\cdot\,}_2$.
Pour chaque $q \in \N^*$, on considère le sous-groupe de congruence
\[
\Omega_q= \{g\in\Omega\ |\ g\equiv 1\mod q\}
\]
et on note $P_q = \frac{\mathbbm{1}_{\Omega_q}}{m_\Omega(\Omega_q)}$ la suite d'unités approchées obtenue à partir de la famille $(\Omega_q)_{q \in \N^*}$.

\begin{lemma}[Lemme d'aplanissement]
\label{lm:aplaniss}
Il existe une constante $\delta > 0$ dépendant de $\tau$ et $C$ telles que pour tout $n\geq C\log q$, si 
\[
\norm{\mu^n*P_q}_2 \geq q^\tau,
\]
alors
\[
\norm{\mu^{2n}*P_q}_2 \leq q^{-\delta}\norm{\mu^n*P_q}_2.
\]
\end{lemma}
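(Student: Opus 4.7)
Le plan est de raisonner par l'absurde en combinant une version non-commutative du lemme de Balog--Szemerédi--Gowers à la proposition~\ref{global}. Supposons que $\|\mu^n * P_q\|_2 \geq q^\tau$ mais $\|\mu^{2n} * P_q\|_2 > q^{-\delta} \|\mu^n * P_q\|_2$; on cherchera une contradiction pour $\delta > 0$ assez petit devant $\tau$, $C$ et la constante $\eps = \eps(\tau)$ fournie par la proposition~\ref{global}.

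Puisque $P_q$ est idempotent pour la convolution, la fonction $f := \mu^n * P_q$ se factorise par le quotient fini $\Omega/\Omega_q$: on a $f(x) = N(\Omega, q)\, \widetilde{\mu}(x\Omega_q)$, où $\widetilde{\mu}$ est la projection de $\mu^n$ sur $\Omega/\Omega_q$, et donc $\|f\|_2^2 = N(\Omega, q) \sum_{\bar x} \widetilde\mu(\bar x)^2$. Les hypothèses se réécrivent alors $\|\widetilde\mu\|_2^2 \geq q^{2\tau}/N(\Omega, q)$ et $\|\widetilde\mu * \widetilde\mu\|_2 \geq q^{-\delta} \|\widetilde\mu\|_2$. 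Un découpage dyadique des niveaux de $\widetilde\mu$, couplé à une version non-commutative du lemme de Balog--Szemerédi--Gowers (à la Tao), permet d'extraire un niveau $\sigma$ et un ensemble symétrique $A \subset \Omega$, réunion de classes de $\Omega_q$, tel que $\widetilde\mu \asymp \sigma$ sur l'image $\widetilde A$ de $A$, $\sigma N(A, q) \asymp \mu^n(A) \geq q^{-O(\delta)}$, et $N(\Pi_3 A, q) \leq q^{O(\delta)} N(A, q)$. Par le choix dyadique, la contribution de $A$ domine la norme $L^2$ à des facteurs logarithmiques près:
\[ \|f\|_2^2 \lesssim (\log q)^{O(1)}\, N(\Omega, q)\, N(A, q)\, \sigma^2. \]

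L'inégalité de Plünnecke--Ruzsa non-commutative propage la faible triplication: $N(\Pi_C A, q) \leq q^{O(C\delta)} N(A, q)$. Pour $\delta$ assez petit devant $\eps$, on a $\mu^n(A) \geq q^{-\eps}$, et la proposition~\ref{global} (appliquée avec $n \geq C\log q$ et la partie symétrique $A$) donne $N(\Pi_C A, q) \geq q^{-\tau} N(\Omega, q)$. En combinant, $N(A, q) \geq q^{-\tau - O(C\delta)} N(\Omega, q)$.

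En injectant la majoration $\sigma \lesssim 1/N(A, q)$ (conséquence de $\mu^n(A) \leq 1$) dans la borne ci-dessus, on obtient
\[ \|f\|_2^2 \lesssim (\log q)^{O(1)}\, N(\Omega, q)/N(A, q) \lesssim (\log q)^{O(1)}\, q^{\tau + O(C\delta)}, \]
qui contredit $\|f\|_2^2 \geq q^{2\tau}$ pour $\delta$ assez petit et $q$ assez grand. Le principal obstacle sera l'étape d'extraction via Balog--Szemerédi--Gowers dans le cadre non-commutatif de $\Omega/\Omega_q$: il faudra obtenir simultanément, sur un même ensemble symétrique $A$, une masse $\mu^n$-significative (pour invoquer la proposition~\ref{global}) et une faible triplication modulo $q$, tout en veillant à ce que $A$ porte l'essentiel de la norme $L^2$ de $f$ et que tous les facteurs logarithmiques perdus restent négligeables devant $q^\tau$.
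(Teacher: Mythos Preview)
Your approach is correct and is essentially the same as the paper's: argue by contradiction, pass to the finite quotient $\Omega/\Omega_q$, apply a non-commutative Balog--Szemerédi--Gowers lemma to extract a $q^{O(\delta)}$-approximate subgroup $A$ carrying significant $\mu^{*n}$-mass, then play the bounded product growth of $A$ against Proposition~\ref{global}. The only cosmetic difference is how you package the contradiction: the paper uses the BSG output $N(A,q)\leq q^{-2\tau+O(\delta)}N(\Omega,q)$ directly and compares it to the lower bound $N(\Pi_{C}A,q)\geq q^{-\tau}N(\Omega,q)$ from Proposition~\ref{global}, whereas you re-express the same information as an upper bound on $\norm{f}_2^2$ via the dyadic level $\sigma$; both routes are equivalent. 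One minor point to watch when you write this out in full: BSG gives $\widetilde\mu(hA)\geq q^{-O(\delta)}$ for some translate $h$, not $\widetilde\mu(A)$ itself, so you will need to symmetrize (e.g.\ pass to $hAA^{-1}h^{-1}$ and use $\mu^{*2n}$, as the paper implicitly does) before invoking Proposition~\ref{global}.
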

\begin{proof}
Rappelons qu'étant donné un paramètre $K\geq 1$, une partie $A$ d'un groupe fini $H$ est un \emph{sous-groupe $K$\dash{}approximatif} si $A$ est symétrique, contient l'élément neutre et s'il existe un ensemble $X \subset H$ fini de cardinal $\leq K$ tel que $AA \subset AX$.
En particulier, si $A$ est un sous-groupe $K$\dash{approximatif} alors
\begin{equation*}
\forall k \geq 1, \quad \abs{\Pi_k A} \leq K^{k - 1} \abs{A}.
\end{equation*}
Grâce à une version non commutative du lemme de Balog-Szemerédi-Gowers \cite[Corollary~2.46]{taovu}, les sous-groupes approximatifs d'un groupe fini sont reliés aux convolutions de mesures via le lemme suivant, qui apparaît déjà implicitement dans l'article de Bourgain et Gamburd \cite{bg0}.

\begin{lemma}
\label{lm:BSG}
Soit $K\geq 2$ un paramètre.
Soient $\nu$ et $\nu'$ des mesures de probabilité sur un groupe fini $H$. 
On suppose que 
\[\norm{\nu * \nu'}_2 \geq K^{-1}\norm{\nu}_2^{\frac{1}{2}}\norm{\nu'}_2^{\frac{1}{2}}.\]
Alors il existe un sous-groupe $K^{O(1)}$\dash{}approximatif $A \subset H$ tel que
\begin{itemize}

\item $K^{-O(1)} \norm{\nu}_2^{-2} \leq \abs{A} \leq K^{O(1)} \norm{\nu}_2^{-2}$,

\item il existe $h \in H$ tel que $\nu(hA) \geq K^{-O(1)}$,
\item il existe $h' \in H$ tel que $\nu'(Ah') \geq K^{-O(1)}$.
\end{itemize} 
\end{lemma}

%
Pour démontrer le lemme~\ref{lm:aplaniss}, on raisonne par l'absurde en supposant que
\[
\norm{\mu^{*2n}*P_q}_2 > q^{-\delta}\norm{\mu^{*n} * P_q}_2,
\]
pour $\delta > 0$ arbitrairement petit.
D'après le lemme~\ref{lm:BSG} ci-dessus, appliqué dans le groupe fini $\Omega/\Omega_q$ à la mesure image de $\mu^{*n}$ par $\pi_q$,
il existe une partie symétrique $A \subset \Omega$ telle que $\pi_q(A)$ soit un sous-groupe $q^{O(\delta)}$\dash{approximatif} et que
\[
\mu^{*2n}(AA) \geq q^{-O(\delta)} \label{eq:marcheA}
\]
et
\[
N(A,q) \leq q^{-2\tau+O(\delta)} N(\Omega,q).
\]
Rappelons que $N(A,q)$ désigne le nombre de recouvrement de $A$ par des classes de $\Omega_q$; autrement dit, $N(A,q)$ est le cardinal de la projection de $A$ dans $\Omega/\Omega_q$.
Soient $\eps>0$ et $C\geq 1$ les constantes données par la proposition~\ref{global}, de sorte que si $\delta>0$ est choisi suffisamment petit par rapport à $\eps$, alors
\[
N(\Pi_{2C} A,q) \geq N(\Omega,q)^{1 - \tau}.
\]
Mais par ailleurs, comme $A$ est un sous-groupe $q^{O(\delta)}$-approximatif,
\[
N(\Pi_{2C} A, q) \leq q^{O(C\delta)} N(A,q) \leq q^{-2\tau+O(C\delta)}N(\Omega,q).
\]
Si $\delta$ est suffisamment petit par rapport à $\tau/C$, cela donne la contradiction recherchée.
\end{proof}

Une application itérée du lemme d'aplanissement permet alors d'obtenir la proposition suivante.

\begin{proposition}\label{presque}
Soit $\mu$ une probabilité sur $\SL_d(\Z)$.
On suppose que l'adhérence de Zariski du sous-groupe $\Gamma$ engendré par $\Supp\mu$ est simple, connexe et simplement connexe.
Alors, pour tout $\tau > 0$ il existe $C \geq 1$ tel que pour tout $n \geq C \log q$,
\begin{equation}\label{flat}
 \norm{\mu^{*n}*P_q}_2 \leq q^{\tau}.
\end{equation}
\end{proposition}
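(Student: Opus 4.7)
Le plan est d'itérer le lemme~\ref{lm:aplaniss} à partir d'une borne a priori polynomiale en $q$. Je commencerai par deux observations élémentaires : premièrement, puisque $\mu$ est une mesure de probabilité, l'opérateur $f\mapsto \mu*f$ est une contraction de $L^2(\Omega, m_\Omega)$ (par inégalité de Jensen et invariance à gauche de $m_\Omega$), si bien que la suite $n\mapsto\norm{\mu^{*n}*P_q}_2$ est décroissante; deuxièmement, on dispose de la borne triviale
\[
\norm{\mu^{*n}*P_q}_2 \leq \norm{P_q}_2 = m_\Omega(\Omega_q)^{-1/2} = \sqrt{N(\Omega,q)} \leq q^{d^2/2}.
\]

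Fixons $\tau > 0$, notons $C \geq 1$ la constante donnée par la proposition~\ref{global} pour ce $\tau$, et $\delta > 0$ la constante correspondante fournie par le lemme~\ref{lm:aplaniss}. Je définirai pour chaque $k\geq 0$ la suite $n_k = 2^k \lceil C\log q\rceil$ et considérerai $a_k = \log_q \norm{\mu^{*n_k}*P_q}_2$. On a $a_0 \leq d^2/2$, et tant que $a_k > \tau$, le lemme d'aplanissement, applicable puisque $n_k\geq C\log q$, donne $a_{k+1}\leq a_k - \delta$. Il existera donc un indice $k^* \leq \lceil(d^2/2-\tau)/\delta\rceil$ tel que $a_{k^*}\leq \tau$, c'est-à-dire $\norm{\mu^{*n_{k^*}}*P_q}_2\leq q^\tau$. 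En posant alors $C' = 2^{k^*+1}C$, tout entier $n\geq C'\log q$ vérifiera $n\geq n_{k^*}$, et par décroissance de la norme on obtiendra l'inégalité voulue $\norm{\mu^{*n}*P_q}_2 \leq \norm{\mu^{*n_{k^*}}*P_q}_2 \leq q^\tau$.

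Toute la difficulté se concentre ainsi dans le lemme~\ref{lm:aplaniss}, et plus précisément dans la proposition~\ref{global} sur laquelle il repose; l'itération présentée ci-dessus n'est qu'un argument combinatoire standard, déjà employé par Bourgain et Gamburd dans leurs travaux antérieurs, et n'introduit pas d'obstacle nouveau.
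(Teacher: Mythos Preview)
Your argument is correct and is exactly the standard iteration the paper has in mind when it writes ``Une application itérée du lemme d'aplanissement permet alors d'obtenir la proposition suivante'': a trivial polynomial upper bound $\norm{P_q}_2\leq q^{d^2/2}$, monotonicity of $n\mapsto\norm{\mu^{*n}*P_q}_2$, and a bounded number (independent of $q$) of doublings via Lemma~\ref{lm:aplaniss}. The only cosmetic point is that your $C'=2^{k^*+1}C$ depends a priori on $q$ through $k^*$; to get a uniform constant, take $C'=2^{K+1}C$ with $K=\lceil(d^2/2-\tau)/\delta\rceil$, which your bound on $k^*$ already provides.
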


%
%

\subsection{Multiplicité des représentations et trou spectral}

À partir de la proposition~\ref{presque}, la démonstration du théorème~\ref{mesures} se fait par un argument d'analyse de Fourier sur le groupe compact $\Omega$.
On notera $\hat\Omega$ le dual unitaire de $\Omega$, i.e. l'ensemble des représentations unitaires irréductibles de $\Omega$, à équivalence près.
Comme $\Omega$ est un groupe compact, $\hat\Omega$ est constitué de représentations de dimension finie.

L'ingrédient essentiel de la démonstration est la proposition~\ref{qr} ci-dessous, qui tire son origine des observations de Frobenius \comm{\cite{frobenius}} sur les représentations du groupe $\SL_2(\Z/p\Z)$, lorsque $p$ est un nombre premier arbitraire.
Lorsque le groupe $\Gamma$ est dense dans $\SL_d$ au sens de la topologie de Zariski, sa démonstration est plus facile, et apparaît déjà dans \cite[Proof of Theorem~1]{bv}.
Le résultat général est sans doute bien connu des spécialistes, mais les différents cas à étudier dans la démonstration sont éparpillés dans la littérature sur le sujet \cite{ls,sg1,breuillard_survey}.
Nous en donnerons donc une démonstration complète dans l'appendice~\ref{sec:qa}.

\begin{proposition}[Propriété quasi-aléatoire]
\label{qr}
Soit $\Gamma$ un sous-groupe de $\SL_d(\Z)$ et $\Omega$ l'adhérence de $\Gamma$ dans $\SL_d(\ZZ)$.
On suppose que l'adhérence de Zariski de $\Gamma$ dans $\SL_d$ est semi-simple, connexe et simplement connexe.
Alors, il existe $\kappa>0$ tel que pour tout  $(\rho,V_\rho) \in \hat\Omega$, il existe $q \in \N^*$ tel que
\[
\Omega_q \subset \ker \rho
\quad\mbox{et}\quad
\dim V_\rho \geq \kappa q^\kappa.
\]
\end{proposition}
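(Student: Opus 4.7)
Le plan est d'exploiter la structure profinie de $\Omega$ combinée à l'approximation forte et à la théorie de Clifford, pour ramener la question à des bornes de Landazuri-Seitz sur la dimension minimale des représentations des groupes finis de type de Lie. Comme $\Omega$ est compact et totalement discontinu, toute représentation irréductible $\rho$ est de dimension finie et se factorise par un quotient fini $\Omega/\Omega_q$. On choisirait $q=\prod_p p^{m_p}$ \emph{minimal}, au sens où $\rho$ reste non triviale en restriction à $\Omega_{q/p}/\Omega_q$ pour chaque premier $p$ divisant $q$.

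Ensuite, d'après les résultats d'approximation forte de Matthews-Vaserstein-Weisfeiler et Nori rappelés dans le premier appendice, la connexité, la simple connexité et la semi-simplicité de $G$ entraînent que l'adhérence $\Omega$ est ouverte dans $\prod_p G(\Z_p)$, et que pour tout premier $p$ hors d'un ensemble exceptionnel fini $S$, le facteur local de $\Omega$ en $p$ coïncide avec $G(\Z_p)$. En décomposant $q=q_Sq'$ suivant ces deux parties, le quotient $\Omega/\Omega_q$ est alors, à indice borné près, isomorphe au produit direct $\prod_{p\mid q'}G(\Z/p^{m_p}\Z)$, multiplié par une partie localisée en $S$; la représentation $\rho$ se décompose donc comme produit tensoriel externe $\rho\cong\rho_S\otimes\bigotimes_{p\mid q'}\rho_p$ de représentations irréductibles de chacun des facteurs.

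Il reste à prouver, pour chaque bon premier $p$, la borne quasi-aléatoire locale suivante: toute représentation irréductible de $G(\Z/p^m\Z)$ non triviale sur le noyau de réduction modulo $p^{m-1}$ est de dimension au moins $c\,p^{\kappa m}$. Pour $m=1$, il s'agit de la borne classique de Landazuri-Seitz sur la plus petite dimension d'une représentation non triviale d'un groupe quasi-simple de type de Lie, appliquée au groupe de Chevalley $G(\F_p)$. Pour $m\geq 2$, le noyau de congruence $K=G(\Z/p^m\Z)_{p^{m-1}}$ est un $p$-groupe abélien élémentaire isomorphe à $\g(\F_p)$ via l'exponentielle étudiée dans le second appendice, et l'action par conjugaison de $G(\F_p)$ sur $K$ est donnée par la représentation adjointe. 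La théorie de Clifford entraîne alors que la restriction de $\rho_p$ à $K$ est somme directe de caractères formant une seule $G(\F_p)$-orbite, non triviale par la minimalité de $m_p$; comme $G$ est semi-simple, le stabilisateur d'un caractère non nul correspond à un sous-groupe algébrique propre, donc de codimension au moins $1$, ce qui fournit $\dim\rho_p\geq c\,p^\kappa$, puis, en itérant le long de la filtration $p$\dash{}adique, l'estimation souhaitée en $p^{\kappa m}$.

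L'obstacle principal se situera dans cette dernière étape, et plus précisément en deux points: (a) obtenir des constantes $c,\kappa$ indépendantes du premier $p$ dans la borne de Landazuri-Seitz, ce qui est acquis car le type radiciel de $G$ est fixé, et (b) rendre rigoureuse l'identification $K\cong\g(\F_p)$ et l'argument de Clifford aux premiers de petite caractéristique, notamment $p=2$, ce qui relève du second appendice. En combinant alors ces bornes locales et en contrôlant la contribution des mauvais premiers par une constante uniforme absorbée dans $\kappa$, on conclurait $\dim V_\rho\geq\kappa q^\kappa$.
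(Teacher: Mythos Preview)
Your global architecture is the same as the paper's: strong approximation reduces $\Omega$ to an open subgroup of $\prod_p G(\Z_p)$, an irreducible representation factors as a tensor product $\bigotimes_p \rho_p$, and for each large prime with $m_p=1$ the Landazuri--Seitz bound applies, while for $m_p\geq 2$ one invokes Clifford theory on the abelian kernel $\ker\bigl(G(\Z/p^{m_p}\Z)\to G(\Z/p^{m_p-1}\Z)\bigr)\simeq\g(\F_p)$. Up to here the paper does exactly this.

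The gap is in the sentence \enquote{puis, en it\'erant le long de la filtration $p$-adique, l'estimation souhait\'ee en $p^{\kappa m}$}. A single step of Clifford on the bottom layer gives only $\dim\rho_p\geq cp$; it does \emph{not} hand you another irreducible representation of $G(\Z/p^{m-1}\Z)$ on which to repeat the argument, since $\rho_p$ does not factor through that quotient and the stabiliser of a character is not of this shape. The paper does not iterate Clifford. Instead it jumps directly to the midpoint: for $\ell=\lceil m/2\rceil$ the quotient $H_{p,\ell}/H_{p,m}$ is abelian via $\exp$, one decomposes $V$ under its characters, and the whole group $H=H_{p,k}$ acts on these characters by the co-adjoint action. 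The key point, which replaces your iteration, is the chain
\[
\stab_\h(\psi)\subsetneq\stab_\h(\psi^p)\subsetneq\cdots\subsetneq\stab_\h(\psi^{p^n})=\h,
\]
each inclusion being strict because $\h/\stab_\h(\psi)$ is a $p$-group; combined with the perfectness $[\g(\Z_p),\g(\Z_p)]\supset p^{o_p}\g(\Z_p)$ this forces $n\geq\lfloor m/2\rfloor-k-o_p$ and hence $\dim\rho_p\geq p^{\lfloor m/2\rfloor-k-o_p}$. This is the content of the paper's Proposition~\ref{qr,padic}, and it is the missing idea in your sketch.

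A secondary issue: your plan to \enquote{absorb} the bad primes into the constant only works when $m_p$ is bounded at those primes. But nothing prevents $m_2$ or $m_3$ from being large, and then you still need a bound of order $p^{\kappa m_p}$ at $p=2,3$. The paper handles this by noting that the argument of Proposition~\ref{qr,padic} is valid for \emph{every} prime (the defect $o_p$ is nonzero only for finitely many $p$), and then partitions the primes according to whether $m_p\leq 5$ or $m_p\geq 6$, rather than according to whether $p$ is good or bad.
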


Pour le reste, la démonstration est basée sur la formule de Parseval dans le groupe compact $\Omega$.
Si $(\rho,V_\rho)$ est une représentation unitaire de $\Omega$ et $\mu$ une mesure borélienne finie sur $\Omega$, on définit un élément $\rho(\mu) \in \End(V_\rho)$ par la formule 
\[
\rho(\mu)  = \int_\Omega \rho(g)\, \dd \mu(g).
\]
De façon similaire, si $f \in L^1(\Omega)$, on définit
\[
\forall \rho(f)  = \int_\Omega \rho(g) f(g) \dd m_\Omega(g).
\]

\begin{proof}[Démonstration du théorème~\ref{mesures}, cas particulier]
Pour pouvoir appliquer la proposition~\ref{qr}, on se restreint ici au cas particulier où $\mu$ est à support fini et où l'adhérence de Zariski de $\Gamma$ est un groupe algébrique simple, connexe et simplement connexe.
Pour $n\geq 1$, la formule de Parseval \cite[\S5.3, equation~(5.13)]{folland} appliquée à la fonction $\mu^{*n}*P_q$ sur $\Omega$ s'écrit 
\[
\norm{\mu^{*n}*P_q}_2^2  = \sum_{\rho \in\hat{\Omega}} (\dim V_\rho) \normHS{\rho(\mu)^n\rho(P_q)}^2,
\]
où l'on note $\normHS{\varphi}=\Tr(\varphi^*\varphi)$ la norme de Hilbert-Schmidt d'un endomorphisme $\varphi$ d'un espace de Hilbert.
D'après la proposition~\ref{qr}, il existe $\kappa>0$ tel que pour tout $\rho \in \hat\Omega$, il existe $q \in \N^*$ tel que
\[
\Omega_q \subset \ker \rho
\quad\mbox{et}\quad
\dim V_\rho \geq \kappa q^\kappa.
\]
L'inclusion $\Omega_q\subset\ker\rho$ implique en particulier $\rho(P_q) = \Id_{V_\rho}$.

Alors, d'après la proposition~\ref{presque} appliquée avec $\tau=\kappa/5$, il existe une constante $C\geq 1$ telle que, pour $n = C \log q$,
\[
\kappa q^\kappa \, \normHS{\rho(\mu)^n}^2 \leq \norm{\mu^{*n}*P_q}_2^2 \leq q^{2\tau}.
\]
Comme la norme de Hilbert-Schmidt majore la norme d'opérateur sur $\End V_\rho$, cela implique, avec le fait que $\rho(\mu)$ est un opérateur symétrique, pour $q$ suffisamment grand,
\[
\normop{\rho(\mu)} =  \normop{\rho(\mu)^n}^{\frac{1}{n}} \leq \normHS{\rho(\mu)^n}^{\frac{1}{n}} \leq \kappa^{-\frac{1}{n}} e^{\frac{-3\tau}{2C}} \leq e^{-\frac{\tau}{C}}<1.
\]
On en déduit qu'il existe des constantes $c > 0$ et $Q > 1$ telle pour tout $\rho \in \hat\Omega$,
\begin{enumerate}
\item ou bien $\normop{\rho(\mu)} \leq 1 - c$, 
\item ou bien il existe $q \leq Q$ et $\Omega_q \subset \ker \rho$.
\end{enumerate}
La deuxième option n'est possible que pour un nombre fini de $\rho \in \hat\Omega$, pour lesquelles on doit avoir $\normop{\rho(\mu)}<1$ si $\rho$ est non triviale.
\comm{En effet, si $\normop{\rho(\mu)}=1$, il y a une valeur propre égale à $1$, car $\mu$ est symétrique; mais alors, par uniforme convexité, un vecteur propre associé est nécessairement fixé par le support de $\mu$, et donc par $\Omega$ tout entier, ce qui contredit l'irréductibilité de $V_\rho$.}
Comme $\normop{T_\mu^0}=\sup\{ \normop{\rho(\mu)}\ ;\ \rho\in\hat{\Omega},\ \rho\neq 1\} $ le théorème est démontré lorsque l'adhérence de Zariski de $\Gamma$ est simplement connexe.
%
%
\end{proof}

\subsection{La propriété du trou spectral}
\label{pts}

Nous expliquons maintenant comment déduire le cas général du théorème~\ref{mesures} du cas particulier démontré au paragraphe précédent.
Comme le reste de cette partie, l'argument présenté n'est pas nouveau, et apparaît déjà par exemple dans Salehi Golsefidy \cite[\S2.3]{sg1}.

Afin de démontrer le théorème~\ref{mesures} sans l'hypothèse de simple connexité, il est commode de passer par l'énoncé équivalent du théorème~\ref{trouspectral}.
Pour cela, nous ferons usage du lemme suivant, tiré du livre de Bekka, de la Harpe et Valette \cite[Proposition~G.4.2]{BdlHV}.

\begin{lemma}
\label{lm:vectpresqueinv}
Soit $\Gamma$ un groupe dénombrable, $\rho$ une représentation unitaire de $\Gamma$ et $\mu$ une probabilité symétrique sur $\Gamma$. 
\begin{enumerate}
\item Si
\(\normop{\rho(\mu)} < 1,\)
alors $\rho$ n'a pas de vecteur presque invariant.
\item Si $\rho$ n'a pas de vecteur presque invariant et si le support de $\mu$ engendre $\Gamma$, alors 
\(\normop{\rho(\mu)} < 1\).
\end{enumerate}
\end{lemma}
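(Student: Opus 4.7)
Le plan est de démontrer les deux implications par contraposée, en exploitant d'un côté l'inégalité triangulaire pour $\rho(\mu)$ vu comme barycentre des $\rho(g)$, et de l'autre le caractère auto-adjoint de $\rho(\mu)$ dû à la symétrie de $\mu$.

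\textbf{Pour le premier point}, je supposerais que $\rho$ admet des vecteurs presque invariants, et chercherais à en déduire $\normop{\rho(\mu)} = 1$. Étant donné $\eps > 0$, je commencerais par choisir un ensemble fini $F\subset\Gamma$ tel que $\mu(F) \geq 1 - \eps/4$, puis un vecteur unitaire $v$ vérifiant $\norm{\rho(g)v - v} \leq \eps/2$ pour tout $g\in F$. L'inégalité triangulaire intégrale donnerait alors
\[
\norm{\rho(\mu)v - v} \leq \int_\Gamma \norm{\rho(g)v - v}\,\dd\mu(g) \leq \tfrac{\eps}{2}\mu(F) + 2\mu(F^c) \leq \eps,
\]
d'où $\normop{\rho(\mu)} \geq 1 - \eps$, et donc $\normop{\rho(\mu)} = 1$ en passant à la limite.

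\textbf{Pour le second point}, je supposerais $\normop{\rho(\mu)} = 1$. Comme $\mu$ est symétrique, $\rho(\mu)$ est auto-adjoint à spectre contenu dans $[-1,1]$, si bien que $1$ ou $-1$ appartient à son spectre approché. Dans le cas $1$, il existerait une suite de vecteurs unitaires $v_n$ avec $\rho(\mu)v_n - v_n \to 0$; l'identité $\norm{\rho(g)v - v}^2 = 2 - 2\Re\langle\rho(g)v,v\rangle$ donnerait
\[
\int_\Gamma \norm{\rho(g)v_n - v_n}^2\,\dd\mu(g) = 2 - 2\Re\langle \rho(\mu)v_n,v_n\rangle \xrightarrow{n\to\infty} 0.
\]
Comme $\mu$ est portée par le groupe dénombrable $\Gamma$ donc atomique, j'en déduirais que $\norm{\rho(g_0)v_n - v_n} \to 0$ pour chaque $g_0\in\Supp\mu$. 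L'hypothèse de génération de $\Gamma$ par $\Supp\mu$ permettrait enfin, par inégalité triangulaire appliquée à l'écriture $g = s_1\cdots s_k$ de chaque élément $g\in\Gamma$ comme mot en $\Supp\mu$, de produire un vecteur presque invariant pour toute partie finie de $\Gamma$, contredisant l'hypothèse. Si c'est $-1$ qui appartient au spectre approché, je me ramènerais au cas précédent via $\rho(\mu^{*2}) = \rho(\mu)^2$, pour lequel $1$ est dans le spectre approché.

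L'étape la plus délicate me semble être la propagation de la presque invariance des éléments de $\Supp\mu$ à tous les éléments de $\Gamma$, qui repose essentiellement sur l'estimation $\norm{\rho(s_1\cdots s_k)v - v} \leq \sum_{i=1}^k \norm{\rho(s_i)v - v}$; le traitement du cas $-1\in\mathrm{spec}$ par passage à $\mu^{*2}$ constitue une subtilité annexe, intimement liée à la notion implicite d'apériodicité de $\mu$ (le support de $\mu^{*2}$ devant encore engendrer $\Gamma$).
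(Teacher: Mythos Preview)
The paper does not give its own proof of this lemma; it simply cites \cite[Proposition~G.4.2]{BdlHV}. Your argument is the standard one and is essentially what one finds in that reference: part~1 via the integral triangle inequality, part~2 via self-adjointness of $\rho(\mu)$ and the identity $\int\norm{\rho(g)v-v}^2\,\dd\mu(g)=2-2\langle\rho(\mu)v,v\rangle$, then propagation from $\Supp\mu$ to $\Gamma$ by the word-length estimate you wrote down.

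Your instinct about the $-1$ case is exactly right, and it is worth being explicit. The reduction to $\mu^{*2}$ only yields almost invariance under the subgroup generated by $(\Supp\mu)^2$, which can be proper: for $\Gamma=\Z$, $\mu=\tfrac12(\delta_1+\delta_{-1})$ and $\rho(n)=(-1)^n$ on $\C$, one has $\rho(\mu)=-1$, hence $\normop{\rho(\mu)}=1$, yet $\rho$ has no almost invariant vector. So part~2 as literally stated requires an extra hypothesis such as $e\in\Supp\mu$ or, equivalently here, that $\Supp(\mu^{*2})$ still generate $\Gamma$ --- precisely the aperiodicity condition you mention. In the paper's applications this is harmless (one can always replace $\mu$ by $\tfrac12(\delta_e+\mu)$ without affecting the spectral gap property), but you have correctly located the one genuine subtlety in the argument.
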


Par ailleurs, nous aurons besoin d'un lemme pour passer du groupe $\Gamma$ à un sous-groupe d'indice fini, et réciproquement.
L'énoncé ci-dessous est tiré de l'article \cite[Lemma~3.3]{AbertElek}.

\begin{lemma}
\label{lm:indicefini}
Soit $\Gamma$ un sous-groupe de $\SL_d(\Z)$ et $\Omega$ son adhérence dans $\SL_d(\ZZ)$.
Soit $\Gamma'$ un sous-groupe d'indice fini de $\Gamma$ et $\Omega'$ l'adhérence de $\Gamma'$ dans $\Omega$.
L'action $\Gamma' \curvearrowright \Omega'$ admet un trou spectral, si et seulement si $\Gamma \curvearrowright\Omega$ admet un trou spectral.
\end{lemma}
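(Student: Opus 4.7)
Le plan est de décomposer $L^2_0(\Omega)$ en sous-représentations de $\Gamma$ adaptées au sous-groupe $\Omega'\leq\Omega$, et d'en extraire un isomorphisme avec une représentation induite à partir de $L^2_0(\Omega')$.

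On commence par une réduction. Posons $\widetilde\Gamma' = \Gamma\cap\Omega'$: c'est un sous-groupe de $\Gamma$ contenant $\Gamma'$, d'indice fini dans $\Gamma$, et d'adhérence dans $\Omega$ encore égale à $\Omega'$. Par le fait classique que, pour un sous-groupe $H$ d'indice fini dans un groupe dénombrable $G$ et une représentation unitaire $\pi$ de $G$, $\pi$ admet des vecteurs presque invariants sous $G$ si et seulement si sous $H$ (appliqué au couple $\Gamma'\leq\widetilde\Gamma'$ et à $\pi = L^2_0(\Omega')$), les actions $\Gamma'\curvearrowright\Omega'$ et $\widetilde\Gamma'\curvearrowright\Omega'$ admettent simultanément un trou spectral. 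On peut donc supposer $\Gamma' = \Gamma\cap\Omega'$, ce qui rend bijective l'application naturelle $\Gamma/\Gamma'\to\Omega/\Omega'$.

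Sous cette hypothèse, choisissons des représentants communs $\gamma_1=e,\gamma_2,\dots,\gamma_m\in\Gamma$ pour $\Gamma/\Gamma'\simeq\Omega/\Omega'$. La partition $\Omega=\bigsqcup_i\gamma_i\Omega'$ induit une décomposition orthogonale $\Gamma$-équivariante
\[L^2_0(\Omega) = L^2_0(\Omega/\Omega')\oplus V,\]
où $L^2_0(\Omega/\Omega')$ désigne le sous-espace de dimension finie des fonctions constantes sur chaque classe $\gamma_i\Omega'$ et de moyenne globale nulle, tandis que $V$ est constitué des fonctions de moyenne nulle sur chacune de ces classes. L'action de $\Gamma$ sur $L^2_0(\Omega/\Omega')$, de dimension finie, est sans vecteur invariant non nul puisque $\Gamma$ agit transitivement sur $\Omega/\Omega'$; le trou spectral y est donc trivial. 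D'autre part, l'extension par zéro fournit un plongement $L^2_0(\Omega')\hookrightarrow V$ sur la classe $\Omega'=\gamma_1\Omega'$, et ses translatés par les $\gamma_i$ remplissent les autres classes. L'action de $\Gamma$ sur la somme directe obtenue s'explicite, via la formule $g\gamma_i = \gamma_{\tau(i)}h_i$ avec $h_i\in\Gamma'$, comme l'action induite standard, d'où un isomorphisme $\Gamma$-équivariant
\[V \simeq \mathrm{Ind}_{\Gamma'}^\Gamma L^2_0(\Omega').\]

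On conclut par un second fait classique: pour un sous-groupe $H$ d'indice fini dans un groupe dénombrable $G$ et une représentation unitaire $\sigma$ de $H$, la représentation induite $\mathrm{Ind}_H^G\sigma$ admet des vecteurs presque invariants sous $G$ si et seulement si $\sigma$ en admet sous $H$. Combiné à la décomposition précédente et à la trivialité du trou spectral sur $L^2_0(\Omega/\Omega')$, ceci établit l'équivalence cherchée entre les trous spectraux des actions $\Gamma\curvearrowright\Omega$ et $\Gamma'\curvearrowright\Omega'$. Le point technique le plus délicat est la vérification soigneuse de l'isomorphisme $V\simeq\mathrm{Ind}_{\Gamma'}^\Gamma L^2_0(\Omega')$, qui dépend crucialement de la réduction $\Gamma\cap\Omega' = \Gamma'$ pour garantir que, dans la formule $g\gamma_i = \gamma_{\tau(i)}h_i$, l'élément $h_i$ appartient bien à $\Gamma'$ et non seulement à $\Gamma\cap\Omega'$.
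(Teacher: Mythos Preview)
Your induced-representation argument (the decomposition $L^2_0(\Omega) = L^2_0(\Omega/\Omega') \oplus V$ with $V \simeq \mathrm{Ind}_{\widetilde\Gamma'}^{\Gamma} L^2_0(\Omega')$, together with the standard fact about almost invariant vectors in induced representations) is correct and establishes the equivalence between spectral gap for $\Gamma \curvearrowright \Omega$ and for $\widetilde\Gamma' = \Gamma\cap\Omega' \curvearrowright \Omega'$. The gap lies in the reduction step: the ``fait classique'' you invoke --- that for $H$ of finite index in a countable group $G$ and $\pi$ a unitary representation of $G$, $\pi$ admits almost invariant vectors under $G$ if and only if it does under $H$ --- is false. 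Take $G=\Z/4\Z$, $H=2\Z/4\Z$, and $\pi$ the character $k\mapsto(-1)^k$: then $\pi|_H$ is trivial, hence has invariant vectors, while $\pi$ has none (and being one-dimensional, no almost invariant vectors either). The implication that fails is precisely ``a.i.v.\ for $H$ $\Rightarrow$ a.i.v.\ for $G$'', which is the one your reduction needs to pass from $\widetilde\Gamma'$ back to $\Gamma'$. As a result your argument proves only the implication ``$\Gamma'\curvearrowright\Omega'$ has spectral gap $\Rightarrow$ $\Gamma\curvearrowright\Omega$ has spectral gap'', via the trivial half of your claimed fact combined with the correct induction principle.

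This is, in fact, the only direction the paper uses, and the paper does not prove the lemma itself but quotes it from Ab\'ert--Elek. For the converse, the residual step --- showing that spectral gap for $\widetilde\Gamma'\curvearrowright\Omega'$ implies spectral gap for the possibly strictly smaller $\Gamma'\curvearrowright\Omega'$, when both are dense in $\Omega'$ --- is itself a nontrivial instance of the lemma and cannot be deduced from a general restriction principle for representations. One valid route, followed by Ab\'ert--Elek, is to abandon the representation-theoretic viewpoint and instead characterise spectral gap by an isoperimetric inequality for measurable subsets of $\Omega$; a F{\o}lner-type sequence witnessing failure of spectral gap for $\Gamma$ on $\Omega$ can then be intersected with a single coset $c\Omega'$ to produce one for $\Gamma'$ on $\Omega'$.
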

\comm{
\begin{proof}
D'après notre définition du trou spectral, l'action $\Gamma\curvearrowright\Omega$ admet un trou spectral si, et seulement si, il existe un sous-groupe $\Gamma_1=\bracket{S}$ de type fini dans $\Gamma$ tel que $\Gamma_1\curvearrowright\Omega$ admette un trou spectral.
D'après Lubotzky~\cite[Theorem~4.3.2]{Lubotzky}, cela est équivalent à dire qu'il existe une partie finie symétrique $S\subset\Gamma$ telle que la famille de graphes de Cayley $(\pi_q(S),\Omega/\Omega_q)$ est une famille d'expanseurs.
Vérifions que cela est équivalent à dire qu'il existe une partie symétrique $S$ finie et $\kappa>0$ tel que pour tout $A\subset\Omega$ mesurable vérifiant $m_\Omega(A)\leq 1/2$, il existe $s\in S$ tel que $\frac{m_\Omega(A\setminus sA)}{m_\Omega(A)}\geq\kappa$.
Clairement, si cette dernière propriété est satisfaite, alors, pour tout $A\subset\Omega/\Omega_q$ tel que $\abs{A}\leq 1/2[\Omega:\Omega_q]$, la partie $A'=\pi_q^{-1}(A)$ est mesurable et vérifie $m_\Omega(A')\leq 1/2$, donc il existe $s\in S$ tel que $\frac{m_\Omega(A'\setminus sA')}{m_\Omega(A')}\geq\kappa$, i.e. $\frac{\abs{A\setminus sA}}{\abs{A}}\geq\kappa$. Donc on a bien une famille d'expanseurs.
Réciproquement, supposons que $(\pi_q(S),\Omega/\Omega_q)$ soit une famille de graphes $\kappa$-expanseurs, et soit $A\subset\Omega$ avec $m_\Omega(A)\leq 1/2$.
Par régularité des mesures de Radon, on peut prendre $A_0\supset A$ ouvert tel que $m_\Omega(A_0\setminus A)\leq\frac{\kappa}{10}m_\Omega(A)$.
Ensuite, écrivant $A_0=\bigcup_{i\in\N}g_i\Omega_{q_i}$, on peut trouver $n$ tel que $m_\Omega(A_0\setminus A_1)\leq\frac{\kappa}{10}m_\Omega(A)$, où $A_1=\bigcup_{i\leq n}g_i\Omega_{q_i}$.
Si $q=\prod_{i\leq n}q_i$, il existe une partie finie $A'\subset\Omega/\Omega_q$ telle que $A_1=\pi^{-1}(A')$.
Par la propriété d'expansion, il existe $s\in S$ tel que $\frac{\abs{A'\setminus sA'}}{\abs{A'}}\geq\kappa\abs{A'}$, i.e. $\frac{m_\Omega(A_1\setminus sA_1)}{m_\Omega(A_1)}\geq \kappa m_\Omega(A_1)$, d'où l'on tire facilement $\frac{m_\Omega(A\setminus sA)}{m_\Omega(A)}\geq \frac{\kappa}{2}m_\Omega(A)$.
Vérifions maintenant que si cette propriété est satisfaite pour $\Gamma'\curvearrowright\Omega'$, alors elle l'est aussi pour $\Gamma\curvearrowright\Omega$.
On raisonne par contraposée, en supposant que pour toute partie finie $S\subset\Gamma$, il existe une suite $(A_n)$ de parties de $\Omega$ telle que $\frac{m(A_n\setminus sA_n)}{m(A_n)}\to 0$ pour tout $s\in S$.
Si $S\subset\Gamma'$ est une partie finie arbitraire, soit $(A_n)$ une telle suite.
Comme $\Omega=\bigcup_{c\in C} c\Omega'$ pour une partie $C$ finie, il existe une sous-suite de $(A_n)$ et $c\in C$ tel que pour tout $n$, posant $B_n=A_n\cap c\Omega'$, on ait $m(B_n)\geq m(A_n)/\abs{C}$.
Alors, pour tout $s\in S$, $\frac{m(B_n\setminus sB_n)}{m(B_n)}\leq \abs{C}\frac{m(A_n\setminus sA_n)}{m(A_n)}\to_{n\to\infty} 0$.
Cela montre que $\Gamma'\curvearrowright\Omega'$ n'admet pas de trou spectral.
La réciproque est vraie aussi, mais nous n'en aurons pas besoin, et renvoyons donc à \cite[Lemma~3.3]{AbertElek} pour la démonstration.
\end{proof}
}

\comm{
\begin{remark}
Le lemme ci-dessus est valable dans un cadre plus général que celui des sous-groupes de $\SL_d(\Z)$, et nous renvoyons à \cite[Theorem~8]{AbertElek} et \cite[Lemma~16]{sg1} pour la démonstration.
\end{remark}
}

Nous pouvons maintenant démontrer le théorème~\ref{trouspectral} en toute généralité.

\begin{proof}[Démonstration du théorème~\ref{trouspectral}]
Notons $G$ l'adhérence de Zariski de $\Gamma$ dans $\SL_d$. Par hypothèse, $G$ est simple.
Si $G$ est connexe et simplement connexe, le cas particulier du théorème~\ref{mesures} démontré au paragraphe précédent montre que $\normop{T_\mu^0}<1$ pour toute probabilité symétrique $\mu$ dont le support est fini et engendre $\Gamma$.
Avec le lemme~\ref{lm:vectpresqueinv}, cela montre que $\Gamma\curvearrowright\Omega$ admet un trou spectral.
\comm{On omet ici un argument: il n'existe pas forcément de mesure à support fini qui engendre $\Gamma$. Mais il existe une mesure à support fini qui engendre un groupe $\Gamma'\subset\Gamma$ ayant même adhérence de Zariski que $\Gamma$, et tel que $\Omega'=\Omega$. (Par approximation forte, $\Omega'$ est toujours d'indice fini dans $\Omega$, donc quitte à ajouter un nombre fini de générateurs dans le support de $\mu$, on peut imposer $\Omega'=\Omega$.) Le trou spectral de l'action de $\Gamma'$ sur $\Omega$ implique le trou spectral pour $\Gamma\curvearrowright\Omega$.}

Si $G$ n'est pas connexe, notons $G^0$ la composante neutre de $G$, $\Gamma^0 = \Gamma \cap G^0$ et $\Omega^0$ l'adhérence de $\Gamma^0$ dans $\Omega$.
Le groupe $\Gamma^0$ est un sous-groupe distingué d'indice fini dans $\Gamma$ dont l'adhérence de Zariski est $G^0$.
Si $G^0$ est simplement connexe, ce qui précède montre que $\Gamma^0\curvearrowright\Omega^0$ admet un trou spectral.
Le lemme~\ref{lm:indicefini} permet d'en déduire que $\Gamma\curvearrowright\Omega$ admet un trou spectral.
Cela démontre le théorème lorsque $G$ est simplement connexe.
\comm{En effet, par définition, cela revient à dire que $G^0$ est simplement connexe.}

Enfin, dans le cas général, soit $\pi \colon \tilde{G} \to G$ le revêtement simplement connexe de $G$.
Rappelons que $\pi$ est une isogénie.
Fixons une $\Q$-représen\-tation fidèle $\tilde{G}\hookrightarrow\SL_{\tilde d}$ dans un groupe linéaire, pour un certain $\tilde{d} \geq 2$, et notons
\[
\tilde{G}(\Z) = \tilde{G}\cap\SL_{\tilde d}(\Z).
\]
D'après Borel \cite[théorème~8.9]{Borel_GA}, les groupes $\pi(\tilde{G}(\Z))$ et $G(\Z)$ sont commensurables.
Posons 
\[
\tilde\Gamma = \tilde{G}(\Z) \cap \pi^{-1}(\Gamma).
\]
L'adhérence de Zariski de $\tilde\Gamma$ dans $\SL_{\tilde d}$ est égale au groupe simplement connexe $\tilde G$, et donc, d'après ce qui précède, l'action de $\tilde\Gamma$ sur son adhérence $\tilde\Omega$ dans $\SL_{\tilde d}(\ZZ)$ admet un trou spectral.
En d'autres termes, si l'on note
\[
\tilde\Gamma_q = \{\, g \in \tilde \Gamma\mid g \equiv 1 \mod q\,\}, \quad q\in \N^*
\]
alors $\tilde\Gamma$ a la propriété $(\tau)$ par rapport à la famille $(\tilde\Gamma_q)_{q\in\N^*}$: il existe une partie finie $\tilde S\subset\tilde\Gamma$ et $\eps>0$ tels que
\[
\forall q\in\N^*,\ \forall v\in L^2_0(\tilde\Gamma/\tilde\Gamma_q),\ 
\exists g \in \tilde S,\quad \norm{\rho(g)v - v} \geq \eps\norm{v}.
\]
Comme $\pi$ est un morphisme de $\Q$-groupes, pour $g \in \tilde{G}$, les coefficients de $\pi(g)$ sont donnés par des polynômes rationnels en les coefficients de $g$.
Il existe donc $k \in \N^*$ tel que pour tout $q \in \N^*$, on ait
\[
\pi(\tilde\Gamma_{kq}) \subset \pi(\tilde\Gamma)_q=\{g\in\pi(\tilde\Gamma)\ |\ g\equiv 1\mod q\}.
\]
\comm{L'argument pour justifier cela est celui de Borel \cite[Proposition~7.12]{Borel_GA}: si les coefficients de $\pi(g)$ sont donnés par les polynômes $a_{ij}(g)$, $1\leq i,j\leq d$, alors les polynômes $b_{ij}$ définis par $b_{ij}(g-1) = a_{ij}(g)-\delta_{ij}$, $1\leq i,j\leq d$ sont sans terme constant. Donc si $g-1$ est bien divisible, $\pi(g)-1=(a_{ij}(g)-\delta_{ij})_{ij}$ aussi.}
Par suite, toute représentation de $\pi(\tilde\Gamma)/\pi(\tilde\Gamma)_q$ se relève en une représentation de $\tilde\Gamma/\tilde\Gamma_{kq}$, et $\pi(\tilde\Gamma)$ a la propriété $(\tau)$ par rapport à la famille de ses sous-groupes de congruence $(\pi(\tilde\Gamma)_q)_{q\in \N^*}$: il existe une partie finie $S\subset\pi(\tilde\Gamma)$ et $\eps>0$ tels que
\[
\forall q\in\N^*,\ \forall v\in L^2_0(\pi(\tilde\Gamma)/\pi(\tilde\Gamma)_q),\ 
\exists g \in S,\quad \norm{\rho(g)v - v} \geq \eps\norm{v}.
\]
Cela est équivalent à dire que l'action de $\pi(\tilde\Gamma)$ sur son adhérence dans $\SL_d(\ZZ)$ admet un trou spectral.
Comme $\pi(\tilde\Gamma)$ est d'indice fini dans $\Gamma$, le lemme~\ref{lm:indicefini} permet de conclure que l'action de $\Gamma$ sur $\Omega$ admet un trou spectral.
\end{proof}

\section{Expansion modulo les nombres sans facteur carré}
\label{sec:sqf}

Nous rappelons maintenant la propriété du trou spectral modulo les entiers sans facteur carré, due à Salehi Golsefidy et Varjú \cite{sgv}, et certaines de ses conséquences, dont nous aurons besoin plus tard.


\subsection{Trou spectral et expansion modulo $r$}
Le théorème suivant est tiré des travaux de Salehi Golsefidy et Varjú sur l'expansion dans les groupes parfaits \cite[Theorem~1]{sgv}.
\begin{theorem}[Trou spectral modulo les entiers sans facteur carré]
\label{thm:sgv}
Soit $\mu$ une probabilité sur $\SL_d(\Z)$, $\Gamma$ le semi-groupe engendré par $\mu$, et $\Omega$ l'adhérence de $\Gamma$ dans $\SL_d(\ZZ)$.
On suppose que l'adhérence de Zariski de $\Gamma$ est parfaite.
Alors l'action du groupe $\Gamma$ sur $\varprojlim \Omega/\Omega_r$, $r$ parcourant l'ensemble des entiers naturels sans facteur carré, admet un trou spectral.
\end{theorem}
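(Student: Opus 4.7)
Le plan est de mimer la stratégie de Bourgain-Gamburd, adaptée au cas des modules sans facteur carré. Comme l'adhérence de Zariski $G$ est parfaite, sa composante neutre est semi-simple (le radical résoluble contribuerait à un quotient abélien non trivial). Quitte à remplacer $\Gamma$ par son intersection avec $G^0$, qui est d'indice fini, on peut supposer $G$ semi-simple connexe, puis, en passant au revêtement simplement connexe comme dans la démonstration du théorème~\ref{trouspectral}, $G$ semi-simple simplement connexe. Par les résultats de forte approximation de Matthews-Vaserstein-Weisfeiler et Nori rappelés dans les appendices, pour tout entier $r$ sans facteur carré premier à une constante dépendant de $G$, on obtient alors un isomorphisme $\Omega/\Omega_r \simeq \prod_{p\mid r} G(\F_p)$, où chaque facteur $G(\F_p)$ est un produit de groupes (quasi-)simples finis de type Lie.

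Dans un second temps, j'établirais une propriété quasi-aléatoire adaptée: la dimension minimale d'une représentation unitaire irréductible non triviale de $\Omega/\Omega_r$ est minorée par $r^\kappa$ pour un $\kappa>0$ ne dépendant que de $G$. Cela résulte des minorations de Landazuri-Seitz-Zalesskii sur la dimension des représentations projectives des groupes simples finis de type Lie, combinées au fait qu'une représentation irréductible non triviale d'un produit direct se décompose en produit tensoriel avec au moins un facteur non trivial, contribuant alors d'une dimension $\geq p^\kappa$.

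Le cœur du travail consiste à démontrer un lemme d'aplanissement analogue à la proposition~\ref{presque}: pour tout $\tau>0$, il existe $C\geq 1$ tel que pour tout entier $r$ sans facteur carré et tout $n\geq C\log r$,
\[ \norm{\mu^{*n}*P_r}_2 \leq r^\tau.\]
On raisonne par l'absurde via le lemme de Balog-Szemerédi-Gowers non commutatif (lemme~\ref{lm:BSG}), qui fournit un sous-groupe approximatif $A\subset\Omega/\Omega_r$ portant une masse non négligeable de $\mu^{*n}$. L'étape clé consiste à exclure un tel $A$, en invoquant le théorème de produit dans les groupes simples de type Lie dû à Pyber-Szabó et Breuillard-Green-Tao, qui contraint les sous-groupes approximatifs de chaque facteur $G(\F_p)$ à être proches d'un sous-groupe propre ou du groupe entier. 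La formule de Parseval sur $\varprojlim \Omega/\Omega_r$, combinée à la propriété quasi-aléatoire, donne alors le trou spectral voulu comme dans la démonstration du théorème~\ref{mesures} esquissée plus haut.

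Le principal obstacle est précisément la gestion simultanée des différents facteurs premiers: un sous-groupe approximatif d'un produit $\prod_{p\mid r} G(\F_p)$ n'est pas en général un produit de sous-groupes approximatifs, car il peut prendre la forme de \enquote{graphes diagonaux} induits par des isomorphismes entre facteurs de même type. La contribution essentielle de Salehi Golsefidy et Varjú \cite{sgv} est précisément de surmonter cette difficulté par une récurrence délicate sur le nombre de facteurs premiers de $r$, exploitant la Zariski-densité de $\Gamma$ pour exclure que $\mu^{*n}$ se concentre sur une telle diagonale entre deux facteurs.
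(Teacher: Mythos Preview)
Le papier ne démontre pas ce théorème: il le cite tel quel de Salehi Golsefidy et Varjú \cite[Theorem~1]{sgv} et l'utilise comme boîte noire. Il n'y a donc pas de démonstration dans le papier à laquelle comparer ta proposition.

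Ton esquisse décrit correctement les grandes lignes de l'approche de \cite{sgv} (stratégie de Bourgain-Gamburd, théorème de produit de Pyber-Szab\'o / Breuillard-Green-Tao, et surtout la récurrence sur le nombre de facteurs premiers pour gérer les sous-groupes diagonaux). Tu identifies d'ailleurs honnêtement que le point dur est précisément celui résolu par \cite{sgv}, si bien que ta \enquote{démonstration} se réduit in fine à invoquer ce même article --- ce qui est exactement ce que fait le papier.

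Une remarque de fond cependant: ta formulation de la propriété quasi-aléatoire est incorrecte. La dimension minimale d'une représentation irréductible \emph{non triviale} de $\Omega/\Omega_r \simeq \prod_{p\mid r} G(\F_p)$ n'est \emph{pas} minorée par $r^\kappa$: une telle représentation peut être triviale sur tous les facteurs sauf un, et sa dimension n'est alors minorée que par $p^\kappa$ pour le plus petit $p\mid r$. La bonne formulation, comme dans la proposition~\ref{qr}, est: pour toute représentation irréductible $\rho$, il existe $q$ (le niveau de $\ker\rho$) tel que $\dim V_\rho \geq \kappa q^\kappa$. C'est ce $q$, et non $r$, qui intervient dans l'argument de Parseval. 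Cette distinction n'est pas cosmétique: elle explique pourquoi l'aplanissement doit être démontré \emph{pour tout} $r$ sans facteur carré, et non seulement pour le $r$ ambiant.
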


Cette propriété d'expansion nous sera utile sous la forme de la proposition suivante, que nous associerons ensuite aux résultats de la partie~\ref{sec:torus} pour obtenir le théorème de trou spectral annoncé dans l'introduction.
Pour pouvoir utiliser les résultats de l'approximation forte rappelés dans l'appendice~\ref{sec:prelim}, nous supposerons toujours dans la suite que l'adhérence de Zariski de $\Gamma$ est simple, connexe et simplement connexe.

\begin{proposition}
\label{sqf}
Soit $\mu$ une probabilité sur $\SL_d(\Z)$ et $\Gamma$ le semi-groupe engendré par $\mu$.
On suppose que l'adhérence de Zariski de $\Gamma$ est simple, connexe et simplement connexe.
\wknote{$C$ ne dépend pas de $\eps_0$.}
Étant donné $\eps_0>0$, il existe $\eps>0$ et $C\geq 0$ tels que l'énoncé suivant soit vérifié pour tout entier sans facteur carré $r$ suffisamment grand et tout $q\geq r$.
Soit $A\subset\Gamma$ tel que $\mu^{*n}(A)\geq q^{-\eps}$, où $n\geq C\log q$.
Il existe un entier $r'|r$ tel que $r' \geq q^{-\eps_0}r$ et
\[
\pi_{r'}(\Pi_C A) =  \Omega/\Omega_{r'}.
\]
\end{proposition}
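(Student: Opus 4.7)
The plan is to combine Theorem~\ref{thm:sgv} with the product theorem of Pyber--Szabó and Breuillard--Green--Tao in finite almost-simple groups of Lie type, in four steps. First, by the uniform spectral gap furnished by Theorem~\ref{thm:sgv} together with the symmetry of $\mu$ and a standard Cauchy--Schwarz estimate, one obtains
\[
\pi_r(\mu^{*2n})(x) \leq \pi_r(\mu^{*2n})(e) = \norm{\pi_r(\mu^{*n})}_2^2 \leq \frac{2}{N(\Omega,r)}
\]
for every $x \in \Omega/\Omega_r$, every squarefree $r$ and every $n \geq C_1 \log r$ with $C_1$ depending only on the spectral gap and on $d$. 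Given $A$ with $\mu^{*n}(A) \geq q^{-\eps}$ and $n \geq C\log q \geq C_1 \log r$, this immediately yields the density estimate $N(A, r) \geq (q^{-\eps}/2)\, N(\Omega, r)$.

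Next, I construct $r'$ by discarding the \emph{mauvais} primes of $r$. Setting $\beta_p = N(A,p)/N(\Omega,p)$, the embedding $\Omega/\Omega_r \hookrightarrow \prod_{p|r}\Omega/\Omega_p$ and Step~1 give $\prod_p \beta_p \geq q^{-\eps}/2$, hence $\sum_{p|r} \log(1/\beta_p) \leq \eps \log q + O(1)$. Calling $p$ mauvais when $\beta_p \leq p^{-1/2}$ forces $\prod_{p\text{ mauvais}} p \leq q^{2\eps+o(1)}$, so for $\eps \leq \eps_0/3$ the divisor $r' := r/\prod_{p\text{ mauvais}} p$ is a divisor of $r$ with $r' \geq q^{-\eps_0} r$.

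For each good prime $p \mid r'$, strong approximation (appendice~\ref{sec:prelim}, following Matthews--Vaserstein--Weisfeiler and Nori) identifies $\Omega/\Omega_p$ for $p$ large with an almost-simple group $G(\F_p)$ of Lie type of rank $\leq d$. Since $\pi_p(A)$ has density at least $p^{-1/2}$ and every proper subgroup of $G(\F_p)$ has index $\geq p^c$ by Landazuri--Seitz, $\pi_p(A)$ cannot be contained in a proper subgroup. A bounded iteration of the Pyber--Szabó / Breuillard--Green--Tao product theorem then yields $\pi_p(\Pi_{C_0}A) = \Omega/\Omega_p$ for a constant $C_0 = C_0(d)$.

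The main obstacle is the last step: passing from factor-wise surjectivity to surjectivity on the product $\Omega/\Omega_{r'} \simeq \prod_{p \mid r'}\Omega/\Omega_p$. Applying Step~1 to $B = \Pi_{C_0} A$ and to the measure $\mu^{*nC_0}$, which satisfies $\mu^{*nC_0}(B) \geq q^{-C_0\eps}$, one gets the density $N(B, r') \geq (q^{-C_0\eps}/2)\, N(\Omega, r')$. Combining this density with a product-set refinement of Pyber--Szabó in products of almost-simple groups of Lie type, uniform in the level---essentially the combinatorial content of \cite{sgv}---a bounded number of further products yields $\pi_{r'}(\Pi_C A) = \Omega/\Omega_{r'}$ for some $C = C(d, \eps_0)$. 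A direct appeal to Gowers' quasirandomness theorem would suffice only if all prime divisors of $r'$ were at least $q^{O(\eps)}$, a condition that genuinely fails for squarefree $r$ with many small prime factors; hence the need for the uniform approximate-subgroup control developed by Salehi Golsefidy and Varjú.
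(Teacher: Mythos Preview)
Your first step is correct and is exactly the paper's Lemma~\ref{lm:pirA}. The genuine gap is Step~4, and you identify it yourself: passing from factor-wise surjectivity $\pi_p(\Pi_{C_0}A)=\Omega/\Omega_p$ for each $p\mid r'$ to surjectivity of $\pi_{r'}$ on the product is the whole difficulty, and your appeal to ``a product-set refinement of Pyber--Szabó in products of almost-simple groups of Lie type, uniform in the level---essentially the combinatorial content of \cite{sgv}'' is not a proof. No such black-box is stated in \cite{sgv} in the form you need, and the density bound $N(B,r')\geq (q^{-C_0\eps}/2)N(\Omega,r')$ together with $\pi_p(B)=\Omega/\Omega_p$ for each $p$ does not by itself force $\pi_{r'}(\Pi_C B)=\Omega/\Omega_{r'}$: the image of $B$ in $\prod_p \Omega/\Omega_p$ could still be a thin graph over the factors, and no bounded number of products will fill up the product when there are unboundedly many factors. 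Gowers' trick fails here exactly for the reason you state, but you do not supply a substitute.

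The paper's device for this step, which your proposal is missing, is the regularisation procedure from \cite[Lemma~5.2]{bgs}. One passes to a subset $A'\subset\pi_r(A)$ for which, for every $i$ and every $g\in A'$, the fibre of $A'$ in $\Omega/\Omega_{p_i}$ above $\pi_{p_1\cdots p_{i-1}}(g)$ has a \emph{fixed} cardinality $K_i$, at the cost of a factor $\prod_i 2\log[\Omega:\Omega_{p_i}]$ in density. One then declares $p_i$ good when $K_i>p_i^{-\kappa/3}[\Omega:\Omega_{p_i}]$, with $\kappa$ the quasi-random exponent, and sets $r'=\prod_{i\in I}p_i$. The point of controlling the \emph{conditional} fibre sizes (rather than the marginals $\beta_p$ as you do) is that one can now run an induction on the prime factors of $r'$: assuming $\pi_{r'_{i-1}}(\Pi_3 A')=\Omega/\Omega_{r'_{i-1}}$, for any target $h$ one finds $g_1,g_2,g_3\in A'$ with $\pi_{r'_{i-1}}(g_1g_2g_3)=h$, and the three fibres $B_j=\{g\in A': \pi_{r'_{i-1}}(g)=\pi_{r'_{i-1}}(g_j)\}$ each project to sets of size $K_i$ in $\Omega/\Omega_{p_i}$, so Gowers' lemma gives $\pi_{p_i}(B_1B_2B_3)=\Omega/\Omega_{p_i}$. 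This is what lifts surjectivity from level $r'_{i-1}$ to level $r'_i$. Your marginal densities $\beta_p$ give no control on these conditional fibres, which is why your induction cannot start. Note also that this argument uses only Gowers quasirandomness, not the Pyber--Szabó or Breuillard--Green--Tao product theorems; your Step~3 is unnecessary, and in any case those theorems require the set to be symmetric and generating, neither of which is assumed of $A$ here.
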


Nous reprenons essentiellement la démonstration de Bourgain et Varjú \cite[Proposition~3]{bv}.
Commençons par un lemme qui est une conséquence facile de la propriété du trou spectral.

\begin{lemma}
\label{lm:pirA}
Sous les hypothèses de la proposition~\ref{sqf}, il existe une constante $C$ telle que pour tout $n\geq C\log r$ et toute partie $A$ satisfaisant $\mu^{*n}(A) \geq q^{-\eps}$, on a
\[
\abs{\pi_r(A)} \gg q^{-\eps} [\Omega:\Omega_r].
\]
\end{lemma}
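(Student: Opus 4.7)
Le plan consiste à exploiter directement le théorème~\ref{thm:sgv}, qui fournit un trou spectral uniforme pour la famille des quotients finis $\Omega/\Omega_r$ ($r$ sans facteur carré), puis à traduire cette équidistribution en une borne inférieure sur le cardinal de $\pi_r(A)$ au moyen de l'inégalité de Cauchy-Schwarz.

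Plus précisément, je poserais $N = [\Omega:\Omega_r]$ et noterais $u_r$ la probabilité uniforme sur le groupe fini $\Omega/\Omega_r$, puis $f_n$ la densité par rapport à $u_r$ de la mesure poussée $\pi_r(\mu^{*n})$. Le théorème~\ref{thm:sgv} assure l'existence d'une constante $c>0$, indépendante de $r$, telle que la restriction de l'opérateur de convolution $T_\mu$ à $L^2_0(\Omega/\Omega_r, u_r)$ soit de norme d'opérateur au plus $1-c$. Comme $\int f_n \, \dd u_r = 1$, la fonction $f_n - 1$ appartient à $L^2_0$, et puisque $f_0 = N\mathbbm{1}_{\{e\}}$ vérifie $\norm{f_0 - 1}_{L^2(u_r)} = \sqrt{N-1}$, on en déduit
\[
\norm{f_n}_{L^2(u_r)} \leq 1 + (1-c)^n \sqrt{N}.
\]
La majoration élémentaire $N \leq \abs{\SL_d(\Z/r\Z)} \leq r^{d^2}$ permet alors de choisir $C$ assez grand pour que, dès que $n \geq C\log r$, le second terme ci-dessus soit $\leq 1$, garantissant ainsi $\norm{f_n}_{L^2(u_r)} \leq 2$.

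Il ne resterait plus qu'à appliquer l'inégalité de Cauchy-Schwarz, via la double majoration $\mu^{*n}(A) \leq \pi_r(\mu^{*n})(\pi_r A) = \int_{\pi_r A} f_n \, \dd u_r$:
\[
q^{-\eps} \leq \mu^{*n}(A) \leq u_r(\pi_r A)^{1/2}\, \norm{f_n}_{L^2(u_r)} \leq 2\sqrt{\abs{\pi_r A}/N},
\]
d'où $\abs{\pi_r A} \geq \tfrac{1}{4} q^{-2\eps} N$, ce qui est la conclusion recherchée quitte à remplacer $\eps$ par $\eps/2$ dans l'énoncé ($\eps$ étant un paramètre libre de la proposition~\ref{sqf}). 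L'argument est essentiellement routinier et ne présente pas de véritable obstacle; la seule vérification à ne pas négliger est l'uniformité en $r$ du trou spectral, laquelle découle immédiatement du théorème~\ref{thm:sgv} par restriction depuis la limite projective au quotient $\Omega/\Omega_r$.
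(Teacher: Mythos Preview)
Your argument is correct and follows essentially the same route as the paper: both deduce from the uniform spectral gap of théorème~\ref{thm:sgv} an $L^2$-bound on the deviation of ${\pi_r}_*\mu^{*n}$ from the uniform distribution, then convert this into a lower bound on $\abs{\pi_r(A)}$. The only variation is in the final step: the paper passes from the $L^2$-norm (counting measure) to the $L^\infty$-norm and bounds ${\pi_r}_*\mu^{*n}(\pi_r A)$ pointwise, obtaining $\abs{\pi_r(A)} \gg q^{-\eps}[\Omega:\Omega_r]$ directly, whereas your Cauchy--Schwarz step loses a square and yields $q^{-2\eps}$. Your fix of halving $\eps$ is harmless for the application in proposition~\ref{sqf}, but if you want the lemma exactly as stated you can simply replace Cauchy--Schwarz by the pointwise bound $\bigl|{\pi_r}_*\mu^{*n}(x)-\tfrac{1}{N}\bigr|\leq e^{-cn}$ for each $x$, which follows from $\norm{\cdot}_\infty\leq\norm{\cdot}_2$ in counting-measure $\ell^2$.
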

\begin{proof}
D'après le théorème~\ref{thm:sgv} et la caratérisation du trou spectral~\ref{lm:vectpresqueinv}, il existe $c>0$ tel que
\[\biggl\lVert{{\pi_r}_*(\mu^{*n}) - \frac{\mathbf{1}_{\Omega/\Omega_r}}{[\Omega : \Omega_r]}}\biggr\rVert_\infty \leq \biggl\lVert{{\pi_r}_*(\mu^{*n}) - \frac{\mathbf{1}_{\Omega/\Omega_r}}{[\Omega : \Omega_r]}}\biggr\rVert_2 \leq e^{-cn}.\]
Le lemme est alors immédiat avec $C = 1/c$.
\end{proof}

Pour la démonstration de la proposition~\ref{sqf}, nous aurons aussi besoin d'un résultat de Gowers \cite{Gowers}. 
Étant donné un groupe fini $H$, nous notons $m(H)$ le degré minimal d'une représentation linéaire complexe non triviale.

\begin{lemma}\label{lm:GowersQR}
Soient $A_1,A_2,A_3$ trois parties d'un groupe fini $H$. Si $\abs{A_1}\abs{A_2}\abs{A_3} > \frac{\abs{H}^3}{m(H)}$, alors $A_1A_2A_3 = H$.
\end{lemma}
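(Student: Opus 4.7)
Le plan est d'appliquer l'argument de Fourier classique sur le groupe fini $H$. Posons $f_i = \mathbbm{1}_{A_i}$ et considérons la convolution triple $F = f_1 * f_2 * f_3$: comme $F(g)$ dénombre les triplets $(a_1,a_2,a_3) \in A_1 \times A_2 \times A_3$ vérifiant $a_1 a_2 a_3 = g$, l'égalité $A_1 A_2 A_3 = H$ équivaut à $F(g) > 0$ pour tout $g \in H$. En notant $\hat f(\rho) = \sum_{h\in H} f(h)\rho(h)$ pour $\rho \in \hat H$, la formule d'inversion de Peter-Weyl fournit
\[
F(g) = \frac{1}{\abs{H}} \sum_{\rho \in \hat H} (\dim V_\rho) \Tr\bigl( \rho(g^{-1}) \hat f_1(\rho)\hat f_2(\rho)\hat f_3(\rho) \bigr),
\]
où la représentation triviale contribue à hauteur de $\abs{A_1}\abs{A_2}\abs{A_3}/\abs{H}$; il restera à montrer que la somme sur $\rho \neq 1$ est strictement dominée par ce terme principal sous l'hypothèse.

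Pour majorer les termes non triviaux, on combine $\abs{\Tr(AB)} \leq \normHS{A}\normHS{B}$, la sous-multiplicativité $\normHS{BC} \leq \normop{B}\normHS{C} \leq \normHS{B}\normHS{C}$, et l'unitarité de $\rho(g^{-1})$. L'ingrédient clé est la minoration $\dim V_\rho \geq m(H)$ pour $\rho \neq 1$, qui, jointe à l'égalité de Plancherel
\[
\sum_{\rho \in \hat H} (\dim V_\rho) \normHS{\hat f_i(\rho)}^2 = \abs{H}\abs{A_i},
\]
donne la majoration uniforme $\normHS{\hat f_3(\rho)} \leq \sqrt{\abs{H}\abs{A_3}/m(H)}$. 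On extrait alors ce facteur en maximum sur $\rho \neq 1$, puis on applique une inégalité de Cauchy-Schwarz en $\rho$ à la somme restante $\sum_{\rho} (\dim V_\rho) \normHS{\hat f_1(\rho)}\normHS{\hat f_2(\rho)}$, à nouveau contrôlée par Plancherel par $\abs{H}\sqrt{\abs{A_1}\abs{A_2}}$. Un calcul direct aboutit à
\[
\Abs{F(g) - \frac{\abs{A_1}\abs{A_2}\abs{A_3}}{\abs{H}}} \leq \sqrt{\frac{\abs{H}\abs{A_1}\abs{A_2}\abs{A_3}}{m(H)}},
\]
borne strictement inférieure au terme principal précisément lorsque $\abs{A_1}\abs{A_2}\abs{A_3} > \abs{H}^3/m(H)$. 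On en déduit $F(g) > 0$ pour tout $g \in H$, d'où $A_1 A_2 A_3 = H$.

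Cette démonstration ne présente pas de véritable obstacle: tout repose sur la combinaison équilibrée de Cauchy-Schwarz et de Plancherel, autorisée par la borne spectrale $\dim V_\rho \geq m(H)$. Le seul choix technique est la répartition des trois indices --- l'un traité par maximum, les deux autres par Plancherel --- mais celle-ci est imposée par l'obtention de la constante optimale $\abs{H}^3/m(H)$ figurant dans l'hypothèse.
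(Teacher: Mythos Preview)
Your argument is correct and is precisely the Plancherel/Fourier argument the paper has in mind: the paper does not give its own proof of this lemma but admits it, stating that \enquote{sa démonstration est une simple application de la formule de Plancherel pour le groupe fini $H$} and referring to Gowers~\cite[Theorem~3.3]{Gowers}. Your write-up supplies exactly those details.
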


Nous admettons ce lemme.
Sa démonstration est une simple application de la formule de Plancherel pour le groupe fini $H$; le lecteur est renvoyé à Gowers~\cite[Theorem~3.3]{Gowers} 
pour plus de détails.

\begin{proof}[Démonstration de la proposition~\ref{sqf}]
Écrivons $r=\prod_{i=1}^m p_i$.
Par le procédé de régularisation expliqué dans Bourgain-Gamburd-Sarnak~\cite[Lemma 5.2]{bgs},
on peut construire une partie $A'\subset \pi_r(A)$ et des constantes $K_i$, $i=1,\dots,m$ telles que pour chaque $i$, pour tout $g\in A'$,
\[
\abs{\{\, \pi_{p_i}(h) \in \Omega/\Omega_{p_i} \mid h \in A' : \pi_{p_1 \dotsm p_{i-1}}(h) = \pi_{p_1\dotsm p_{i-1}}(g) \,\}} = K_i
\]
et
\[
\abs{A'} = \prod_{i=1}^m K_i \geq (\prod_{i=1}^m 2\log[\Omega:\Omega_{p_i}])^{-1} \abs{\pi_r(A)}.
\]
\comm{
Cela se fait par induction rétrograde.
Tout d'abord, par le principe des tiroirs, on choisit $1\leq K_m\leq[\Omega:\Omega_{p_m}]$ et $A_m\subset\pi_r(A)$ tel que $\abs{A_m}\geq\frac{\abs{\pi_r(A)}}{\log[\Omega:\Omega_{p_m}]}$ en restreignant les $m-1$ premières coordonnées de sorte que pour tout $g=(g_1,\dots,g_m)$ dans $A_m$
\[
K_m \leq \abs{\{x\in \Omega/\Omega_{p_m} \ |\ (g_1,\dots,g_{m-1},x)\in \pi_r(A)\}}
\leq 2 K_m.
\]
Notons que par construction, si $g=(g_1,\dots,g_m)\in A$, alors la condition $(g_1,\dots,g_{m-1},x)\in A$ est équivalente à $(g_1,\dots,g_{m-1},x)\in A_m$, si bien que
\[
K_m \leq \abs{\{x\in \Omega/\Omega_{p_m} \ |\ (g_1,\dots,g_{m-1},x)\in A_m\}}
\leq 2 K_m.
\]
Donc on peut choisir une sous-partie $A'_m\subset A_m$ avec $\abs{A_m'}\geq\frac{\abs{A_m}}{2}$ pour que toutes les fibres de la dernière coordonnée aient le même cardinal:
pour tout $g=(g_1,\dots,g_m)\in A_m'$,
\[
\abs{\{x\in \Omega/\Omega_{p_m} \ |\ (g_1,\dots,g_{m-1},x)\in A_m'\}} = K_m.
\]
De même, on choisit $K_{m-1}$ et $A_{m-1}\subset A_m'$ tel que $\abs{A_{m-1}}\geq (\log[\Omega:\Omega_{p_{m-1}}])^{-1}\abs{A_m'}$ et pour tout $g=(g_1,\dots,g_m)$ dans $A_{m-1}$,
\[
K_{m-1} \leq \abs{\{x\in \Omega/\Omega_{p_{m-1}} \ |\ (g_1,\dots,g_{m-2},x)\in\pi_{p_1\dots p_{m-1}}(A_m') \}}
\leq 2 K_{m-1},
\]
puis $A_{m-1}'\subset A_{m-1}$ tel que pour tout $g=(g_1,\dots,g_m)\in A_{m-1}'$,
\[
\abs{\{x\in \Omega/\Omega_{p_{m-1}} \ |\ (g_1,\dots,g_{m-2},x)\in \pi_{p_1\dots p_{m-1}}(A_{m-1}')\}} = K_{m-1}.
\]
Après $m$ étapes, ce procédé nous permet d'obtenir un ensemble $A'=A_1'$ qui satisfait les conditions requises.
}
Au vu du lemme~\ref{lm:pirA} et du lemme~\ref{chinois2}, si $r$ est assez grand,
\[
\prod_{i=1}^m K_i \geq r^{-\eps}q^{-\eps} {[\Omega:\Omega_r]}
\geq q^{-3\eps} \prod_{i=1}^m {[\Omega:\Omega_{p_i}]}.
\]
La proposition~\ref{qr} donne l'existence de $\kappa > 0$ tel que pour tout nombre premier $p$ suffisamment grand,
$m(\Omega/\Omega_p) \geq p^\kappa$.
Posons
\[
I = \bigl\{\,i \in \{1,\dotsc, m\} \mid K_i > p_i^{-\kappa/3}{[\Omega:\Omega_{p_i}]} \,\bigr\}
\]
et 
\[
r' = \prod_{i \in I} p_i.
\]
On a alors
\[
q^{-3\eps} \prod_{i=1}^m [\Omega:\Omega_{p_i}]
\leq \prod_{i=1}^m K_i 
\leq \Bigl(\frac{r}{r'}\Bigr)^{-\kappa/3}\prod_{i=1}^m [\Omega:\Omega_{p_i}]
\]
et donc, pourvu que $\eps<\kappa\eps_0/9$,
\[
\frac{r}{r'} \leq q^{\frac{9\eps}{\kappa}} \leq q^{\eps_0}.
\]

Pour $i\in\{0,\dots,m-1\}$, notons $r'_i = \prod_{j \in \{1,\dotsc,i\}\cap I} p_j$ et montrons par récurrence sur $i$ que
\begin{equation}
\label{eq:pirA3}
\pi_{r'_i}(\Pi_3 A') = \Omega/\Omega_{r'_i}.
\end{equation}
Le résultat est trivial pour $i=0$.
Supposons donc le résultat connu pour $i - 1 \in \{0,\dotsc, m-1\}$.
Si $i \notin I$, il n'y a rien à démontrer, car $r'_i = r'_{i-1}$.
Sinon $r'_i = r'_{i-1} p_i$ et on dispose d'un morphisme injectif
\[(\pi_{r'_{i-1}},\pi_{p_i}) \colon \Omega/\Omega_{r'_i} \to \Omega/\Omega_{r'_{i-1}} \times \Omega/\Omega_{p_i}.\]
Il suffit de montrer que pour tout $h \in \Omega/\Omega_{r'_{i-1}}$,
\begin{equation}
\label{eq:hpiA3}
\pi_{p_i}\bigl(\pi_{r'_{i-1}}^{-1}(\{h\})\cap \Pi_3 A'\bigr) = \Omega/\Omega_{p_i}.
\end{equation}
Or, par hypothèse de récurrence il existe $g_1,g_2,g_3 \in A'$ tel que $h = \pi_{r'_{i-1}}(g_1g_2g_3)$.
Par construction de $A'$, les trois parties
\[B_j = \{\,g \in A' \mid \pi_{r'_{i-1}}(g) = \pi_{r'_{i-1}}(g_j)\,\}, \quad j = 1,2,3,\] 
vérifient $\abs{\pi_{p_i}(B_j)} \geq K_i$, et donc, vu le choix de $I$,
\[
\abs{\pi_{p_i}(B_1)}\abs{\pi_{p_i}(B_2)}\abs{\pi_{p_i}(B_3)} > \frac{[\Omega:\Omega_{p_i}]^3}{m(\Omega/\Omega_{p_i})}.
\]
D'après le lemme~\ref{lm:GowersQR}, cela implique $\pi_{p_i}(B_1 B_2 B_3) = \Omega/\Omega_{p_i}$.
Comme $\pi_{r'_{i-1}}(B_1B_2B_3) = h$, cela termine la démonstration de \eqref{eq:hpiA3} et ainsi celle de \eqref{eq:pirA3}.
\end{proof}

\subsection{Points dans $\Omega_p/\Omega_{p^2}$}

Nous réunissons ici plusieurs lemmes qui nous serviront plus tard à construire des éléments non triviaux dans les quotients $\Omega_p/\Omega_{p^2}$.
Dans tout le paragraphe, $\Gamma$ désigne un sous-groupe de $\SL_d(\Z)$ dont l'adhérence de Zariski $G$ dans $\SL_d$ est simple, connexe, et simplement connexe. 
On note encore $\Omega$ l'adhérence de $\Gamma$ dans le groupe profini $\SL_d(\ZZ)$, et pour tout $q\in\N^*$,
\[
\Omega_q=\{g\in\Omega\ |\ g\equiv 1\mod q\}.
\]
Enfin, la projection modulo $q$ sera notée $\pi_q\colon\Omega\to\Omega/\Omega_q$.

\begin{lemma}
\label{lm:passcindee}
Pour tout nombre premier $p$ suffisamment grand, la suite exacte
\[1 \to \Omega_p/\Omega_{p^2} \to \Omega/\Omega_{p^2} \to \Omega/\Omega_{p} \to 1\]
n'est pas scindée.
\end{lemma}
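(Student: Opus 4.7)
The plan is to argue by contradiction: assume there exists a section $s \colon \Omega/\Omega_p \to \Omega/\Omega_{p^2}$ and produce an element of order $p$ in $\Omega/\Omega_p$ whose lifts to $\Omega/\Omega_{p^2}$ all have order $p^2$, which is incompatible with the existence of $s$.

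First, I would use strong approximation (Appendix~\ref{sec:prelim}, i.e.\ the results of Matthews--Vaserstein--Weisfeiler and Nori) to identify, for $p$ large enough, $\Omega/\Omega_p$ with $G(\F_p)$ and $\Omega/\Omega_{p^2}$ with $G(\Z/p^2\Z)$, so the sequence becomes
\[
1 \to \g(\F_p) \to G(\Z/p^2\Z) \to G(\F_p) \to 1,
\]
the kernel being identified with $\g(\F_p)$ via $X \mapsto I + pX$. Since $G$ is simple, connected and of positive rank, we can fix a root vector $X_\alpha \in \g(\Z)$ in a Chevalley-type basis, nilpotent of bounded degree $e$ independent of $p$. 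Assuming $p > e$, the exponential series of Appendix~B gives a well-defined element $u = \exp(X_\alpha) \in G(\F_p)$, which is non-trivial unipotent of order $p$. A splitting $s$ would provide a lift $s(u) \in G(\Z/p^2\Z)$ of the same order $p$.

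The heart of the argument is to show that no lift $\tilde u \in G(\Z/p^2\Z)$ of $u$ has order $p$. Using the logarithm modulo $p^2$ (Appendix~B), write $\tilde u = \exp(\tilde Z)$ with $\tilde Z \in \g(\Z/p^2\Z)$ satisfying $\tilde Z \equiv X_\alpha \pmod p$; any other lift has the form $\exp(\tilde Z + pY)$ for some $Y \in \g(\Z/p\Z)$. Then
\[
\bigl(\exp(\tilde Z + pY)\bigr)^p = \exp\bigl(p\tilde Z + p^2 Y\bigr) = \exp(p\tilde Z),
\]
and expanding the series, the terms $p^k \tilde Z^k / k!$ for $k \geq 2$ vanish modulo $p^2$ provided $p$ is large enough to invert the factorials and to ensure $\tilde Z^k$ for $k \geq e$ is divisible by $p$. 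Hence $\tilde u^p \equiv I + p\tilde Z \pmod{p^2}$, and since $\tilde Z \equiv X_\alpha \not\equiv 0 \pmod p$, we get $\tilde u^p \neq I$, contradicting the existence of a section.

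The main obstacle, and really the only technical part, is controlling $\exp$ and $\log$ modulo $p^2$ uniformly in a way that shows every lift of $u$ has the same non-trivial $p$-th power. This relies on the standard estimates $v_p(k!) < k/(p-1)$, which force $p^k/k! \equiv 0 \pmod{p^2}$ for $2 \le k \le e$ once $p$ is sufficiently large relative to $e$, and on the nilpotency of $\tilde Z$ over $\Z/p^2\Z$ inherited from the Chevalley basis; both are exactly what Appendix~B provides. Everything else (existence of $X_\alpha$, identification of the sequence, extraction of the contradiction from a splitting) is elementary and uses only the simplicity and simple connectedness of $G$.
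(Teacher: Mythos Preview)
Your argument is essentially correct and follows the same outline as the paper: exhibit an element of order $p$ in $\Omega/\Omega_p$ none of whose lifts to $\Omega/\Omega_{p^2}$ has order $p$. The difference lies only in the implementation of this last step. The paper takes an arbitrary $g$ with $\pi_p(g)$ of order $p$, writes $g = 1 + u$ with $u$ nilpotent modulo $p$, and computes $g^p$ directly via the binomial formula; a short algebraic trick with the minimal $k$ such that $u^k \equiv 0 \pmod{p^2}$ then shows $g^p \not\equiv 1 \pmod{p^2}$. Your approach instead fixes a specific nilpotent $X_\alpha \in \g(\Z)$, passes through $\exp$/$\log$, and uses the formal identity $\exp(W)^p = \exp(pW)$. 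The paper's route is shorter and needs nothing beyond $p > 2d$ and matrix arithmetic; yours is more structural but imports the existence of a Chevalley-type root vector and the exponential formalism.

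One genuine wrinkle in your write-up: you invoke ``the logarithm modulo $p^2$ (Appendix~B)'' to produce $\tilde Z$ with $\tilde u = \exp(\tilde Z)$, but Appendix~B only defines $\log$ on elements $\equiv 1 \pmod{p}$, and your $\tilde u$ is not of that form. What you actually need is the polynomial (truncated) logarithm for unipotent matrices, which is well-defined once $p$ exceeds the nilpotency degree of $\tilde u - I$ in $\gl_d(\Z/p^2\Z)$; since $\tilde u \equiv u \pmod p$ and $u$ is unipotent, $(\tilde u - I)^{2e} \equiv 0 \pmod{p^2}$, so this is fine for $p > 2e$. With that correction, your parametrisation of all lifts as $\exp(\tilde Z + pY)$ and the computation $\exp(\tilde Z + pY)^p = \exp(p\tilde Z) \equiv I + p\tilde Z \not\equiv I \pmod{p^2}$ are valid.
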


\begin{proof}
Supposons $p > 2d$ et montrons que pour tout $g \in \Omega$, si $\pi_p(g)$ est d'ordre $p$ dans $\Omega/\Omega_p$ alors $\pi_{p^2}(g)$ est d'ordre $p^2$ dans $\Omega/\Omega_{p^2}$.

En effet, si $\pi_p(g)$ est d'ordre $p$ alors $g$ s'écrit $g = 1 + u$ avec $u^p \equiv 0 \mod p$.
Soit $k \geq 0$ l'entier minimal tel que $u^k \equiv 0 \mod p^2$.
Comme $1 \leq k \leq d$ et $p > 2d$, $u^p \equiv 0 \mod p^2$.
Par la formule du binôme de Newton,
\[g^p \equiv 1 + p u + {p\choose 2} u^2 + \dotsb + {p\choose k-1} u^{k-1} \mod p^2.\]
Mais
\[\Bigl(u + \frac{1}{p}{p\choose 2} u^2 + \dotsb + \frac{1}{p}{p\choose k-1} u^{k-1}\Bigr)^{k-1} \equiv u^{k-1} \not\equiv 0 \mod p.\]
Donc 
\[u + \frac{1}{p}{p\choose 2} u^2 + \dotsb + \frac{1}{p}{p\choose k-1} u^{k-1} \not\equiv 0 \mod p\]
et $g^p \not\equiv 1 \mod p^2$.
Par la formule du binôme à nouveau, $g^{p^2} \equiv 1 \mod p^2$.
Cela montre que $\pi_{p^2}(g)$ est d'ordre $p^2$ dans $\Omega/\Omega_{p^2}$.

Pour $p$ suffisamment grand, on sait par le lemme~\ref{unipotent} que le groupe $\Omega/\Omega_p$ contient des éléments d'ordre $p$.
On en déduit que $\pi_p \colon \Omega/\Omega_{p^2} \to \Omega/\Omega_{p}$ n'admet pas de section dans la catégorie des groupes. 
\end{proof}

Nous montrons maintenant une version quantitative de l'énoncé: si $\psi$ est une section ensembliste de la suite exacte du lemme~\ref{lm:passcindee}, alors on doit avoir $\psi(xy)\neq\psi(x)\psi(y)$ pour la plupart des couples $(x,y)$ d'éléments de $\Omega/\Omega_p$.

\begin{lemma}
\label{nosec}
Il existe $\sigma > 0$ dépendant seulement de $\Gamma$ tel que l'assertion suivante soit vraie.
Soit $r \in \N^*$ un entier sans facteur carré dont les facteurs premiers sont tous assez grands.
Soit $p$ un facteur premier de $r$.
Si $\psi \colon \Omega/\Omega_r \to \Omega$ est une section ensembliste de $\pi_r$, i.e. $\pi_r \circ \psi= \Id_{\Omega/\Omega_r}$, alors l'événement
\[
\psi(xy) \equiv \psi(x)\psi(y) \mod p^2
\]
est de probabilité au plus $p^{-\sigma}$ lorsque $x$ et $y$ sont deux variables aléatoires uniformes sur $\Omega/\Omega_r$.
\end{lemma}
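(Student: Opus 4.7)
The plan is to argue by contradiction, combining the non-abelian Balog--Szemerédi--Gowers Lemma~\ref{lm:BSG} with the non-splitness of the extension $\Omega/\Omega_{p^2} \to \Omega/\Omega_p$ provided by Lemma~\ref{lm:passcindee}. Set $V = \Omega_p/\Omega_{p^2}$, an $\F_p$-vector space which the exponential modulo $p^2$ identifies with $\g(\F_p)$, on which $\Omega/\Omega_p$ acts by the adjoint representation. For $\tilde\psi = \pi_{p^2} \circ \psi \colon \Omega/\Omega_r \to \Omega/\Omega_{p^2}$, introduce the obstruction cocycle
\[
c(x,y) = \tilde\psi(x)\tilde\psi(y)\tilde\psi(xy)^{-1}.
\]
Since $\psi$ is a section modulo $r$ and hence modulo $p$, one has $\pi_p(\tilde\psi(x)) = \pi_p^r(x)$ so $c(x,y)$ lies in $V$; the event in the statement is exactly $\{c(x,y)=0\}$.

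Assume for contradiction that $\Pr_{x,y}[c(x,y)=0] > p^{-\sigma}$ for some small $\sigma>0$ to be determined. Writing $\nu = \tilde\psi_* m_{\Omega/\Omega_r}$ for the pushforward probability on $\Omega/\Omega_{p^2}$, a direct Cauchy--Schwarz computation on the triangle count yields $\|\nu * \nu\|_2 \geq p^{-O(\sigma)}\|\nu\|_2$. Applying Lemma~\ref{lm:BSG} with $K = p^{O(\sigma)}$ produces a $p^{O(\sigma)}$\dash approximate subgroup $A \subset \Omega/\Omega_{p^2}$ with $|A| = p^{\pm O(\sigma)} \|\nu\|_2^{-2}$ and $\nu(hA) \geq p^{-O(\sigma)}$ for some $h \in \Omega/\Omega_{p^2}$. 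The projection $\bar A = \pi_p(A) \subset \Omega/\Omega_p$ is then a $p^{O(\sigma)}$-approximate subgroup of density at least $p^{-O(\sigma)}$, because $\pi_p \circ \tilde\psi = \pi_p^r$ sends $m_{\Omega/\Omega_r}$ onto the uniform measure on $\Omega/\Omega_p$. Invoking Proposition~\ref{qr} together with a Helfgott--Pyber--Szabó product theorem in the quasi-simple group $\Omega/\Omega_p$, one concludes $\Pi_3\bar A = \Omega/\Omega_p$.

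To derive the contradiction, observe that the vertical intersection $(AA^{-1})\cap V$ is an approximate $\bar A$-invariant subset of $V$; the irreducibility of the adjoint representation of $G(\F_p)$ on $\g(\F_p)$ (for $p$ large), combined with the size constraint $|A| \approx |\Omega/\Omega_p|$, forces this intersection to be essentially trivial. Thus $A$ meets each fibre of $\pi_p$ in essentially one element, yielding an approximate section $s\colon \Omega/\Omega_p \to \Omega/\Omega_{p^2}$ of $\pi_p$ which is multiplicative on a fraction $1-p^{-c\sigma}$ of pairs for some $c>0$. A standard averaging-and-correction argument, exploiting the vanishing of $H^1(\Omega/\Omega_p, V)$ for the irreducible adjoint module, upgrades $s$ into a genuine group-theoretic section, contradicting Lemma~\ref{lm:passcindee} for $\sigma$ chosen small enough.

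The main obstacle is this final upgrade step: promoting an approximately multiplicative section into an actual splitting. It relies on both the irreducibility of the adjoint representation and the appropriate cohomological vanishing, which hold for the simple simply connected $G$ and $p$ sufficiently large. An alternative that bypasses the cohomological input would iterate Balog--Szemerédi--Gowers inside the extension $\Omega/\Omega_{p^2} \to \Omega/\Omega_p$ and analyse the approximate subgroup structure of $A$ directly, following the strategy of Bourgain--Varjú~\cite{bv}.
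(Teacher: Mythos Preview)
Your proposal follows the right broad strategy (contradiction via Balog--Szemerédi--Gowers plus non-splitness), but there are two genuine gaps.

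\textbf{The Cauchy--Schwarz step.} Your passage from $\Pr_{x,y}[c(x,y)=0] > p^{-\sigma}$ to $\lVert \nu*\nu\rVert_2 \geq p^{-O(\sigma)}\lVert\nu\rVert_2$ for the global pushforward $\nu = \tilde\psi_* m_{\Omega/\Omega_r}$ is not a direct computation. The difficulty is that $\lVert\nu\rVert_2$ is uncontrolled: depending on how $\psi$ varies with the $\Omega/\Omega_s$-coordinate (where $r = ps$), the measure $\nu$ can have $L^2$-norm anywhere between $[\Omega:\Omega_{p^2}]^{-1/2}$ and $[\Omega:\Omega_p]^{-1/2}$, and the approximate subgroup $A$ returned by Lemma~\ref{lm:BSG} may then satisfy $|A|\asymp[\Omega:\Omega_{p^2}]$ rather than $|A|\asymp[\Omega:\Omega_p]$, wrecking your size comparison in the endgame. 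The paper fixes this by first using the Chinese remainder isomorphism $\Omega/\Omega_r \simeq \Omega/\Omega_p \times \Omega/\Omega_s$: for each fixed $x_2\in\Omega/\Omega_s$ the slice $\psi_{x_2}\colon \Omega/\Omega_p \to \Omega/\Omega_{p^2}$ is an \emph{injective} section, so $\lVert\nu_{x_2}\rVert_2 = [\Omega:\Omega_p]^{-1/2}$ exactly. Averaging over $(x_2,y_2)$ and choosing one pair with large $\langle\nu_{x_2}*\nu_{y_2},\nu_{x_2y_2}\rangle$ then feeds BSG with the correct norm, guaranteeing $|A|\leq p^{O(\sigma)}[\Omega:\Omega_p]$.

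\textbf{The endgame.} The paper does not attempt to upgrade an approximate section to a genuine one; your route via $H^1(G(\F_p),\g(\F_p))=0$ is neither standard nor uniformly true, and the ``approximate cocycle $\Rightarrow$ coboundary'' step is the whole difficulty, as you yourself flag. The paper's argument runs in the opposite direction and is much shorter. Once $|A|\leq p^{O(\sigma)}[\Omega:\Omega_p]$ and $|\pi_p(A)|\geq p^{-O(\sigma)}[\Omega:\Omega_p]$, Gowers's trick (Lemma~\ref{lm:GowersQR}, using $m(\Omega/\Omega_p)\geq\frac{p-1}{2}$) gives $\pi_p(\Pi_3 A)=\Omega/\Omega_p$; no product theorem is needed. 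Pick any section $\phi\colon\Omega/\Omega_p\to\Pi_3 A$; by Lemma~\ref{lm:passcindee} there exist $x_1,y_1$ with $z:=\phi(x_1y_1)\phi(y_1)^{-1}\phi(x_1)^{-1}\in(\Pi_9 A)\cap V\setminus\{1\}$. Now conjugate: $B=\{aza^{-1}:a\in\Pi_3 A\}\subset(\Pi_{15}A)\cap V$ coincides with the full $\Omega/\Omega_p$-orbit of $z$ under the adjoint action, and since the stabiliser of $z\neq 0$ is a proper subgroup, Corollary~\ref{cr:qr,p} gives $|B|\geq\frac{p-1}{2}$. Hence $|\Pi_{18}A|\geq|B|\cdot|\pi_p(\Pi_3A)|\geq\frac{p-1}{2}[\Omega:\Omega_p]$, contradicting the $p^{O(\sigma)}$-approximate subgroup bound on $|A|$ for $\sigma$ small. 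This orbit-count argument replaces your cohomological step entirely and uses only quasirandomness and the irreducibility of the adjoint action (Lemma~\ref{conjAd}).
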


\begin{proof}
On suppose que tous les facteurs premiers de $r$ sont suffisamment grands, si bien que $\Omega/\Omega_r = \prod_{p | r}\Omega/\Omega_p$ d'après le lemme~\ref{chinois}.
Fixons un facteur premier $p$ de $r$.
Écrivons $r = ps$ et identifions $\Omega/\Omega_r$ à $\Omega/\Omega_p \times \Omega/\Omega_s$.
Pour chaque $x_2 \in \Omega/\Omega_s$, l'application $\psi_{x_2} \colon \Omega/\Omega_p \to \Omega/\Omega_{p^2}$ définie par $\psi_{x_2}(x_1) = \pi_{p^2}\circ \psi(x_1,x_2)$ est une section de la projection $\pi_p \colon \Omega/\Omega_{p^2} \to \Omega/\Omega_p$.
Notons $\nu_{x_2}$ la mesure image de la probabilité uniforme sur $\Omega/\Omega_p$ par l'application $\psi_{x_2}$.

Observons que pour tout $x_2\in \Omega/\Omega_s$
\begin{equation}
\label{eq:normnux2}
\norm{\nu_{x_2}}^2 = {[\Omega:\Omega_p]}^{-\frac{1}{2}}.
\end{equation}
Observons aussi que pour tout couple $(x_2,y_2) \in \Omega/\Omega_s \times \Omega/\Omega_s$,
\begin{align*}
&\card\{\, (x_1,y_1) \in \Omega/\Omega_p \times \Omega/\Omega_p \mid \psi(x_1,x_2) \psi(y_1,y_2) \equiv \psi(x_1y_1,x_2y_2) \mod p^2\,\} \\
=&\card\{\, (x_1,y_1) \in \Omega/\Omega_p \times \Omega/\Omega_p \mid \psi_{x_2}(x_1) \psi_{y_2}(y_1) = \psi_{x_2y_2}(x_1y_1)\,\} \\
=& {[\Omega:\Omega_p]}^3 \bracket{\nu_{x_2} * \nu_{y_2}, \nu_{x_2y_2}}
\end{align*}
de sorte que 
\[ \PP[\psi(xy) \equiv \psi(x)\psi(y) \mod p^2 ] = \frac{[\Omega:\Omega_p]}{{[\Omega:\Omega_s]^2}}\sum_{(x_2,y_2) \in \Omega/\Omega_s \times \Omega/\Omega_s} \bracket{\nu_{x_2} * \nu_{y_2}, \nu_{x_2y_2}}.\]

Supposons par l'absurde que cette probabilité soit supérieure à $p^{-\sigma}$.
Alors il existe  $(x_2,y_2) \in \Omega/\Omega_s \times \Omega/\Omega_s$ tel que
\[p^{-\sigma} \leq {[\Omega:\Omega_p]} \bracket{\nu_{x_2} * \nu_{y_2}, \nu_{x_2y_2}}.\]
Par l'inégalité de Cauchy-Schwarz et \eqref{eq:normnux2},
\[
\norm{\nu_{x_2}*\nu_{y_2}}_2 \geq p^{-\sigma}\norm{\nu_{x_2}}_2^{\frac{1}{2}}\norm{\nu_{y_2}}_2^{\frac{1}{2}}.
\]
Avec le lemme~\ref{lm:BSG}, cela donne un sous-groupe $p^{O(\sigma)}$\dash{}approximatif $A\subset \Omega/\Omega_{p^2}$ et un élément $h \in \Omega/\Omega_{p^2}$ tel que $\nu_{x_2}(hA)\geq p^{-O(\sigma)}$ et 
\begin{equation}
\label{eq:cardAnu}
\abs{A} \leq p^{O(\sigma)} \norm{\nu_{x_2}}_2^{-2} = p^{O(\sigma)}{[\Omega:\Omega_p]}.
\end{equation}
Il s'ensuit que
\[
\frac{\abs{\pi_p(A)}}{[\Omega:\Omega_p]} = ({\pi_p}_*\nu_{x_2})(\pi_p(hA)) \geq \nu_{x_2}(hA) \geq p^{-O(\sigma)}.
\]

Pour $p$ assez grand, $\Omega/\Omega_p = G(\Z/p\Z)$ par le théorème~\ref{thm:Nori}.
Donc, par le théorème~\ref{qr,p}, $m(\Omega/\Omega_p) \geq \frac{p-1}{2}$. 
On peut alors choisir $\sigma$ suffisamment petit pour pouvoir appliquer le lemme~\ref{lm:GowersQR} à $\pi_p(A) \subset \Omega/\Omega_p$.
On obtient
\[\pi_p(\Pi_3 A)= \Omega/\Omega_p.\]
Soit $\phi \colon \Omega/\Omega_p \to \Pi_3 A$ une section de $\pi_p$.
D'après le lemme~\ref{lm:passcindee}, il existe $x_1,y_1 \in \Omega/\Omega_p$ tels que $\phi(x_1y_1) \neq \phi(x_1)\phi(y_1)$, et l'élément
\[
z = \phi(x_1y_1) \phi(y_1)^{-1}\phi(x_1)^{-1} \in \Pi_9 A
\]
vérifie alors
\[
z\in \Omega_p/\Omega_{p^2} \setminus \{1\}.
\]
Considérons l'action de $\Pi_3 A$ sur $\Omega_p/\Omega_{p^2}$ par conjugaison.
D'après le lemme~\ref{conjAd}, l'ensemble
\[B = \{\, aza^{-1} \mid a \in \Pi_3 A\,\}\]
est égal à l'orbite de $z$ sous l'action de $\Omega/\Omega_p$ et de plus $z$ n'est pas un point fixe.
Ainsi, le stabilisateur de $z$ est un sous-groupe propre donc d'indice au moins $\frac{p-1}{2}$ d'après le corollaire~\ref{cr:qr,p}.
On en déduit que $\abs{B} \geq \frac{p-1}{2}$.
En remarquant que $B \subset (\Pi_{15} A) \cap \ker\pi_p$, on obtient
\[\abs{\Pi_{18} A} \geq \abs{ B (\Pi_3 A)} \geq \abs{B}\abs{\pi_p(\Pi_3 A)} \geq \frac{p-1}{2} [\Omega:\Omega_p].\]
Si $\sigma$ est choisi suffisamment petit, cela contredit \eqref{eq:cardAnu} et le fait que $A$ est un sous-groupe $p^{O(\sigma)}$-approximatif.
\end{proof}

\begin{lemma}
\label{nosec2}
Pour tout $\eps > 0$, si $r$ est un entier suffisamment grand sans facteur carré, et si $A$ est une partie de $\Omega$ telle que $\pi_r(A)=\Omega/\Omega_r$, alors il existe $r_0|r$ avec $r_0 \leq r^{\eps}$ et $z\in (\Pi_3 A) \cap \Omega_r$ tel que pour tout facteur premier $p$ de $\frac{r}{r_0}$, $z\not\in\Omega_{p^2}$.
\end{lemma}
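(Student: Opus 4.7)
Puisque $\pi_r(A) = \Omega/\Omega_r$, on choisit une section ensembliste $\psi \colon \Omega/\Omega_r \to A$ de $\pi_r$. Pour chaque couple $(x,y) \in (\Omega/\Omega_r)^2$, posons
\[
z_{x,y} = \psi(x)\,\psi(y)\,\psi\bigl((xy)^{-1}\bigr).
\]
Par construction $z_{x,y} \in \Pi_3 A$, et $\pi_r(z_{x,y}) = xy(xy)^{-1} = 1$, donc $z_{x,y} \in (\Pi_3 A) \cap \Omega_r$. Il s'agit de trouver un couple $(x,y)$ tel que le produit $r_0$ des diviseurs premiers $p$ de $r$ pour lesquels $z_{x,y} \in \Omega_{p^2}$ soit au plus $r^\eps$.

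L'étape clé consiste à établir la variante à deux sections du lemme~\ref{nosec} suivante : si $\psi_1, \psi_2$ sont deux sections ensemblistes de $\pi_r$ et si $p$ est un diviseur premier de $r$ suffisamment grand, alors pour $(x,y)$ uniforme sur $(\Omega/\Omega_r)^2$,
\[
\PP\bigl[\psi_1(x)\psi_1(y) \equiv \psi_2(xy) \mod p^2\bigr] \leq p^{-\sigma},
\]
où $\sigma > 0$ ne dépend que de $\Gamma$. La démonstration suit celle du lemme~\ref{nosec} : via la décomposition chinoise $r=ps$ et $\Omega/\Omega_r \cong \Omega/\Omega_p \times \Omega/\Omega_s$ (lemme~\ref{chinois}), on construit les familles de mesures images $\nu_{x_2}$ (associée à $\psi_1$) et $\nu'_{x_2 y_2}$ (associée à $\psi_2$) sur $\Omega/\Omega_{p^2}$, puis l'on applique successivement Cauchy-Schwarz, les lemmes~\ref{lm:BSG}, \ref{lm:GowersQR} et \ref{lm:passcindee}. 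Le seul changement est que $\nu'_{x_2 y_2}$ remplace $\nu_{x_2 y_2}$ dans le produit scalaire $\bracket{\nu_{x_2}*\nu_{y_2},\,\cdot\,}$ ; comme les deux mesures ont la même norme $L^2$ égale à $[\Omega:\Omega_p]^{-1/2}$, les estimations se transposent mot pour mot.

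On applique cette variante avec $\psi_1 = \psi$ et $\psi_2(u) := \psi(u^{-1})^{-1}$, qui est bien une section de $\pi_r$. L'événement $\{z_{x,y} \in \Omega_{p^2}\}$ s'écrit alors $\{\psi(x)\psi(y) \equiv \psi_2(xy) \mod p^2\}$, et pour tout diviseur premier $p \geq P_0$ de $r$ (où $P_0$ dépend de $\Gamma$),
\[
\PP[z_{x,y} \in \Omega_{p^2}] \leq p^{-\sigma}.
\]
En posant $r_0(x,y) = \prod_{p\mid r,\, z_{x,y}\in\Omega_{p^2}} p$, la linéarité de l'espérance et un découpage au seuil $T = (2/\eps)^{1/\sigma}$ donnent
\[
\E[\log r_0(x,y)] \leq \sum_{p \leq T} \log p + T^{-\sigma}\log r,
\]
où le premier terme est borné par une constante dépendant de $\eps$ et $\Gamma$. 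Pour $r$ assez grand, on a donc $\E[\log r_0(x,y)] \leq \eps \log r$ ; comme $\log r_0 \geq 0$, il existe un couple $(x_0, y_0)$ avec $r_0(x_0, y_0) \leq r^\eps$. On prend alors $z := z_{x_0,y_0}$ et $r_0 := r_0(x_0, y_0)$, qui divise $r$ puisque $r$ est sans facteur carré, et pour tout $p \mid r/r_0$, on a $z \notin \Omega_{p^2}$ par définition de $r_0$.

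L'obstacle principal est la variante à deux sections du lemme~\ref{nosec} : il faut reprendre l'argument Balog-Szemerédi-Gowers avec deux familles distinctes de mesures images. Je m'attends à ce que l'adaptation soit directe, puisque les seules quantités numériques utilisées dans la démonstration originale---les normes $L^2$ des mesures---sont identiques pour $\nu$ et $\nu'$, et que la symétrie $\psi \leftrightarrow \psi^*$ ne joue aucun rôle essentiel dans la construction du sous-groupe approximatif.
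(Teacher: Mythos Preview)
Your argument is correct, but the paper's route is shorter. Instead of $z_{x,y}=\psi(x)\psi(y)\psi((xy)^{-1})$, the paper sets
\[
z=\psi(xy)\,\psi(y)^{-1}\,\psi(x)^{-1},
\]
so that the event $\{z\in\Omega_{p^2}\}$ is \emph{exactly} $\{\psi(xy)\equiv\psi(x)\psi(y)\bmod p^2\}$, and Lemma~\ref{nosec} applies verbatim with the single section $\psi$---no two-section variant is needed. The expectation step is then identical to yours. The cost of this shortcut is that $z\in A\cdot A^{-1}\cdot A^{-1}$, so one needs $A$ symmetric for $z\in\Pi_3A$; the paper uses this implicitly (and it holds in the only place the lemma is invoked). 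Your version, by contrast, lands in $A\cdot A\cdot A$ without any symmetry hypothesis, which is a genuine (if minor) gain in generality, paid for by the detour through the two-section variant. That variant is indeed a mechanical adaptation: after Cauchy--Schwarz, only $\nu_{x_2}$ enters the Balog--Szemer\'edi--Gowers step, so replacing $\nu_{x_2y_2}$ by $\nu'_{x_2y_2}$ changes nothing downstream.
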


\begin{proof}
On peut supposer que tous les facteurs premiers de $r$ sont assez grands pour que le lemme~\ref{nosec} soit valable.
Soient alors $x$ et $y$ deux variables aléatoires indépendantes de loi uniforme sur $\Omega/\Omega_r$, et $\psi \colon \Omega/\Omega_r \to A$ une section de la projection naturelle $\Omega\to\Omega/\Omega_r$.
Par le lemme~\ref{nosec},
\begin{align*}
\E[ \sum_{p|r} (\log p) \mathbbm{1}_{\{\psi(xy)=\psi(x)\psi(y)\mod p^2\}}]
& = \sum_{p|r} (\log p) \PP[\psi(xy)=\psi(x)\psi(y) \mod p^2]\\
& \leq \sum_{p|r} (\log p) p^{-\sigma}\\
& \leq \eps \sum_{p|r} \log p = \eps\log r,
\end{align*}
et il doit exister $x$ et $y$ tels que
$\sum_{p|r} (\log p) \mathbbm{1}_{\{\psi(xy)\equiv\psi(x)\psi(y) \mod p^2\}} \leq \eps\log r$.
L'élément $z=\psi(xy)\psi(y)^{-1}\psi(x)^{-1}$ et l'entier $r_0 = \prod_{p|r : z \in \Omega_{p^2}}p$ vérifient toutes les conditions requises.
\end{proof}

\section{Expansion via la représentation adjointe}
\label{sec:torus}

Outre les résultats de Salehi Golsefidy et Varjú \cite{sgv} sur la propriété du trou spectral modulo un entier sans facteur carré, l'ingrédient principal dans la démonstration de la proposition~\ref{global} est un résultat d'équidistribution quantitative des marches aléatoires linéaires sur le tore.
Il sera utilisé pour démontrer la proposition~\ref{irrep} ci-dessous, qui sera ensuite appliquée dans la représentation adjointe de $G$.

\subsection{Représentations linéaires et répartition modulo~$q$}

Nous considérons une représentation irréductible $V$ définie sur $\Q$ de l'adhérence de Zariski de $\Gamma$, et fixons un réseau rationnel $V(\Z)$ dans $V$ stable par $\Gamma$.
\comm{Cela est possible, d'après Borel \cite[Corollaire~7.13]{Borel_GA}, car par hypothèse $\Gamma\subset G\cap\SL_d(\Z)$ est inclus dans un sous-groupe arithmétique de $G$.}
Pour comprendre la répartition de la marche aléatoire induite sur $V$ modulo un entier arbitraire, nous noterons, pour une partie $X \subset V(\Z)$ et $q\in\N^*$,
\[
N(X,q) = \card \pi_q(X),
\]
où $\pi_q:V(\Z)\to V(\Z)/qV(\Z)$ est la projection naturelle.
Nous utiliserons aussi les notations usuelles de combinatoire additive: pour un entier $C \geq 1$ et une partie $X \subset V(\Z)$,
\[
\textstyle{\sum_C X} = X + \dotsb + X = \{\, v_1+\dotsb+v_C\ ;\ \forall i,\, v_i \in X \,\},
\]
et pour $A \subset\Gamma$ et un vecteur $v \in V$, 
\[
A\cdot v = \{ a\cdot v\ ;\ a\in A\}.
\]
Enfin, pour $q\in\N^*$ et $v\in V(\Z)$ on note $\pgcd(q,v)$ le plus grand diviseur commun entre l'entier $q$ et le vecteur $v$.
(Un entier $d$ est dit diviseur de $v \in V(\Z)$ si $\frac{v}{d} \in V(\Z)$.)

\begin{proposition}[Action sur une représentation irréductible]
\label{irrep}
Soit $\mu$ une probabilité sur $\SL_d(\Z)$, $\Gamma$ le sous-groupe engendré par $\mu$, $G$ l'adhérence de Zariski de $\Gamma$, et $V$ une $\Q$-représentation de $G$.
On suppose que:
\begin{enumerate}
\item $\exists \tau>0:\ \int \norm{g}^\tau \mu(\dd g) < +\infty$;
\item $G$ est connexe pour la topologie de Zariski;
\item l'algèbre engendrée par $G(\R)$ dans $\End(V(\R))$ est égale à $\End(V(\R))$.
\end{enumerate}
On note $V(\Z)$ un réseau de $V(\R)$ stable sous l'action de $\Gamma$.
Il existe une constante $C > 0$ telle que pour tout $\eps>0$ suffisamment petit, l'énoncé suivant soit vérifié pour tout entier $q$ suffisamment grand.\\
Soit $A\subset\Gamma$ un ensemble tel que
\[
\mu^{*n}(A) \geq q^{-\eps}
\quad\mbox{pour un certain}\ 
n\geq C\log q.
\]
Pour tout vecteur non nul $v \in V(\Z)$,
\[
N\bigl(\textstyle{\sum_C A\cdot v},q\bigr) \geq\displaystyle q^{-C\eps}\left(\frac{q}{\pgcd(q,v)}\right)^{\dim V}.
\]
\end{proposition}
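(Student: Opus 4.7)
Le plan naturel est d'appliquer le théorème d'équidistribution quantitative des marches aléatoires linéaires sur le tore, dû à Bourgain-Furman-Lindenstrauss-Mozes \cite{bflm} et généralisé dans \cite{hs_nonprox}, à la loi de $g\cdot v \bmod q$ lorsque $g$ suit $\mu^{*n}$. Les hypothèses (i), (ii) et (iii) de la proposition semblent correspondre précisément aux conditions usuelles de ce type d'énoncé: l'hypothèse (iii) assure l'irréductibilité absolue de la représentation (théorème de Burnside), (ii) garantit la connexité requise pour simplifier la forme du théorème, et (i) fournit le moment exponentiel nécessaire à la convergence à vitesse donnée.

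La première étape consisterait en une réduction au cas primitif $\pgcd(q,v)=1$. En posant $d=\pgcd(q,v)$, $w=v/d$ et $q'=q/d$, on a $\pgcd(q',w)=1$, et la multiplication par $d$ induit une bijection entre $V(\Z)/q'V(\Z)$ et $dV(\Z)/qV(\Z)\subset V(\Z)/qV(\Z)$. Comme $g\cdot v=d(g\cdot w)$ pour tout $g\in\Gamma$, il en résulte $N(\sum_C A\cdot v,q)=N(\sum_C A\cdot w,q')$ et $(q/\pgcd(q,v))^{\dim V}=q'^{\dim V}$. Il suffit donc de démontrer $N(\sum_C A\cdot w,q')\geq q^{-C\eps}\,q'^{\dim V}$.

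Dans ce cas primitif, le point $x_0=w/q'\in V(\R)/V(\Z)$ a pour dénominateur exact $q'$. Le théorème d'équidistribution, appliqué avec $n\geq C\log q\geq C\log q'$ et $C$ assez grand, doit fournir une borne $|\hat{\bar\nu}(\xi)|\leq q'^{-c_0}$ sur les coefficients de Fourier non triviaux de la mesure $\bar\nu=\pi_{q'}(\mu^{*n}\cdot w)$ sur $V(\Z)/q'V(\Z)$, avec $c_0$ aussi grand que souhaité (quitte à augmenter $C$). Par inversion de Fourier, ceci entraîne $\|\bar\nu-m'\|_\infty\leq q'^{-c_0}$ où $m'$ désigne l'équirépartition sur $V(\Z)/q'V(\Z)$. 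En choisissant $c_0>\dim V$, on aurait donc $\bar\nu(g)\leq 2\,q'^{-\dim V}$ pour tout $g$, et la minoration évidente $\bar\nu(\pi_{q'}(A\cdot w))\geq \mu^{*n}(A)\geq q^{-\eps}$ donnerait immédiatement $|\pi_{q'}(A\cdot w)|\geq \tfrac{1}{2}q^{-\eps}\,q'^{\dim V}\geq q^{-C\eps}\,q'^{\dim V}$ pour $q$ assez grand et $C>1$.

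Le principal obstacle me paraît être l'application précise du théorème d'équidistribution de \cite{hs_nonprox}: il faudra en particulier s'assurer que les bornes de Fourier sont uniformes en tous les caractères non triviaux de $V(\Z)/q'V(\Z)$, et non seulement pour les caractères de norme modérée dans $\widehat{V(\R)/V(\Z)}$, et que la constante $C$ qui contrôle la borne $n\geq C\log q$ peut être choisie indépendamment de $\eps$ tout en imposant $c_0>\dim V$. La présence de la somme $\sum_C A\cdot v$ dans la conclusion, plutôt que de $A\cdot v$ seul, ne semble pas cruciale dans le plan esquissé ci-dessus; elle fournit néanmoins une marge technique qui pourrait s'avérer nécessaire si le théorème d'équidistribution disponible impose de passer d'abord par une convolution pour améliorer la qualité de la répartition.
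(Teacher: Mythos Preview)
Your reduction to the primitive case $\pgcd(q,v)=1$ is correct and matches the paper. The gap lies in the Fourier decay step, where you correctly flag both obstacles but underestimate them: they are the heart of the argument, not technicalities.

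First, the bound $|\hat{\bar\nu}(\xi)|\leq q'^{-c_0}$ with $c_0$ arbitrarily large is not available. Applying the theorem of \cite{bflm,hs_nonprox} directly on $V$ only controls coefficients at frequencies of moderate norm. To reach \emph{all} $b\in V(\Z)/q'V(\Z)$ the paper uses the Bourgain--Varj\'u trick (Corollaire~\ref{decay}): pass to $E=\End(V(\R))$ with the two-sided action $\tilde\mu=\frac{1}{2}(L^*(\mu)+R^*(\mu))$ of $G\times G$, and evaluate the Fourier coefficient of the resulting walk at the \emph{identity}. Hypothesis~(iii) is exactly what makes this action irreducible. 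The output is
\[
|\hat\nu(b)|\leq\Bigl(\frac{q'}{\pgcd(q',b)}\Bigr)^{-\tau}
\]
for a \emph{fixed} $\tau>0$ that does not improve by increasing $n$; and the bound degrades for $b$ with large $\pgcd(q',b)$.

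Second, this non-uniformity is why your $L^\infty$ inversion fails: summing $|\hat\nu(b)|$ over $b\neq 0$ produces a term of order $q'^{\dim V-\tau}$, which swamps the main term unless $\tau\geq\dim V$. This is precisely the role of $\ssum_C$: the $C$-fold \emph{additive} convolution $\nu^{(C)}$ raises each Fourier coefficient to the $C$-th power, so choosing $C\geq\dim V/\tau$ gives exponent at least $\dim V$. One then bounds $\norm{\nu^{(C)}}_2^2$ by Parseval (not $L^\infty$ inversion), since $\sum_{r\mid q'}r^{-\dim V}=O(1)$, obtaining $\norm{\nu^{(C)}}_2^2\ll q'^{-\dim V}$. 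The conclusion follows from $\nu^{(C)}(\ssum_C A\cdot v)\geq q^{-C\eps}$ and Cauchy--Schwarz. So the sumset $\ssum_C A\cdot v$ is essential, not a mere ``marge technique''.
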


Pour la démonstration de cette proposition, nous aurons besoin du théorème d'équidistribution quantitative des marches aléatoires linéaires sur le tore, que nous rappelons ci-dessous.
Avec une hypothèse supplémentaire de proximalité, ce résultat est dû à Bourgain, Furman, Lindenstrauss et Mozes \cite{bflm}; les modifications nécessaires à leur démonstration pour lever cette hypothèse sont détaillées dans \cite{hs_nonprox}, d'où est tiré l'énoncé ci-dessous.
Dans la suite, étant donné un entier $d\geq 2$, on note $\T^d=\R^d/\Z^d$ le tore de dimension $d$.
Si $\nu$ est une mesure borélienne finie sur $\T^d$, ses coefficients de Fourier sont indexés par $a\in\Z^d$, et donnés par la formule
\[
\widehat{\nu}(a) = \int_{\T^d} e^{2i\pi\bracket{a,x}}\nu(\dd x).
\]

\begin{theorem}[Équirépartition des marches aléatoires sur $\T^d$]
\label{bflm}
Soit $\mu$ une probabilité sur $\SL_d(\Z)$, $\Gamma$ le sous-groupe engendré par le support de $\mu$, et $\lambda_1$ le premier exposant de Liapounoff de $\mu$.
On suppose que
\begin{enumerate}
\item $\exists \tau>0:\ \int \norm{g}^\tau \mu(\dd g) < +\infty$;
\item $\Gamma$ agit irréductiblement sur $\R^d$;
\item l'adhérence de Zariski de $\Gamma$ est connexe.
\end{enumerate}
Pour tout $\lambda\in]0,\lambda_1[$, il existe une constante $C>0$ telle que, pour tout $t \in {]0,\frac{1}{2}[}$ et tout $x\in\T^d$, s'il existe $a\in\Z^d\setminus\{0\}$ tel que
\[
\abs{\widehat{\mu^{*n}*\delta_x}(a)} \geq t
\qquad\mbox{avec}\ n\geq C\log\frac{\norm{a}}{t},
\]
alors il existe $q\in\N$ et $x'\in(\frac{1}{q}\Z^d/\Z^d)$ tels que
\[
q \leq \left(\frac{\norm{a}}{t}\right)^C
\quad\mbox{et}\quad
d(x,x') \leq e^{-\lambda n}.
\]
\end{theorem}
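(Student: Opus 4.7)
The plan is to follow the strategy of Bourgain-Furman-Lindenstrauss-Mozes, in the non-proximal adaptation from \cite{hs_nonprox}. The starting observation is to reformulate the problem via the dual random walk $(a_n)$ on $\Z^d$ defined by $a_n = (g_n\dotsm g_1)^T a$, through the identity
\[ \widehat{\mu^{*n}*\delta_x}(a) = \E\bigl[e^{2i\pi \bracket{a_n, x}}\bigr]. \]
Under hypotheses (1)-(3), the transpose action is still irreducible on $\R^d$ with Zariski-connected image, so it has the same first Lyapunov exponent $\lambda_1 > 0$ and satisfies the Guivarc'h-Raugi large deviation estimates, with rate controlled by the moment assumption (1). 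In particular $\frac{1}{n}\log\norm{a_n}$ concentrates exponentially around $\lambda_1$, which is what will let us trade time $n$ for scale $e^{-\lambda n}$ in the conclusion, with any $\lambda < \lambda_1$.

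The heart of the approach is a multi-scale decomposition: splitting $\mu^{*n} = \mu^{*k}*\mu^{*(n-k)}$ and applying Cauchy-Schwarz gives
\[ \abs{\widehat{\mu^{*n}*\delta_x}(a)}^2 \leq \int \abs{\widehat{\mu^{*(n-k)}*\delta_x}(g^T a)}^2 \dd\mu^{*k}(g), \]
so that a single large Fourier coefficient at $a$ produces, for a nonnegligible $\mu^{*k}$-mass of $g$'s, a large Fourier coefficient at the translate $g^T a$. Iterating this transfer across multiple intermediate scales, and combining with the large deviations to track the norms $\norm{g^T a} \approx e^{\lambda_1 k}\norm{a}$, one extracts a rich collection of vectors $b \in \Z^d$ with $\norm{b} \leq (\norm{a}/t)^{O(1)}$ for which $\bracket{b,x}$ is close to an integer. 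The set of such $b$'s spreads across all intermediate dyadic scales in a quantitative way.

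The main obstacle is the flattening (non-concentration) step: show that such a set of $b$'s cannot exist unless $x$ itself lies within distance $e^{-\lambda n}$ of a rational $x' \in \frac{1}{q}\Z^d/\Z^d$ with $q \leq (\norm{a}/t)^C$. In the proximal case BFLM achieve this via a Bourgain discretized sum-product theorem applied to the projections $\bracket{b, x} \bmod 1$, exploiting the drift of the walk towards the dominant Lyapunov line to reduce the problem to a one-dimensional diophantine question on $\R/\Z$. Without proximality, following \cite{hs_nonprox}, one lifts the argument to an appropriate exterior power $\wedge^k V$ on which the induced $\Gamma$-representation recovers a proximal factor; one applies the sum-product machinery there, and then uses irreducibility together with Zariski-connectedness of $G$ to descend the resulting diophantine information back to $x \in \T^d$. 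This flattening/descent is the technically hardest piece; once in place, the quantitative bookkeeping of all exponents yields the desired bound $q \leq (\norm{a}/t)^C$ and rational approximation at scale $e^{-\lambda n}$.
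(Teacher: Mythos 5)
First, a point of comparison: the paper does not prove Théorème~\ref{bflm} at all. It is explicitly imported from the literature — attributed to Bourgain, Furman, Lindenstrauss et Mozes \cite{bflm} under a proximality hypothesis, and to \cite{hs_nonprox} for the removal of that hypothesis — and is then used as a black box. So there is no internal proof to measure yours against; the only question is whether your sketch would stand as a self-contained argument. It would not. The dual-walk identity $\widehat{\mu^{*n}*\delta_x}(a)=\E[e^{2i\pi\bracket{a_n,x}}]$ and the Cauchy–Schwarz inequality
\[
\abs{\widehat{\mu^{*n}*\delta_x}(a)}^2 \leq \int \abs{\widehat{\mu^{*(n-k)}*\delta_x}(g^T a)}^2 \,\dd\mu^{*k}(g)
\]
are correct but essentially formal. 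Everything that makes the theorem true is deferred: (a) the granulation bootstrap, showing that a large Fourier coefficient at time $n$ forces the measures $\mu^{*m}*\delta_x$ at earlier times to concentrate on $(\norm{a}/t)^{O(1)}$ balls of exponentially shrinking radius, so that descending to $m=0$ traps $x$ near a rational point of controlled denominator; and (b) the non-concentration input — a discretized sum-product/projection theorem — which excludes the alternative. Naming these steps is not proving them; each is a substantial theorem in its own right, and no quantitative bookkeeping is actually carried out.

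More seriously, your treatment of the non-proximal case is not a viable route as described. Passing to an exterior power $\wedge^k V$ does produce a proximal top piece of the $\Gamma$-action, but the quantity being controlled is the pairing $\bracket{b,x}$ with the original point $x\in\T^d$; Diophantine information obtained in $\wedge^k V$ concerns $k$-vectors and does not determine, nor transfer back to, the single vector $x$, and "irreducibility plus Zariski-connectedness" provides no descent mechanism. This is exactly the obstruction that \cite{hs_nonprox} had to overcome, and it is done there not by reducing to a proximal factor but by establishing the non-concentration (sum-product/projection) estimate directly in the non-proximal setting. In other words, the piece you identify as "the technically hardest" is not merely omitted: the strategy you propose for it would fail.
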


Ce théorème implique une décroissance de Fourier pour les marches aléatoires dont le point de départ est rationnel, et c'est sous cette forme qu'il nous sera utile.
Le corollaire suivant et sa démonstration sont tirés de l'article de Bourgain et Varjú \cite[Lemma~7]{bv}.

\begin{corollary}[Décroissance de Fourier des marches rationnelles]
\label{decay}
Soit $\mu$ une probabilité sur $\SL_d(\Z)$, $\Gamma$ le sous-groupe engendré par $\mu$, et $\lambda_1$ le premier exposant de Liapounoff de $\mu$.
On suppose que
\begin{enumerate}
\item $\exists \tau>0:\ \int \norm{g}^\tau \mu(\dd g) < +\infty$;
\item la sous-algèbre engendrée par $\Gamma$ est égale à $M_d(\R)$;
\item l'adhérence de Zariski de $\Gamma$ est connexe.
\end{enumerate}

Il existe alors des constantes $C,\ \tau>0$ telles que, pour tout $x_0=\frac{p_0}{q_0}\in\Q^d/\Z^d$ avec $\pgcd(p_0,q_0)=1$,
\[
\forall n\geq C\log q_0,\ \forall b\in\Z^d,\quad
\abs{\widehat{\mu^{*n}*\delta_{x_0}}(b)} \leq \left(\frac{q_0}{\pgcd(q_0, b)}\right)^{-\tau}.
\]
\end{corollary}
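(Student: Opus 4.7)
The proof is a contrapositive argument that combines Théorème~\ref{bflm} with the rigidity of rational points on the torus, following Bourgain and Varjú \cite[Lemma~7]{bv}.

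\emph{Reduction to the coprime case.} Set $d_0 = \pgcd(q_0, b)$, $q_1 = q_0/d_0$, so that $b = d_0 b'$ with $\pgcd(b', q_1) = 1$. The multiplication-by-$d_0$ map $\pi \colon \T^d \to \T^d$ is $\Gamma$-equivariant (since $\Gamma$ acts linearly on $\R^d$); it sends $\tfrac{1}{q_0}\Z^d/\Z^d$ onto $\tfrac{1}{q_1}\Z^d/\Z^d$, and the character $\chi_b$ factors as $\chi_{b'}\circ \pi$. Moreover, $x_1 := \pi(x_0) = p_0/q_1$ has exact denominator $q_1$, because $\pgcd(p_0, q_1)$ divides $\pgcd(p_0, q_0) = 1$. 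Consequently,
\[
\widehat{\mu^{*n}*\delta_{x_0}}(b) = \widehat{\mu^{*n}*\delta_{x_1}}(b'),
\]
and it suffices to prove the bound in the case $\pgcd(b', q_1) = 1$.

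\emph{Applying BFLM and rigidity.} Suppose for contradiction that $t := |\widehat{\mu^{*n}*\delta_{x_1}}(b')| > q_1^{-\tau}$, for some $\tau>0$ to be chosen. Using that $\widehat{\mu^{*n}*\delta_{x_1}}$ is $q_1\Z^d$-periodic, I pick the representative with $\|b'\|_\infty \leq q_1/2$, so that $\|b'\|/t \leq q_1^{1+\tau}$. Taking $C$ large enough in terms of the BFLM constant $C_0$, the assumption $n \geq C\log q_0$ implies $n \geq C_0 \log(\|b'\|/t)$, and Théorème~\ref{bflm} produces $q \in \N^*$ and $x' \in \tfrac{1}{q}\Z^d/\Z^d$ with $q \leq (\|b'\|/t)^{C_0}$ and $d(x_1, x') \leq e^{-\lambda n}$. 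Because $x_1$ has exact denominator $q_1$, two distinct points of $\tfrac{1}{\mathrm{ppcm}(q_1, q)}\Z^d/\Z^d$ lie at distance at least $1/(q_1 q)$; enlarging $C$ further (in terms of $\lambda$, $C_0$, $\tau$) so that $e^{-\lambda n} < 1/(q_1 q)$ forces $x' = x_1$, hence $q_1 \mid q$.

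\emph{Main obstacle.} The combined inequality $q_1 \leq q \leq (\|b'\|/t)^{C_0}$ rearranges to $t \leq \|b'\|/q_1^{1/C_0}$. Since $\|b'\|$ can be as large as $q_1/2$, this does not directly contradict $t > q_1^{-\tau}$, and the technical core of the argument is precisely to sharpen this bound. Following \cite{bv}, one fixes $\tau$ small compared to $1/C_0$ and uses the hypothesis that $\Gamma$ generates $M_d(\R)$ as an algebra to transport the large Fourier mass from $b'$ to a much shorter frequency: writing $\widehat{\mu*\nu}(b') = \int \hat\nu(g^\top b')\, d\mu(g)$ and iterating, a frequency $g^\top b'$ with small representative modulo $q_1\Z^d$ is produced in the $\Gamma$-orbit of $b'$, at which BFLM's polynomial dependence on $\|a\|$ yields the required contradiction and thus the Fourier decay.
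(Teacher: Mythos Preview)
Your reduction to the coprime case and the identification of the main obstacle are correct, but your proposed fix in the last paragraph is not the right idea and does not work as stated. The relation $\widehat{\mu*\nu}(b') = \int \hat\nu(g^\top b')\,d\mu(g)$ does let you move the frequency along the $\Gamma^\top$-orbit of $b'$ modulo $q_1$, but there is no mechanism guaranteeing that this orbit contains a representative of small norm \emph{at which the Fourier coefficient is still large}; the hypothesis that $\Gamma$ generates $M_d(\R)$ as an algebra says nothing about short vectors in the mod-$q_1$ orbit of a given $b'$. So the contradiction you announce does not follow.

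The paper's argument (which is the Bourgain--Varjú trick) bypasses this obstacle entirely by changing the torus rather than the frequency. One lifts to the $d^2$-dimensional torus $E^*/E^*(\Z)$ with $E = M_d(\R)$: the linear form $\phi_0 \colon g \mapsto \langle b, g\tilde{x}_0\rangle$ defines a rational point of this torus with denominator exactly $q_0$ (here $\pgcd(b,q_0)=1$ is used), and the original Fourier coefficient becomes $\widehat{\tilde\mu^{*n}*\delta_{\phi_0}}(I)$, where $\tilde\mu = \tfrac12(L^*(\mu)+R^*(\mu))$ is the average of the left and right dual actions and $I$ is the identity matrix. The point is that the frequency is now $I$, of norm $O(1)$ independently of $b$ and $q_0$. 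The hypothesis that $\Gamma$ generates $M_d(\R)$ is used precisely here, to ensure that the $G\times G$-action on $E^*$ is irreducible so that Théorème~\ref{bflm} applies. With $\|a\| = O(1)$, BFLM gives a rational approximation of $\phi_0$ with denominator $\le t^{-C}$; choosing $t = q_0^{-1/(2C)}$ and $n$ large enough makes this incompatible with the fact that $\phi_0$ has exact denominator $q_0$.
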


\comm{
\begin{remark}
L'hypothèse que $\Gamma$ engendre $M_d(\R)$ implique en particulier que $\Gamma$ agit irréductiblement sur $\R^d$.
La réciproque n'est pas vraie, comme le montre l'exemple de $\GL_{d/2}(\Z[i])$, qui agit irréductiblement sur $\R^d\simeq\C^{d/2}$.
Nous verrons plus tard que ce corollaire est faux si l'algèbre engendrée par $\Gamma$ n'est pas égale à $M_d(\R)$.
\end{remark}
}

\begin{proof}
Si $q_0=dq_1$ et $b=db'$ pour un certain entier $d$, alors, avec $x_1=\frac{p_0}{q_1}$,
\[
\widehat{\mu^{*n}*\delta_{x_0}}(b)
= \widehat{\mu^{*n}*\delta_{x_1}}(b')
\]
de sorte qu'il suffit de démontrer le théorème dans le cas où $\pgcd(q_0, b) = 1$, ce que nous supposerons dorénavant.
\comm{
En effet, dans le cas général, on pourra majorer
\[
\abs{\widehat{\mu^{*n}*\delta_{x_0}}(b)} = 
\abs{\widehat{\mu^{*n}*\delta_{x_1}}(b')} \leq q_1^{-\tau} = \left(\frac{q_0}{q_0\wedge b}\right)^{-\tau}.
\]
}
Notons $E=M_d(\R)$ et $E^*$ l'espace des formes linéaires sur $E$. Dans $E^*$, nous considérons l'ensemble $E^*(\Q)$ des formes linéaires à coefficients rationnels dans la base canonique, et le réseau $E^*(\Z)$ constitué des formes linéaires $\phi \in E^*(\R)$ telles que $\phi(M_d(\Z)) \subset \Z$.
Soit $\tilde{x}_0$ un relevé de $x_0$ dans $\R^d$, et $\phi_0$ l'image dans $E^*/E^*(\Z)$ de la forme linéaire $g \in E \mapsto \bracket{b,g \tilde{x}_0} \in \R$ où $\bracket{\cdot,\cdot}$ désigne le produit scalaire usuel sur $\R^d$.

Notons $L^*$ (resp. $R^*$) l'application $E\to\End(E^*)$ définie par
\[
\forall \phi \in E^*,\ L^*_g(\phi)(u) = \phi(gu) \qquad   (\mbox{resp.}\ R^*_g(\phi)(u) = \phi(ug))
\]
et posons
\[
\tilde{\mu} = \frac{1}{2}\bigl(L^*(\mu) + R^*(\mu)\bigr).
\]
Pour $g \in \SL_d(\Z)$, $L^*_g$ est inversible et préserve le réseau $E^*(\Z)$ donc agit sur le tore $E^*/E^*(\Z)$. 
Nous allons appliquer le théorème~\ref{bflm} à la marche aléatoire sur le tore $E^*/E^*(\Z)$ associée à la loi de transition $\tilde{\mu}$ et au point de départ $\phi_0$. 
Les coefficients de Fourier sur  $E^*/E^*(\Z)$ sont naturellement indexés par $M_d(\Z)$ :
pour une mesure $\nu$ sur $E^*/E^*(\Z)$, et $a \in M_d(\Z)$, on a
\[
\widehat\nu(a) = \int_{E^*/E^*(\Z)} e^{2\pi i \phi(a)} \dd \nu(\phi).
\]
Pour $n \geq 0$, le coefficient de Fourier en l'identité de la marche aléatoire est
\[
\widehat{\tilde{\mu}_{n}*\delta_{\phi_0}}(I) = \widehat{\mu^{*n}*\delta_{x_0}}(b).
\]
Remarquons que le groupe algébrique engendré par le support de $\tilde{\mu}$, isomorphe à $G \times G$, est bien connexe, et que son action sur $E^{*}(\R)$ est irréductible, car $E$ est une algèbre simple engendrée par $G(\R)$.
L'hypothèse du moment exponentiel est aussi satisfaite pour $\tilde{\mu}$, dès qu'elle l'est pour $\mu$.
Donc il existe une constante $C\geq 0$ telle que si pour $t\in]0,1/2[$ et $n\geq C\log t$,
\[
\abs{\widehat{\tilde{\mu}_{n}*\delta_{\phi_0}}(I)}
\geq t,\]
alors il existe un élément $\phi\in E^*(\Q)/E^*(\Z)$ de dénominateur au plus $t^{-C}$ et tel que
\begin{equation}\label{approx}
 \norm{\phi_0-\phi} < e^{-n\frac{\lambda_1}{2}}.
\end{equation}
Posons $t=q_0^{-\frac{1}{2C}}$.
Supposons $n\geq \frac{4\log q_0}{\lambda_1}$ et donc $e^{-n\frac{\lambda_1}{2}}\leq q_0^{-2}$.
Comme $\phi_0$ admet un coefficient rationnel réduit de dénominateur $q_0$, il n'admet pas d'approximation rationnelle de hauteur au plus $t^{-C}=q_0^{\frac{1}{2}}$ satisfaisant \eqref{approx}.
Ainsi, si $n\geq\frac{4\log q_0}{\lambda_1}$, on trouve bien
\[
\abs{\widehat{\mu^{*n}*\delta_{x_0}}(b)}
= \abs{\widehat{\tilde{\mu}_{n}*\delta_{\phi_0}}(I)}
< t = q_0^{-\frac{1}{2C}}.
\]
Quitte à remplacer $C$ par $\max(C,\frac{4}{\lambda_1})$, cela montre le corollaire, avec $\tau=\frac{1}{2C}$.
\end{proof}

\comm{
\begin{remark}
Si $E<M_d(\R)$, l'argument ci-dessus n'est pas valide, pour la raison suivante.
La forme linéaire $\phi_0$ est rationnelle de hauteur exactement $q_0$, vue comme élément de $M_d(\R)^*$, et n'est donc pas bien approchable par une forme rationnelle de hauteur au plus $q_0$.
Mais la restriction de $\phi_0$ à $E$ peut être de hauteur plus petite, et donc coïncider avec une forme rationnelle de petite hauteur sur $E$.
\nsnote{Cet exemple serait plus facile avec $E=M_1(\C)$, s'il était engendré par $\Gamma=\SL_1(\Z[i])$.} 
Par exemple, si $\Gamma=\SL_{2}(\Z[i])$, alors $E=M_{2}(\C)$.
Soit $p\equiv 1\mod 4$ et $a$ tel que $a^2\equiv-1\mod p$.
Posons $x_0=\frac{1}{p}\begin{pmatrix}1\\ a\\ 0\\ 0\end{pmatrix}$ et $b=\begin{pmatrix}1 & a & 1 & a\end{pmatrix}$.
En restriction aux matrices de $E$, de la forme
$g=\begin{pmatrix} x & y & \dots & \dots\\
-y & x & \dots & \dots\\
z & t & \dots & \dots\\
-t & z & \dots & \dots
\end{pmatrix}$
la forme linéaire s'écrit
\[
\phi_0(g) = \bracket{b,gx_0} = \frac{1+a^2}{p}(x+ay+z+at)= m(x+ay+z+at)
\]
et appartient donc à $E^*(\Z)$.
Par conséquent, sa hauteur (dans $E^*(\Q)/E^*(\Z)$) est nulle.
\end{remark}
}

\comm{
\begin{remark}
Si on essaie d'appliquer directement le théorème~\ref{bflm} à la marche aléatoire associée à $\mu$, on ne parvient qu'à majorer les coefficients de Fourier $\widehat{\mu^{*n}*\delta_{x_0}}(b)$ pour $\norm{b}\leq q^{1/C}$, mais sous la seule hypothèse d'une action irréductible et de l'adhérence de Zariski connexe.
L'astuce ci-dessus donne une borne indépendante de la norme de $b$.\\
On pourrait s'étonner d'avoir une borne pour les petits coefficients de Fourier même si $E=M_{d/2}(\C)$, alors que vu la remarque précédente, la marche aléatoire ne s'équidistribue pas modulo $p$ si $p\equiv 1\mod 4$: l'action de $E(\Z)$ sur $(\Z/p\Z)^d$ n'est pas irréductible.
Mais cela est possible, car toute forme linéaire $b$ qui s'annule sur un sous-$E(\Z)$-module de $(\Z/p\Z)^d$ doit faire intervenir un résidu quadratique de $-1$ modulo $p$, et est donc nécessairement de norme au moins $\sqrt{p}$.
\end{remark}
}

\comm{
\begin{remark}
Cette astuce donne aussi un résultat lorsque l'on part d'un point $x_0$ irrationnel: on a une décroissance de Fourier sauf si $\phi_0=b\otimes x_0$ admet une bonne approximation rationnelle dans $E^*(\Q)/E^*(\Z)$.
\end{remark}
}


\begin{proof}[Démonstration de la proposition~\ref{irrep}]
Fixons une $\Z$-base de $V(\Z)$ qui permet d'identifier $V(\Z)$ à $\Z^d$, et de définir un produit scalaire $\bracket{\,\cdot\,, \,\cdot\,}$ sur $V$.
Quitte à remplacer $v$ par $\frac{v}{\pgcd(q,v)}$, on peut supposer que $\pgcd(q,v)=1$.
D'après le corollaire~\ref{decay} appliqué à l'action de $\Gamma$ sur $V(\R)/V(\Z)$ et au point
\[
x_0=\frac{v}{q} \mod V(\Z) \in V(\R)/V(\Z),
\]
il existe deux constantes $C\geq 0$ et $\tau>0$ telles que pour tout $n\geq C\log q$,
\begin{equation}
\label{eq:decayonV}
\forall b\in V(\Z),\quad 
\abs{\widehat{\mu^{*n}*\delta_{x_0}}(b)} \leq \left(\frac{q}{\pgcd(q,b)}\right)^{-\tau}.
\end{equation}
Remarquons que la mesure $\mu^{*n}*\delta_{x_0}$ est supportée par $\frac{1}{q}V(\Z)/V(\Z)$ qui est en bijection avec $V(\Z)/qV(\Z)$.
Nous allons utiliser l'analyse de Fourier sur le groupe additif $V(\Z)/qV(\Z)$.
L'application $b \mapsto (x\mapsto e^{2i\pi\frac{\bracket{b,x}}{q}})$ identifie $V(\Z)/qV(\Z)$ avec son groupe dual. 
Notons $\nu$ l'image de $\mu^{*n}$ dans $V(\Z)/qV(\Z)$ par l'application
\[
\begin{array}{ccc}
\Gamma & \to & V(\Z)/qV(\Z)\\
g & \mapsto & g\cdot v \mod qV(\Z).
\end{array}
\]
L'inégalité \eqref{eq:decayonV} est alors équivalente à 
\[
\forall b\in V(\Z)/V(q\Z),\quad 
\abs{\hat{\nu}(b)} \leq \left(\frac{q}{\pgcd(q,b)}\right)^{-\tau}.
\]
Quitte à augmenter $C$, nous pouvons supposer $C\geq \frac{\dim V}{\tau}$, et si $\nu^{(C)}$ désigne la $C$-ième convolution additive de $\nu$, on a alors
\[
\forall b\in V(\Z)/qV(\Z),\quad 
\abs{\widehat{\nu^{(C)}}(b)} \leq \left(\frac{q}{\pgcd(q,b)}\right)^{-\dim V},
\]
et par la formule de Parseval dans le groupe $V(\Z)/qV(\Z)$,
\begin{align*}
\norm{\nu^{(C)}}_2^2
& = \frac{1}{\abs{V(\Z)/qV(\Z)}}\sum_{b\in V(\Z/q\Z)} \abs{\widehat{\nu^{(C)}}(b)}^2\\
& \ll q^{-\dim V} \sum_{r|q} r^{\dim V} r^{-2\dim V}\\
& \ll q^{-\dim V}.
\end{align*}
Comme $\mu^{*n}(A)\geq q^{-\eps}$, on a aussi $\nu^{(C)}(\ssum_C A \cdot v) \geq q^{-O(\eps)}$, et avec l'inégalité de Schwarz,
\[
N(\ssum_CA\cdot v,q) \geq \displaystyle q^{-O(\eps)}q^{\dim V}.
\]
\end{proof}

\subsection{Représentation adjointe}

Nous appliquons maintenant les résultats du paragraphe précédent dans la représentation adjointe de $G$, pour en déduire un résultat d'expansion qui nous permettra, plus tard, de démontrer la proposition~\ref{global}.

\smallskip

Rappelons que $\mu$ est une probabilité à support fini sur $\SL_d(\Z)$, $\Gamma$ le sous-groupe engendré par le support de $\mu$, $G$ l'adhérence de Zariski de $\Gamma$, et $\Omega$ l'adhérence de $\Gamma$ dans $\SL_d(\ZZ)$.
On suppose de plus que $G$ est un groupe simple, connexe et simplement connexe.

Dorénavant, le nombre $q$ s'écrira $q=\prod_p p^{m_p}$.
Dans la suite, la variable utilisée dans les produits $\prod$ est toujours $p$, et elle parcourt l'ensemble des nombres premiers vérifiant la condition précisée en-desous du symbole. 
On omettra la condition \enquote{$p$ facteur premier de $q$}, qui sera implicite dans cette notation.
Pour tout ensemble $I$ de nombres premiers, nous noterons
\[
q_I = \prod_{p \in I} p^{m_p}
.\]
De plus, si $x\in\gl_d(\ZZ)$ et $p$ est un nombre premier, nous noterons $v_p(x)$ la valuation $p$-adique de $x$, i.e. le plus grand entier $n$ tel que $p^{-n}x\in\gl_d(\ZZ)$.
Enfin, pour $a,g \in \SL_d(\ZZ)$, on note $\iota(a)\cdot g = aga^{-1}$; ainsi donc, pour $A \subset \SL_d(\ZZ)$,
\[
\iota(A) \cdot g = \{\, aga^{-1} \mid a\in A\,\}.
\]

La proposition~\ref{xremplit} ci-dessous exprime que si $g\in\Omega$ et $A\subset\Omega$ est tel que $\mu^{*n}(A)\geq q^{-\eps}$, alors l'action par conjugaison de $A$ sur l'élément $g$ permet d'obtenir, en un nombre fini de produits, une part importante du plus petit sous-groupe de congruence contenant $g$.
Sa démonstration occupera le restant de ce paragraphe.

\begin{proposition}
\label{xremplit}
Étant donné $\delta > 0$, il existe $C\geq 0$ tel que l'assertion suivante soit vraie pour tout $\eps>0$ suffisamment petit.
Soit $g \in \Omega$ et $I$ un ensemble de nombres premiers tel que
\[
\forall p \in I,\quad v_p(g - 1) \geq \max\{1 + \delta_2(p), \floor{\delta m_p}\}.
\]
Si $A\subset\Omega$ vérifie $\mu^{*n}(A)\geq q^{-\eps}$ pour $n\geq C\log q$, alors
\[N(\Pi_C (\iota(A)\cdot g),q_I) \geq q^{-O(\eps)} \left(\frac{q_I}{\pgcd(q_I,g-1)}\right)^{\dim G}.\]
\end{proposition}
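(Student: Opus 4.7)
La stratégie consiste à transférer le problème via l'exponentielle à la représentation adjointe de $G$ sur son algèbre de Lie $\g$, puis d'y appliquer la proposition~\ref{irrep}. La condition $v_p(g-1) \geq 1 + \delta_2(p)$ pour $p \in I$ assure la convergence $p$-adique du logarithme, et l'on pose $X = \log g$, un élément de $\prod_{p\in I}\g(\Z_p)$ vérifiant $v_p(X) = v_p(g-1)$. La conjugaison se traduit alors en action adjointe : $aga^{-1} = \exp(\Ad(a) X)$.

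On applique ensuite la proposition~\ref{irrep} à la représentation adjointe $V = \g$ : ses hypothèses sont vérifiées, car $\mu$ est à support fini, $G$ est Zariski-connexe, et l'algèbre engendrée par $G(\R)$ dans $\End(\g(\R))$ vaut $\End(\g(\R))$ tout entier, la représentation adjointe d'un groupe simple simplement connexe étant absolument irréductible. Après relèvement de $X$ en un élément entier modulo $q_I$, on obtient une constante $C_1$ telle que
\[
N\bigl(\textstyle\sum_{C_1} \Ad(A) \cdot X,\; q_I\bigr) \;\geq\; q^{-O(\eps)} \left(\frac{q_I}{\pgcd(q_I, X)}\right)^{\dim G}.
\]

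Pour transférer cette borne additive à $\Omega$, la formule de Baker--Campbell--Hausdorff donne
\[
\prod_{i=1}^{C_1} \exp(\Ad(a_i) X) \;=\; \exp\bigl(\textstyle\sum_i \Ad(a_i) X + Z(a)\bigr),
\]
où $Z(a)$ est une combinaison de crochets itérés des $\Ad(a_i) X$, de valuation $v_p(Z(a)) \geq 2 v_p(X)$ pour chaque $p \in I$. Comme $\exp$ induit une bijection entre $p^k\g/p^{m_p}\g$ et $\Omega_{p^k}/\Omega_{p^{m_p}}$ pour $k \geq 1 + \delta_2(p)$, on en déduit que $N\bigl(\Pi_{C_1}(\iota(A)\cdot g),\, p^{\min(2v_p(X), m_p)}\bigr)$ coïncide avec $N\bigl(\textstyle\sum_{C_1}\Ad(A)\cdot X,\, p^{\min(2v_p(X), m_p)}\bigr)$, ce qui fournit la minoration voulue modulo $p^{\min(2v_p(X), m_p)}$.

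Si $\delta \geq 1/2$, alors $2 v_p(X) \geq m_p$ pour tout $p \in I$, et cela achève la preuve. Pour $\delta$ plus petit, on itère via des commutateurs : pour $h_1, h_2 \in \Pi_{C_1}(\iota(A)\cdot g)$, l'élément $[h_1, h_2]$ s'exprime comme un produit d'au plus $O(C_1)$ conjugués de $g^{\pm 1}$ par des éléments de $A$, et vérifie $v_p([h_1, h_2] - 1) \geq 2 v_p(X)$. En appliquant récursivement le même procédé à $[h_1, h_2]$, la profondeur double à chaque étape ; après $\ell = \lceil \log_2(1/\delta) \rceil$ itérations, elle dépasse $m_p$ pour tout $p \in I$, et la borne annoncée suit avec $C = \delta^{-O(1)} C_1$. \emph{L'obstacle principal} réside dans ce contrôle itératif : il faut vérifier qu'à chaque niveau de la filtration de congruence, les commutateurs construits produisent suffisamment d'éléments distincts, ce qui repose sur la non-dégénérescence du crochet de Lie de l'algèbre simple $\g$ et sur une application soignée de la proposition~\ref{irrep} à chaque étape.
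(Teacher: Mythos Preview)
Your reduction to the adjoint representation via $X = \log g$ and the application of Proposition~\ref{irrep} are exactly what the paper does. The divergence is in how you pass from the additive bound on $N(\ssum_{C_1}\Ad(A)\cdot X, q_I)$ back to the multiplicative bound on $N(\Pi_C(\iota(A)\cdot g), q_I)$, and here your sketch has a real gap.

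Your BCH computation is correct as far as it goes: it gives the desired count only modulo $\prod_{p\in I} p^{\min(2v_p(X),m_p)}$. The trouble is the iteration you propose for smaller $\delta$. Taking a commutator $[h_1,h_2]$ does push you one step deeper in the congruence filtration, but it destroys the structure you need: $[h_1,h_2]$ is no longer of the form $\exp(\Ad(a)X)$ for a fixed $X$ and variable $a\in A$, so Proposition~\ref{irrep} cannot be re-applied to it. What you would actually need at each stage is a \emph{counting} statement -- that the map $(h_1,h_2)\mapsto [h_1,h_2]$, or whatever combination you use, has large image modulo the next level -- and this is precisely the obstacle you flag but do not resolve. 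Non-degeneracy of the bracket on $\g$ is necessary but far from sufficient here.

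The paper bypasses the iteration on the group side entirely. It proves a word formula (Lemme~\ref{mot}): given $k=\lceil 2/\delta\rceil$ and $s=C_0$, there exist a word $w\in F_s$ and an integer $D$ such that
\[
e^{D(x_1+\dots+x_s)} \equiv w(e^{x_1},\dots,e^{x_s}) \mod R^k
\]
whenever $R\mid x_i$. The proof of this lemma \emph{is} an induction, but on the truncation order $k$ rather than on the filtration depth, and at each step one corrects by an explicit commutator word to kill the next homogeneous term in BCH. Taking $R=\prod_{p\in I}p^{\max\{1+\delta_2(p),\floor{\delta m_p}\}}$ so that $q_I\mid R^k$, and specialising $x_i=\Ad(a_i)X$, one gets directly
\[
\exp\bigl(D\,\ssum_{C_0}\Ad(A)\cdot X\bigr) \subset \Pi_C(\iota(A)\cdot g) \mod q_I,
\]
whence the multiplicative count inherits the additive one with no loss (Proposition~\ref{exp}). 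This single word does in one stroke what your iterated commutators were meant to do, and it keeps the link to $\ssum_{C_0}\Ad(A)\cdot X$ intact so that the output of Proposition~\ref{irrep} transfers cleanly.
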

\begin{proof}
On a $g \in \Omega_{\dot{r}_I}$ où $\dot{r}_I = \prod_{p \in I} p^{1 + \delta_2(p)}$.
Soit $\g$ l'algèbre de Lie de $G$ et $\g(\Z)=\g\cap\gl_d(\Z)$.
Par les lemmes~\ref{isom} et \ref{lm:logg}, on peut écrire
\[
g = \exp(x) \mod q,
\]
avec $x\in\g(\Z)$ satisfaisant
\[
\forall p\in I,\quad v_p(x) = v_p(g - 1).
\]
En particulier $\dot{r}_I | x$ et $\pgcd(q_I,x) = \pgcd(q_I,g-1)$ et par conséquent, la proposition~\ref{irrep} appliquée dans la représentation adjointe $\Ad \colon \Gamma\to\GL(\g(\R))$ (noter que $\Gamma$ préserve le réseau $\g(\Z)$), montre que pour une certaine constante $C_0$,
\[
N(\ssum_{C_0} (\Ad A)\cdot x,q_I)
\geq q^{-C_0 \eps} \left(\frac{q_I}{\pgcd(q_I,g-1)}\right)^{\dim G},
\]
et la proposition~\ref{exp} ci-dessous permet de conclure.
\end{proof}

Il reste à démontrer le résultat que nous avons utilisé dans la démonstration ci-dessus, qui est en fait une conséquence de la formule de Campbell-Hausdorff.

\begin{proposition} 
\label{exp}
Étant donnés $C_0\in\N$ et $\delta > 0$, il existe une constante $C \geq 0$ telle que l'énoncé suivant soit vérifié pour tout entier $q \in \N^*$, tout vecteur $x \in \ssl_d(\Z)$, et toute partie finie $A \subset \SL_d(\Z)$.

Soit $I$ un ensemble de facteurs premiers de $q$ tel que pour tout $p \in I$, on a
\(v_p(x) \geq \max\{1 + \delta_2(p), \floor{\delta m_p}\}\),
alors
\[
N\bigl(\Pi_C \iota(A)\cdot \exp(x),q_I\bigr) \geq \frac{1}{C}N\bigl(\ssum_{C_0} \Ad(A) \cdot x,q_I\bigr).
\]
\end{proposition}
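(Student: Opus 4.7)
The plan is to apply the $p$-adic Baker--Campbell--Hausdorff (BCH) formula combined with the exp/log correspondence of Appendix~B.

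First observe the conjugation identity $a\exp(x)a^{-1} = \exp(\Ad(a)\cdot x)$, so that $\iota(A)\cdot\exp(x) = \exp(\Ad(A)\cdot x)$. For each $p\in I$, set $k_p = v_p(x)\geq\max\{1+\delta_2(p),\floor{\delta m_p}\}$. Appendix~B provides a bijection $\exp\colon p^{1+\delta_2(p)}\g(\Z_p)/p^{m_p}\to\Omega_{p^{1+\delta_2(p)}}/\Omega_{p^{m_p}}$, which via CRT over $I$ gives
\[
N(\Pi_C\iota(A)\exp(x), q_I) = N(\log\Pi_C\iota(A)\exp(x), q_I),
\]
reducing the problem to a counting statement in $\g(\Z)/q_I$.

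By BCH, writing $y_i = \Ad(a_i)x$, one has
\[
\log\prod_{i=1}^C\exp(y_i) = \sum_{i=1}^C y_i + Z(y_1,\ldots,y_C),
\]
where $Z$ is a Lie series whose terms are iterated brackets of order $\geq 2$. The divisibility hypothesis $k_p\geq 1+\delta_2(p)$ ensures $p$-adic convergence modulo $p^{m_p}$, and $v_p(Z)\geq 2k_p$ for $p\in I$. Taking $C=C_0$ factors, the linear part already gives
\[
N(\log\Pi_{C_0}\iota(A)\exp(x), q_I^{(1)}) \geq N(\ssum_{C_0}\Ad(A)x, q_I^{(1)}),
\]
where $q_I^{(1)} = \prod_{p\in I}p^{\min(m_p,2k_p)}$. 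This resolves the case $\delta\geq 1/2$.

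For general $\delta>0$ the plan is an iteration that adds precision $k_p$ per block of $C_0$ additional factors. Given $g\in\Pi_{jC_0}\iota(A)\exp(x)$ with $\log g = s+\epsilon$ and $v_p(\epsilon)\geq jk_p$, one seeks a correction $g'\in\Pi_{C_0}\iota(A)\exp(x)$ with $\log g'\equiv -\epsilon\pmod{p^{(j+1)k_p}}$; the BCH bracket $[\log g,\log g']$ has $v_p\geq k_p+jk_p$, so $\log(gg')\equiv s\pmod{p^{(j+1)k_p}}$. Since $k_p\geq\delta m_p$, after $O(1/\delta)$ iterations full precision $q_I$ is reached, yielding a total constant $C = O(C_0/\delta)$.

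The main obstacle is the correction step: for an arbitrary finite set $A$, realizing $-\epsilon$ of prescribed $p$-adic valuation as the log of a small product of conjugates of $\exp(x)$ is not always possible. This is where the factor $1/C$ on the right-hand side enters, absorbing the compounded cardinality loss at each of the $O(1/\delta)$ correction steps; the careful bookkeeping of how many residues of $\ssum_{C_0}\Ad(A)x$ modulo $q_I$ can merge under the BCH perturbation at each precision level is the technical crux of the argument.
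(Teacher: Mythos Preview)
Your setup is right --- the conjugation identity $\iota(a)\exp(x)=\exp(\Ad(a)x)$, the exp/log bijection from Appendix~B, and BCH are exactly the tools --- and your treatment of the case $\delta\geq 1/2$ is essentially correct. The gap is in the iterative correction step for general $\delta$.

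You seek $g'\in\Pi_{C_0}\iota(A)\exp(x)$ with $\log g'\equiv -\epsilon\pmod{p^{(j+1)k_p}}$, where $v_p(\epsilon)\geq jk_p$. But any such $g'$ has $\log g'=\sum_{i=1}^{C_0}\Ad(a_i)x+(\text{order }\geq 2)$, and since $\Ad(a_i)$ preserves $p$-adic valuation, the leading sum has valuation exactly $k_p$. For $j\geq 2$ this cannot be congruent to $-\epsilon$ modulo anything finer than $p^{k_p}$: the valuations simply do not match. Consequently your claim that $v_p([\log g,\log g'])\geq k_p+jk_p$ also fails, since $v_p(\log g')=k_p$, not $jk_p$. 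This is not a cardinality loss that a constant $1/C$ can absorb; it is an algebraic obstruction.

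The paper's argument avoids any correction-of-errors scheme. The key step (Lemme~\ref{mot}) constructs, by induction on the precision level $k=\lceil 2/\delta\rceil$, an explicit word $w$ in the free group on $C_0$ letters and an integer $D$ such that
\[
w(e^{x_1},\dotsc,e^{x_{C_0}})\equiv e^{D(x_1+\dotsb+x_{C_0})}\pmod{R^k},\qquad R=\prod_{p\in I}p^{\max\{1+\delta_2(p),\lfloor\delta m_p\rfloor\}}.
\]
At each inductive step the BCH discrepancy is a \emph{homogeneous} Lie polynomial $\sr_k$ of degree $k$; the point you are missing is that $e^{\sr_k}$ can be realised, modulo $R^{k+1}$, as a product of iterated \emph{commutators} of the $e^{x_i}$ (whose logs have valuation $\geq k\cdot k_p$, precisely because the linear parts cancel). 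Multiplying by the inverse commutator word kills the error exactly and advances the induction. Applying the resulting identity with $x_i=\Ad(a_i)x$ gives $\exp\bigl(D\,\ssum_{C_0}\Ad(A)x\bigr)\subset\Pi_C\iota(A)e^x\bmod q_I$, where $C$ is the length of $w$. The factor $1/C$ in the statement then comes only from the fixed dilation by $D$ on $\g(\Z/q_I\Z)$, not from any iterative loss.

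In short: to push precision beyond $2k_p$ you must use group commutators of the $\iota(a_i)e^x$, not further plain products; that is the idea encoded in Lemme~\ref{mot}.
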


Dans ce qui suit, nous écrirons $e^x$ au lieu de $\exp(x)$.
Pour $s \in \N^*$, le groupe libre engendré par $s$ générateurs $a_1,\dotsc,a_s$ sera noté $F_s$.
On identifie chaque élément de $F_s$ avec l'application de mot qu'il induit sur $\GL_d(\ZZ)$.
Nous noterons aussi $\cF_s(\Z)$ la $\Z$-algèbre de Lie libre engendrée par $s$ générateurs.

\begin{lemma}
\label{mot}
Étant donnés deux entiers naturels non nuls $k$ et $s$, il existe un mot $w \in F_s$ et une constante $D \in \N^*$ tels que pour tout $R \in \N^*$ vérifiant $v_2(R) \neq 1$, pour tous $x_1,\dotsc,x_{s}$ dans $R\gl_d(\ZZ)$,
\[
e^{D(x_1+\dotsb+x_{s})}
\equiv w(e^{x_1},\dotsc,e^{x_{s}}) \mod R^k.
\]
\end{lemma}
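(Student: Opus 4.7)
Le plan consiste à procéder par récurrence sur $k$, avec la formule de Baker--Campbell--Hausdorff (BCH) comme outil central. Le cas de base $k=1$ est trivial: les deux membres valent $1$ modulo $R$, il suffira de prendre $w$ égal au mot vide et $D=1$.

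Pour l'étape de récurrence, on supposera avoir construit $w_{k-1}\in F_s$ et $D_{k-1}\in\N^*$ vérifiant la conclusion modulo $R^{k-1}$. La stratégie consistera à écrire formellement, via BCH,
\[
w_{k-1}(e^{x_1},\dots,e^{x_s}) = \exp\bigl(\psi(x_1,\dots,x_s)\bigr),
\]
où $\psi$ est une $\Q$-combinaison linéaire universelle de crochets de Lie itérés des $x_i$, avec dénominateurs bornés aux ordres $\leq k-1$ par un entier $N_0$ dépendant uniquement de $k$ (on peut prendre $N_0=(k-1)!$). L'hypothèse de récurrence signifiera alors que $\psi-D_{k-1}(x_1+\dots+x_s)$, évaluée sur $x_i\in R\gl_d(\ZZ)$, appartient à $R^{k-1}\gl_d(\ZZ)$; modulo $R^k$, cette différence sera concentrée sur les crochets itérés de longueur exactement $k-1$.

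L'observation-clé que l'on exploitera est que chaque crochet itéré $\beta$ de longueur $k-1$ dans l'algèbre de Lie libre se réalise modulo $R^k$ par le commutateur de groupe itéré correspondant $c_\beta\in F_s$, évalué en $(e^{x_1},\dots,e^{x_s})$: en effet, $[e^a,e^b]\equiv \exp([a,b])\bmod R^3$ pour $a,b\in R\gl_d(\ZZ)$, et une récurrence sur la longueur du crochet donnera $c_\beta(e^{x_1},\dots,e^{x_s})\equiv \exp(\beta(x_1,\dots,x_s))\bmod R^k$. Il suffira alors de choisir un produit $c=\prod_\beta c_\beta^{n_\beta}$ de tels commutateurs pour que $w=w_{k-1}\cdot c$ et $D=N\cdot D_{k-1}$ vérifient l'identité souhaitée au rang $k$, où $N$ est un entier universel absorbant tous les dénominateurs BCH jusqu'à l'ordre $k-1$.

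La principale difficulté sera le suivi précis des dénominateurs BCH au fil de la récurrence, afin que $D$ reste une constante ne dépendant que de $k$ et $s$ (pas de $R$). L'hypothèse $v_2(R)\neq 1$ interviendra précisément à ce niveau: elle garantit que la série exponentielle $e^y$ est bien définie modulo $R^k$ pour $y\in R\gl_d(\ZZ)$; si $R$ est impair, les dénominateurs $n!$ sont inversibles dans $\Z/R^k\Z$, et si $v_2(R)\geq 2$, les puissances $y^n$ absorbent les facteurs $2$ de $n!$ pour $n$ assez grand. Le cas $v_2(R)=1$ est réellement pathologique pour la convergence $2$-adique des exponentielles tronquées, et doit donc être exclu.
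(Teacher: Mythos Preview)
Your plan is essentially the paper's proof: induction on $k$, BCH to write $w_{k-1}(e^{x_1},\dots,e^{x_s})=\exp\psi$, and then kill the top-degree part of $\psi$ with a product of iterated group commutators (your ``observation-clé'' is exactly step~2 of the paper). So the architecture is correct.

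There is, however, a concrete gap in your denominator mechanism. You set $w=w_{k-1}\cdot c$ and $D=N\cdot D_{k-1}$, but these are incompatible. After cancelling the degree-$(k-1)$ error with $c$, the product $(w_{k-1}\cdot c)(e^{x_i})$ gives $e^{D_{k-1}\sum x_i}$ modulo $R^k$, not $e^{N D_{k-1}\sum x_i}$: nowhere in the word have you inserted the factor $N$. Moreover, the degree-$(k-1)$ part $\psi_{k-1}$ has genuinely rational coefficients, while each $c_\beta$ only realises an \emph{integer} Lie monomial $\beta$; so you cannot choose integers $n_\beta$ with $\sum n_\beta\beta=\psi_{k-1}$. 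The paper fixes this by scaling the inputs: it applies $w_{k-1}$ to $e^{Cx_i}$ (equivalently substitutes $a_i\mapsto a_i^C$ in the word), which multiplies the degree-$j$ part of $\psi$ by $C^j$ and makes the truncated BCH expansion lie in $\cF_s(\Z)$. An alternative that matches your bookkeeping is $w=(w_{k-1})^N\cdot c$, since $(e^\psi)^N=e^{N\psi}$ and $N\psi_{k-1}$ is then integral; but you must say this explicitly.

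The same issue affects your commutator identity. The bare claim $c_\beta(e^{x_1},\dots,e^{x_s})\equiv\exp\!\bigl(\beta(x_1,\dots,x_s)\bigr)\bmod R^k$ is not safe as stated: the BCH remainder has denominators, and its degree-$n$ terms land in $R^n\gl_d(\Q_p)$ rather than $R^n\gl_d(\Z_p)$ in general. The paper explicitly inserts a constant here too, writing $e^{\beta(Cx_1,\dots,Cx_s)}\equiv c_\beta(e^{x_1},\dots,e^{x_s})\bmod R^{k+1}$ and commenting ``il faut contrôler les dénominateurs qui apparaissent dans le reste, d'où la constante $C$''. With that scaling in place everywhere, your argument goes through and coincides with the paper's.
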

\begin{proof}
Il s'agit de reprendre la démonstration de \cite[Lemma~3.5]{abrs1}, en s'assurant, quitte à ajuster la valeur des constantes pour contrôler les dénominateurs, que le reste est bien divisible par $R^k$.

Par le lemme chinois, il suffit de traiter le cas où $R$ a un seul facteur premier.
On pourra donc travailler sur $\Z_p$.
Les lettres $C$ et $C'$ seront utilisées pour désigner des quantités dépendant seulement de $k$ et de $s$, et dont la valeur peut varier d'une ligne à l'autre.

La démonstration se fait en trois étapes.

\noindent
1. Soit $k\in\N^*$ et $w\in F_s$.
Il existe $C\in\N^*$ et une relation $\sr \in\cF_s(\Z)$ de degré strictement inférieur à $k$ telle que pour tous $x_1,\dotsc,x_s\in R \gl_d(\ZZ)$, 
\[
w(e^{C x_1},\dotsc,e^{C x_s}) \equiv e^{\sr(x_1,\dotsc,x_s)} \mod R^k.
\]
Cela se voit par récurrence sur la longueur du mot.
Le résultat est trivial pour le mot vide, de longueur nulle.
Supposons le résultat connu pour tous les mots de longueur au plus $\ell$.
Soit $w'$ un mot de longueur $\ell+1$.
Pour un certain $j$, $w'=a_jw$, avec $w$ de longueur $\ell$.
L'hypothèse de récurrence permet d'écrire, pour tous $x_1,\dots,x_s\in R \gl_d(\ZZ)$, $w(e^{C x_1},\dotsc,e^{C x_s}) \equiv e^{\sr(x_1,\dotsc,x_s)}\mod R^{k}$.
Alors, quitte à ajuster la valeur de $C$, avec la formule de Campbell-Hausdorff,
\begin{align*}
w'(e^{C^2X_1},\dots,e^{C^2X_s})
& \equiv e^{C^2X_j}w(e^{C^2X_1},\dots,e^{C^2X_s})\\
& \equiv e^{C^2X_j}e^{\sr(CX_1,\dots,CX_s)}\\
& \equiv e^{\sr'(X_1,\dots,X_s) + U} \mod R^k
\end{align*}
avec $U\equiv 0\mod R^k$. (La constante $C$ permet d'absorber les dénominateurs qui apparaissent dans le reste de la formule de Campbell-Hausdorff, cf. Serre \cite[page 29]{serre_lalg}).
Grâce au lemme~\ref{lm:expZp}, on trouve donc bien
\[
w'(e^{C'x_1},\dotsc,e^{C'x_s}) \equiv e^{\sr'(x_1,\dotsc,x_s)} \mod R^k.
\]

\noindent
2. Remarquons que si $x_1,\dotsc,x_k$ sont dans $R\gl_d(\ZZ)$, alors
\[
e^{[Cx_1,[\dots,[Cx_{k-1},Cx_k]\dots]]} \equiv [e^{Cx_1},[\dots,[e^{Cx_{k-1}},e^{Cx_k}]\dots]]] \mod R^{k+1}.
\]
Cela se voit par récurrence, avec la formule de Campbell-Hausdorff. (Ici encore, il faut contrôler les dénominateurs qui apparaissent dans le reste, d'où la constante $C$.)
Par suite, utilisant encore la formule de Campbell-Hausdorff, si $\sr_k$ est un élément de $\cF_s(\Z)$ homogène de degré $k$ on peut écrire, pour $x_1,\dotsc,x_s$ dans $R\gl_d(\ZZ)$,
\begin{equation}\label{comm}
e^{\sr_k(Cx_1,\dotsc,Cx_s)} \equiv u(e^{x_1},\dotsc,e^{x_s}) \mod R^{k+1}.
\end{equation}
Le mot $u$ est obtenu comme le produit des commutateurs constituant $\sr_k$.

\noindent
3. On construit par récurrence sur $k$ un mot $w_k$ et une constante $C=C_k$ tels que pour tous $x_1,\dotsc,x_s\in R\gl_d(\ZZ)$,
\begin{equation}\label{hr}
e^{C(x_1+\dotsb+x_s)} \equiv w_k (e^{x_1},\dotsc,e^{x_s}) \mod R^k.
\end{equation}
Supposons construit le mot $w_k$ satisfaisant \eqref{hr}.
D'après le 1., nous pouvons écrire, pour $\sr\in\cF_s$ de degré inférieur à $k$,
\[
w_k(e^{Cx_1},\dotsc,e^{Cx_s}) \equiv e^{\sr(x_1,\dots,x_s)} \mod R^{k+1}.
\]
Comme $w_k(e^{Cx_1},\dotsc,e^{Cx_s}) \equiv e^{C^2(x_1+\dotsb+x_s)}\mod R^k$, on a naturellement 
\[
\sr(x_1,\dots,x_s) = C^2(x_1+\dotsb+x_s) + \sr_k(x_1,\dotsc,x_s),
\]
 où $\sr_k \in \cF_s(\Z)$ est un élément homogène de degré $k$.
D'après \eqref{comm}, il existe un mot $u$ tel que $e^{-\sr_k(Cx_1,\dotsc,Cx_s)}=u(e^{x_1},\dotsc,e^{x_s})\mod R^{k+1}$ et par suite
\begin{align*}
e^{C'(x_1+\dotsb+x_s)}
& \equiv e^{\sr(Cx_1,\dotsc,Cx_s)-\sr_k(Cx_1,\dotsc,Cx_s)}\\
& \equiv e^{\sr(Cx_1,\dotsc,Cx_s)} e^{-\sr_k(Cx_1,\dotsc,Cx_s)}\\
& \equiv w_k(e^{C^2x_1},\dotsc,e^{C^2x_s})u(e^{x_1},\dotsc,e^{x_s}) \mod R^k,
\end{align*}
ce qu'il fallait démontrer.
\end{proof}

Nous pouvons maintenant démontrer la proposition~\ref{exp}.

\begin{proof}[Démonstration de la proposition~\ref{exp}]
Soit $k=\left\lceil\frac{2}{\delta}\right\rceil$ et
\[
R = \prod_{p\in I} p^{\max\{1 + \delta_2(p), \floor{\delta m_p}\}},
\]
de sorte que $R|X$ et $q_I|R^k$.
D'après le lemme~\ref{mot} il existe une constante $D\in\N^*$ et un mot $w \in F_{C_0}$ tels que pour tous $x_1,\dotsc,x_{C_0}$ dans $R\ssl_d(\ZZ)$, 
\[
e^{D(x_1+\dotsb+x_{C_0})} \equiv w(e^{x_1},\dotsc,e^{x_{C_0}}) \mod q_I.
\]
Cette égalité vaut en particulier si $x_i = \Ad(a_i)\cdot x$, avec $a_i \in A$.
Mais on a $e^{x_i}= \iota(a_i) \cdot e^x$ et par conséquent, si $C$ désigne la longueur du mot $w$,
\[
\exp(D \ssum_{C_0} \Ad(A) \cdot x) \subset \Pi_C (\iota(A)\cdot e^x) \mod q_I.
\]
Avec le lemme~\ref{isom}, on trouve bien
\begin{align*}
N(\ssum_{C_0}\Ad(A) \cdot x,q_I)
& \ll_D N(D\ssum_{C_0}A\cdot x,q_I)\\
& = N(\exp(D\ssum_{C_0}A\cdot x),q_I)\\
& \leq N(\Pi_C(\iota(A)\cdot e^x),q_I).
\end{align*}
Cela conclut la démonstration de la proposition puisque $D$ ne dépend que de $C_0$ et de $\delta$.
\end{proof}

\section{Croissance des ensembles de grande $\mu^{*n}$-masse}
\label{sec:croissance}

Nous voulons maintenant conclure la démonstration de la proposition~\ref{global}. 
Grâce aux résultats des parties~\ref{sec:sqf} et \ref{sec:torus}, le problème se ramène à la construction d'un élément $g$ dans un ensemble produit $A^C$ qui satisfait certaines congruences.
Pour cela, l'instrument principal est la propriété presque diophantienne de $\mu^{*n}$, mais la mise en place des différents paramètres est subtile.
Nous reprenons en grande partie l'argument de Bourgain et Varjú \cite[\S5]{bv}, avec quelques changements notables.

\bigskip

Dans toute la suite, $\mu$ désigne une probabilité symétrique à support fini sur $\SL_d(\Z)$, $\Gamma$ le sous-groupe engendré par le support de $\mu$, $G$ l'adhérence de Zariski de $\Gamma$ dans $\SL_d(\Z)$, et $\Omega$ l'adhérence de $\Gamma$ dans le groupe profini $\SL_d(\ZZ)$.
Le groupe algébrique $G$ est supposé simple, connexe, et simplement connexe.
On note $q$ un entier arbitraire, dont la décomposition en facteurs premiers s'écrit
\[
q=\prod_p p^{m_p},
\]
et $r$ son radical:
\[
r=\prod_{p|q} p.
\]
Pour tout ensemble de nombres premiers $I$, nous noterons
\[
q_I = \prod_{p \in I} p^{m_p} \quad \text{et} \quad r_I = \prod_{p\in I} p.
\]

\subsection{Les conditions de congruences}
\label{ss:xcond}


Les conditions de divisibilité convenables pour l'élément $g$ recherché sont celles de la proposition ci-dessous, qui sera démontrée dans les paragraphes suivants.

\begin{proposition}
\label{constructionx}
Étant donné $\tau > 0$, il existe des constantes $C > 1$ et $\delta > 0$ telles que l'assertion suivante soit vraie pour tout entier naturel $q$ suffisamment grand.
Si $n\geq C\log q$, et $A \subset \SL_d(\Z)$ est une partie symétrique vérifiant $\mu^{*n}(A) \geq q^{-\delta}$, alors il existe $g \in \Pi_C A$ vérifiant
\begin{equation}\label{xgrand}
\pgcd(q,g-1) \leq q^{O(\tau)} r
\end{equation}
et 
\begin{equation}\label{xpetit}
q_I \geq q^{1-O(\tau)}
\end{equation}
avec 
\[I = \bigl\{\, p \mid v_p(g-1) \geq \max\{1, \floor{\delta m_p}\}\,\bigr\}.\]
\end{proposition}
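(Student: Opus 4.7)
Le plan est de combiner les résultats des parties~\ref{sec:sqf} et~\ref{sec:torus}, selon la stratégie de Bourgain-Varjú~\cite{bv}. Dans un premier temps, j'appliquerais la proposition~\ref{sqf} avec $\eps_0$ de l'ordre de $\tau$, pour obtenir un diviseur $r'$ de $r$ avec $r' \geq q^{-\tau} r$ et $\pi_{r'}(\Pi_{C_0}A) = \Omega/\Omega_{r'}$. Cette surjection combinée au lemme~\ref{nosec2} appliqué à la partie $\Pi_{C_0}A$ modulo $r'$ produit un élément $z \in \Pi_{3C_0}A \cap \Omega_{r'}$ tel que $v_p(z-1) = 1$ exactement pour tous les facteurs premiers $p$ d'un diviseur $r'/r_0$ avec $r_0 \leq {r'}^{\tau}$. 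Ce contrôle précis empêchera par la suite la sur-divisibilité qui ferait exploser $\pgcd(q, g-1)$ dans la condition~\eqref{xgrand}.

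Ce premier élément $z$ suffit aux primes $p|r'/r_0$ où $\floor{\delta m_p} \leq 1$. Pour amplifier la valuation aux primes où $m_p$ est plus grand, je passerais par l'algèbre de Lie : par les lemmes~\ref{isom} et~\ref{lm:logg}, on écrit $z = \exp(y)$ avec $y \in \g(\Z)$ satisfaisant $v_p(y) = v_p(z-1)$ pour $p|r'$. La proposition~\ref{irrep}, appliquée à la représentation adjointe $\Ad \colon \Gamma \to \GL(\g(\R))$ et au vecteur $y$, donne alors pour une constante $C_1$ adéquate
\[
N\bigl(\ssum_{C_1}\Ad(A)\cdot y,\, q\bigr) \;\gg\; q^{-O(\delta)}\left(\frac{q}{\pgcd(q,y)}\right)^{\dim G},
\]
et cet ensemble couvre une grande proportion de $\g(\Z)/q\g(\Z)$ puisque $\pgcd(q,y) \leq q^{O(\tau)}r$ d'après les étapes précédentes.

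On extrait alors, par un argument combinatoire de type union bound sur les facteurs premiers de $q$, un élément $y' = \sum_{i=1}^{C_1} \Ad(a_i)\cdot y$ dans $\ssum_{C_1}\Ad(A)\cdot y$ vérifiant simultanément $v_p(y') \geq \floor{\delta m_p}$ pour $p$ dans un ensemble $I$ avec $q_I \geq q^{1-O(\tau)}$, et $\pgcd(q, y') \leq q^{O(\tau)}r$. La proposition~\ref{exp} (via le lemme~\ref{mot}) fournit enfin un mot $w$ de longueur bornée et une constante $D \in \N^*$ tels que
\[
w(a_1 z a_1^{-1},\ldots,a_{C_1} z a_{C_1}^{-1}) \equiv \exp(D y') \mod q_I,
\]
et l'élément $g = w(a_1 z a_1^{-1},\ldots,a_{C_1} z a_{C_1}^{-1})$, qui appartient à $\Pi_C A$ pour une constante $C$ bornée, satisfait les deux conditions~\eqref{xgrand} et~\eqref{xpetit} voulues.

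L'obstacle principal sera l'argument combinatoire d'extraction de $y'$ : il faut concilier la \emph{grande valuation} aux primes de $I$ (nécessaire à~\eqref{xpetit}) et la \emph{sur-divisibilité bornée} partout (nécessaire à~\eqref{xgrand}), en coordonnant ces exigences à travers tous les facteurs premiers de $q$ dont les $m_p$ peuvent varier considérablement, et en calibrant les paramètres $\delta$, $\eps$ et $\eps_0$ en fonction de $\tau$.
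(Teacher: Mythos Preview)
Your approach diverges from the paper's in a way that leaves a genuine gap. The paper does \emph{not} start from the square-free side; it begins with the almost Diophantine property (Proposition~\ref{presquedioph}, packaged as Lemma~\ref{constructionx0}) to produce an element $g_0 \in AA$ that \emph{already} satisfies $q_\delta \mid g_0 - 1$ together with $\pgcd(q,g_0-1) \leq q^{\alpha\gamma}$. The square-free machinery (Proposition~\ref{sqf} and Lemma~\ref{nosec2}) and Proposition~\ref{xremplit} are then used only to \emph{correct} $g_0$ at the primes where $\floor{\delta m_p} = 0$, and the paper needs an intricate three-step correction ($g_0 \to g_1 \to g_2 g_3$) plus an iterated choice of parameters (the loop on $L$) to keep both conditions simultaneously under control.

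Your proposal instead starts from an element $z$ with $v_p(z-1) = 1$ and tries to \emph{amplify} to valuation $\floor{\delta m_p}$ via the adjoint action. This is where the gap lies: every element of $\ssum_{C_1}\Ad(A)\cdot y$ has $p$-adic valuation $\geq v_p(y)$, and reaching $\floor{\delta m_p} \gg 1$ requires cancellation that Proposition~\ref{irrep} does not by itself furnish. A pigeonhole on $\ssum_{C_1}\Ad(A)\cdot y$ modulo $q_\delta$ would produce a difference $y' = y_1 - y_2$ with $q_\delta \mid y'$, but then you have no mechanism to prevent $y' \equiv 0 \pmod{s}$ for some large $s \mid q$, which would ruin \eqref{xgrand}. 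Controlling this over-divisibility is exactly what the almost Diophantine bound $\mu^{*2n}(\Omega_s) \ll s^{-c}$ does in the paper's argument, and you never invoke it. Your acknowledged ``obstacle principal'' is therefore not a technical calibration issue but the missing idea: without Lemma~\ref{constructionx0} (or an equivalent non-concentration input) the extraction you describe cannot be carried out by a union bound alone. A secondary issue is that your final congruence $g \equiv \exp(Dy') \pmod{q_I}$ gives no information about $v_p(g-1)$ at primes $p \notin I$, so even granting the extraction, \eqref{xgrand} is not verified.
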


Admettant ce résultat pour le moment, nous pouvons aisément démontrer la proposition~\ref{global}.

\begin{proof}[Démonstration de la proposition~\ref{global}]
Soit $g \in \Pi_C A$ et $I$ l'ensemble de premiers donnés par la proposition~\ref{constructionx}.
Si $2 \in I$ et $\delta m_2 < 2$, on peut remplacer $I$ par $I\setminus\{2\}$ pour avoir 
\[\forall p \in I,\quad v_p(g  - 1) \geq \max\{1 + \delta_2(p), \floor{\delta m_p}\},\]
tout en gardant les conditions \eqref{xgrand} et \eqref{xpetit}, si $q$ est suffisamment grand.
La proposition~\ref{xremplit}, le lemme~\ref{isom} et le lemme~\ref{lm:logg} nous donnent alors 
\[
N(\Pi_C (\iota(A)\cdot g),q_I) \geq q^{-O(\tau)} \left(\frac{q_I}{r_I}\right)^{\dim G} \geq  q^{-O(\tau)} N(\Omega_{r_I},q_I).
\]
Or, $\Pi_C (\iota(A)\cdot g) \subset (\Pi_{C'} A) \cap\Omega_{r_I}$, car $g \in (\Pi_C A) \cap \Omega_{r_I}$.
Quitte à ajuster la valeur de $C$, on trouve donc
\[
N((\Pi_C A) \cap\Omega_{r_I}, q_I)
\geq q^{-O(\tau)}  N(\Omega_{r_I},q_I).
\]
Mais d'autre part, comme $r_I$ est sans facteur carré, d'après le lemme~\ref{lm:pirA},
\[
N( \Pi_C A,r_I) \geq q^{-O(\eps)} N(\Omega,r_I).
\]
Mis bout à bout, cela montre
\begin{align*}
N(\Pi_{2C}A, q_I) &\geq N(\Pi_C A,r_I) N( (\Pi_C A) \cap\Omega_{r_I},q_I)\\
&\geq q^{-O(\tau)} N(\Omega,r_I) N(\Omega_{r_I},q_I)\\
&\geq q^{-O(\tau)} N(\Omega,q_I),
\end{align*}
Comme $q_I \geq q^{1-O(\tau)}$, et donc $N(\Omega,q_I) \geq q^{-O(\tau)} N(\Omega,q)$, cela démontre la proposition~\ref{global}.
\end{proof}

\subsection{Propriété presque diophantienne.}

Pour démontrer la proposition~\ref{constructionx}, nous utiliserons une propriété importante de non concentration pour la loi $\mu^{*n}$ de la marche aléatoire au temps $n$.

\begin{proposition}[Propriété presque diophantienne]
\label{presquedioph}
Soit $\mu$ une probabilité symétrique sur $\SL_d(\Z)$ dont le support est fini et engendre un sous-groupe $\Gamma$ non moyennable.
Il existe des constantes $C > 0$ et $c > 0$ telles que pour tout entier $q \in \N^*$,
\[
\forall n \geq C \log q,\quad \mu^{*n}(\Omega_q) \leq C q^{-c}.
\]
\end{proposition}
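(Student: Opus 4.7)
Le plan est de combiner la non moyennabilité de $\Gamma$, via le théorème de Kesten, avec une estimation de Cauchy--Schwarz et un décompte de points entiers dans les boules de $\gl_d(\Z)$.

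Plus précisément: comme $\Gamma$ n'est pas moyennable et $\mu$ est symétrique à support fini engendrant $\Gamma$, le théorème de Kesten donne $\rho := \normop{T_\mu} < 1$ sur $\ell^2(\Gamma)$, et donc $\mu^{*2n}(e) = \norm{\mu^{*n}}_2^2 \leq \rho^{2n}$. L'inégalité de Cauchy--Schwarz, appliquée à la fonction indicatrice de $\Gamma_q := \Gamma \cap \Omega_q$, fournit alors
\[
\mu^{*n}(\Omega_q)^2 \leq \abs{\Gamma_q \cap \Supp \mu^{*n}}\cdot\mu^{*2n}(e) \leq \abs{\Gamma_q \cap \Supp\mu^{*n}}\cdot\rho^{2n}.
\]
Comme $\Supp\mu$ est fini, $\Supp\mu^{*n}$ est inclus dans la boule $\{\norm{g}\leq M^n\}$ avec $M=\max_{g\in\Supp\mu}\norm{g}$, et un élément $I+qN \in \Gamma_q$ satisfaisant cette borne vérifie $\norm{N}\ll M^n/q$, d'où $\abs{\Gamma_q \cap \Supp\mu^{*n}}\ll(M^n/q)^{d^2}$ pour $q\leq M^n$. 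On en déduit la borne
\[
\mu^{*n}(\Omega_q) \ll q^{-d^2/2}(M^{d^2/2}\rho)^n.
\]

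Si $M^{d^2/2}\rho < 1$, le résultat est immédiat en prenant $n\geq C\log q$ avec $C$ assez grand. La principale difficulté est que cette inégalité de balance grossière peut échouer en général, puisque le théorème de Kesten ne fournit que $\rho<1$ sans contrôle quantitatif par rapport à $M$. Pour la surmonter, je raffinerais l'argument en deux temps: d'une part, remplacer la borne grossière $M^n$ par la borne de Liapounoff $e^{(\lambda_1+\eta)n}$, valide hors d'un événement de probabilité $\leq e^{-cn}$ par le principe de grandes déviations pour les produits de matrices aléatoires dû à Le Page et Guivarc'h; d'autre part, améliorer le décompte en utilisant l'approximation forte (Matthews--Vaserstein--Weisfeiler, Nori, rappelée dans l'appendice) et une estimation de volume dans $G(\R)$ pour obtenir la borne $\abs{\Gamma_q \cap B(R)}\ll (R/q)^{\dim G}$, le facteur $q^{-\dim G}$ provenant de l'indice $[\Gamma:\Gamma_q]\asymp q^{\dim G}$. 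La condition de balance raffinée devient alors $e^{\lambda_1\dim G/2}\rho<1$, qui peut être établie via les inégalités d'entropie de Ledrappier--Raugi combinées à la formule de Avez $h(\mu)\leq -\log\rho$ pour les marches aléatoires sur les groupes non moyennables.
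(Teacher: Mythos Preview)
Your initial steps are correct: Kesten gives $\max_g \mu^{*n}(g) \le \rho^n$ with $\rho<1$, and Cauchy--Schwarz combined with a lattice-point count yields the bound $\mu^{*n}(\Omega_q)\ll q^{-d^2/2}(M^{d^2/2}\rho)^n$. You also correctly identify the difficulty: the factor $M^{d^2/2}\rho$ need not be $<1$.

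However, the proposed fix does not work. The inequality $e^{\lambda_1\dim G/2}\rho<1$ that you aim for is simply false in general. The spectral radius $\rho$ depends only on the abstract Cayley graph of $(\Gamma,\Supp\mu)$, whereas $\lambda_1$ depends on the linear embedding: if $a,b$ generate a free subgroup of $\SL_2(\Z)$ and $\mu$ is uniform on $\{a^{\pm1},b^{\pm1}\}$, then $\rho=\sqrt{3}/2$ regardless of the matrices, while $\lambda_1$ can be made arbitrarily large by replacing $a,b$ with high powers or by conjugating. No entropy inequality of Ledrappier--Raugi or Avez type can produce a universal bound of the form $-\log\rho>\tfrac{\dim G}{2}\lambda_1$, because the two sides scale differently under such changes.

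The paper's argument avoids the balance problem entirely, and the trick is elementary. One fixes the time $m=\lfloor\log q/(2\log M)\rfloor$, so that $\Supp\mu^{*2m}$ lies in the ball of radius $<q$; since two distinct points of a coset $g\Omega_q$ differ by a matrix of norm $\ge q$, each coset meets $\Supp\mu^{*m}$ in at most one point. Hence
\[
\sup_{g}\mu^{*m}(g\Omega_q)\le \max_{h}\mu^{*m}(h)\ll \rho^{m}\ll q^{-c},\qquad c=\frac{-\log\rho}{2\log M}.
\]
For $n\ge m$ one then writes $\mu^{*n}(\Omega_q)=\sum_g\mu^{*(n-m)}(g^{-1})\,\mu^{*m}(g\Omega_q)\le\sup_g\mu^{*m}(g\Omega_q)$, and the same bound follows. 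In other words, rather than counting lattice points in $\Gamma_q$ at the target time $n$, one freezes the estimate at the time $m$ where the count is trivially $\le 1$, and propagates it forward by convolution. Your Cauchy--Schwarz and large-deviation machinery is unnecessary.
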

\begin{proof}
Comme $\Gamma$ est non moyennable, il existe $c' > 0$ tel que
\[\forall n \geq 1,\quad \max_{g \in \Gamma} \mu^{*n}(g) \ll e^{-c'n}.\]
Posons $M = \max_{g \in \Supp(\mu)} \norm{g}$ et $m = \lfloor \frac{\log q}{2\log M}\rfloor$.
Nous avons, pour tout $g \in \Supp(\mu^{*2m})$, $\norm{g} < q$.
Donc $\Supp(\mu^{*2m}) \cap \Omega_q = \{1\}$ et 
par conséquent, pour tout $g \in \Gamma$, $\Supp(\mu^{*m}) \cap g\Omega_q$ contient au plus un élément. 
Par suite,
\[\sup_{g\in \Gamma} \mu^{*m}(g\Omega_q) \leq  \max_{g \in \Gamma} \mu^{*m}(g) \ll e^{-c'm} \ll q^{-c}\]
avec $c = \frac{c'}{2\log M}$.
Enfin, pour tout entier $n \geq m$,
\[\mu^{*n}(\Omega_q) = \sum_{g\in \Gamma} \mu^{*(n-m)}(g^{-1}) \mu^{*m}(g\Omega_q) \leq \sup_{g \in \Gamma} \mu^{*m}(g\Omega_q) \ll q^{-c}.\]
\end{proof}

Pour $\delta\in]0,1[$, nous noterons 
\[
q_\delta = \prod p^{\floor{\delta m_p}}.
\]
À l'aide de la propriété presque diophantienne, il est facile de trouver un élément $g \in \Pi_C A$ vérifiant \eqref{xgrand} et $q_\delta | g - 1$ à la place de \eqref{xpetit}.

\begin{lemma}
\label{constructionx0}
Étant donné $\tau > 0$, il existe des constantes $C, \delta > 0$ telles que l'assertion suivante soit vraie.
Si $n\geq C\log q$ et $A \subset \SL_d(\Z)$ est une partie vérifiant $\mu^{*n}(A) \geq q^{-\delta}$, alors il existe un élément $g_0 \in AA$ tel que
\begin{equation*}
\pgcd(q, g_0 - 1) \leq q^{\tau}
\quad \text{et} \quad 
q_\delta |g_0 -1.
\end{equation*}
\end{lemma}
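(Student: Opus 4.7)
Le plan est de procéder par un argument probabiliste. On tire un couple $(a,b) \in A \times A$ indépendamment selon la mesure de probabilité $\nu_A = \mu^{*n}|_A / \mu^{*n}(A)$, et on cherche à vérifier qu'avec probabilité strictement positive, l'élément $g_0 = ab \in AA$ satisfait simultanément les deux conditions requises.

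Pour la première condition $q_\delta | g_0 - 1$, on examine la projection $\bar\nu = \pi_{q_\delta *}\nu_A$ sur le groupe fini $H = \Omega/\Omega_{q_\delta}$. Quitte à supposer $A$ symétrique (ce qui est toujours le cas dans l'application à la proposition~\ref{constructionx}), la symétrie de $\mu$ entraîne $\bar\nu(c) = \bar\nu(c^{-1})$ pour tout $c \in H$, et l'inégalité de Cauchy-Schwarz donne
\[
\PP[ab \in \Omega_{q_\delta}] = \sum_{c\in H} \bar\nu(c) \bar\nu(c^{-1}) = \sum_{c\in H} \bar\nu(c)^2 \geq \frac{1}{\abs{H}} \gg q^{-\delta \dim G},
\]
puisque $\abs{H} \ll q_\delta^{\dim G} \leq q^{\delta \dim G}$.

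Pour la seconde condition $\pgcd(q, g_0-1) \leq q^\tau$, on applique la proposition~\ref{presquedioph} à $\mu^{*2n}$, ce qui reste valide dès que $n \geq C\log q$ pour une constante $C$ assez grande. Pour chaque diviseur $d$ de $q$, cela fournit
\[
\PP[ab \in \Omega_d] \leq \frac{\mu^{*2n}(\Omega_d)}{\mu^{*n}(A)^2} \ll q^{2\delta}\, d^{-c},
\]
et une borne d'union sur les diviseurs $d | q$ vérifiant $d > q^\tau$, combinée au fait que le nombre de diviseurs de $q$ est au plus $q^{o(1)}$, donne
\[
\PP[\pgcd(q,ab-1) > q^\tau] \ll q^{o(1) + 2\delta - c\tau}.
\]
En choisissant $\delta$ suffisamment petit par rapport à $\tau$, par exemple $\delta = c\tau/(3\dim G)$, le minorant $q^{-\delta\dim G}$ domine strictement le majorant $q^{o(1)+2\delta - c\tau}$ pour $q$ assez grand, si bien qu'il existe un couple $(a,b) \in A \times A$ satisfaisant les deux conditions simultanément. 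On pose alors $g_0 = ab$.

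Le point délicat est précisément cet équilibrage de $\delta$: la borne presque diophantienne se dégrade d'un facteur $q^{2\delta}$ à cause du normalisateur $\mu^{*n}(A)^{-2}$, tandis que l'inégalité de Cauchy-Schwarz ne fournit qu'une borne inférieure en $q^{-\delta\dim G}$ pour la condition de divisibilité. Le paramètre $\delta$ doit donc être strictement plus petit qu'une constante multiplicative de $\tau$, ce qui fixe la valeur admissible de $\delta$ en fonction de $\tau$, $\dim G$ et de la constante $c$ issue de la proposition~\ref{presquedioph}.
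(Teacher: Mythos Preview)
Your proof is correct and follows essentially the same approach as the paper: a Cauchy--Schwarz lower bound for the event $ab \in \Omega_{q_\delta}$, the almost Diophantine estimate (Proposition~\ref{presquedioph}) plus a divisor-sum union bound for the event $\pgcd(q,ab-1)>q^\tau$, and a choice of $\delta$ small enough in terms of $\tau$ to make the first probability beat the second. The paper phrases the same argument in terms of the measure $\mu^{*2n}$ on $AA\cap\Omega_{q_\delta}$ rather than the conditioned law $\nu_A\otimes\nu_A$, and uses the cruder bound ${[\Omega:\Omega_{q_\delta}]}\leq q^{d^2\delta}$ in place of your $q^{\delta\dim G}$, but the two are the same computation. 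Your explicit remark that $A$ must be taken symmetric is apt: the paper's inequality $\mu^{*2n}(AA\cap\Omega_{q_\delta})\geq\sum_g\mu^{*n}(A\cap g\Omega_{q_\delta})^2$ also silently relies on it.
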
 

\begin{proof}
D'une part, par l'inégalité de Schwarz, et avec la symétrie de $\mu$ et de $A$,
\begin{align*}
\mu^{*2n}(AA\cap\Omega_{q_\delta}) & \geq \sum_{g \in \Omega/\Omega_{q_\delta}} \mu^{*n}(A \cap g \Omega_{q_\delta})^2\\
& \geq {[\Omega : \Omega_{q_\delta}]}^{-1} \Bigl(\sum_{g \in \Omega/\Omega_{q_\delta}} \mu^{*n}(A \cap g \Omega_{q_\delta}) \Bigr)^2\\
& \geq {[\Omega : \Omega_{q_\delta}]}^{-1} \mu^{*n}(A)^2.
\end{align*}
On majore l'indice ${[\Omega : \Omega_{q_\delta}]}$ simplement par $q_\delta^{d^2} \leq q^{d^2\delta}$, d'où l'on tire
\[
\mu^{*2n}(AA\cap\Omega_{q_\delta}) \geq q^{-(d^2+2)\delta}.
\]
D'autre part, par le lemme~\ref{presquedioph}, il existe $c > 0$ dépendant seulement de $\mu$ tel que 
\[
\sum_{s|q \text{ et } s \geq q^\tau} \mu^{*2n}(\Omega_s) \leq \sum_{s|q \text{ et } s \geq q^\tau} s^{-c} 
\leq \sum_{s|q} q^{-c\tau} 
\leq q^{-\frac{c\tau}{2}}.
\]
Si $\delta > 0$ est choisi tel que  $(d^2+2)\delta < \frac{c\tau}{2}$, l'ensemble  $(AA\cap\Omega_{q_\delta}) \setminus \bigcup_{s|q \text{ et } s \geq q^\tau} \Omega_s$ est nécessairement non vide.
\end{proof}

\subsection{Construction de l'élément $g$}

Pour aider à la compréhension de la démonstration un peu technique qui va suivre, nous commençons par en donner une interprétation plus imagée.
Pour cela, nous représentons l'entier $q=\prod p^{m_p}$ sous la forme du graphe de la fonction $p\mapsto m_p$.
Le support de cette fonction est l'ensemble $\cP$ des diviseurs premiers de $q$, que l'on munit de la mesure $\nu$ définie par $\nu(p)=\log p$.
Ensuite, à un élément $g\in\Omega$ nous associons la fonction $f$ définie sur $\cP$ par $f(p)=v_p(g-1)$.
En termes de la fonction $f$, les conditions de la proposition~\ref{constructionx} deviennent grosso modo:
\begin{enumerate}
\item \label{un} $\frac{2}{\delta}f(p)\geq m_p$ pour tout $p$ (hors d'un ensemble exceptionnel $^cI$ tel que $\int_{^cI}m_p\dd\nu(p)\leq\tau\log q$);
\comm{Cette condition permet d'appliquer la formule de Campbell-Hausdorff à un odre borné par $\frac{2}{\delta}$, en arrivant à la précision donnée par $q$.}
\item $\int f(p)\dd\nu(p) \leq \log r + \tau\log q = \int 1\dd\nu(p)+\tau\int m_p\dd\nu(p)$.
\comm{Cela nous permet de récupérer à partir de $g$, via la proposition~\ref{xremplit}, la quasi-totalité de $q$, excepté bien sûr le radical $r$ inaccessible par cette méthode.}
\end{enumerate}

La construction de l'élément $g$ se fait en trois étapes, illustrées dans la figure~\ref{vp}, et décrites grossièrement comme suit.
\begin{enumerate}
\item [(a)] La proposition~\ref{presquedioph} donne l'existence d'un élément $g_0$ tel que la fonction $f_0$ associée satisfasse $f_0(p)\geq\lfloor\delta m_p\rfloor$ et $\int f_0(p)\dd\nu(p) \leq \log r + \tau\log q$.
\item [(b)] Pour avoir la condition~\ref{un}, on doit corriger $g_0$ aux places $p$ où $\lfloor\delta m_p\rfloor=0$. Cela se fait à l'aide de la proposition~\ref{sqf}. On obtient un élément $g_1$ dont la fonction associée $f_1$ vérifie $f_1(p)=f_0(p)$ si $\delta m_p\geq 1$, et pour tout $p\in\cP,\ f_1(p)\geq 1$.
\item [(c)] Malheureusement, en passant de $g_0$ à $g_1$ on perd la propriété $\int f_1(p)\dd\nu(p)\leq\log r+\tau\log q$. Il faut donc réduire la valuation $p$-adique de $g_1-1$ aux places $p$ telles que $\delta m_p<1$. Cela se fait à l'aide du lemme~\ref{nosec2}, et permet d'obtenir l'élément $g$ désiré.
\end{enumerate}

\begin{figure}[H]
\begin{center}
\begin{tikzpicture}
\draw[->] (-.5,0) -- (10,0) node[anchor=north]{\tiny{$p$}};
\draw[->] (0,-.5) -- (0,5) node[anchor=east]{\tiny{$m$}};

\draw[gray,-] (-.2,.5) node[anchor=east]{\tiny{1}} -- (9.6,.5);
\draw[black, thick] (0,2) -- (2,2) -- (2,1.5) -- (4,1.5) -- (4,3) -- (5,3) -- (5,5) -- (7,5) -- (7,4) -- (8,4) -- (8,1.6) -- (9.9,1.6) node[anchor=west]{\tiny{$q:m=m_p$}};
\draw[red, thick] (0,.46) -- (2,.46) -- (2,0) -- (4,0) -- (4,.75) -- (5,.75) -- (5,1.2) -- (7,1.2) -- (7,1) -- (8,1) -- (8,0) -- (9.9,0) node[anchor=west]{\tiny{$g_0:m=\lfloor\delta m_p\rfloor$}};
\draw[green!90!red, thick] (0,.5) -- (2.04,.5) -- (2.04,.5) -- (3.96,.5) -- (3.96,.79) -- (4.96,.79) -- (4.96,1.24) -- (7.04,1.24) -- (7.04,1.04) -- (8.04,1.04) -- (8.04,.5) -- (9.9,.5) node[anchor=west]{\tiny{$g:m=\max(1,\lfloor\delta m_p\rfloor)$}};
\draw[blue, thick] (0,.54) -- (2.3,.54) -- (2.3, 2) -- (3,2) -- (3, .83) -- (3.92,.83) -- (4.92,.83) -- (4.92,1.28) -- (7.08,1.28) -- (7.08,1.08) -- (8.08,1.08) -- (8.08,.54) -- (8.4,.54) -- (8.4,1.5) -- (9,1.5) -- (9,1) -- (9.9,1) node[anchor=west]{\tiny{$g_1:m\geq \max(1,\lfloor\delta m_p\rfloor)$}};
\end{tikzpicture}
\caption{Valuations des éléments $g_0$, $g_1$, $g$.}
\label{vp}
\end{center}
\end{figure}
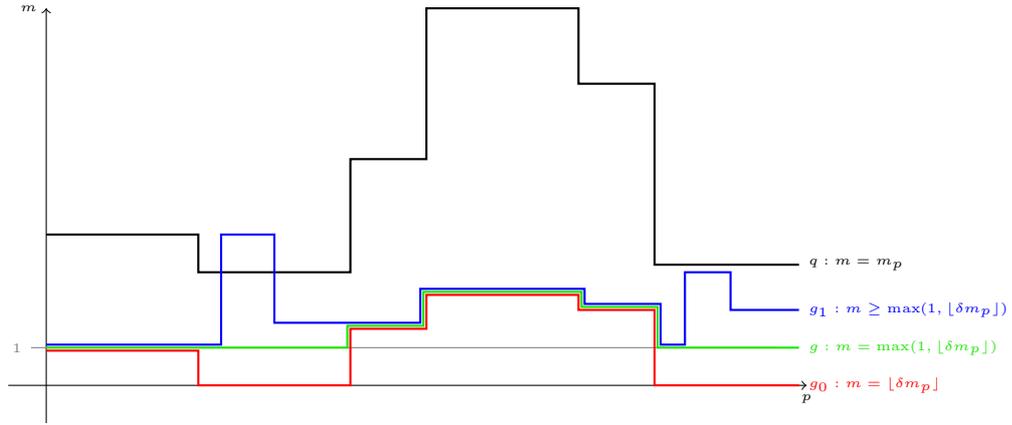

Nous donnons enfin la démonstration rigoureuse de la proposition~\ref{constructionx}. 
Soit $\tau>0$ fixé.
La construction de $g$ dépend de certaines quantités $\alpha > \beta > \gamma> \delta>0$.
Pour le choix de ces quantités, nous utilisons un paramètre auxiliaire $L \in \N$, et procédons en plusieurs étapes.
Au départ, $L=1$, et ensuite, à chaque tentative infructueuse, nous ajustons la valeur de $L$.
Il est important de noter que si la valeur finale de $L$ dépend de $q$, elle est toutefois bornée indépendamment de $q$.

Dans cette démonstration, $C$ désigne une constante dépendant de $\mu$ et de $\tau$ et $A \subset \SL_d(\Z)$ est une partie symétrique satisfaisant $\mu^{*n}(A)\geq q^{-\delta}$ pour certain $n \geq C \log q$.

\begin{enumerate}
\item \label{it:1}
 Choisissons $\alpha>0$ tel que $L \alpha < \tau$.
\item \label{it:2}
 Choisissons $\beta > 0$ tel que, pour $n \geq C \log q$, si $A' \subset \SL_d(\Z)$ est une partie symétrique et $\mu^{*n}(A')\geq q^{-\beta}$, alors il existe $r'|r$ avec $r' \geq q^{- \alpha}r$ tel que 
 \[\pi_{r'}(\Pi_C A') = \Omega/\Omega_{r'}.\]
Cela est possible, d'après la proposition~\ref{sqf}.
\item \label{it:3}
 Choisissons $\gamma>0$ tel que $\mu^{*2n}(AA\cap \Omega_{q_\gamma}) \geq q^{-\beta}$.
Il suffit de prendre $\gamma = \frac{\beta}{d^2+2}$. La preuve est identique à la première moitié de la démonstration du lemme~\ref{constructionx0}.
\item \label{it:4}
Choisissons $\delta > 0$ tel qu'il existe $g_0$ dans $AA$ satisfaisant $q_{\delta} | g_0 - 1$ tandis que $\pgcd(q,g_0 - 1) \leq q^{\alpha \gamma}$.
Cela est possible, d'après le lemme~\ref{constructionx0} appliqué avec $\tau=\alpha \gamma$.
\item Si $\prod_{L<p\leq 1/\delta} p^{m_p} \leq q^{\tau}$, les quantités $\alpha$, $\beta$, etc. sont fixées pour le restant de la démonstration. Sinon, nous reprenons la construction ci-dessus en partant de $L=\floor{1/\delta}$.
\end{enumerate}

Vue la condition $\prod_{L<p\leq 1/\delta} p^{m_p} \geq q^{\tau}$ obtenue en cas d'échec, le nombre de tentatives nécessaires pour conclure cette procédure est majoré, indépendamment de $q$, par $1/\tau$.
Cela montre que la quantité finale $\delta$ est minorée par une constante indépendante de $q$.

Partons de l'élément $g_0$ obtenu au point~\ref{it:4}, qui satisfait
\begin{equation*}
\forall p |q, \quad v_p(g_0 - 1) \geq \floor{\delta m_p}
\end{equation*}
et 
\begin{equation}
\label{x0grand}
\pgcd(q,g_0 - 1) \leq q^{\alpha \gamma}.
\end{equation}

Nous devons en premier lieu faire en sorte que $g-1$ soit divisible par presque tous les facteurs premiers de $q$, afin qu'il puisse vérifier la propriété \eqref{xpetit}.
Avec le point \ref{it:3}, le point \ref{it:2} appliqué à $A' = AA\cap \Omega_{q_\gamma}$ montre que l'on peut trouver $r = r' r_0$ avec $r_0 \leq q^{\alpha}$ tel que 
\begin{equation}
\label{r'plein}
\pi_{r'}( (\Pi_C A)\cap\Omega_{q_\gamma})= \Omega/\Omega_{r'}.
\end{equation}
En particulier il existe $g_0' \in (\Pi_C A) \cap \Omega_{q_\gamma}$ tel que 
\[g_0'\equiv g_0\mod r'.\]
Posons $g_1=g_0'g_0^{-1}$, et montrons que $g_1$ satisfait
\begin{equation}
\label{x1petit}
q_{I_1} \geq q^{1- O(\tau)},
\end{equation}
où
\[
I_1 = \bigl\{\, p \mid v_p(g_1-1) \geq \max\{1, \floor{\delta m_p}\} \,\bigr\}.
\]
Tout d'abord, $g_1 \in \Omega_{q_\delta} \cap \Omega_{r'}$, et donc, pour tout $p |q$, $p \notin I_1$ implique $p| r_0$ et $\delta m_p < 1$, d'où
\[
\frac{q}{q_{I_1}} = \prod_{p \notin I_1} p^{m_p} \leq \prod_{p| r_0 \text{ et } \delta m_p<1} p^{m_p}.
\]
En séparant les premiers $p$ selon $m_p \leq L$ ou non, on trouve
\[
\frac{q}{q_{I_1}} \leq r_0^L  \prod_{L < m_p \leq 1/\delta} p^{m_p} \leq q^{L\alpha + \tau} \leq q^{O(\tau)},
\]
ce qui démontre \eqref{x1petit}.
Il ne nous resterait donc qu'à montrer l'inégalité $\pgcd(q,g_1-1)\leq q^{O(\tau)}r$.
Malheureusement, l'élément $g_1$ n'a pas de raison de vérifier cette propriété, à cause des nombres premiers $p$ tels que $\gamma m_p<1$ et $v_p(g_1-1)\geq 2$.
La suite de la démonstration a pour but de corriger $g_1$ en ces places ; cela va nous contraindre à un petit détour.

Soit 
\[ J = \{\, p \mid \gamma m_p \geq 1 \text{ ou } v_p(g_1 - 1) \leq 1 \,\}.\]
Montrons que
\begin{equation}
\label{x1grand}
\pgcd(q_J, g_1-1)  \leq q^{O(\alpha)} r_J.
\end{equation}
On peut écrire $J = J_1 \cup J_2 \cup J_3$ avec 
\[J_1 = \{\, p \mid \gamma m_p \geq 1 \text{ et } v_p(g_0 - 1) \geq \floor{\gamma m_p} \,\},\]
\[J_2 = \{\, p \mid \gamma m_p \geq 1 \text{ et } v_p(g_0 - 1) < \floor{\gamma m_p}\} \,\]
et 
\[J_3 = \{\, p \mid v_p(g_1 - 1) \leq 1 \,\}.\]
Remarquons que si $\gamma m_p \geq 1$ alors $\floor{\gamma m_p} \geq \frac{\gamma}{2} m_p$. En particulier, pour tout $p \in J_1$, $m_p \leq \frac{2}{\gamma}v_p(g_0 - 1)$. Donc, avec \eqref{x0grand},
\[q_{J_1} \leq \prod_{p \in J_1} p^{\frac{2}{\gamma}v_p(g_0 - 1)} \leq \pgcd(q,g_0 - 1)^{\frac{2}{\gamma}} \leq q^{2\alpha}.\]
Pour tout $p \in J_2$, $v_p(g_0-1) < v_p(g_0'-1)$ donc $v_p(g_1 - 1) = v_p(g_0 - 1) < \floor{\gamma m_p}$. Donc
\[\pgcd(q_{J_2},g_1-1) \leq q_{J_2}^\gamma \leq q^\alpha.\] 
Par la définition de $J_3$, 
\[\pgcd(q_{J_3},g_1-1) \leq r_{J_3} \leq r_J.\]
Enfin,  on trouve \eqref{x1grand} en combinant les trois inégalités ci-dessus avec la suivante
\[
\pgcd(q_J,g_1-1) \leq q_{J_1} \pgcd(q_{J_2},g_1-1) \pgcd(q_{J_3},g_1-1).
\]
La proposition~\ref{xremplit} avec \eqref{x1grand} montre donc que
\begin{equation}\label{jplein}
N \bigl( \Pi_C (\iota(A) \cdot g_1), q_J \bigr) 
\geq q^{-O(\alpha)} \left(\frac{q_J}{r_J}\right)^{\dim G}.
\end{equation}
Par ailleurs, l'égalité \eqref{r'plein} et le lemme~\ref{nosec2} appliqué à $r'$ montrent qu'on peut écrire $r'= r_1 r''$ avec $r_1 \leq q^{\alpha}$ tel qu'il existe $g_2$ dans $(\Pi_C A) \cap \Omega_{q_\gamma}$ vérifiant
\[g_2 \in \Omega_{r'} \quad \text{ et } \quad \forall p|r'',\ v_p(g_2-1) = 1.\]
L'inégalité \eqref{jplein} permet de choisir $g_3$ dans $\Pi_C (\iota(A)\cdot g_1)$ tel que $g=g_2g_3$ vérifie $\pgcd(q_J,g - 1)\leq q^{O(\alpha)}r_J$.
En effet, 
\[
N(g_2 \Pi_C(\iota(A)\cdot {g_1}),q_J) \geq q^{-O(\alpha)} \left(\frac{q_J}{r_J}\right)^{\dim G},
\]
tandis que pour tout entier $1 \leq s \leq q_J$,
\[
N(\{\, g \in \Omega \mid \pgcd(g - 1,q_J)\geq s \,\},q_J) \ll (\log q) \left(\frac{q_J}{s}\right)^{\dim G}.
\]

Par ailleurs, $p\notin J$ signifie $\gamma m_p < 1$ et $g_1 \in \Omega_{p^2}$ donc $g_3 \in \Omega_{p^2}$.
Si de plus $p| r''$, alors $v_p(g_2-1) = 1$ donc $v_p(g - 1) = 1$.
Ainsi,
\begin{align*}
\pgcd(q, g - 1)
& \leq \pgcd(q_J, g - 1) \biggl(\prod_{p\not\in J \text{ et } p |r''} p\biggr) \biggl( \prod_{p |r_0r_1 \text{ et } \gamma m_p < 1} p^{m_p}\biggr)\\
& \leq q^{O(\alpha)}r_J \biggl(\prod_{p\not\in J \text{ et } p |r} p\biggr) (r_0r_1)^{L} \biggl( \prod_{L < m_p \leq 1/\gamma} p^{m_p} \biggr)\\
& \leq q^{O(\alpha)} r q^{2L\alpha} q^\tau = q^{O(\tau)}r.
\end{align*}
Cela montre que $x$ vérifie la condition~\eqref{xgrand}.

D'autre part, on a $g_1 \in \Omega_{q_\delta} \cap \Omega_r$ et $g_2 \in \Omega_{q_\gamma} \cap \Omega_{r'}$.
Donc $g_3 \in \Omega_{q_\delta} \cap \Omega_r$ puis $g \in \Omega_{q_\delta} \cap \Omega_{r'}$.
Avec le même argument que celui pour \eqref{x1petit}, on montre que $g$ vérifie la condition~\eqref{xpetit}.


\appendix

\section{Approximation dans les groupes semi-simples}
\label{sec:prelim}

Nous résumons dans cet appendice les résultats obtenus par Matthews, Vaserstein et Weisfeiler \cite{MVW} et Nori \cite{Nori} sur l'approximation dans les groupes algébriques simples, ainsi que quelques autres propriétés que nous avons utilisées dans le corps de l'article.
Étant donné un sous-groupe $\Gamma$ dans $\SL_d(\Z)$, on s'intéresse à son adhérence $\Omega$ dans $\SL_d(\ZZ)$.
Pour parler sans ambiguïté des points sur $\Z/q\Z$ de l'adhérence de Zariski de $\Gamma$, nous commençons par introduire quelques éléments de langage de la théorie des schémas en groupe.

\subsection{Le schéma en groupes $G$}

Dans l'algèbre $\Z[X_{11},\dotsc,X_{dd}]$ des polynômes à coefficients entiers sur les matrices $d\times d$, on considère l'idéal $\cI$ défini par 
\[
\cI = \{\,f \in \Z[X_{11},\dotsc,X_{dd}] \mid \forall g \in \Gamma,\, f(g) = 0 \,\}
\]
et le foncteur
\[
G = \Hom(\Z[X_{ij}] / \cI, \,\mathbf{-}\,)
\]
de la catégorie des anneaux commutatifs unifères dans la catégorie des ensembles.
Si $R$ est un anneau commutatif unifère, alors $G(R)$ est l'ensemble des morphismes d'anneau de $\Z[X_{ij}] / \cI$ dans $R$, et si $\phi \colon R \to R'$ est un morphisme, alors $G(\phi)$ est la composition par $\phi$.
De manière équivalente, 
\[
G(R) = \{\, (x_{ij}) \in R^{d^2} \mid \forall f \in \cI,\, f(x_{ij}) = 0\,\}.
\]
Évidemment, $\det(X_{ij}) - 1 \in \cI$, et $G(R)$ peut donc être identifié à une partie de $\SL_d(R)$.
Cette identification sera implicite dans la suite. 
De l'hypothèse que $\Gamma$ est un sous-groupe de $\SL_d(\Z)$, on peut déduire que pour tout $R$, $G(R)$ est un sous-groupe de $\SL_d(R)$.
On peut alors voir $G$ comme un foncteur de la catégorie des anneaux commutatifs unifères dans la catégorie des groupes.
Comme $G$ est de plus représentable -- représenté par $\Z[X_{ij}] / \cI$ -- c'est un schéma en groupes affine sur $\Z$.
\wknote{C'est en fait un sous-schéma en groupes fermé de $\SL_{d,\Z}$}.

L'extension de base $\Z \to \Q$ permet d'obtenir à partir de $G$ un schéma en groupes affine sur $\Q$,
\[
G_\Q = \Hom((\Z[X_{ij}] / \cI)\otimes_\Z \Q, \,-\,), 
\]
\wknote{$G_\Q$ est aussi appelé la fibre générique de $G$.}
qu'on appelle la \emph{fibre générique de $G$}.
Le schéma $G_\Q$ est en fait une variété, qui coïncide avec la clôture de Zariski de $\Gamma$ dans $\SL_d$.
Suivant la terminologie de Borel \cite{Borel}, nous dirons que $G_\Q$ est un $\Q$-groupe.
La dimension de $G_\Q$ est simplement égale à la dimension de la variété $G_\Q$.

Si l'anneau $R$ est muni d'une topologie, nous munirons $G(R)$ de la topologie induite par la topologie produit sur l'espace de matrices $M_d(R)$. 
Pour un nombre premier $p$, $G(\Q_p)$ est alors un groupe fermé du groupe analytique $\SL_d(\Q_p)$, donc un sous-analytique par le théorème de Cartan~\cite[Part II, Chap. V, \S 9]{serre_lalg}.
Notons aussi que $G(\Z_p)$ est un sous-groupe ouvert dans $G(\Q_p)$.

\subsection{L'algèbre de Lie}

Au schéma en groupes $G$ est associée une algèbre de Lie sur $\Z$, notée $\g(\Z)$.
Nous rappelons les grandes lignes de cette construction, et renvoyons par example au livre \cite[Chapter 12]{Waterhouse} de Waterhouse pour plus de détails sur le sujet. 
Concrètement, notant $I_d = (\delta_{ij}) \in M_d(\Z)$,
\[
\g(\Z) = \Bigl\{\, (x_{ij}) \in M_d(\Z) \mid \forall f \in \cI,\  \sum_{i,j} \partial_{ij}f(I_d) x_{ij} = 0\,\Bigr\}.
\]
C'est un sous-module de $\ssl_d(\Z)$ sur $\Z$ et, muni du crochet usuel sur $\ssl_d(\Z)$, une sous-algèbre de Lie sur $\Z$.
Les équations qui définissent $\g(\Z)$ sont à coefficients dans $\Z$, on peut donc définir $\g(R)$ pour tout anneau $R$ commutatif et unifère:
\[
\g(R) = \Bigl\{\, (x_{ij}) \in M_d(R) \mid \forall f \in \cI,\  \sum_{i,j} \partial_{ij}f(I_d)x_{ij} = 0\,\Bigr\}.
\]
On identifie naturellement $\g(R)$ à une sous-algèbre de Lie de $\ssl_d(R)$.
\wknote{En général, $\g(R)$ n'est pas isomorphe à $\g(\Z) \otimes_\Z R$ il me semble.}
Si $R$ est sans torsion, alors $\g(R) = \g(\Z) \otimes_\Z R$. \wknote{Parce qu'un $\Z$-module sans torsion est plat.}
De même, si $p$ est un nombre premier suffisamment grand, alors $\g(\Z/p\Z) =  \g(\Z) \otimes_\Z \Z/p\Z$. \wknote{$p$ doit être plus grand que les coefficients d'un système d'equation engendrant celui qui définit $\g$.}

On vérifie aisément que cette notion d'algèbre de Lie coïncide avec d'autres définitions:
\begin{enumerate}
\item $\g(\C)$ est l'algèbre de Lie du groupe algébrique linéaire $G_\Q$. L'hypothèse que $G_\Q$ est simple implique que $\g(\C)$ est une algèbre de Lie simple sur $\C$.
\item Pour tout premier $p$, $\g(\Q_p)$ est égale à l'algèbre de Lie du groupe analytique $p$-adique $G(\Q_p)$.
\end{enumerate}
Nous aurons aussi besoin du lemme suivant, qui relie l'algèbre de Lie et le groupe des points sur l'anneau $\Z/q\Z$.

\begin{lemma}
\label{lm:12congru}
Pour tout nombre premier $p$ et tout entier $k \geq 1$. 
Le noyau de $G(\Z/p^{k+1}\Z) \to G(\Z/p^k\Z)$ est isomorphe au groupe additif $\g(\Z/p\Z)$.
\end{lemma}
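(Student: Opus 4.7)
Mon plan est d'identifier explicitement le noyau en paramétrant ses éléments par des matrices $u \in M_d(\Z/p\Z)$. D'abord, tout $g$ dans ce noyau vérifie $g \equiv I_d \mod p^k$ et s'écrit donc de façon unique sous la forme $g = I_d + p^k u$ avec $u \in M_d(\Z/p\Z)$ (seule la classe de $u$ modulo $p$ intervient, car $p^k u$ est considéré modulo $p^{k+1}$).

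Ensuite, il s'agit de traduire la condition $g \in G(\Z/p^{k+1}\Z)$ en une condition linéaire sur $u$. Pour un polynôme $f \in \cI \subset \Z[X_{ij}]$, la formule de Taylor à coefficients entiers permet d'écrire
\[
f(I_d + p^k u) = f(I_d) + p^k \sum_{i,j} \partial_{ij} f(I_d)\, u_{ij} + p^{2k} R(u),
\]
où $R(u)$ est un polynôme à coefficients entiers en les $u_{ij}$. L'hypothèse $k \geq 1$ donne $2k \geq k+1$, de sorte que le terme en $p^{2k}$ disparaît modulo $p^{k+1}$; combiné à $f(I_d) = 0$, cela montre que $f(g) \equiv 0 \mod p^{k+1}$ équivaut à $\sum_{i,j} \partial_{ij} f(I_d) u_{ij} \equiv 0 \mod p$. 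En faisant parcourir $f$ dans $\cI$, on reconnaît précisément les équations définissant $\g(\Z/p\Z)$, ce qui donne la bijection $u \mapsto I_d + p^k u$ entre $\g(\Z/p\Z)$ et le noyau.

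Enfin, pour vérifier la compatibilité avec la structure de groupe, il suffit de calculer directement
\[
(I_d + p^k u_1)(I_d + p^k u_2) = I_d + p^k(u_1 + u_2) + p^{2k}\, u_1 u_2 \equiv I_d + p^k(u_1 + u_2) \mod p^{k+1},
\]
où à nouveau l'inégalité $2k \geq k+1$ fait disparaître le terme quadratique. La bijection ci-dessus est donc un isomorphisme entre le groupe additif $\g(\Z/p\Z)$ et le noyau recherché. Le seul point qui demande un soin particulier est la justification du développement de Taylor à coefficients entiers, mais elle résulte immédiatement du fait que les $\partial_{ij}f$ sont à coefficients entiers lorsque $f$ l'est; il n'y a donc pas d'obstacle sérieux dans cette démonstration, le cœur de l'argument étant la linéarisation autour de l'identité rendue exacte par l'inégalité $2k \geq k+1$.
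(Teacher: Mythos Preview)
Your proof is correct and follows essentially the same approach as the paper's: parametrize the kernel by $u \mapsto I_d + p^k u$, linearize $f(I_d + p^k u)$ modulo $p^{k+1}$ to recover the defining equations of $\g(\Z/p\Z)$, and identify the bijection. You supply a bit more detail than the paper does—explicitly invoking the Taylor remainder and the inequality $2k \geq k+1$, and checking the additivity of the group law—whereas the paper simply writes $f(I_d + p^k x) = p^k \sum_{i,j}\partial_{ij}f(I_d)x_{ij}$ and leaves the homomorphism verification implicit.
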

\begin{proof}
\wknote{$G(\Z/p^{k+1}\Z) \to G(\Z/p^k\Z)$ est-t-il surjectif pour petit $p$ ?}
Un antécédent de $I_d \in M_d(\Z/p^k\Z)$ dans $M_d(\Z/p^{k+1}\Z)$ s'écrit de manière unique sous la forme $I_d + p^k x$, avec $x \in \gl_d(\Z/p\Z)$.
Pour tout $f \in \cI$,
\[f(I_d + p^k x) = p^k \sum_{i,j}\partial_{ij}f(I_d)x_{ij},\]
et par conséquent, $I_d + p^k x \in G(\Z/p^{k+1}\Z)$ si et seulement si $x \in \g(\Z/p\Z)$.
Cela donne l'isomorphisme souhaité.
\end{proof}

\subsection{Approximation forte.}

Le groupe $G(\ZZ)$ des points de $G$ sur l'anneau profini $\ZZ=\varprojlim\Z/q\Z$ s'identifie à un sous-groupe fermé de $\SL_d(\ZZ)$ pour la topologie profinie.
Si $\Omega$ désigne l'adhérence de $\Gamma$ dans $\SL_d(\ZZ)$, nous avons donc $\Omega \subset G(\ZZ)$. 
Le théorème d'approximation forte ci-dessous est une forme de réciproque à cette inclusion.
Il est dû à Matthews, Vaserstein et Weisfeiler~\cite{MVW} lorsque $G_\Q$ est simple, et à Nori~\cite{Nori} dans le cas général. 

\wknote{Je ne cite pas Theorem 5.2 de Nori parce qu'il me semble faux.}
\nsnote{Pourquoi?}
\begin{theorem}[Approximation forte]
\label{thm:Nori}
Soit $\Gamma$ un sous-groupe de $\SL_d(\Z)$, et $G$ le schéma en groupes associé.
Si $G_\Q$ est connexe, semi-simple et simplement connexe, alors
\begin{enumerate}
\item pour tout nombre premier $p$ assez grand, $\Omega/\Omega_p = G(\Z/p\Z)$;
\item le groupe $\Omega$ est ouvert dans $G(\ZZ)$ pour la topologie profinie. En particulier, $\Omega$ est d'indice fini dans $G(\ZZ)$.
\end{enumerate}
\end{theorem}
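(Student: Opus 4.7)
The plan is to split into the two assertions and treat them in sequence, the first reducing to the classical mod-$p$ strong approximation of Nori, Matthews--Vaserstein--Weisfeiler, and the second being a bootstrap argument combining Lie-theoretic lifting with Goursat's lemma.

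For assertion (1), the core input is Nori's structural theorem on subgroups of $\GL_d(\F_p)$ generated by their elements of order $p$: such a subgroup is, for $p$ large enough, the group of $\F_p$-points of a connected algebraic subgroup of $\GL_{d,\F_p}$ that is itself the mod-$p$ reduction of a $\Q$-subgroup. Applied to the subgroup of $\pi_p(\Gamma)$ generated by its unipotent elements, this will identify its algebraic envelope with a Zariski-closed subgroup of $G_{\F_p}$ whose $\Q$-lift contains all unipotent elements of $G_\Q$; since $G_\Q$ is semisimple and connected, it is generated by its unipotent elements, so the envelope is $G_{\F_p}$ itself. Because $G_\Q$ is simply connected, $G(\F_p)$ is generated by its unipotent elements (Chevalley--Steinberg), so one concludes $\pi_p(\Gamma) = G(\F_p)$. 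To apply Nori, one must first exhibit enough unipotent elements in $\pi_p(\Gamma)$: for an element $\gamma \in \Gamma$ whose reduction modulo $p$ has order divisible by $p$, a suitable power of $\pi_p(\gamma)$ is unipotent, and Zariski density of $\Gamma$ combined with an elementary counting argument shows that outside a finite set of bad primes such elements exist in abundance.

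For assertion (2), one first lifts from $\F_p$ to $\Z_p$ for each large prime $p$, then assembles information across primes. The lifting proceeds by induction on $k$ using Lemma~\ref{lm:12congru}: the kernel of $G(\Z/p^{k+1}\Z) \to G(\Z/p^k\Z)$ is $\g(\F_p)$, acted on via the adjoint representation by $G(\F_p)$; for $p$ larger than the primes of bad reduction of $\g$, the complexified simplicity of $\g(\C)$ transfers to irreducibility of $\g(\F_p)$ as a $G(\F_p)$-module. Given surjectivity $\pi_{p^k}(\Omega) = G(\Z/p^k\Z)$, the image $\pi_{p^{k+1}}(\Omega)$ meets the kernel in a $G(\F_p)$-submodule of $\g(\F_p)$; Zariski density of $\Gamma$ in $G_\Q$, together with a commutator argument in $\Omega$, guarantees the submodule is non-zero, hence all of $\g(\F_p)$, yielding surjectivity at level $p^{k+1}$. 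To assemble over primes one invokes Goursat's lemma: for two distinct large primes $p,q$, any proper subgroup of $G(\F_p) \times G(\F_q)$ surjecting onto both factors corresponds to a common simple quotient of $G(\F_p)$ and $G(\F_q)$, which is excluded since the quotients $G(\F_p)/Z(G(\F_p))$ are pairwise non-isomorphic simple groups of distinct orders. Thus $\pi_{pq}(\Omega) = G(\F_p) \times G(\F_q)$ for all but finitely many pairs, and iterating together with the $p$-adic lifting yields density of $\Omega$ in $\prod_{p > N} G(\Z_p)$ for some $N$; compactness of $G(\ZZ)$ then forces $\Omega$ to be open, whence of finite index.

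The main obstacle in this program is of course Nori's theorem itself, which we treat as a black box; its proof is a delicate piece of algebraic group theory in positive characteristic, studying the subgroup generated by elements of order $p$ via the truncated exponential map for nilpotent $d\times d$ matrices (valid as soon as $p > d$) and a nilpotency-length induction. A secondary technical point is the Goursat step, where one needs uniform control on the centres and abstract isomorphism classes of the finite simple groups $G(\F_p)/Z$ as $p$ varies; here the simply-connected hypothesis is again essential to identify the abstract group structure of $G(\F_p)$ with the Chevalley group it is expected to be.
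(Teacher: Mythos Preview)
The paper does not prove this theorem: it is stated in the appendix as a known result attributed to Matthews--Vaserstein--Weisfeiler \cite{MVW} (for $G_\Q$ simple) and Nori \cite{Nori} (in general), and then used as a black box throughout. So there is no ``paper's own proof'' to compare against; the authors simply quote the result and move on to its corollaries.

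Your proposal is a reasonable outline of the classical proof strategy in the literature --- Nori's description of subgroups of $\GL_d(\F_p)$ generated by their order-$p$ elements for assertion (1), then a lift-and-Goursat argument for assertion (2). Two remarks. First, you say that for $\gamma\in\Gamma$ whose reduction mod $p$ has order divisible by $p$ a suitable power is unipotent, and that ``Zariski density \dots\ combined with an elementary counting argument'' produces such $\gamma$: this step is not elementary and is in fact the heart of Nori's argument (or alternatively one appeals to Jordan's theorem on finite linear groups, as in MVW), so you should not present it as a triviality. Second, in the lifting step you assert that a commutator argument guarantees the image in $\g(\F_p)$ is non-zero; this is precisely where one uses that the extension $1\to \g(\F_p)\to G(\Z/p^{k+1}\Z)\to G(\Z/p^k\Z)\to 1$ does not split (compare the paper's Lemma~\ref{lm:passcindee}), and it deserves a word of justification rather than being folded into ``Zariski density''. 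With those caveats, your sketch matches what the cited references do, but since the paper itself treats the theorem as input rather than output, there is nothing further to compare.
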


\comm{
\begin{remark}
Cela est faux si le groupe n'est pas simplement connexe, comme le montre l'exemple de l'image de $\Gamma=\SL_2(\Z)$ dans $\PGL_2$.
Cette image est Zariski dense.
Cependant, pour chaque $p$, l'image de $\Gamma$ dans $\PGL_2(\Z/p\Z)$ est d'indice au moins 2, car on ne peut pas obtenir une matrice de la forme $\diag(1,a)$, où $a$ n'est pas un carré modulo $p$.
\end{remark}
}

Notons deux corollaires immédiats du théorème ci-dessus.

\begin{corollary}
\label{chinois}
Il existe un entier $q_0 \in \N$ tel que  pour tout entier $q \in \N^*$ premier avec $q_0$, si $q = q_1 \dotsm q_n$ avec $q_1, \dotsc, q_n \in \N^*$ deux-à-deux premiers entre eux, alors nous avons le lemme chinois : 
\[(\pi_{q_1},\dotsc,\pi_{q_n}) \colon \Omega/\Omega_q \to \Omega/\Omega_{q_1} \times \dotsm \times \Omega/\Omega_{q_n}\]
est un isomorphisme de groupes.
\end{corollary}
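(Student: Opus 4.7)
Le plan consiste à déduire l'énoncé du théorème d'approximation forte. L'application $(\pi_{q_1},\dotsc,\pi_{q_n})$ est clairement un morphisme de groupes; comme les $q_i$ sont deux à deux premiers entre eux, son noyau vaut $\bigcap_i \Omega_{q_i} = \Omega_q$, ce qui donne l'injectivité. Par récurrence sur $n$ (en décomposant $q = (q_1\dotsm q_{n-1}) \cdot q_n$), il suffira de traiter la surjectivité dans le cas $n=2$.

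D'après le théorème~\ref{thm:Nori}, le groupe $\Omega$ est ouvert dans $G(\ZZ)$ pour la topologie profinie. Je choisirai donc un entier $q_0\in\N^*$ tel que $\Omega$ contienne le noyau de congruence $K_0 := \ker\bigl(G(\ZZ) \to G(\Z/q_0\Z)\bigr)$. Quitte à augmenter $q_0$, je supposerai aussi que $G$ est lisse sur $\Z_p$ pour tout premier $p$ ne divisant pas $q_0$: cela est possible car $G_\Q$, étant simple en caractéristique~$0$, est lisse sur $\Q$, et cette lissité s'étend à $\Z_p$ pour tous les premiers sauf un nombre fini (argument standard de \emph{spreading out}).

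Pour $q$ premier avec $q_0$, l'étape clé sera d'établir l'égalité $\Omega/\Omega_q = G(\Z/q\Z)$. J'utiliserai pour cela la décomposition $G(\ZZ) = \prod_p G(\Z_p)$. Comme les facteurs premiers de $q$ sont tous disjoints de ceux de $q_0$, le groupe $K_0$ contient le sous-groupe $\prod_{p\mid q} G(\Z_p) \times \{1\}$ (dans la décomposition de $G(\ZZ)$ séparant les premiers divisant $q$ des autres). Pour chaque $p\mid q$, la lissité de $G$ sur $\Z_p$ assure via le lemme de Hensel la surjectivité de $G(\Z_p) \to G(\Z/p^{m_p}\Z)$. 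On en déduira $\pi_q(K_0) = G(\Z/q\Z)$, puis $\pi_q(\Omega) = G(\Z/q\Z)$ puisque $K_0 \subset \Omega$.

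Enfin, le théorème chinois classique pour l'anneau $\Z/q\Z$ donne un isomorphisme naturel $G(\Z/q\Z) \cong \prod_i G(\Z/q_i\Z)$; en combinant avec les égalités $\Omega/\Omega_{q_i} = G(\Z/q_i\Z)$ obtenues par le même argument appliqué à chaque $q_i$, on reconnaîtra l'application étudiée comme précisément cet isomorphisme. La difficulté principale se concentre sur la surjectivité $G(\Z_p) \to G(\Z/p^{m_p}\Z)$: il faudra s'assurer que $q_0$ absorbe tous les premiers où la réduction du schéma $G$ pourrait être non lisse (et donc où Hensel pourrait échouer).
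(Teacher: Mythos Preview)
Your proof is correct and fleshes out exactly what the paper leaves implicit: the corollary is stated there as an \enquote{immediate consequence} of the strong approximation theorem, with no argument given. Your route---choosing $q_0$ so that $\Omega$ contains the level-$q_0$ congruence kernel $K_0\subset G(\ZZ)$, enlarging $q_0$ to ensure smoothness of $G$ over $\Z_p$ for $p\nmid q_0$, then using Hensel to get $\pi_q(K_0)=G(\Z/q\Z)$ and concluding via the classical CRT isomorphism $G(\Z/q\Z)\cong\prod_i G(\Z/q_i\Z)$---is precisely the intended unpacking of that word \enquote{immediate}.
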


\begin{corollary}
\label{chinois2}
Il existe une constante $C \geq 1$ tel que pour tout entier $q \in \N^*$, si $q = q_1 \dotsm q_n$ avec $q_1, \dotsc, q_n \in \N^*$ deux-à-deux premiers entre eux, 
\[ {[\Omega:\Omega_q]} \geq \frac{1}{C}  {[\Omega:\Omega_{q_1}]}	 \dotsm  {[\Omega:\Omega_{q_n}]}.\]
\end{corollary}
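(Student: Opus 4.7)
Mon plan est de ramener l'énoncé à la structure produit du groupe ambiant $G(\ZZ)$, dans lequel la factorisation selon les premiers est tautologiquement exacte. Le théorème~\ref{thm:Nori} fournit l'ingrédient clé : le sous-groupe $\Omega$ est d'indice fini dans $G(\ZZ)$, et je noterai $N=[G(\ZZ):\Omega]$, constante qui ne dépend que de $\Gamma$.

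La première étape consistera à écrire la formule exacte pour $G(\ZZ)$. Puisque $G(\ZZ)\simeq\prod_p G(\Z_p)$ et que $G(\ZZ)_q:=\{g\in G(\ZZ)\mid g\equiv 1\mod q\}$ se décompose en $\prod_p G(\Z_p)_{p^{v_p(q)}}$, on a automatiquement
\[
[G(\ZZ):G(\ZZ)_q]=\prod_p[G(\Z_p):G(\Z_p)_{p^{v_p(q)}}].
\]
Si $q=q_1\cdots q_n$ avec les $q_i$ deux-à-deux premiers entre eux, chaque facteur premier $p$ divise au plus un $q_i$, et en regroupant les facteurs selon l'indice $i$ on obtient la factorisation exacte
\[
[G(\ZZ):G(\ZZ)_q]=\prod_{i=1}^n[G(\ZZ):G(\ZZ)_{q_i}].
\]

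La seconde étape comparera $\Omega$ et $G(\ZZ)$ à travers leurs sous-groupes de congruence. Comme $\Omega_q=\Omega\cap G(\ZZ)_q$, l'application naturelle $\Omega/\Omega_q\hookrightarrow G(\ZZ)/G(\ZZ)_q$ est injective, d'image $\Omega\cdot G(\ZZ)_q/G(\ZZ)_q$, sous-groupe d'indice au plus $N$ dans $G(\ZZ)/G(\ZZ)_q$; cela donne la minoration
\[
[\Omega:\Omega_q]\geq\frac{1}{N}[G(\ZZ):G(\ZZ)_q].
\]
La même injection appliquée à chaque $q_i$ donne, dans l'autre sens, la majoration $[\Omega:\Omega_{q_i}]\leq[G(\ZZ):G(\ZZ)_{q_i}]$.

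Il ne restera plus qu'à combiner ces trois inégalités pour conclure avec la constante $C=N$. Aucune difficulté sérieuse n'est à prévoir : toute la substance est cachée dans le théorème d'approximation forte de Matthews--Vaserstein--Weisfeiler et Nori, utilisé comme boîte noire, et le reste n'est qu'un jeu d'indices de sous-groupes.
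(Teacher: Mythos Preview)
Your proof is correct and matches the paper's intent: the paper states this corollary without proof, calling it an immediate consequence of the strong approximation theorem~\ref{thm:Nori}, and your argument is precisely the natural way to make that deduction explicit. You use exactly the right ingredient (point~2 of th\'eor\`eme~\ref{thm:Nori}, that $\Omega$ has finite index in $G(\ZZ)$) and the comparison via the product structure of $G(\ZZ)=\prod_p G(\Z_p)$ is the expected route.
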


Outre ces deux corollaires, nous aurons encore besoin de quelques lemmes sur la structure du groupe $G(\Z/p\Z)$.
Une matrice $g$ est dite unipotent si $g - 1$ est nilpotent.
Si $p$ est un premier avec $p > d$ alors $g \in \SL_d(\Z/p\Z)$ est unipotent si et seulement si $g^p = 1$.

\begin{lemma}
\label{unipotent}
Soit $\Gamma$ un sous-groupe de $\SL_d(\Z)$ et $G$ le schéma en groupes associé.
Si $G_\Q$ est connexe, semi-simple et simplement connexe, alors, pour tout nombre premier $p$ suffisamment grand, $\Omega/\Omega_p$ est engendré par ses éléments unipotents.
\end{lemma}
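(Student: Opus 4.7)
The plan is to combine Theorem~\ref{thm:Nori} with a classical structural result on finite groups of Lie type. By the first assertion of Theorem~\ref{thm:Nori}, we may assume that $p$ is large enough so that $\Omega/\Omega_p = G(\F_p)$. It therefore suffices to prove that $G(\F_p)$ is generated by its unipotent elements, which under the assumption $p > d$ are precisely the elements of order $p$ (since for $p > d$ a matrix $g \in \SL_d(\F_p)$ satisfies $g^p = 1$ if and only if $g - 1$ is nilpotent).

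Next, I would verify that for $p$ sufficiently large the special fibre $G_{\F_p}$ is again a semi-simple simply connected algebraic group of the same type as $G_\Q$. This follows because the defining ideal $\mathcal{I}$ admits a finite set of generators with integer coefficients, so that excluding finitely many primes (those dividing the discriminants of the structure constants of a Chevalley basis of $\g$) the reduction mod $p$ preserves the root datum and the isogeny type. In particular, $G_{\F_p}$ is split (or at worst quasi-split, which is automatic over a finite field by Lang's theorem), simply connected and semi-simple.

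Finally, I would invoke the classical theorem that for a semi-simple simply connected algebraic group $H$ defined over a finite field $k$, the group $H(k)$ is generated by the $k$-points of the unipotent radicals of a pair of opposite Borel (or parabolic) subgroups; in the notation of Tits one has $H(k) = H(k)^+$. This is due to Steinberg and Chevalley for split groups (an explicit Bruhat decomposition expressing every element as a product of root subgroup elements and a torus element, the torus being generated by commutators of unipotents) and to Tits in general. Applied to $H = G_{\F_p}$ this gives immediately that $G(\F_p)$ is generated by unipotent elements, which concludes the proof.

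The main obstacle is invoking a result of a somewhat different flavour from the rest of the paper (the Chevalley–Steinberg generation theorem), and checking that the reduction $G_{\F_p}$ really is simply connected semi-simple for almost all $p$; once these are in hand, the argument is essentially a one-line citation combined with Theorem~\ref{thm:Nori}.
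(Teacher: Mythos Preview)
Your proposal is correct and follows essentially the same route as the paper: one first uses Theorem~\ref{thm:Nori} to identify $\Omega/\Omega_p$ with $G(\Z/p\Z)$ for large $p$, and then invokes Steinberg's theorem (the paper cites \cite[Theorem~12.4]{steinberg_endomorphismsoflinearalgebraicgroups}) to conclude that this group is generated by its unipotent elements. Your additional remarks --- the characterisation of unipotents as elements of order $p$ when $p>d$, and the verification that the special fibre $G_{\F_p}$ remains semi-simple and simply connected for almost all $p$ --- are details the paper leaves implicit, but they do not change the architecture of the argument.
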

\begin{proof}
\wknote{C'est à qui ce résultat ?}
Cela découle du théorème~\ref{thm:Nori} et d'un résultat de Steinberg \cite[Theorem~12.4]{steinberg_endomorphismsoflinearalgebraicgroups} selon lequel $G(\Z/p\Z)$ est engendré par ses éléments unipotents.
\comm{
En fait, l'approche de Nori pour démontrer le théorème~\ref{thm:Nori} commence justement par démontrer que $\Omega/\Omega_p$ contient des éléments unipotents (en utilisant un théorème de Jordan) et par semi-simplicité, les logarithmes des conjugués de ces éléments unipotents engendrent linéairement l'algèbre de Lie 
(cf. le survey de Breuillard~\cite[page 10]{breuillard_survey}).
}
\end{proof}

\begin{lemma}
\label{conjAd}
Soit $\Gamma$ un sous-groupe de $\SL_d(\Z)$ et $G$ le schéma en groupes associé.
On suppose que $G_\Q$ est connexe, semi-simple et simplement connexe.
Pour tout nombre premier $p$ suffisamment grand, le groupe quotient $\Omega_p/\Omega_{p^2}$ est isomorphe au groupe abélien $\g(\Z/p\Z)$.
L'action de $\Omega$ sur $\Omega_p/\Omega_{p^2}$ par conjugaison se factorise par $\Omega/\Omega_p$ et s'identifie à l'action adjointe de $G(\Z/p\Z)$ sur $\g(\Z/p\Z)$.
Le seul point fixe de cette action est $1 \in \Omega_p/\Omega_{p^2}$\comm{(autrement dit $0 \in \g(\Z/p\Z)$)}.
\end{lemma}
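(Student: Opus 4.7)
The plan is to handle the three parts of the statement in sequence, each of them essentially reducing to matrix arithmetic once the right objects are in place.

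First, I would identify $\Omega_p/\Omega_{p^2}$ with $\g(\Z/p\Z)$. By Theorem~\ref{thm:Nori}, $\Omega$ is open in $G(\ZZ)$, so for all but finitely many primes $p$ the map $\pi_{p^k}$ surjects from $\Omega$ onto $G(\Z/p^k\Z)$ for every $k \geq 1$ (it suffices for $p$ to avoid the finitely many primes dividing the index of $\Omega$ in $G(\ZZ)$). In particular $\Omega_p/\Omega_{p^2}$ coincides with the kernel of $G(\Z/p^2\Z) \to G(\Z/p\Z)$, and Lemma~\ref{lm:12congru} identifies this kernel as an additive group with $\g(\Z/p\Z)$ via the correspondence $x \mapsto 1 + p x$.

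Second, a direct matrix calculation gives both the factorisation of the conjugation action and its identification with $\Ad$. For $g \in \Omega$ and $h = 1 + p x$ with $x \in \g(\Z/p\Z)$, we have
\[
g h g^{-1} \equiv 1 + p \cdot g x g^{-1} \pmod{p^2}.
\]
The right-hand side depends only on $\bar g \in \Omega/\Omega_p = G(\Z/p\Z)$, and the induced action of $\bar g$ on $\g(\Z/p\Z)$ is by definition $\Ad(\bar g)$. So the conjugation action of $\Omega$ on $\Omega_p/\Omega_{p^2}$ factors as claimed, and the induced $G(\Z/p\Z)$-action is the adjoint representation.

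The last point, that the only $\Ad(G(\Z/p\Z))$-fixed vector in $\g(\Z/p\Z)$ is $0$, is the only non-formal part and where I expect the real work. By Lemma~\ref{unipotent}, $G(\Z/p\Z)$ is generated by its unipotent elements for $p$ large. For $p > d$, every unipotent $u \in G(\Z/p\Z)$ can be written as $u = \exp(n)$ with $n \in \g(\Z/p\Z)$ nilpotent (the series terminates before denominators become singular), and a direct check gives $\Ad(\exp n) = \exp(\ad n)$. A fixed vector $x$ therefore satisfies $[n,x] = 0$ for every nilpotent $n \in \g(\Z/p\Z)$. Since $\g_\Q$ is a semisimple Lie algebra, it is $\Q$-spanned by its nilpotent elements, and this linear statement persists modulo $p$ for $p$ large, so $\ad(y) x = 0$ for all $y \in \g(\Z/p\Z)$. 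Hence $x$ lies in the centre of $\g(\Z/p\Z)$, which is trivial for $p$ large because $Z(\g_\Q) = 0$ by semisimplicity and this again reduces well mod $p$.

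The main obstacle is thus the passage from characteristic $0$ to characteristic $p$: one has to verify that the three auxiliary facts used — the identity $\Ad(\exp n) = \exp(\ad n)$ on nilpotents, the spanning of $\g$ by nilpotents, and the vanishing of the centre — all survive reduction modulo $p$ for all but finitely many primes. Each is routine via the $\Z$-structure $\g(\Z)$ once denominators are cleared, but these are the points at which the "suffisamment grand" assumption on $p$ is really used.
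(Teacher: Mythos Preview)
Your first two parts match the paper's proof essentially verbatim: the paper also invokes Theorem~\ref{thm:Nori} and Lemma~\ref{lm:12congru} for the identification $\Omega_p/\Omega_{p^2}\simeq\g(\Z/p\Z)$, and then Lemma~\ref{conjexpAd} (which is exactly your matrix computation, since $\exp(px)\equiv 1+px\bmod p^2$) for the factorisation and identification with $\Ad$.

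For the fixed-point assertion, the paper takes a different route: it reduces to the case of a simple factor and then invokes Lemma~\ref{lm:GFpsimple}, which asserts that the adjoint action of $G(\Z/p\Z)$ on $\g(\Z/p\Z)$ is \emph{irreducible} for large $p$ (proved by a model-theoretic transfer from characteristic zero via elimination of quantifiers for algebraically closed fields). Irreducibility of a nontrivial representation of course forces the fixed subspace to be zero. Your argument via unipotent generation and triviality of the centre is more hands-on and avoids the model-theoretic black box; on the other hand the paper's route yields the stronger statement of irreducibility, which is reused elsewhere (e.g.\ in the proof of Theorem~\ref{qr,p}).

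One step in your argument deserves a word of justification: from $\Ad(\exp n)(x)=x$ you conclude $[n,x]=0$, but $\exp(\ad n)(x)=x$ only gives $\sum_{k\geq 1}\frac{(\ad n)^k}{k!}(x)=0$. This is fine because $\ad n$ is nilpotent, so $\exp(\ad n)-1=(\ad n)\bigl(1+\tfrac{1}{2}\ad n+\dots\bigr)$ with the bracketed factor unipotent and hence invertible; alternatively, apply the hypothesis to all $u^t=\exp(tn)$, $t\in\Z/p\Z$, and use that a polynomial of degree $<p$ with $p$ roots vanishes identically. Either way the gap is easily filled, and your argument is correct.
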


\begin{proof}
D'après le théorème~\ref{thm:Nori}, pour $p$ assez grand, les projections $\Omega \to G(\Z/p\Z)$ et $\Omega \to G(\Z/p^2\Z)$ sont surjectives.
Donc 
\[
\Omega_p/\Omega_{p^2} \simeq \ker\bigl(G(\Z/p^2\Z) \to G(\Z/p\Z)\bigr) \simeq \g(\Z/p\Z)
\]
par le lemme~\ref{lm:12congru}.
Cet isomorphisme est réalisé par l'application exponentielle.
Par le lemme~\ref{conjexpAd}, on voit donc que l'action de $\Omega$ sur $\Omega_p/\Omega_{p^2}$ par conjugaison  s'identifie à l'action adjointe de $G(\Z/p\Z)$ sur $\g(\Z/p\Z)$.
Pour $p$ grand, cette action se décompose en somme directe $\oplus_i\g_i(\Z/p\Z)$, où les $\g_i$ sont les algèbres de Lie des facteurs directs de $G$, et la dernière assertion découle donc du lemme suivant.
\end{proof}

\begin{lemma}
\label{lm:GFpsimple}
Soit $\Gamma$ un sous-groupe de $\SL_d(\Z)$ et $G$ le schéma en groupes associé.
On suppose que $G_\Q$ est simple. 
Alors pour $p$ assez grand, l'action adjointe de $G(\Z/p\Z)$ sur $\g(\Z/p\Z)$ est irréductible.
\end{lemma}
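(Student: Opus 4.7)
Mon plan consiste à combiner trois ingrédients classiques: l'irréductibilité de la représentation adjointe en caractéristique nulle, sa préservation par réduction modulo $p$ pour presque tout $p$, et la densité de Zariski des points rationnels d'un groupe algébrique connexe défini sur un corps fini.

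Je commencerais par remarquer que l'hypothèse de simplicité de $G_\Q$ entraîne que l'algèbre de Lie $\g(\overline{\Q}) = \g(\Q)\otimes_\Q\overline{\Q}$ est simple comme algèbre de Lie (quitte à remplacer $G$ par un de ses facteurs absolument simples). À l'aide d'une $\Z$-base de Chevalley de $\g$, on en déduit que pour tout nombre premier $p$ hors d'un ensemble fini de \enquote{mauvais premiers} attachés au système de racines sous-jacent, l'algèbre de Lie $\g(\F_p) = \g(\Z)\otimes_\Z\F_p$ reste simple sur $\F_p$.

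Ensuite, pour $p$ assez grand, le schéma en groupes $G_{\F_p}$ est lisse et géométriquement connexe; par un résultat classique de Steinberg reposant sur le théorème de Lang, $G(\F_p)$ est alors Zariski-dense dans $G_{\F_p}$. Par conséquent, tout sous-espace $W \subset \g(\F_p)$ stable par l'action adjointe de $G(\F_p)$ est automatiquement stable par l'action algébrique de $G_{\F_p}$ tout entier, puisque la condition de stabilité de $W$ est fermée au sens de Zariski sur $G_{\F_p}$.

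Enfin, en différentiant cette action algébrique au voisinage de l'identité, je déduirais que $W$ est stable par l'action adjointe de $\g(\F_p) = \mathrm{Lie}(G_{\F_p})$ sur elle-même, c'est-à-dire que $W$ est un idéal de Lie. La simplicité de $\g(\F_p)$ force alors $W = 0$ ou $W = \g(\F_p)$, ce qui établit l'irréductibilité recherchée. La principale difficulté technique est le contrôle explicite de l'ensemble fini des \enquote{mauvais premiers} à exclure, pour lesquels soit la simplicité de $\g(\F_p)$ soit la densité de $G(\F_p)$ dans $G_{\F_p}$ pourraient se dégrader; ce contrôle ne dépend toutefois que du type du système de racines de $G_\Q$ et de la taille des coefficients définissant $G$ comme sous-schéma de $\SL_{d,\Z}$.
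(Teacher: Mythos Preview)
Your approach is correct and genuinely different from the paper's. The paper gives a one-line model-theoretic argument: the statement \enquote{the adjoint action of $G(R)$ on $\g(R)$ is irreducible} is expressible as a first-order sentence in the language of rings; since it holds over $\overline{\Q}$ (by simplicity of $G_\Q$), quantifier elimination for the theory of algebraically closed fields transfers it to $\overline{\F_p}$, and hence to $\F_p$, for all but finitely many $p$ (the paper cites \cite[Corollary 9.2.2]{FriedJarden}).

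Your route is more algebraic-geometric: spread out the simplicity of the Lie algebra to $\g(\F_p)$, invoke Zariski density of $G(\F_p)$ in $G_{\F_p}$ via Lang--Weil, and differentiate to turn a $G(\F_p)$-stable subspace into an ideal. This is perfectly valid; the direction you use (algebraic $G$-invariance implies $\g$-invariance) holds in any characteristic. Your argument is more transparent and in principle gives an effective handle on the excluded primes, whereas the paper's model-theoretic transfer is slicker but entirely non-constructive. Two small quibbles: the phrase \enquote{quitte à remplacer $G$ par un de ses facteurs absolument simples} is imprecise, since a $\Q$-simple group need not have $\Q$-rational absolutely simple factors (and indeed the lemma can fail for split primes in the non-absolutely-simple case, so some care is needed here regardless of approach); and invoking a \enquote{$\Z$-base de Chevalley} is slightly off, since $\g(\Z)$ is defined from an arbitrary $\Z$-model of $G$ rather than a Chevalley form---but the conclusion (simplicity of $\g(\F_p)$ for large $p$) follows just as well from the non-degeneracy of the Killing form spreading out.
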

\begin{proof}
La propriété \enquote{l'action adjointe de $G(R)$ sur $\g(R)$ est irréductible} peut être exprimée comme une formule logique du premier ordre sur le langage des anneaux. La théorie des corps algébriquement clos admet l'élimination des quantificateurs. Le lemme découle donc de \cite[Corollary 9.2.2]{FriedJarden}.
\end{proof}

\section{L'application exponentielle}
\label{sec:exp}

Nous donnons ici la construction de l'application expontielle à valeurs dans $\GL_d(\Z/q\Z)$, lorsque $q\in\N^*$ est un entier arbitraire, et rappelons quelques-unes de ses propriétés élémentaires.
L'algèbre des matrices carrées de taille $d$ à coefficients dans un anneau $R$ est notée $\gl_d(R)$.

\subsection{L'application exponentielle $p$-adique}

Rappelons d'abord les propriétés de l'application exponentielle sur $\gl_d(\Z_p)$.
Notons $\alpha_p = 1 + \delta_2(p)$, i.e. $\alpha_2=2$ et $\alpha_p=1$ si $p\neq 2$.

\begin{lemma}
\label{lm:expZp}
Pour tout nombre premier $p$, la série entière \[\exp(x) = \sum_{n = 0}^{+\infty} \frac{x^n}{n!}\] est convergente sur $p^{\alpha_p} \gl_d(\Z_p)$ et définit une isométrie entre $p^{\alpha_p} \gl_d(\Z_p)$ et le sous-groupe de congruence $\ker(\GL_d(\Z_p) \to \GL_d(\Z/p^{\alpha_p} \Z))$.
Sa fonction réciproque, notée $\log$ s'écrit
\[\log(1 + x) = \sum_{n = 1}^{+\infty} \frac{(-1)^{(n-1)}}{n}x^n.\]
\end{lemma}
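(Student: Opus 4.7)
The plan rests on Legendre's formula $v_p(n!) = (n - s_p(n))/(p-1) \leq (n-1)/(p-1)$, where $s_p(n)$ is the sum of the digits of $n$ in base $p$, together with the cruder bound $v_p(n) \leq \log_p n$. From these, for $x \in p^{\alpha_p}\gl_d(\Z_p)$, one deduces the term-by-term estimates $v_p(x^n/n!) \geq n\alpha_p - (n-1)/(p-1)$ and $v_p(x^n/n) \geq n\alpha_p - \log_p n$, both tending to $+\infty$. For $p \geq 3$ with $\alpha_p = 1$, this is clear since $n - (n-1)/(p-1) \geq n/2$, while for $p = 2$ the hypothesis $\alpha_p = 2$ yields $2n - (n-1) = n + 1 \to \infty$. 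This secures $p$-adic convergence of both series on their respective domains.

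For the isometry statement, I would observe that whenever $v_p(x) \geq \alpha_p$ the nonlinear contribution to $\exp(x) - 1$ is strictly dominated: for every $n \geq 2$, one has $v_p(x^n/n!) - v_p(x) \geq (n-1)(\alpha_p - 1/(p-1)) > 0$, since $\alpha_p > 1/(p-1)$ holds in both regimes ($\alpha_2 = 2 > 1$ and $\alpha_p = 1 > 1/(p-1)$ for $p \geq 3$). The ultrametric inequality then gives $v_p(\exp(x) - 1) = v_p(x)$, so $\exp$ maps $p^{\alpha_p}\gl_d(\Z_p)$ into the principal congruence subgroup $\ker(\GL_d(\Z_p) \to \GL_d(\Z/p^{\alpha_p}\Z))$. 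The full isometry follows from the telescoping identity $x^n - y^n = \sum_{k=0}^{n-1} x^k (x-y) y^{n-1-k}$, which is valid even for non-commuting matrices and yields $v_p((x^n - y^n)/n!) > v_p(x - y)$ for $n \geq 2$, hence $v_p(\exp(x) - \exp(y)) = v_p(x - y)$. An analogous argument using $v_p(y^n/n) > v_p(y)$ treats $\log$.

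For the bijection, I would invoke the formal power series identities $\log(\exp(X)) = X$ in $\Q[[X]]$ and $\exp(\log(1+Y)) = 1 + Y$ in $\Q[[Y]]$: since any two powers of a fixed matrix commute, these formal identities transfer verbatim to the $p$-adic analytic setting whenever the associated double series converges absolutely, which the valuation estimates above guarantee. Hence $\exp$ and $\log$ are mutual inverses between $p^{\alpha_p}\gl_d(\Z_p)$ and the congruence subgroup. The delicate point throughout is the sharpness of Legendre's formula at $p = 2$: one has $v_2((2^k)!) = 2^k - 1$, so if one attempted $v_2(x) = 1$, the term $x^{2^k}/(2^k)!$ would have the same valuation as $x$ itself, destroying the isometry. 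It is precisely this phenomenon that forces $\alpha_2 = 2$, and tracking this factor $\alpha_p = 1 + \delta_2(p)$ uniformly is the only real subtlety in the proof.
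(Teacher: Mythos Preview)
Your proof is correct and follows essentially the same route as the paper: Legendre's bound on $v_p(n!)$ for convergence, and a telescoping difference $x^n - y^n$ to establish the isometry. You are in fact slightly more careful than the paper on two points: you use the non-commutative telescoping identity $x^n - y^n = \sum_{k} x^k (x-y) y^{n-1-k}$ with the difference in the middle, whereas the paper's factored expression $\exp(y)-\exp(x) = (y-x)(1 + \dotsb)$ is literally correct only in the commutative case (though the valuation estimate survives either way); and you make explicit the formal-series argument for mutual inverses and the reason $\alpha_2 = 2$ is forced, which the paper leaves implicit.
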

\begin{proof}
Pour $n \in \N$, nous avons
\begin{equation}
\label{eq:valfactoriel}
v_p(n!) = \Bigl\lfloor \frac{n}{p}\Bigr\rfloor + \Bigl\lfloor \frac{n}{p^2}\Bigr\rfloor + \dotsb \leq \frac{n}{p} + \frac{n}{p^2} + \dotsb \leq \frac{n}{p-1}.
\end{equation}
Pour tout $x, y \in p^{\alpha_p} \gl_d(\Z_p)$,
\[\exp(y) - \exp(x) = (y -x) \biggl( 1 + \frac{y  + x}{2} + \sum_{n = 3}^{+\infty} \frac{y^{n-1} + \dotsb + x^{n-1}}{n!}  \biggr).\]
On vérifie aisément $v_p(\frac{y  + x}{2}) > 0$ et pour $n \geq 3$,
\[v_p\bigl(\frac{y^{n-1} + \dotsb + x^{n-1}}{n!}\bigr) \geq  (n-1) \alpha_p - \frac{n}{p-1} > 0.\] 
\end{proof}

Le lemme suivant nous sera utile dans l'appendice~\ref{sec:qa}, où nous étudierons la propriété quasi-aléatoire du groupe $G(\ZZ)$, lorsque $G$ est un sous-schéma en groupes sur $\Z$ de $\SL_d$.

\begin{lemma}
\label{lm:BCHexp1}
Posons 
\[\beta_p = \begin{cases} 3 \quad \text{si $p=2$,} \\
2 \quad \text{si $p=3$,} \\
 1 \quad \text{sinon.}\end{cases}\]
Pour tout $x, y \in p^{\beta_p} \gl_d(\Z_p)$, on a 
\begin{equation}
\label{eq:BCHexp1}
\log(\exp(x)\exp(y)) \equiv x + y \mod p^{v_p(x) + v_p(y) - 2\delta_2(p)}.
\end{equation}
\end{lemma}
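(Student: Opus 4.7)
The plan is to apply the Baker–Campbell–Hausdorff formula in Dynkin's explicit form. By Lemma~\ref{lm:expZp}, since $\beta_p \geq \alpha_p$, the hypothesis $x,y \in p^{\beta_p}\gl_d(\Z_p)$ ensures $\exp(x)\exp(y) \in 1 + p^{\alpha_p}\gl_d(\Z_p)$, so $\log(\exp(x)\exp(y))$ is well-defined as a convergent $p$-adic series. I would write
\[
\log(\exp(x)\exp(y)) \;=\; x+y \;+\; \tfrac12[x,y] \;+\; \sum_{N\geq 3} Z_N(x,y),
\]
where $Z_N$ collects the Dynkin terms of total degree $N$, namely
\[
Z_N \;=\; \sum \frac{(-1)^{n-1}}{n\, N\, \prod_{i=1}^n r_i!\,s_i!}\;[x^{r_1}y^{s_1}\cdots x^{r_n}y^{s_n}],
\]
the sum running over $1 \le n \le N$ and tuples $(r_i,s_i)$ with $r_i+s_i\geq 1$ and $\sum_i(r_i+s_i)=N$.

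The leading correction $\tfrac12[x,y]$ has valuation $v_p(x)+v_p(y)-\delta_2(p)$, already within the claimed tolerance. For $N\geq 3$, I would bound each Dynkin monomial individually. A nonzero iterated commutator must involve both letters, so it contains $a\geq 1$ copies of $x$ and $b\geq 1$ copies of $y$ with $a+b=N$; hence the bracket has $p$-adic valuation at least $a\,v_p(x)+b\,v_p(y) \geq v_p(x)+v_p(y)+(N-2)\beta_p$. For the rational coefficient, the key combinatorial input is the inequality
\[
n \prod_{i=1}^n r_i!\,s_i! \;\leq\; N!,
\]
which follows from $\binom{N}{r_1,s_1,\ldots,r_n,s_n} \geq 2^{\,n-1} \geq n$ once each $r_i+s_i\geq 1$. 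Consequently $v_p(\text{denom}) \leq v_p(N)+v_p(N!)$, giving
\[
v_p(Z_N) \;\geq\; v_p(x)+v_p(y) + (N-2)\beta_p - v_p(N) - v_p(N!).
\]

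It therefore suffices to verify the elementary numerical inequality
\[
(N-2)\beta_p + 2\delta_2(p) \;\geq\; v_p(N) + v_p(N!) \qquad (N\geq 3),
\]
which I would check case-by-case on $p\in\{2,3\}$ and $p\geq 5$ using $v_p(N!)\leq (N-1)/(p-1)$. The values $\beta_2=3$, $\beta_3=2$, $\beta_p=1$ are calibrated precisely so that this holds; tightness occurs at $(p,N)=(3,3)$, where both sides equal $2$. The main technical step is the combinatorial bound $n\prod r_i!\,s_i!\leq N!$, while the remaining verification is direct bookkeeping; once it is established, summing the Dynkin expansion yields the congruence \eqref{eq:BCHexp1}.
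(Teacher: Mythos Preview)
Your argument is correct and is the same overall strategy as the paper's: expand the Baker--Campbell--Hausdorff series and bound the $p$-adic valuation of each term of degree $\geq 2$. The organizational choice differs. The paper works directly with the associative expansion
\[
\sum_{m\geq 1}\frac{(-1)^{m+1}}{m}\sum_{k_i+\ell_i\geq 1}\frac{x^{k_1}y^{\ell_1}\cdots x^{k_m}y^{\ell_m}}{k_1!\cdots k_m!\,\ell_1!\cdots\ell_m!}
\]
and estimates each monomial via $v_p(m)+\sum v_p(k_i!)+\sum v_p(\ell_i!)\leq \frac{m+k+\ell}{p-1}\leq \frac{2(k+\ell)}{p-1}$, then invokes a case check for $p\in\{2,3\}$. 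You instead pass to Dynkin's Lie form, which buys you two things: nonzero iterated brackets of degree $\geq 2$ automatically involve both $x$ and $y$ (so the pure-$x$ and pure-$y$ contributions, which in the monomial expansion must be argued to cancel to $x+y$, disappear for free), and your denominator bound is packaged in the single combinatorial inequality $n\prod r_i!\,s_i!\leq N!$. The endgame --- the numerical verification that $(N-2)\beta_p+2\delta_2(p)\geq v_p(N)+v_p(N!)$ for $N\geq 3$ --- is the analogue of the paper's case check, and your observation that equality occurs at $(p,N)=(3,3)$ shows the constants $\beta_p$ are sharp for this method.
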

\begin{proof}
Formellement,
\begin{align}
\log(\exp(x)\exp(y)) &= \sum_{m=1}^{+\infty}\frac{(-1)^{m+1}}{m}(\exp(x)\exp(y) - 1)^m \notag\\
& = \sum_{m=1}^{+\infty}\frac{(-1)^{m+1}}{m} \Bigl(\sum_{k, \ell\geq 0}\frac{x^k y^\ell}{k! \ell!} - 1 \Bigr)^m \notag\\
& = \sum_{m=1}^{+\infty}\frac{(-1)^{m+1}}{m} \sum_{\substack{k_1 + \ell_1 \geq 1\\ \cdots \\ k_m + \ell_m \geq 1}}\frac{x^{k_1}y^{\ell_1} \dotsm x^{k_m}y^{\ell_m}}{k_1! \dotsm k_m! \ell_1! \dotsm \ell_m!}. \label{eq:BCHexp}.
\end{align}
En utilisant \eqref{eq:valfactoriel}, on a
\begin{align}
v_p\Bigl(\frac{(-1)^{m+1}}{m} \frac{x^{k_1}y^{\ell_1} \dotsm x^{k_m}y^{\ell_m}}{k_1! \dotsm k_m! \ell_1! \dotsm \ell_m!}\Bigr) &\geq k v_p(x) + \ell v_p(y) - \frac{k + \ell + m}{p-1} \notag\\
&\geq k v_p(x) + \ell v_p(y) - \frac{2}{p-1}(k + \ell), \label{eq:BCHterme4}
\end{align}
où $k = k_1 + \dotsb +k_m$ and $ \ell = \ell_1+  \dotsb + \ell_m$.
Ainsi, la série dans \eqref{eq:BCHexp} est convergente pour $x, y \in p^{\beta_p} \gl_d(\Z_p)$.
À l'aide de~\eqref{eq:BCHterme4}, et en faisant attention aux cas $p = 2$ et $p = 3$, on vérifie que tous les termes de degré homogène supérieur à $2$ qui apparaissent dans \eqref{eq:BCHexp} sont de valuation $p$-adique au moins $v_p(x)+v_p(y)-2\delta_2(p)$.
\end{proof}

D'après \cite[Part I, Chap. IV, Theorem 7.4]{serre_lalg}, chaque terme homogène dans la série dans \eqref{eq:BCHexp} est en fait un élément dans l'algèbre de Lie libre à deux indéterminées sur $\Q$, c'est-à-dire une combinaison $\Q$\dash{}linéaire de crochets itérés en $x$ et $y$.

\subsection{L'application exponentielle modulo $q$}
Étant donné $q = \prod p^{m_p}$, avec $m_2\neq 1$, posons
\[
\dot{r} = \prod_{p | q} p^{1 + \delta_2(p)}.
\]
En utilisant les isomorphismes
\[
\gl_d(\prod_{p|q} \Z_p) \simeq \prod_{p|q} \gl_d(\Z_p)
\quad\mbox{et}\quad
\GL_d(\prod_{p|q} \Z_p) \simeq \prod_{p|q} \GL_d(\Z_p),
\]
et en combinant les applications exponentielles sur chaque facteur, on obtient une bijection définie sur $\dot{r} \gl_d(\prod_{p|q} \Z_p)$ et à valeurs dans $\ker(\GL_d(\prod_{p|q} \Z_p) \to \GL_d(\Z/\dot{r}\Z))$.
Ainsi, pour $x \in \dot{r}\gl_d(\ZZ)$, $\exp(x)$ est bien défini dans $\GL_d(\prod_{p|q} \Z_p)$. 
Plus généralement, si $A \subset \dot{r}\gl_d(\ZZ)$, l'image $\exp(A)$ est bien définie, et l'on note
\[
N(\exp(A),q) = \card \pi_q(\exp(A))
\]
le cardinal de sa projection modulo $q$.

\begin{lemma}
\label{isom}
\begin{enumerate}
\item Pour tout $x \in \dot{r}\gl_d(\ZZ)$, pour tout facteur premier $p$ de $q$,
\[v_p(\exp(x) - 1) = v_p(x).\]
\item Pour tout $B \subset \dot{r}\gl_d(\ZZ)$, 
\[N(\exp(B),q) = N(B,q).\]
Inversement, si $A \subset \ker(\GL_d(\hat\Z) \to \GL_d(\Z)/\GL_d(\Z/\dot{r}\Z))$,
\[N(\log(A),q) = N(A,q).\]
\end{enumerate}
\end{lemma}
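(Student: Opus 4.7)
Le plan est de ramener tout à des calculs place par place via l'isomorphisme $\gl_d(\ZZ)\simeq\prod_p\gl_d(\Z_p)$. Rappelons que pour $x\in\dot{r}\gl_d(\ZZ)$, chaque composante $p$-adique $x_p$ (pour $p|q$) appartient à $p^{\alpha_p}\gl_d(\Z_p)$, de sorte que $\exp(x_p)$ est bien défini par le lemme~\ref{lm:expZp}. Le point clé, qui apparaît déjà dans la démonstration du lemme~\ref{lm:expZp}, est la factorisation suivante: pour tous $x,y\in p^{\alpha_p}\gl_d(\Z_p)$,
\[
\exp(y)-\exp(x) = (y-x)\Bigl(1 + \tfrac{y+x}{2} + \sum_{n\geq 3}\tfrac{y^{n-1}+\dots+x^{n-1}}{n!}\Bigr),
\]
où le facteur de droite a valuation $p$-adique nulle. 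En particulier, $v_p(\exp(y)-\exp(x))=v_p(y-x)$ : l'exponentielle est une isométrie $p$-adique.

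Pour l'assertion~(1), on applique la factorisation précédente avec $y=0$ : on obtient directement $v_p(\exp(x_p)-1)=v_p(x_p)$. Ceci étant valable pour chaque $p|q$, et puisque $v_p$ sur $\gl_d(\ZZ)$ coïncide avec $v_p$ appliqué à la composante $p$-adique, on en déduit l'égalité voulue.

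Pour l'assertion~(2), il suffit de démontrer que pour $x,y\in\dot{r}\gl_d(\ZZ)$, la congruence $\exp(x)\equiv\exp(y)\mod q$ équivaut à $x\equiv y\mod q$. Comme la réduction modulo $q=\prod_{p|q}p^{m_p}$ se ramène, par le théorème des restes chinois, aux réductions $p$-adiques modulo $p^{m_p}$ pour $p|q$, on est ramené à vérifier pour chaque tel $p$ l'équivalence $v_p(\exp(x_p)-\exp(y_p))\geq m_p \Leftrightarrow v_p(x_p-y_p)\geq m_p$, qui résulte immédiatement de la factorisation ci-dessus. L'énoncé réciproque pour $\log$ se démontre de façon identique, $\log$ étant la bijection réciproque de $\exp$ entre les domaines considérés (cf. lemme~\ref{lm:expZp}). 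Il n'y a donc pas d'obstacle conceptuel sérieux: le lemme est essentiellement une reformulation, modulo $q$ au lieu de $p$-adiquement, de la propriété d'isométrie du lemme~\ref{lm:expZp}.
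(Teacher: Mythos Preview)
Your proof is correct and follows exactly the route the paper intends: the paper's own proof is the single sentence \enquote{Ce sont des conséquences immédiates du lemme~\ref{lm:expZp}}, and you have simply unpacked that sentence by invoking, place by place, the $p$-adic isometry property of $\exp$ established there. The only cosmetic remark is that the factorisation $\exp(y)-\exp(x)=(y-x)(\dots)$ you quote from the proof of lemme~\ref{lm:expZp} is, strictly speaking, written for commuting variables; in the non-commutative case one uses the telescoping $y^n-x^n=\sum_{k=0}^{n-1}y^{\,n-1-k}(y-x)x^k$, which yields the same valuation estimate and hence the same isometry conclusion---this is a wrinkle in the paper's exposition, not in your argument.
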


\begin{proof}
Ce sont des conséquences immédiates du lemme~\ref{lm:expZp}.
\end{proof}

Naturellement, si $\Gamma$ est un sous-groupe de $\SL_d(\Z)$, $G$ le schéma en groupes associé, et $\Omega$ l'adhérence de $\Gamma$ dans $\SL_d(\ZZ)$, on peut restreindre les applications exponentielles et logarithme à $\g(\ZZ)$ et $G(\ZZ)$.
En notant, pour $q\in\N^*$, $\Omega_q=\Omega\cap\ker\pi_q$, cela donne le lemme suivant.

\begin{lemma}
\label{lm:logg}
Si $g \in \Omega_{\dot{r}}$ alors $\log g\in \dot{r}\g(\prod_{p|q}\Z_p)$.
En particulier $\log g\in \dot{r}\g(\Z) \mod q\g(\Z)$.
\comm{Notons que comme $\g(\Z)$ est dense dans $\g(\ZZ)$, on a $\dot{r}\g(\Z)/q\g(\Z)\simeq\dot{r}\g(\ZZ)/q\g(\ZZ)\simeq\dot{r}\g(\prod_{p|q}\Z_p)/q\g(\prod_{p|q}\Z_p)$.}
\end{lemma}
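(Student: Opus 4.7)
Le plan est de démontrer la première assertion séparément en chaque nombre premier $p$ divisant $q$, via la décomposition $\gl_d(\prod_{p|q}\Z_p)=\prod_{p|q}\gl_d(\Z_p)$. Comme $\Omega\subset G(\ZZ)$, la projection de $g$ sur le facteur $\Z_p$ appartient à $G(\Z_p)$, et l'hypothèse $g\in\Omega_{\dot{r}}$ donne $g-1\in p^{1+\delta_2(p)}\gl_d(\Z_p)$. Le lemme~\ref{lm:expZp} garantit alors que $x_p:=\log g\in p^{1+\delta_2(p)}\gl_d(\Z_p)$. Je vais démontrer que $x_p\in\g(\Q_p)$. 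En écrivant $x_p = p^{1+\delta_2(p)}y_p$ avec $y_p\in\gl_d(\Z_p)$, on aura par linéarité $y_p\in\g(\Q_p)\cap\gl_d(\Z_p)=\g(\Z_p)$, d'où $x_p\in p^{1+\delta_2(p)}\g(\Z_p)$.

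L'étape clef repose sur le sous-groupe à un paramètre $p$-adique associé à $x_p$. Pour tout $t\in\Z_p$, on a $tx_p\in p^{1+\delta_2(p)}\gl_d(\Z_p)$, et l'application $\phi\colon\Z_p\to\GL_d(\Z_p)$, $\phi(t)=\exp(tx_p)$, est donc bien définie et $p$-adiquement analytique. Par la relation $\exp(x+y)=\exp(x)\exp(y)$ pour $x,y$ commutants dans le domaine de convergence (conséquence formelle des séries entières ou du lemme~\ref{lm:BCHexp1}), on a $\phi(s+t)=\phi(s)\phi(t)$ pour tous $s,t\in\Z_p$, et par conséquent $\phi(n)=g^n\in G(\Z_p)$ pour tout $n\in\N$. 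Pour chaque polynôme $f\in\cI$ définissant le schéma en groupes $G$, l'application $t\mapsto f(\phi(t))$ est donc analytique sur $\Z_p$ et s'annule sur $\N$, qui est dense dans $\Z_p$; par continuité, elle est identiquement nulle. En prenant sa dérivée en $t=0$, et en utilisant $\phi'(0)=x_p$ et $f(I_d)=0$, on obtient
\[
\sum_{i,j}\partial_{ij}f(I_d)\,(x_p)_{ij}=0,
\]
ce qui signifie précisément $x_p\in\g(\Q_p)$.

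La conjonction des cas $p|q$ donne $\log g\in\dot{r}\g(\prod_{p|q}\Z_p)$. L'assertion \enquote{en particulier} découle du théorème des restes chinois : comme $\g(\Z)$ est un $\Z$-module libre (sous-module sans torsion de $\gl_d(\Z)$), on a $\g(\Z)/q\g(\Z)\simeq\prod_{p|q}\g(\Z_p)/p^{m_p}\g(\Z_p)$, et le membre de droite est isomorphe à $\g(\prod_{p|q}\Z_p)/q\g(\prod_{p|q}\Z_p)$. La principale subtilité technique réside dans la deuxième étape : il faut invoquer la densité de $\N$ dans $\Z_p$ et la continuité $p$-adique des applications $t\mapsto f(\phi(t))$ pour transférer l'information de nature algébrique \enquote{$g^n\in G(\Z_p)$ pour $n\in\N$} en une information infinitésimale sur $x_p$. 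L'analyticité globale de $\phi$ sur tout $\Z_p$ (et non seulement sur un petit voisinage de $0$) provient du fait que les coefficients $\frac{(tx_p)^n}{n!}$ restent dans $\gl_d(\Z_p)$ pour $t\in\Z_p$ grâce à la divisibilité $p^{1+\delta_2(p)}\mid x_p$.
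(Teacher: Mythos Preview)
Your argument is correct. It differs from the paper's in how you establish $\log g \in \g(\Q_p)$: the paper simply invokes the fact that the Lie algebra of the $p$-adic analytic group $G(\Q_p)$ is $\g(\Q_p)$ (with implicit reference to Serre's book, already cited for Cartan's theorem), so that the analytic $\log$ automatically lands in $\g(\Q_p)$. You instead rederive this from scratch via the one-parameter subgroup $t\mapsto\exp(tx_p)$, using that $\phi(n)=g^n\in G(\Z_p)$ for $n\in\N$, density of $\N$ in $\Z_p$, and differentiation of the defining equations of $G$. Your route is more elementary and self-contained; the paper's is shorter but presupposes the $p$-adic Lie correspondence. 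Both reduce the second assertion to the first by the Chinese remainder theorem on the free $\Z$-module $\g(\Z)$.
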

\begin{proof}
D'après les propriétés de l'exponentielle modulo $q$ sur $\gl_d$, l'élément $\log g$ est bien défini dans $\gl_d(\dot{r}\prod_{p|q}\Z_p)$.
Par ailleurs, pour tout nombre premier $p$, l'algèbre de Lie du groupe analytique $G(\Z_p)$ est $\g(\Q_p)$, donc 
\[
\exp(p^{\alpha_p}\g(\Z_p)) \subset G(\Q_p) \cap \GL_d(\Z_p) \subset G(\Z_p),
\]
et
\[
\log g \in \prod_{p|q}p^{\alpha_p}\g(\Z_p) = \dot{r} \prod_{p|q}\g(\Z_p) \simeq \dot{r}\g(\prod_{p|q}\Z_p).
\]
\end{proof}

\comm{
\begin{remark}
Sans introduire de schéma en groupes, on peut définir $\g(\Z)=\g(\Q)\cap\gl_d(\Z)$, et de même pour $\g(\Z_p)$ et $\g(\prod_{p|q}\Z_p)$.
On définit aussi $G(\Z)=G(\Q)\cap\SL_d(\Z)$, etc.
Cela permet d'interpréter la première partie du lemme de façon élémentaire, et de voir que $\log g\in\dot{r}\g(\Z)\mod q$.
Ensuite, si on définit $\g(\Z/q\Z)=\g(\Z)/q\g(\Z)$, et $G(\Z/q\Z)=\pi_q(G(\Z))$ la seconde partie du lemme est claire aussi.
Ces définitions ne sont pas compatibles avec celles obtenues via les schémas en groupes, car on ne peut pas toujours relever une solution des équations de $G$ dans $\Z/p\Z$ en une solution dans $\Z$.
\end{remark}
\begin{remark}
Pour $p$ grand, grâce au lemme de Hensel -- la condition du jacobien non nul modulo $p$ sera vérifiée -- on montre que $G(\Z_p)=G(\Q_p)\cap\ssl_d(\Z_p)$ se surjecte sur $G(\Z/p\Z)$.
Si l'on pouvait démontrer que $G(\Z)$ est dense dans $G(\Z_p)$ (approximation faible), on en déduirait que $G(\Z)\to G(\Z/p\Z)$ est surjective.
\end{remark}
\begin{remark}
En tout état de cause, le théorème d'approximation forte montre que pour $p$ grand, $G(\Z)\to G(\Z/p\Z)$ est surjective, et idem pour $\g$. Ce qui permet en quelque sorte de parler de $G(\Z/p\Z)$ (et même $G(\Z/q\Z)$) sans avoir à introduire de schémas en groupes.
\end{remark}
}

\subsection{Exponentielle et représentation adjointe}

Si $R$ est un anneau unifère quelconque, l'action adjointe de $\GL_d(R)$ sur l'algèbre de Lie $\gl_d(R)$ est définie par
\[
\forall a \in \GL_d(R),\ \forall x \in \gl_d(R),\quad
\Ad(a)\cdot x = a x a^{-1}.
\] 
Nous noterons aussi, pour $a,g \in \GL_d(R)$,
\[
\iota(a)\cdot g = aga^{-1}.
\]
En particulier, pour $A \subset \GL_d(R)$,
\[
\iota(A) \cdot g = \{\, aga^{-1} \mid a\in A\,\}.
\]

\begin{lemma}
\label{conjexpAd}
Soit $q=\prod p^{m_p}$, avec $m_p\neq 1$, et $\dot{r}=\prod_{p|q} p^{1+\delta_2(p)}$.
\begin{enumerate}
\item Pour tout $a \in \SL_d(\Z)$ et $x \in \dot{r} \ssl_d(\Z)$
\[\iota(a) \cdot \exp(x) \equiv \exp(\Ad(a)\cdot x) \mod q.\]
\item Pour tout $a \in G(\Z)$ la restriction de $\Ad(a)$ à $\g(\Z)$ est un automorphisme de $\Z$-module. 
\end{enumerate}
\end{lemma}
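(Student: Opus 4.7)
Pour la première assertion, l'idée est que la conjugaison par $a$ est un automorphisme continu de la $\Z$\dash{}algèbre $\gl_d(\prod_{p\mid q}\Z_p)$, donc commute terme à terme avec la série qui définit l'exponentielle. Plus précisément, puisque $x\in\dot{r}\gl_d(\ZZ)$, le lemme~\ref{lm:expZp} appliqué à chaque facteur $p\mid q$ montre que $\exp(x)$ est bien défini dans $\GL_d(\prod_{p\mid q}\Z_p)$; et comme $\dot{r}$ est un scalaire, $\Ad(a)\cdot x = a x a^{-1}$ appartient encore à $\dot{r}\gl_d(\prod_{p\mid q}\Z_p)$, si bien que $\exp(\Ad(a)\cdot x)$ est lui aussi bien défini. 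On calcule alors, pour tout $N\geq 0$,
\[
a\Bigl(\sum_{n=0}^N\frac{x^n}{n!}\Bigr)a^{-1} = \sum_{n=0}^N\frac{(axa^{-1})^n}{n!},
\]
et en passant à la limite dans $\GL_d(\prod_{p\mid q}\Z_p)$ (muni de la topologie $p$\dash{}adique sur chaque facteur), on obtient $\iota(a)\cdot\exp(x) = \exp(\Ad(a)\cdot x)$. La congruence modulo $q$ s'en déduit par projection.

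Pour la seconde assertion, la stratégie est de traduire l'appartenance à $\g$ via les nombres duaux. Soit $R=\Z$ et considérons l'anneau $R[\eps]/(\eps^2)$ des nombres duaux. Pour $x\in\gl_d(\Z)$ et $f\in\cI$, un calcul de Taylor donne
\[
f(I_d+\eps x) = f(I_d) + \eps\sum_{i,j}\partial_{ij}f(I_d)\,x_{ij} = \eps\sum_{i,j}\partial_{ij}f(I_d)\,x_{ij},
\]
puisque $f(I_d)=0$. Par définition de $\g(\Z)$, on a donc l'équivalence
\[
x\in\g(\Z)\iff I_d+\eps x\in G(\Z[\eps]/(\eps^2)).
\]
Or, $a\in G(\Z)$ se plonge dans $G(\Z[\eps]/(\eps^2))$, qui est un groupe. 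On calcule $a(I_d+\eps x)a^{-1} = I_d+\eps\,axa^{-1}$, et cet élément appartient donc à $G(\Z[\eps]/(\eps^2))$. L'équivalence ci-dessus appliquée à $axa^{-1}$ montre que $\Ad(a)\cdot x\in\g(\Z)$. Ainsi $\Ad(a)$ préserve $\g(\Z)$, et le même raisonnement appliqué à $a^{-1}\in G(\Z)$ fournit un inverse à deux côtés à l'opérateur $\Ad(a)\colon\g(\Z)\to\g(\Z)$, qui est donc un automorphisme de $\Z$\dash{}module.

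Les deux points sont indépendants, et aucun ne présente de difficulté majeure: la première assertion est une identité formelle de séries entières dont il faut simplement vérifier la convergence via le lemme~\ref{lm:expZp}; la seconde est la traduction algébrique, par le formalisme des nombres duaux, du fait que $\Ad\colon G\to\GL(\g)$ est un morphisme de schémas en groupes sur $\Z$.
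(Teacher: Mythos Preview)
Your proof is correct. For the first point you argue exactly as the paper does, via the power series expansion of the exponential and its compatibility with conjugation; you are simply more explicit about convergence.

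For the second point your route differs slightly from the paper's. The paper observes that $\g(\Z)=\g(\Q)\cap\ssl_d(\Z)$ (the defining equations of $\g$ are linear with integer coefficients), that $\Ad(a)$ preserves $\g(\Q)$ because $a\in G(\Q)$ and $\g(\Q)$ is the Lie algebra of the $\Q$\dash{}group $G_\Q$, and that $\Ad(a)$ preserves $\ssl_d(\Z)$ because $a\in\SL_d(\Z)$; the inverse is $\Ad(a^{-1})$. You instead characterise $\g(\Z)$ via the dual numbers $\Z[\eps]/(\eps^2)$ and use the group structure of $G(\Z[\eps]/(\eps^2))$ directly. Your argument is more self-contained (it avoids invoking the classical fact over $\Q$) and works uniformly over any base ring, at the cost of a few more lines; the paper's argument is shorter but leans on the field case. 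Both are entirely standard and equally valid here.
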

\begin{proof}
Le premier point est une conséquence du développement en série entière de l'application exponentielle sur $\ssl_d$.
Le second découle de ce que $\Ad a$ préserve $\g(\Z)=\g(\Q)\cap\ssl_d(\Z)$ et admet pour inverse $\Ad a^{-1}$.
\end{proof}

\section{Propriété quasi-aléatoire}
\label{sec:qa}

Comme ci-dessus, $\Gamma$ désigne un sous-groupe de $\SL_d(\Z)$, $G$ est le schéma en groupes associé, et $\Omega$ l'adhérence de $\Gamma$ dans $\SL_d(\ZZ)$.
Dans ce dernier appendice, nous donnons une démonstration de la propriété quasi-aléatoire du groupe $G(\ZZ)$, déjà énoncée comme proposition~\ref{qr} dans le corps de l'article, et que nous rappelons ici pour plus de lisibilité.

\begin{proposition}[Propriété quasi-aléatoire]
\label{qra}
On suppose que $G_\Q$ est semi-simple, connexe, et simplement connexe.
Alors, il existe $\kappa>0$ tel que pour toute représentation irréductible $(\rho,V_\rho)$ de $\Omega$, il existe $q \in \N^*$ tel que
\[
\Omega_q \subset \ker \rho
\quad\mbox{et}\quad
\dim V_\rho \geq \kappa {[\Omega : \Omega_q]}^\kappa.
\]
\end{proposition}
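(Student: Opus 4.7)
L'idée consiste à décomposer $\rho$ selon les facteurs premiers de $q$ et à borner inférieurement la dimension de chaque facteur, par une combinaison de la borne de Landazuri--Seitz pour les représentations de $G(\F_p)$ avec un argument de type Kirillov (ou Clifford itéré) pour les niveaux supérieurs.

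\emph{Étape 1 : réduction à un quotient fini.} Comme $V_\rho$ est de dimension finie, $\ker\rho$ est un sous-groupe ouvert du groupe profini $\Omega$, et contient donc $\Omega_{q'}$ pour un certain $q' \in \N^*$. On prend $q'$ minimal avec cette propriété, puis on pose $q = q_0 q'$ où $q_0$ est la constante du corollaire~\ref{chinois}, afin que tous les facteurs premiers utiles à la suite soient suffisamment grands. Ce remplacement ne modifie $[\Omega:\Omega_q]$ et $\dim V_\rho$ que par un facteur borné indépendant de $\rho$.

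\emph{Étape 2 : décomposition tensorielle.} Écrivons $q = \prod_p p^{m_p}$. Par le corollaire~\ref{chinois}, $\Omega/\Omega_q \simeq \prod_p \Omega/\Omega_{p^{m_p}}$, et donc $\rho = \bigotimes_p \rho_p$ avec chaque $\rho_p$ représentation irréductible de $\Omega/\Omega_{p^{m_p}}$, non triviale sur $\Omega_{p^{m_p-1}}/\Omega_{p^{m_p}}$ (par minimalité de $q'$). Puisque $[\Omega:\Omega_q] \ll q^{\dim G}$ par le corollaire~\ref{chinois2} et le théorème~\ref{thm:Nori}, il suffit d'établir l'existence d'une constante $\alpha > 0$ ne dépendant que de $G$ telle que, pour tout premier $p$ assez grand et tout $m_p \geq 1$,
\[\dim \rho_p \geq p^{\alpha m_p}.\]

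\emph{Étape 3 : cas $m_p = 1$.} Dans ce cas, $\rho_p$ est une représentation irréductible non triviale de $\Omega/\Omega_p = G(\F_p)$ (théorème~\ref{thm:Nori}). L'hypothèse de simple connexité autorise l'application des bornes de Landazuri--Seitz \cite{ls}, qui garantissent $\dim \rho_p \gg p$ avec une constante ne dépendant que du type de $G$.

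\emph{Étape 4 : cas $m_p \geq 2$.} Posons $P = \Omega_p/\Omega_{p^{m_p}}$ et $\tilde P = \Omega/\Omega_{p^{m_p}}$. Pour $p$ assez grand, la formule de Baker--Campbell--Hausdorff, via l'exponentielle (lemme~\ref{lm:logg}), identifie le $p$-groupe $P$ au groupe additif sous-jacent à $p\g(\Z/p^{m_p-1}\Z)$ muni du crochet de Lie tordu. Le centre de $P$ est alors $\Omega_{p^{m_p-1}}/\Omega_{p^{m_p}} \simeq \g(\F_p)$, l'action de $G(\F_p) = \tilde P/P$ par conjugaison étant l'action adjointe (lemme~\ref{conjAd}). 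Par la méthode des orbites coadjointes de Kirillov, les représentations irréductibles de $P$ sont paramétrées par les orbites coadjointes dans $L^*$, et la dimension vaut $\abs{\mathcal{O}}^{1/2}$. Soit $\sigma$ une composante irréductible de $\rho_p|_P$ ; par hypothèse son caractère central est non trivial, de sorte que l'orbite correspondante $\mathcal{O}$ admet un paramètre $\ell$ dont la restriction à $Z(L) \simeq \g(\F_p)$ fournit un élément non nul $\bar\ell \in \g(\F_p)^*$. Le stabilisateur coadjoint $L_\ell$ est le radical de la forme $(Y,Z) \mapsto \ell([Y,Z])$, et sa réduction modulo $p$ est incluse dans le centralisateur de $\bar\ell$ dans $\g(\F_p)$, de codimension au moins $c > 0$ dépendant seulement de $G$ (puisque $\g$ est semi-simple, les orbites coadjointes non nulles y sont de dimension uniformément minorée). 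Ce défaut de codimension se propage à chaque niveau $p$-adique, d'où $\abs{L_\ell} \leq p^{(m_p-1)(\dim G - c)}$ et $\abs{\mathcal{O}} \geq p^{c(m_p-1)}$. On obtient $\dim \rho_p \geq \dim \sigma \geq p^{c(m_p-1)/2}$, ce qui combiné avec l'étape 3 donne la borne annoncée $\dim \rho_p \geq p^{\alpha m_p}$.

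\emph{Conclusion.} On trouve $\dim V_\rho = \prod_p \dim \rho_p \geq q^\alpha \gg [\Omega:\Omega_q]^{\alpha/\dim G}$, d'où la proposition avec $\kappa = \alpha/\dim G$ (quitte à ajuster les constantes).

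\textbf{Difficulté principale.} Le point le plus délicat est l'étape 4. La correspondance de Kirillov, sous sa forme la plus simple, requiert que $p$ soit grand par rapport à la classe de nilpotence $m_p - 1$ de $P$ (typiquement $p > m_p$). Lorsque ce n'est pas le cas, en particulier pour $p$ fixé et $m_p$ tendant vers l'infini, il faut remplacer cet argument par une itération minutieuse de la théorie de Clifford le long de la filtration par congruences $(\Omega_{p^k}/\Omega_{p^{m_p}})_{0\leq k\leq m_p}$, chaque quotient successif étant central dans le quotient précédent. La propagation de la croissance exponentielle en $m_p$ à travers cette filtration exige un contrôle soigneux des stabilisateurs des caractères à chaque étape, ce qui constitue la partie technique la plus substantielle de la démonstration.
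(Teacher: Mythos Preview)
Votre plan suit la bonne architecture (décomposition tensorielle, Landazuri--Seitz au niveau $1$, argument de type Kirillov aux niveaux supérieurs), et c'est aussi l'approche du papier. Deux points méritent néanmoins d'être signalés.

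\textbf{Réduction (étapes 1--2).} Multiplier $q'$ par $q_0$ ne rend pas $q$ premier à $q_0$, donc le corollaire~\ref{chinois} ne s'applique pas tel quel, et surtout cela ne règle pas le cas d'un petit premier $p\mid q_0$ avec $m_p$ arbitrairement grand. Le papier contourne cette difficulté autrement : il passe au sous-groupe de congruence $\ker\pi_s\subset G(\ZZ)$ contenu dans $\Omega$ (via l'approximation forte), qui est un produit direct $\prod_p H_{p,k_p}$ où la décomposition tensorielle est exacte pour tout $p$, puis il remonte à $\Omega$ par un lemme d'hérédité de la quasi-aléatoirité aux sur-groupes d'indice fini (lemme~\ref{lm:qrsousgroupe}).

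\textbf{Niveaux supérieurs (étape 4).} Vous avez parfaitement identifié la difficulté : la méthode des orbites de Kirillov sur le $p$-groupe $P=\Omega_p/\Omega_{p^{m_p}}$ de classe de nilpotence $m_p-1$ exige $p$ grand devant $m_p$, et pour $p$ fixé avec $m_p\to\infty$ il faudrait un argument de Clifford itéré que vous ne détaillez pas. Le papier évite entièrement cette itération par une astuce due à Howe et Salehi Golsefidy : au lieu de travailler sur tout $P$, il restreint $\rho_p$ au sous-groupe \emph{abélien} $H_{p,\ell}$ avec $\ell=\lceil m_p/2\rceil$ (de sorte que $H_{p,\ell}/H_{p,m_p}$ est abélien par Baker--Campbell--Hausdorff quel que soit $p$). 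Il décompose alors $V$ selon les caractères $\psi$ de $\h_\ell$ et borne la taille de l'orbite coadjointe de $\psi$ sous $H_{p,k}$ par un argument de chaîne : si $\stab_\h(\psi^p)=\stab_\h(\psi)$ alors $\stab_\h(\psi)=\h$, donc on obtient une filtration stricte
\[
\stab_\h(\psi)\subsetneq\stab_\h(\psi^p)\subsetneq\dots\subsetneq\stab_\h(\psi^{p^n})=\h,
\]
chaque inclusion étant d'indice $\geq p$, et la longueur $n$ de la chaîne est minorée via la perfection de $\g$ (qui donne $\h_{k+\ell+o_p}\subset[\h,\h_\ell]$). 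Cela produit $\dim V\geq p^{\lfloor m_p/2\rfloor-k-o_p}$ uniformément en $p$, sans aucune hypothèse sur le rapport $p/m_p$. C'est cette réduction au niveau $\lceil m_p/2\rceil$ combinée à l'argument de chaîne qui constitue l'idée manquante de votre étape~4.
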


%
%

\comm{
\begin{remark}
Il est indispensable de supposer que $V_\rho$ est irréductible, comme le montre l'exemple de $G=\SL_2$, et d'une représentation somme d'une représentation de $\SL_2(\Z_p)$, et d'une représentation de $\SL_2(\Z_q)$, où $p$ et $q$ sont deux nombres premiers distincts.
La dimension d'une telle représentation est de l'ordre de $p^n+q^m$ (où $n$ et $m$ sont donnés par les noyaux des représentations), qui peut être considérablement plus petit que $p^nq^m$.
On aurait aussi pu considérer une somme directe de représentations standard de $\SL_2(\Z/p_i\Z)$, où les $p_i$ sont des premiers distincts.
La dimension est de l'ordre de $\sum_i p_i^2$, qui est négligeable devant $\prod_i p_i$.
\end{remark}
\begin{remark}
Il est aussi indispensable de supposer que $G_\Q$ est simplement connexe, comme on le voit avec l'exemple de $\PGL_2$.
En effet, $\PGL_2(\Z/p\Z)$ admet un sous-groupe d'indice 2, noté $\PSL_2(\Z/p\Z)$, égal au noyau de l'application $\det\colon\PGL_2(\Z/p\Z)\to(\Z/p\Z)^*/(\Z/p\Z)^{*2}$. Et donc $\PGL_2(\Z/p\Z)$ admet toujours une représentation non triviale de degré 2, égale à $L^2(\PGL_2(\Z/p\Z)/\PSL_2(\Z/p\Z)$.
Noter que $\PSL_2(\Z/p\Z)$ est justement le sous-groupe de $\PGL_2(\Z/p\Z)$ engendré par les unipotents.
\end{remark}
}


\subsection{Cas des groupes \texorpdfstring{$p$-adiques}{p-adiques}}

Pour un nombre premier $p$ et $m \in \N$, notons $H_{p,m}$ le sous-groupe de congruence de $G(\Z_p)$,
\[
H_{p,m} = \{g\in G(\Z_p)\ |\ g\equiv 1\mod p^m\}.
\]
Nous dirons que le schéma en groupes $G$ est parfait si sa fibre générique $G_\Q$ l'est, ce qui revient à dire que son algèbre de Lie $\g_\Q$ coïncide avec son algèbre dérivée $[\g_\Q,\g_\Q]$.

\begin{proposition}
\label{qr,padic}
On suppose que $G$ est parfait.
Alors, il existe une famille d'entiers naturels $(o_p)_p$ telle que $o_p = 0$ sauf pour un nombre fini de nombres premiers $p$ et que l'assertion suivante soit vraie.

Pour tout $p$ premier, et tous entiers $k \geq 2$ et $m \geq 6$, si $(\rho,V)$ est une représentation unitaire de $H_{p,k}$ triviale sur $H_{p,m}$ et non triviale sur $H_{p,m-1}$, alors 
\[
\dim V \geq p^{\floor{m/2} - k - o_p}.
\]
\end{proposition}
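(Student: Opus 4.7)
My plan is to combine Clifford--Mackey theory with the BCH formula of Lemma~\ref{lm:BCHexp1} in order to reduce the dimension estimate to a coadjoint-orbit computation on the reductive Lie algebra $\g$. One may assume $(\rho,V)$ irreducible, since the general case follows by passing to an irreducible constituent of $\rho$ that is still non-trivial on $H_{p,m-1}$. Throughout I work under $p\geq p_0(\g)$ for a suitable constant; the finitely many exceptional primes below $p_0$ (including $p=2,3$ where $\beta_p > 1$ in Lemma~\ref{lm:BCHexp1}) are absorbed into $o_p$.

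Set $s = \lceil m/2\rceil$. The inclusion $[H_{p,s},H_{p,s}]\subset H_{p,2s}\subset H_{p,m}$ makes $H_{p,s}/H_{p,m}$ abelian, and since $\rho$ is non-trivial on $H_{p,m-1}\subset H_{p,s}$, the decomposition of $\rho|_{H_{p,s}}$ into characters yields a character $\chi$ whose restriction to $H_{p,m-1}/H_{p,m}$ is non-trivial. Since $H_{p,s}$ is normal in $H_{p,k}$, Clifford theory gives
\[
\dim V \;\geq\; [H_{p,k}:\Stab_{H_{p,k}}(\chi)].
\]
Via Lemma~\ref{lm:expZp} I identify $H_{p,s}/H_{p,m}$ with $p^s\g(\Z_p)/p^m\g(\Z_p)$; the character $\chi$ then corresponds to a linear form $\lambda = p^{-m}\mu$ with $\mu\in\g^*(\Z_p)$, and non-triviality on $H_{p,m-1}/H_{p,m}$ means precisely that $\bar\mu := \mu\bmod p$ is non-zero in $\g^*(\F_p)$. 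Applying Lemma~\ref{lm:BCHexp1} to $g=\exp X\in H_{p,k}$ and $h=\exp Y\in H_{p,s}$,
\[
\log[g,h] \equiv [X,Y] + \tfrac12[X,[X,Y]] + \cdots \pmod{p^m}.
\]

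In the regime $2k+s\geq m$ (i.e.\ $k\geq \lceil m/4\rceil$), all higher-order terms vanish modulo $p^m$ and the stabilizer condition reduces to the purely bilinear $\mu([\bar X,\bar Y])\equiv 0\pmod{p^{m-k-s}}$ with $\bar X=X/p^k$, $\bar Y=Y/p^s$. Perfectness of $G$ forces $[\g,\g]=\g$ over $\F_p$ for almost all $p$, so the alternating form $B_{\bar\mu}(X,Y)=\bar\mu([X,Y])$ on $\g(\F_p)$ is non-zero and, being anti-symmetric, has rank at least $2$. Splitting $\bar X = \bar X_1 + p^{m-k-s}\bar X_2$ and applying a Smith normal form argument to the induced map $\g(\Z/p^{m-k-s}\Z)\to\g^*(\Z/p^{m-k-s}\Z)$ yields $|\Stab|\leq p^{s\dim\g + (m-k-s)(\dim\g-2)}$, whence
\[
\dim V \geq p^{2(m-k-s)} = p^{2\lfloor m/2\rfloor - 2k} \geq p^{\lfloor m/2\rfloor - k}
\]
for $k\leq\lfloor m/2\rfloor$ (the claimed bound is trivial otherwise). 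For $k<\lceil m/4\rceil$, I would replace $s$ by $s=m-2k$ (still $\geq\lceil m/2\rceil$ when $k\leq\lfloor m/4\rfloor$), which restores $2k+s=m$ and gives $\dim V\geq p^{2k}$, sufficient as long as $k\gtrsim \lfloor m/6\rfloor$; for $k$ smaller still, the Lazard correspondence together with Kirillov's orbit method (available whenever $p$ exceeds the nilpotency class $\lceil (m-1)/k\rceil$ of $H_{p,k}/H_{p,m}$) gives the stronger bound $\dim V = |\mathcal{O}|^{1/2}\geq p^{m-2k}$ from the same coadjoint-orbit computation.

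The main obstacle is the combination of very small $k$ with a fixed, small but not exceptional prime $p$, where the Lazard correspondence is unavailable and no single abelian layer of the congruence filtration suffices. Resolving this regime requires either an iterated Clifford argument along the full filtration $H_{p,k}\supset H_{p,k+1}\supset\cdots$, with careful bookkeeping of how the non-linear BCH corrections $[X,[X,Y]]$ and $[Y,[X,Y]]$ propagate through the character, or an adaptation of Kirillov-style orbit theory valid beyond the uniform pro-$p$ setting; the finitely many primes for which even this refinement falls short of the clean exponent $\lfloor m/2\rfloor - k$ are absorbed into the non-zero values of $o_p$.
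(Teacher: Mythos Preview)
Your setup --- restricting to the abelian layer $H_{p,s}$, decomposing into characters, and bounding $\dim V$ by the size of a coadjoint orbit --- matches the paper's exactly. The gap is in the stabilizer estimate. Your bilinear computation is only clean when $2k+s\geq m$; below that threshold you fall back on Kirillov, which requires $p$ to exceed the nilpotency class $\approx m/k$ of $H_{p,k}/H_{p,m}$. But the proposition must hold for \emph{every} $m\geq 6$ at $k=2$ and at every fixed prime $p$, so for each $p$ there are infinitely many $m$ (namely all $m\gtrsim 2p$) for which your argument gives nothing. Absorbing this into $o_p$ does not work: $o_p$ would have to depend on $m$, not just on $p$. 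The ``iterated Clifford argument'' you mention at the end is the right direction, but you have not carried it out.

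The paper closes this gap with two devices you are missing. First, a Howe-type identity: writing $\tau(x)=\sum_{n\geq 0}\ad(x)^n/(n+1)!$, one has $\Ad(\exp x)-\Id=\ad(x)\circ\tau(x)$, and since $\tau(x)$ is an isometry preserving $\h_\ell$ this yields the \emph{exact} description
\[
\log\Stab_H(\psi)=\{x\in\h : \psi([x,y])=1\ \text{for all}\ y\in\h_\ell\},
\]
with no higher BCH terms and no constraint relating $k$ to $m$. Second, instead of a rank-$2$ bound over $\F_p$, the paper observes that if $\stab_\h(\psi^p)=\stab_\h(\psi)$ then the $p$-group $\h/\stab_\h(\psi)$ has no $p$-torsion, hence is trivial. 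Iterating gives a strict chain
\[
\stab_\h(\psi)\subsetneq\stab_\h(\psi^p)\subsetneq\cdots\subsetneq\stab_\h(\psi^{p^n})=\h,
\]
so $[\h:\stab_\h(\psi)]\geq p^n$. Perfectness, in the form $p^{o_p}\g(\Z_p)\subset[\g(\Z_p),\g(\Z_p)]$, then forces $\h_{n+k+\ell+o_p}\subset p^n[\h,\h_\ell]\subset\ker\psi$, whence $n\geq\lfloor m/2\rfloor-k-o_p-\delta_2(p)$. This is uniform in $m$, $k$, $p$, and is precisely the idea your proposal lacks.
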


\comm{
\begin{remark}
En particulier, avec le lemme~\ref{lm:qrsousgroupe}, cela implique la propriété quasi-aléatoire d'un un sous-groupe ouvert de $G(\Z_p)$ par rapport à la famille de ses sous-groupes de congruence.
\end{remark}
}

\begin{proof}
Dans cette démonstration, nous écrirons
\[
H_\ell = H_{p,\ell}\quad \mbox{pour}\ \ell\in\N,
\quad\mbox{et}\quad
H = H_k.
\]
Posons $\h = \log(H_k)$ et, pour tout $\ell \geq 2$, $\h_\ell = \log H_\ell$. 
D'après la discussion dans les appendices~\ref{sec:prelim} et \ref{sec:exp}, $\h_\ell = p^\ell \g(\Z_p)$ et $\g(\Z_p) = \g(\Z) \otimes_\Z \Z_p$.
En particulier $\h$ et $\h_\ell$ sont des sous-algèbres de Lie de $\g(\Z_p)$ sur $\Z_p$.

Maintenant posons $\ell = \lceil m/2 \rceil + \delta_2(p)$ de sorte que l'application exponentielle induit un isomorphisme de groupe entre $\h_\ell/\h_m$ et $H_\ell/H_m$, par lemme~\ref{lm:BCHexp1}.
On peut supposer $\ell \geq k$, sans quoi il n'y a rien à démontrer.
La représentation $\rho$ se décompose en une somme directe suivant les caractères de $\h_{\ell}$,
\[V = \bigoplus_{\psi \in \Hom(\h_\ell,S^1)} V_\psi,\]
où $\psi$ parcourt l'ensemble des caractères unitaires de $\h_\ell$ se factorisant par $\h_\ell/\h_m$ et pour un tel $\psi$,
\[
V_\psi = \{\, v \in V \mid \forall x \in \h_\ell,\ \rho(\exp(x))v = \psi(x)v\,\}.
\]

Comme $H_\ell$ est distingué dans $H$, l'action de $H$ sur $V$ permute les sous-espaces caractéristiques correspondant à l'action co-adjointe. 
Plus précisément, 
notant
\[
\forall g \in H,\ \forall \psi \in \Hom(\h_\ell,S^1),\quad \Ad^*(g)\psi = \psi\circ\Ad(g^{-1}),
\]
nous avons 
\[
\forall g \in H,\ \forall \psi \in \Hom(\h_\ell,S^1),\quad \rho(g) V_\psi = V_{\Ad^*(g)\psi}.
\]
Il s'ensuit que pour des caractères $\psi \in \Hom(\h_\ell,S^1)$ avec $V_\psi \neq \{0\}$, on a 
\[\dim V \geq \abs{\Ad^*(H)\psi} = {[H : \Stab_{H}(\psi)]}\]
où $\Stab_{H}(\psi) = \{\,g \in H \mid \Ad^*(g)\psi = \psi\,\}$ désigne le stabilisateur de $\psi$ sous l'action co-adjointe.
La fin de la démonstration consiste à minorer l'indice $[H:\Stab_H(\psi)]$, ce pour quoi nous aurons besoin de deux lemmes.
Le premier est une variante d'un énoncé de Howe \cite[Lemma 1.1]{Howe}.

\begin{lemma}
Posons $\stab_{\h}(\psi) = \log \Stab_{H}(\psi)$, alors 
\[\stab_{\h}(\psi) = \{\, x \in \h \mid \forall y \in \h_\ell,\ \psi([x,y]) = 1\}.\]
\end{lemma}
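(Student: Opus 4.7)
Le plan est de traduire la condition définissant $\Stab_H(\psi)$ en une condition sur l'algèbre de Lie via l'application exponentielle, en développant l'action adjointe en série entière en $\ad(x)$ et en montrant que la contribution des termes d'ordre $\geq 2$ se résorbe par un changement de variables linéaire sur $\h_\ell$.

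Plus précisément, je commencerais par écrire $g = \exp(x)$ pour $x \in \h$ et observer que $g \in \Stab_H(\psi)$ équivaut à $\psi\bigl(\Ad(g^{-1})y - y\bigr) = 1$ pour tout $y \in \h_\ell$. Posant $L_x = \ad(x)$, l'identité $\Ad(\exp(-x)) = e^{-L_x}$ permet la factorisation
\[
\Ad(g^{-1}) y - y \;=\; (e^{-L_x} - 1) y \;=\; -L_x \bigl(S(y)\bigr), \qquad S := \sum_{n \geq 0} \frac{(-L_x)^n}{(n+1)!},
\]
ramenant la condition du stabilisateur à l'inclusion $L_x\bigl(S(\h_\ell)\bigr) \subset \ker\psi$.

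L'étape centrale sera de démontrer que $S$ définit une bijection $\Z_p$-linéaire de $\h_\ell$ sur lui-même. Pour cela, j'utiliserai la majoration $v_p\bigl((\ad x)^n y / (n+1)!\bigr) \geq \ell + n\bigl(k - \tfrac{1}{p-1}\bigr)$, valable pour $y \in \h_\ell$ puisque $x \in p^k \g(\Z_p)$ et $\h_\ell = p^\ell \g(\Z_p)$. Grâce à l'hypothèse $k \geq 2$, cela entraîne d'une part que chaque terme de $S$ envoie $\h_\ell$ dans $\h_\ell$, d'autre part que $S - \id$ envoie $\h_\ell$ strictement plus profondément, par exemple dans $\h_{\ell+1}$. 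La série de Neumann $S^{-1} = \sum_{n \geq 0}(\id - S)^n$ converge alors $p$-adiquement sur $\h_\ell$ et fournit l'inverse de $S$. On en déduira $L_x(S(\h_\ell)) = L_x(\h_\ell)$, si bien que $\exp(x) \in \Stab_H(\psi)$ équivaut à $\psi([x,y]) = 1$ pour tout $y \in \h_\ell$, ce qui est précisément l'énoncé cherché.

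L'obstacle principal sera la comptabilité soigneuse des valuations $p$-adiques garantissant la convergence de $S$ et de la série $\sum_n (\id - S)^n$ ; c'est le cas $p = 2$ qui est le plus contraignant et qui explique l'hypothèse $k \geq 2$ de la proposition.
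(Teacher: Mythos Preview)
Your proposal is correct and follows essentially the same route as the paper: the paper introduces the operator $\tau(x)=\sum_{n\geq 0}\frac{\ad(x)^n}{(n+1)!}$ (your $S$, up to the sign coming from working with $g$ rather than $g^{-1}$), uses the factorisation $\Ad(\exp x)-\Id=\ad(x)\circ\tau(x)$, and observes that $\tau(x)$ is an isometry of $\h$ and hence preserves $\h_\ell$. Your Neumann-series justification that $S$ is a bijection of $\h_\ell$ is simply a more explicit version of this isometry claim, and the conclusion follows in the same three-line chain of equivalences.
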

\begin{proof}
Pour $x\in\h$, la somme $\tau(x) = \sum_{n = 0}^{+ \infty} \frac{\ad(x)^n}{(n+1)!}$ définit une isométrie de $\h\to\h$ vérifiant
\[
\ad(x) \circ \tau(x) = e^{\ad(x)} - \Id = \Ad(\exp(x)) - \Id.
\]
\comm{
En effet, pour $x \in p^\alpha \gl_d(\Z_p)$ et $y \in p^\alpha \gl_d(\Z_p)$,
\[\tau(x)y - y = \sum_{n = 1}^\infty \frac{\ad(x)^n y}{(n+1)!}\]
a une valuation
\[v_p(\tau(x)y - y) \geq v_p(y) + \min_{n\geq 1}\bigl(n v_p(x) - v_p((n+1)!)\bigr) > v_p(y).\]
Si $\h \subset p^\alpha \gl_d(\Z_p)$ est une sous\dash{}$\Z_p$\dash{}algèbre de Lie, alors pour tout $x \in \h$, $\tau(x)\h \subset \h$. Or $\tau(x)$ est isométrique donc $\tau(x)$ préverve les $\h_m$, donc induit des bijections de $\h/\h_m$. On en déduit que la restriction de $\tau(x)$ à $\h$ a une image dense dans $\h$, donc est surjectif, donc un automorphisme de $\h$.  
}
Comme $\tau(x)$ est une isométrie, $\tau(x)\h_\ell = \h_\ell$ et donc, pour tout $x \in \h$,
\begin{align*}
\exp(x) \in \Stab_H(\psi) 
& \Longleftrightarrow
\forall y \in \h_\ell,\quad \psi(\Ad(\exp(x))y - y) = 1\\
& \Longleftrightarrow
\forall y \in \h_\ell,\quad \psi([x,\tau(x)y]) = 1\\
& \Longleftrightarrow
\forall y \in \h_\ell,\quad \psi([x,y]) = 1.
\end{align*}
\end{proof}

Le second lemme est tiré de la démonstration de \cite[Lemma~32]{sg1}.
\begin{lemma}
Pour tout caractère $\psi \in \Hom(\h_\ell,S^1)$, si $\stab_\h(\psi^p) = \stab_\h(\psi)$, alors $\h = \stab_\h(\psi)$.
\end{lemma}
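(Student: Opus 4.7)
L'approche consiste à traduire le problème en termes d'un morphisme entre deux $p$\dash{}groupes abéliens finis, puis à conclure par un argument élémentaire sur la $p$\dash{}torsion.

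Dans un premier temps, j'observerais que $\psi$, étant un caractère continu du groupe compact $\h_\ell$, se factorise par un quotient fini $V = \h_\ell/\h_m$ pour un certain $m$ assez grand, de sorte que $\psi$ peut être considéré comme un élément du dual $V^* = \Hom(V, S^1)$, qui est un $p$\dash{}groupe abélien fini. Comme $\h = \h_k$ et $[\h_i, \h_j] \subset \h_{i+j}$, on a $[\h, \h_{m-k}] \subset \h_m$, si bien que l'action adjointe de $\h$ sur $\h_\ell$ induit une action du groupe fini $W = \h/\h_{m-k}$ sur $V$. On dispose alors d'un morphisme de groupes $\phi \colon W \to V^*$ défini par $\phi(w)(v) = \psi(\ad(w)\cdot v)$, et le lemme précédent à la Howe montre que $\ker \phi$ s'identifie à $\stab_\h(\psi)/\h_{m-k}$ (l'inclusion $\h_{m-k}\subset\stab_\h(\psi)$ étant automatique).

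L'étape-clé consiste à calculer $\stab_\h(\psi^p)$ dans ce cadre. L'identité $\psi^p(z) = \psi(pz)$ et la $\Z$\dash{}linéarité du crochet donnent directement $\stab_\h(\psi^p)/\h_{m-k} = \phi^{-1}(V^*[p])$, où $V^*[p] = \{\chi \in V^* \mid p\chi = 0\}$ désigne le sous-groupe de $p$\dash{}torsion. L'hypothèse $\stab_\h(\psi^p) = \stab_\h(\psi)$ se traduit alors en $\phi^{-1}(V^*[p]) = \ker\phi$, c'est-à-dire $\phi(W) \cap V^*[p] = \{0\}$.

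Pour conclure, j'invoquerais le fait général suivant : dans un $p$\dash{}groupe abélien fini, tout sous-groupe non trivial contient un élément d'ordre $p$ (théorème de Cauchy), et rencontre donc $V^*[p]$. Par conséquent, $\phi(W) = \{0\}$, ce qui signifie $\stab_\h(\psi) = \h$. Je n'anticipe pas d'obstacle sérieux dans cette stratégie; la partie la plus technique est la vérification rigoureuse du passage au quotient fini et du calcul de $\phi^{-1}(V^*[p])$, mais il s'agit de manipulations routinières une fois le cadre dualisé mis en place.
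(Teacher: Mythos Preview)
Your proof is correct and follows essentially the same strategy as the paper's: both arguments reduce to the fact that a finite abelian $p$\dash{}group with no element of order $p$ is trivial, via the key identity $\psi^p([x,y]) = \psi([px,y])$. The paper's version is slightly more streamlined, working directly with the quotient $\h/\stab_\h(\psi)$ (a finite $p$\dash{}group because $\stab_\h(\psi)$ is open and $\h$ is pro\dash{}$p$) rather than introducing the auxiliary finite quotients $V$, $W$ and the embedding $\phi$ into $V^*$; your map $\phi$ is just the injection $\h/\stab_\h(\psi) \hookrightarrow V^*$, so the two arguments coincide once unwound.
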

\begin{proof}
Notons que $\stab_\h(\psi)$ est ouvert dans $\h$ et que par conséquent, le quotient $\h / \stab_\h(\psi)$ est fini.
Or, $\h$ est pro\dash{}$p$, donc le quotient $\h / \stab_\h(\psi)$ est un $p$\dash{}groupe.
Pour montrer qu'il est trivial il suffit de montrer qu'il n'a pas de $p$\dash{}torsion.

Supposons que $\stab_\h(\psi^p) = \stab_\h(\psi)$, et soit $x \in \h$ avec $px \in \stab_\h(\psi)$.
Alors, pour tout $y \in \h_\ell$, $\psi^p([x,y]) = \psi([px,y]) = 1$ d'après le lemme précédent. Par le lemme précédent de nouveau, $x \in \stab_\h(\psi^p)$ et donc $x \in \stab_\h(\psi)$ par hypothèse.
Cela montre que $\h / \stab_\h(\psi)$ n'a pas de $p$\dash{}torsion.
\end{proof}

En itérant ce lemme, on obtient, pour tout caractère unitaire $\psi$ de $\h_\ell$, un entier $n$ tel que
\[
\stab_\h(\psi) \subsetneq \stab_\h(\psi^p) \subsetneq \dots \subsetneq \stab_\h(\psi^{p^n}) = \h.
\]
Comme chaque terme de cette suite est d'indice au moins $p$ dans le terme suivant,
\[
{[H: \Stab_H(\psi)]}= {[\h : \stab_\h(\psi)]} \geq p^n.
\]
Reste à minorer l'entier $n$.
Par hypothèse, $[\g(\Q),\g(\Q)] = \g(\Q)$,
et il existe donc un entier $D = \prod p^{o_p}$ tel que $D \g(\Z) \subset  [\g(\Z),\g(\Z)]$.
Avec l'égalité $\g(\Z_p) = \g(\Z) \otimes_\Z \Z_p$, cela donne
\[p^{o_p} \g(\Z_p) \subset  [\g(\Z_p),\g(\Z_p)].\]
Par ailleurs, $p^k \g(\Z_p) = \h$ et $p^\ell \g(\Z_p) = \h_\ell$, et donc
\[
\h_{k +\ell + o_p} = p^{k+\ell+o_p} \g(\Z_p) \subset [\h,\h_\ell].
\]
Mais l'égalité $\stab_\h(\psi^{p^n})=\h$ implique $p^n [\h,\h_\ell] \subset \ker \psi$, puis
\[
\h_{n + k + \ell + o_p} \subset p^n [\h,\h_\ell] \subset \ker \psi.
\]
Par hypothèse, il existe $\psi \in \Hom(\h_\ell,S^1)$ tel que $\dim V_\psi > 0$ et $\h_{m-1} \not\subset \ker \psi$, d'où
\[
n + k + \ell + o_p \geq m
\]
et enfin
\[\dim V \geq {[H: \Stab_H(\psi)]} \geq p^n \geq p^{\floor{m/2} - k - o_p - \delta_2(p)}.\]
\end{proof}

\comm{
\begin{remark}
Dans le cas $p\neq 2$,
on aurait pu appliquer la théorie de Kirillov pour les groupes $p$-adiques compacts: d'après \cite[Theorem~1.1]{Howe} la représentation $(\rho,V)$ provient d'un caractère $\chi$ de $\h$ trivial sur $\h_m$ et
\[
\dim V \geq \abs{\Ad^*(H)\chi}^{1/2} = {[H:\Stab_{H}(\chi)]}^{1/2},
\]
où $\Ad^*(H)$ désigne ici l'action co-adjointe $H$ de sur $\Hom(\h,S^1)$. On pourra alors conclure comme ci-dessus.
Mais le théorème~1.1 de Howe \cite{Howe} n'est pas valable pour $p=2$, contrairement à l'argument présenté ci-dessus.
\end{remark}
}

\comm{
\begin{remark}
Argument alternatif pour minorer l'indice du stabilisateur.
Le dual de $\Z_p=\varprojlim\Z/p^n\Z$ est $\varinjlim\Z/p^n\Z$, et on peut donc écrire en identifiant $\h$ à $\Z_p^d$,
\[
\chi(x) = e^{2i\pi\frac{\bracket{a,x}}{p^{m}}},
\]
avec $a\in(\Z/p^n\Z)^{d}$ tel que $p \nmid a$, et $\bracket{a,x}=\sum_i a_i\pi_{p^n}(x)_i$.
En fait, si l'on note, pour $a,x\in\Z_p^d$, $\bracket{a,x}=\sum_i\pi_{p^n}(a_ix_i)$ la formule ci-dessus ne dépend pas du choix d'un relevé arbitraire de $n$ dans $\Z_p^d$, toujours noté $a$.
Avec ces notations, $g\cdot\chi = \chi$ si et seulement si, pour la distance $p$-adique
\[
d_p(ga,a)\leq p^{-m}
\]
ce qui implique, avec la bonne version de l'inégalité de \L ojasiewicz,
\[
d_p(g,\Stab_H(a))\leq p^{-\tau m},\quad\mbox{où}\quad \Stab_H(a) =\{ g\in H\ |\ g\cdot a = a\}.
\]
\wknote{J'ai un doute. Il me semble que le stabisateur est un sous-groupe ouvert.}
\nsnote{Attention! Je considère ici le stabilisateur du vecteur $a$ qui appartient à une $\Q_p$-représentation, et non à une $\C$-représentation. Le stabilisateur n'est pas ouvert.}
Comme l'action co-adjointe de $H$ sur $\Hom(\h,\Q_p) \simeq \Q_p^k$ est irréductible, on doit avoir $\dim_{\Q_p} \Stab_H(a) < \dim_{\Q_p} H$.
Pour conclure, il suffit de remarquer que l'exponentielle envoie $Lie(\Stab_H(a))(\Q_p)$ sur $\Stab_H(a)(\Q_p)$ au voisinage de l'identité:
cela découle du développement en série entière du logarithme au voisinage de $1$, cf. \cite[démonstration du lemme~2.7]{hesaxce_sumproductinrepresentations}.
\end{remark}
}

\subsection{Cas des groupes linéaires sur $\Z/p\Z$}

Comme nous n'avons pas pu en trouver une démonstration en un seul tenant dans la littérature, nous montrons ici une borne inférieure sur le degré d'une représentation irréductible non triviale de $G(\Z/p\Z)$.
Ce résultat remonte à Frobenius \cite{frobenius} lorsque $G=\SL_2$, apparaît dans Landazuri et Seitz \cite{ls} lorsque $G$ est un groupe de Chevalley, et dans le cas général dans l'article d'Emmanuel Breuillard \cite[Proposition~6.1]{breuillard_survey} sur le sujet.

\begin{theorem}
\label{qr,p}
Soit $G$ un sous-schéma en groupes fermé de $\SL_{d,\Z}$ dont la fibre générique $G_\Q$ est un groupe algébrique connexe semi-simple simplement connexe.
Pour tout nombre premier $p$ suffisamment grand, le degré de toute représentation linéaire irréductible non triviale de $G(\Z/p\Z)$ est minoré par $\frac{p-1}{2}$.
\end{theorem}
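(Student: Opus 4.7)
L'approche naturelle pour démontrer le théorème~\ref{qr,p} est de se ramener à une propriété classique des groupes finis de type de Lie, à savoir la minoration de Landazuri--Seitz sur le degré minimal d'une représentation complexe irréductible non triviale.

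Je commencerais par décomposer $G$ en ses facteurs absolument presque-simples. Comme $G_\Q$ est semi-simple, connexe et simplement connexe, il s'écrit comme produit direct $G_\Q = \prod_i G_i$ de facteurs presque-$\Q$-simples, encore simplement connexes. Chaque $G_i$ s'écrit à son tour comme une restriction des scalaires à la Weil $G_i = \mathrm{Res}_{K_i/\Q}(H_i)$, où $K_i$ est une extension finie de $\Q$ et $H_i$ est un $K_i$-groupe absolument presque-simple simplement connexe. D'après le théorème d'approximation forte (théorème~\ref{thm:Nori}), pour $p$ assez grand on a $G(\Z/p\Z) = G(\F_p)$; par ailleurs, la compatibilité bien connue de la restriction des scalaires avec la réduction modulo $p$ entraîne, pour tout premier $p$ non ramifié suffisamment grand,
\[
G(\F_p) = \prod_i \prod_{\mathfrak{p}|p} H_i(\mathcal{O}_{K_i}/\mathfrak{p}),
\]
où le produit interne porte sur les idéaux premiers de $\mathcal{O}_{K_i}$ au-dessus de $p$. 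Chaque corps résiduel $\mathcal{O}_{K_i}/\mathfrak{p}$ est un corps fini $\F_{q_\mathfrak{p}}$ de caractéristique $p$, donc $q_\mathfrak{p} \geq p$.

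Je réduirais ensuite la question à un seul facteur. Toute représentation irréductible complexe d'un produit direct de groupes finis est un produit tensoriel de représentations irréductibles de chaque facteur; si la représentation globale est non triviale, l'une au moins des composantes tensorielles est non triviale, et sa dimension minore la dimension totale. Il suffit donc d'établir que toute représentation irréductible non triviale d'un groupe $H(\F_q)$, avec $H$ absolument presque-simple simplement connexe et $q \geq p$, est de dimension au moins $(q-1)/2$.

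L'étape finale consiste à invoquer le théorème de Landazuri--Seitz \cite{ls}. Pour $q$ assez grand, $H(\F_q)$ est un groupe quasi-simple fini de type de Lie, et pour chaque type (déployé ou tordu, classique ou exceptionnel), Landazuri et Seitz fournissent une minoration explicite sur le degré des représentations irréductibles complexes non triviales. L'examen cas par cas montre que la plus petite valeur est atteinte par $\SL_2(\F_q)$ en caractéristique impaire, et vaut précisément $(q-1)/2$; tous les autres types conduisent à des minorations strictement plus grandes. Comme $q \geq p$, on obtient $(q-1)/2 \geq (p-1)/2$, d'où la conclusion. L'obstacle principal se trouve naturellement dans le théorème de Landazuri--Seitz lui-même, dont la démonstration repose sur une analyse cas par cas appuyée par la classification des groupes finis de type de Lie; l'étape de réduction au cas absolument presque-simple, quant à elle, exige simplement un peu de soin dans la mise en forme de la restriction des scalaires et de sa compatibilité avec la réduction modulo~$p$.
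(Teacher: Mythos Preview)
Your approach is correct but differs substantially from the paper's. You reduce to the Landazuri--Seitz bounds as a black box after decomposing $G(\F_p)$ via restriction of scalars; the paper instead gives a self-contained argument that avoids the classification entirely. Concretely, the paper treats first the absolutely simple case: take a unipotent $u\in G(\F_p)$ (such elements generate $G(\F_p)$ by Steinberg), complete $\log u$ to an $\ssl_2$-triplet in $\g(\F_p)$ via Jacobson--Morozov (valid for $p$ large), and produce an element $a$ with $aua^{-1}=u^{t^2}$ for a generator $t$ of $(\F_p)^\times$. Decomposing $V$ under the cyclic group $\langle u\rangle$, a non-trivial character must appear, and its orbit under $\langle a\rangle$ has exactly $(p-1)/2$ elements, whence $\dim V\geq(p-1)/2$. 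The general semi-simple case is then handled, as in your argument, by decomposition into simple factors over $\F_p$.

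What each approach buys: yours is shorter to write and handles the reduction to the absolutely simple case more explicitly via Weil restriction, but it imports the full strength of Landazuri--Seitz, whose proof is itself a lengthy case-by-case analysis. The paper's argument is elementary and uniform, essentially embedding the classical $\SL_2(\F_p)$ proof inside $G$; it also makes transparent why $(p-1)/2$ is the right bound. One minor remark on your write-up: the sentence invoking strong approximation to get $G(\Z/p\Z)=G(\F_p)$ is vacuous as stated (these are the same ring); what you actually need there is that the product decomposition of $G_\Q$ spreads out to $\Z[1/N]$ for some $N$, so that for $p\nmid N$ the reduction mod~$p$ is still a product---this is standard, but is not the content of Theorem~\ref{thm:Nori}.
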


\begin{proof}
Supposons dans un premier temps que $G_\Q$ soit absolument simple.
L'idée de la démonstration est de reprendre l'argument élémentaire valable pour $\SL_2(\Z/p\Z)$ en utilisant un $\ssl_2$-triplet bien choisi dans $\g(\Z/p\Z)$.
Soit $(\rho,V)$ une représentation linéaire non triviale de $G(\Z/p\Z)$.

Comme $G_\Q$ est simplement connexe, pour tout $p$ suffisamment grand, le groupe $G(\Z/p\Z)$ est engendré par ses éléments unipotents \cite[Theorem~12.4]{steinberg_endomorphismsoflinearalgebraicgroups}.
Soit $u$ un élément unipotent quelconque de $G(\Z/p\Z)$. 
Le théorème de Jacobson-Morozov pour les algèbres de Lie semi-simples est habituellement cité en caractéristique nulle, et c'est le cas dans Bourbaki \cite[Chapitre VIII, \S11, Proposition~2]{bourbaki_gal_7_8}.
\comm{Voir aussi les notes d'Yves Benoist \cite[Théorème~2.15, page 20]{benoist_reseauxdesgroupesdelie}.}
Cependant, on peut vérifier que la démonstration reste valable dans notre cadre dès que $p$ est suffisamment grand.
\comm{Essentiellement, tout ce dont on a besoin, c'est que la forme de Killing soit non dégénérée sur $\Z/p\Z$,
-- c'est bien le cas, car c'est la réduction modulo $p$ de la forme de Killing sur $\Q$, qui est non-dégénérée --
 et que $p>r+2$, où $r$ est tel que $(\ad x)^r=0$.
Comme l'ordre de nilpotence de $\ad x$ est borné par $(\dim G_\Q)^2$, indépendamment de $p$, on peut aussi s'assurer de cette deuxième condition.
}
Par conséquent, l'élément nilpotent $x=\log u$ dans $\g(\Z/p\Z)$ fait partie d'un certain $\ssl_2$-triplet $(x,y,h)$.
Les éléments $u=\exp x$ et $\exp y$ sont dans $G(\Z/p\Z)$.
\comm{-- si $G$ est un sous-groupe algébrique de $\SL_d$ défini sur $k$, et si $x\in\g(k)$ est nilpotent, alors $\exp x\in G(k)$ (exercice) --}
Notons $H$ le sous-groupe qu'ils engendrent.
L'action adjointe de $H$ sur l'algèbre de Lie $\bracket{x,y,h}\simeq\ssl_2(\Z/p\Z)$ est isomorphe à $\Aut\ssl_2(\Z/p\Z)\simeq \PSL_2(\Z/p\Z)$.
De plus, le noyau de $H \to \PSL_2(\Z/p\Z)$ est contenu dans le centre de $H$.

Soit alors $t$ un générateur de $(\Z/p\Z)^\times$ et $a\in G(\Z/p\Z)$ un élément de $H$ tel que les images de $a$ et $u$ dans $\PSL_2(\Z/p\Z)$ s'identifient respectivement à $\begin{pmatrix}t & 0\\ 0 & t^{-1}\end{pmatrix}$ et $\begin{pmatrix}1 & 1\\0 & 1\end{pmatrix}$.
Il existe un élément $z$ dans le centre de $H$ tel que $aua^{-1} = zu^{t^2}$. Comme $u$ est d'ordre $p$, on a $z^p = 1$. 
Soit $k$ l'inverse de $1 - t^2$ dans $(\Z/p\Z)^\times$.
Quitte à remplacer $u$ par $z^ku$, on peut supposer 
\begin{equation}
\label{eq:aetu}
aua^{-1} = u^{t^2}.
\end{equation}

D'après le lemme~\ref{lm:GFpsimple}, les logarithmes des conjugués de $u$ engendrent linéairement $\g(\Z/p\Z)$, pour $p$ assez grand.
D'après \cite[Theorem B]{Nori}, $u$ et ses conjugués engendrent $G(\Z/p\Z)$.
\comm{Par Nori, le groupe engendré par $u$ et ses conjugués s'écrit $H(F_p)^+$, avec $H$ algébrique, et comme son cardinal est $\gg p^{dim G}$ (cf. argument dans le survey d'Emmanuel \cite[Proof of Theorem 2.2]{breuillard_survey}), on doit avoir d'après l'inégalité de Lang-Weil, $\dim H=\dim G$, et par connexité de $G$, $H=G$.
Mais $G$ est simplement connexe, et donc $H(F_p)^+=G(F_p)^+=G(F_p)$, d'après Steinberg \cite[Theorem~12.4]{steinberg_endomorphismsoflinearalgebraicgroups}.
}
Si l'on décompose $V$ suivant les caractères du groupe $U\simeq\Z/p\Z$ engendré par $u$,
\[
V = \bigoplus_{\chi \in \Hom(U,S^1)} V_\chi,
\]
où
\[V_\chi = \{\, v\in V \mid \forall g \in U,\, \rho(g)v = \chi(g)v \,\},\]
on voit apparaître un caractère non trivial.
Or, $a$ normalise le sous-groupe $U$.
Il agit donc sur le groupe de ses caractères.
À l'aide de \eqref{eq:aetu}, on voit aisément que l'orbite d'un charatère non-trivial $\chi$ sous l'action du sous-groupe engendré par $a$ est de cardinal $\frac{p-1}{2}$.
Comme tous les éléments de cette orbite doivent apparaître dans la décomposition de $V$, on trouve bien
\[
\dim V \geq \frac{p-1}{2}.
\]
Ceci termine la démonstration dans le cas où $G_\Q$ est absolument simple ou, plus précisément, le cas où l'action adjointe de $G(\Z/p\Z)$ sur $\g(\Z/p\Z)$ est irréductible.

Dans le cas général, on se ramène au cas où l'action de $G(\Z/p\Z)$ est irréductible grâce à une décomposition de la réduction modulo $p$ de $G$ en facteurs simples sur $\Z/p\Z$.
Comme nous n'utilisons pas le théorème dans cette généralité, les détails de la démonstration sont laissés au lecteur.
\comm{
Pour le cas général, l'idée est de décomposer la réduction modulo $p$ de $G$ en facteur simples sur $\Z/p\Z$.
On peut donc écrire (pour $p$ assez grand)
\[G(\Z/p\Z) = \prod_{i} G_{p,i}(\Z/p\Z)\]
où chaque $G_{p,i}$ est un groupe algébrique défini et simple sur $\Z/p\Z$.
Le seul problème est que cette suite de groupe $G_{p,i}$ dépend de $p$.
Il suffit alors de remarquer qu'il y a une colletion fini de schémas en groupe $G_j$ sur $\Z$ dont la fibre générique est semisimple telle que pour tout $p$ assez grand, chaque $G_{p,i}$ est la réduction modulo $p$ d'un certain $G_j$. 
}
\end{proof}

\begin{corollary}
\label{cr:qr,p}
Soit $G$ un sous-schéma en groupes fermé de $\SL_{d,\Z}$ dont la fibre générique $G_\Q$ est un groupe algébrique connexe semi-simple et simplement connexe.
Alors pour tout nombre premier $p$ suffisamment grand, l'indice d'un sous-groupe propre de $G(\Z/p\Z)$ est minoré par $\frac{p-1}{2}$.
\end{corollary}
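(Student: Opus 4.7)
The plan is to deduce Corollary~\ref{cr:qr,p} from Theorem~\ref{qr,p} by the classical trick of turning a proper subgroup into a small-dimensional permutation representation. Let $H$ be a proper subgroup of $G(\Z/p\Z)$ and set $n = [G(\Z/p\Z) : H]$. I would consider the permutation action of $G(\Z/p\Z)$ on the coset space $G(\Z/p\Z)/H$, and then pass to the associated complex representation
\[
\rho \colon G(\Z/p\Z) \to \GL(\C[G(\Z/p\Z)/H]),
\]
which is a unitary representation of dimension exactly $n$.

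The key observation is that $\rho$ cannot be a sum of trivial representations. Indeed, if every irreducible subrepresentation of $\rho$ were trivial, then every $g \in G(\Z/p\Z)$ would fix each basis vector $\mathbbm{1}_{gH}$, forcing $G(\Z/p\Z)$ to be contained in the normal core $\bigcap_{g} gHg^{-1}$, hence $H = G(\Z/p\Z)$, contradicting that $H$ is proper. So the decomposition of $\rho$ into irreducibles must contain at least one non-trivial component $(\rho_0, V_0)$.

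Now I would invoke Theorem~\ref{qr,p}: for $p$ large enough, any non-trivial irreducible representation of $G(\Z/p\Z)$ has dimension at least $\frac{p-1}{2}$. Since $V_0$ is a subspace of $\C[G(\Z/p\Z)/H]$, we obtain
\[
\frac{p-1}{2} \leq \dim V_0 \leq \dim \C[G(\Z/p\Z)/H] = n = [G(\Z/p\Z):H],
\]
which is the desired bound. The argument is essentially automatic once Theorem~\ref{qr,p} is in hand; there is no real obstacle, only the care of ruling out the degenerate case where $\rho$ would decompose entirely into trivial components, which is handled by the normal core observation above.
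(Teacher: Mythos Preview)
Your proof is correct and is exactly the approach taken in the paper: the quasi-regular representation $\ell^2(G(\Z/p\Z)/H)$ has degree $[G(\Z/p\Z):H]$ and is non-trivial when $H$ is proper, so Theorem~\ref{qr,p} bounds its degree below by $\frac{p-1}{2}$. The paper states this in a single sentence, leaving the non-triviality observation implicit; your normal-core argument just makes that step explicit.
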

\begin{proof}
Si $H$ est un sous-groupe propre de $G(\Z/p\Z)$ alors la représentation quasi-régulière $\ell^2(G(\Z/p\Z)/H)$ de $G(\Z/p\Z)$ est non triviale et son degré est égal à l'indice de $H$ dans $G(\Z/p\Z)$.
\end{proof}

Le théorème~\ref{qr,p} permet aussi de minorer le degré d'une représentation non triviale du groupe profini $G(\Z_p)$, lorsque $G$ est simple.

\begin{proposition}
\label{qr,pm}
Soit $G$ un sous-schéma en groupes fermé de $\SL_{d,\Z}$ dont la fibre générique $G_\Q$ est un groupe algébrique connexe semi-simple et simplement connexe.
Pour tout nombre premier $p$ suffisamment grand, le degré de toute représentation unitaire non triviale de $G(\Z_p)$ est au moins $\frac{p-1}{2}$.
\end{proposition}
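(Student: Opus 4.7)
Notre plan consiste à se ramener au théorème~\ref{qr,p} par un argument orbital à la Kirillov-Howe aux niveaux profonds. Étant donnée une représentation unitaire non triviale $(\rho, V)$ de $G(\Z_p)$, on pourra se ramener au cas où $V$ est irréductible en choisissant une sous-représentation irréductible non triviale (la décomposition existe pour toute représentation unitaire d'un groupe compact). Par continuité, $\ker\rho$ est alors un sous-groupe ouvert de $G(\Z_p)$, et comme la famille $(H_{p,m})_{m\geq 0}$ forme une base de voisinages de l'identité, il existera un plus petit entier $m\geq 1$ tel que $H_{p,m}\subset\ker\rho$.

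On distinguera alors deux cas. Si $m = 1$, la représentation $\rho$ se factorise par $G(\Z_p)/H_{p,1}$, qui vaut $G(\Z/p\Z)$ pour $p$ assez grand d'après le théorème~\ref{thm:Nori} ; le théorème~\ref{qr,p} donnera directement $\dim V \geq \frac{p-1}{2}$.

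Si $m \geq 2$, on étudiera la restriction de $\rho$ à $H_{p,m-1}$, triviale sur $H_{p,m}$, qui se décompose selon les caractères unitaires du groupe abélien $H_{p,m-1}/H_{p,m}$, isomorphe à $\g(\Z/p\Z)$ par le lemme~\ref{lm:12congru}. Par minimalité de $m$, un caractère $\psi\neq 0$ apparaîtra dans la décomposition $V = \bigoplus_\psi V_\psi$. Un calcul direct du type $ghg^{-1} \equiv h + p[a,x] \equiv h \mod p^m$ pour $g = 1 + pa \in H_{p,1}$ et $h = 1 + x$ avec $x \in p^{m-1}\gl_d(\Z_p)$ montrera que $H_{p,1}$ agit trivialement sur $H_{p,m-1}/H_{p,m}$ ; l'action par conjugaison de $G(\Z_p)$ se factorisera donc par $G(\Z/p\Z)$ et s'identifiera à l'action adjointe sur $\g(\Z/p\Z)$ via le lemme~\ref{conjexpAd}. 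L'irréductibilité de $V$ imposera que toute l'orbite coadjointe de $\psi$ contribue à la décomposition, d'où
\[
\dim V \geq [G(\Z/p\Z):\Stab(\psi)].
\]
Comme $\psi\neq 0$, son stabilisateur $\Stab(\psi)$ sera un sous-groupe propre de $G(\Z/p\Z)$ : en effet, un caractère non trivial fixé par toute l'action coadjointe correspondrait, via un produit scalaire $\Ad$-invariant sur $\g(\Z/p\Z)$, à un vecteur central non nul, contredisant la semi-simplicité. Le corollaire~\ref{cr:qr,p} donnera alors la borne $[G(\Z/p\Z):\Stab(\psi)] \geq \frac{p-1}{2}$, ce qui conclut.

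Le point le plus délicat sera la justification soignée de l'identification entre l'action par conjugaison de $G(\Z/p\Z)$ sur $H_{p,m-1}/H_{p,m}$ et l'action adjointe sur $\g(\Z/p\Z)$, y compris dans le cas semi-simple non absolument simple, où l'on devra décomposer $\g(\Z/p\Z) = \bigoplus \g_i(\Z/p\Z)$ en facteurs simples (pour $p$ suffisamment grand) afin d'appliquer à chacun le lemme~\ref{lm:GFpsimple} et d'assurer que le stabilisateur d'un caractère non trivial reste un sous-groupe propre de $G(\Z/p\Z)$ tout entier.
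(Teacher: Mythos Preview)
Your proposal is correct and follows essentially the same route as the paper's proof: reduction to a representation of $G(\Z/p^m\Z)$, the case split $m=1$ versus $m\geq 2$, the character decomposition with respect to the abelian kernel $H_{p,m-1}/H_{p,m}\simeq\g(\Z/p\Z)$, and the orbit bound via the coadjoint action of $G(\Z/p\Z)$ together with corollary~\ref{cr:qr,p}. The only cosmetic difference is that the paper invokes lemma~\ref{lm:GFpsimple} directly to rule out a nonzero fixed point of the coadjoint action, whereas you pass through an $\Ad$-invariant pairing and the centre-free property of $\g(\Z/p\Z)$; your final paragraph, which decomposes $\g(\Z/p\Z)$ into simple factors, is in fact the honest way to handle the semi-simple (not absolutely simple) case and is slightly more careful than the paper's citation of lemma~\ref{lm:GFpsimple} as stated.
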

\wknote{L'hypothèse simple est utilisée pour démontrer que l'action coadjointe de $G(\Z/p\Z)$ sur $\g(\Z/p\Z)^*$ n'a qu'un seul point fixe.}
\nsnote{Cela est encore vrai si $G_\Q$ est seulement semi-simple.}

\begin{proof}
D'après le lemme de Hensel, si $p$ est suffisamment grand, la projection $G(\Z_p) \to G(\Z/p^m\Z)$ est surjective pour tout entier $m \geq 1$.
\comm{Alternativement, cela se montre aussi à l'aide du théorème~\ref{thm:Nori}, d'après lequel, si $p$ est suffisamment grand, la projection $G(\Z_p) \to G(\Z/p\Z)$ est surjective.
En fait, cela découle simplement du lemme de Hensel, il n'y a pas vraiment besoin du résultat de Nori ici. Le résultat de Nori est beaucoup plus fort, et implique que $G(\Z)\to G(\Z/p\Z)$ est sujective, et on peut même remplacer $G(\Z)$ par un sous-groupe dense au sens de Zariski.
Cela implique d'ailleurs que pour tout $m \geq 1$, la projection $G(\Z_p) \to G(\Z/p^m\Z)$ est surjective.
Justification: on procède par récurrence sur $m$. Par exemple, pour $m=2$, comme $G(\Z_p)\to G(\Z/p\Z)$ n'admet pas de section, il existe un élément $x\in G(\Z_p)$ tel que $x\equiv 1\mod p$ et $x\not\equiv 1\mod p^2$. On fait ensuite agir $G(\Z/p\Z)$ par conjugaison sur $x$; comme cette action s'identifie à l'action adjointe sur $\g(\Z/p\Z)$, et que cette action est irréductible pour $p$ grand, cela permet d'obtenir par sommes et produits tout le sous-groupe $\{g\in G(\Z/p^2\Z)\ |\ g\equiv 1\mod p\}$.
}
Supposons en outre $p$ assez grand pour que les conclusions du lemme~\ref{lm:GFpsimple}, du théorème~\ref{qr,p} et du corollaire~\ref{cr:qr,p} soient vérifiées.

Soit $(\rho,V)$ une représentation unitaire non triviale de $G(\Z_p)$ et $m\in\N$ minimal tel que $\rho$ se factorise par $G(\Z/p^m\Z)$.
Alors, $(\rho,V)$ s'identifie à une représentation linéaire de $G(\Z/p^m\Z)$.

Si $m = 1$, alors $\dim V \geq \frac{p - 1}{2}$ par le théorème~\ref{qr,p}.
Sinon, notons $H$ le noyau de la projection $G(\Z/p^m\Z) \to G(\Z/p^{m-1}\Z)$.
Par le lemme~\ref{lm:12congru}, $H$ est abélien, isomorphe à $\g(\Z/p\Z)$.
On peut décomposer $V$ en sous-espaces caractéristiques
\[
V = \bigoplus_{\chi \in \Hom(H,S^1)} V_\chi,
\]
où
\[V_\chi = \{\, v\in V \mid \forall g \in H,\, \rho(g)v = \chi(g)v \,\}.\]
Par minimalité de $m$, il existe un caractère $\chi$ non trivial tel que $V_\chi \neq \{0\}$.
Comme dans la démonstration de la proposition~\ref{qr,padic}, l'action co-adjointe de $G(\Z/p^m\Z)$ sur les caractères de $H$ permet de montrer que
\[
\dim V \geq {[G(\Z/p^m\Z): \Stab_{G(\Z/p^m\Z)}(\chi)]}.
\]
Or, cette action de $G(\Z/p^m\Z)$ se factorise par $G(\Z/p\Z)$, et sous l'isomorphisme $H \simeq \g(\Z/p\Z)$, s'identifie à l'action co-adjointe de $G(\Z/p\Z)$ sur le dual de $\g(\Z/p\Z)$.
Par le lemme~\ref{lm:GFpsimple}, la dernière action n'a pas de point fixe sauf l'élément $0$ dans le dual de $\g(\Z/p\Z)$.
Donc ${[G(\Z/p^m\Z): \Stab_{G(\Z/p^m\Z)}(\chi)]}$ est égal à l'indice d'un sous-groupe propre de $G(\Z/p\Z)$, qui est minoré par $\frac{p-1}{2}$, d'après le corollaire~\ref{cr:qr,p}.
\end{proof}

\subsection{La propriété quasi-aléatoire de $\Omega$.}

Nous dirons qu'un groupe pro-fini $\Omega$ est \emph{quasi-aléatoire} par rapport à une famille de sous-groupes distingués $(\Omega_q)_{q \in \N^*}$ si pour toute représentation irréductible unitaire $(\rho,V_\rho)$ de $\Omega$, il existe $q \in \N^*$ tel que
\[
\Omega_q \subset \ker \rho
\quad\mbox{et}\quad
\dim V_\rho \geq \kappa {[\Omega : \Omega_q]}^\kappa.
\]
Pour conclure la démonstration de la proposition~\ref{qra}, nous utiliserons le lemme suivant.

\begin{lemma}
\label{lm:qrsousgroupe}
Soit $\Omega$ un groupe profini et $(\Omega_q)_{q \in \N^*}$ une famille de sous-groupes ouverts distingués vérifiant $\Omega_{q} \cap \Omega_{q'} = \Omega_{\pgcd(q,q')}$ pour tous $q,q' \in \N^*$, et telle que $\bigcap_{q\in\N^*}\Omega_q=\{1\}$.
Soit $\Omega'$ un sous-groupe fermé d'indice fini de $\Omega$.
Si $\Omega'$ est quasi-aléatoire par rapport à la famille $\Omega'_q = \Omega_q \cap \Omega',\, q\in \N^*$, alors $\Omega$ est quasi-aléatoire par rapport à la famille $(\Omega_q)_{q\in \N^*}$.
\end{lemma}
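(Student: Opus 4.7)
Le plan est de ramener l'étude des représentations irréductibles de $\Omega$ à celles de $\Omega'$ via la réciprocité de Frobenius, puis de contrôler le noyau obtenu en recourant au cœur distingué de $\Omega'$ dans $\Omega$. Comme $\Omega$ est compact, toute représentation unitaire irréductible est de dimension finie et se factorise à travers un quotient fini, donc $\ker\rho$ est ouvert et contient un certain $\Omega_r$: toute la difficulté est d'en obtenir une version \emph{quantitative}.

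Soit donc $(\rho,V_\rho)$ une telle représentation. Choisissons une sous-représentation $\Omega'$\dash{}irréductible $(\sigma,V_\sigma)\subset\rho|_{\Omega'}$. Par la réciprocité de Frobenius, $\rho$ se plonge dans $\mathrm{Ind}_{\Omega'}^{\Omega}\sigma$; comme $\dim\mathrm{Ind}_{\Omega'}^\Omega \sigma = [\Omega:\Omega']\dim V_\sigma$, on en tire en particulier $\dim V_\sigma \geq \dim V_\rho/[\Omega:\Omega']$, mais surtout $\dim V_\rho \geq \dim V_\sigma$. Par hypothèse de quasi-aléatoire sur $\Omega'$, il existe un certain $\kappa_0>0$ indépendant de $\rho$ et un entier $q\in\N^*$ tels que $\Omega'_q\subset\ker\sigma$ et $\dim V_\sigma\geq \kappa_0[\Omega':\Omega'_q]^{\kappa_0}$.

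Pour en déduire une inclusion $\Omega_{q''}\subset\ker\rho$, on introduit le cœur distingué $\Omega''=\bigcap_{g\in\Omega}g\Omega' g^{-1}$, sous-groupe ouvert distingué d'indice fini (au plus $[\Omega:\Omega']!$) dans $\Omega$. Les hypothèses $\bigcap_r\Omega_r=\{1\}$ et $\Omega_r\cap\Omega_{r'}=\Omega_{\pgcd(r,r')}$, combinées à la compacité de $\Omega$, assurent que la famille $(\Omega_r)$ est une base de voisinages de $1$: il existe donc $r_0$ tel que $\Omega_{r_0}\subset\Omega''$. En posant $\Omega_{q''}=\Omega_{r_0}\cap\Omega_q$, le calcul clé est
\[
\ker\bigl(\mathrm{Ind}_{\Omega'}^\Omega\sigma\bigr) = \bigcap_{g\in\Omega} g(\ker\sigma)g^{-1} \supset \bigcap_{g\in\Omega} g\Omega'_q g^{-1} = \bigcap_{g\in\Omega}\bigl(g\Omega'g^{-1}\cap\Omega_q\bigr) = \Omega''\cap\Omega_q \supset \Omega_{q''},
\]
où l'on utilise de manière essentielle la normalité de $\Omega_q$ dans $\Omega$ pour échanger intersection et conjugaison. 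Puisque $\rho$ est un sous-objet de $\mathrm{Ind}_{\Omega'}^\Omega\sigma$, on en déduit $\Omega_{q''}\subset\ker\rho$. Enfin, l'estimation d'indice $[\Omega:\Omega_{q''}]\leq [\Omega:\Omega_{r_0}]\cdot[\Omega:\Omega_q]\leq [\Omega:\Omega_{r_0}]\cdot[\Omega:\Omega']\cdot[\Omega':\Omega'_q]$ (la dernière inégalité provenant de l'injection $\Omega'/\Omega'_q\hookrightarrow\Omega/\Omega_q$) fournit une constante $C$ ne dépendant que de $\Omega'$ telle que $[\Omega:\Omega_{q''}]\leq C[\Omega':\Omega'_q]$. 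En combinant, $\dim V_\rho\geq\kappa_0 C^{-\kappa_0}[\Omega:\Omega_{q''}]^{\kappa_0}$, ce qui conclut avec un $\kappa>0$ convenable.

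Le point le plus subtil est le passage au cœur distingué $\Omega''$: puisque $\Omega'$ n'est pas supposé normal dans $\Omega$, il n'y a pas de description directe de $\ker\rho\cap\Omega'$ à partir de $\ker\sigma$, et c'est la normalité des $\Omega_q$ qui sauve l'argument en permettant le calcul ci-dessus du noyau de la représentation induite. Le reste n'est qu'une comptabilité d'indices dans un groupe pro-fini.
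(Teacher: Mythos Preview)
Your proof is correct, and the underlying idea --- restrict $\rho$ to an irreducible $\Omega'$\dash{}subrepresentation $\sigma$, apply quasi-randomness of $\Omega'$ to get $\Omega'_q\subset\ker\sigma$, then push the kernel containment back up to $\Omega$ using normality of the $\Omega_q$ --- is exactly the paper's. Your route is, however, more elaborate than necessary. The paper simply observes that since $\Omega'$ is open it already contains some $\Omega_{q'}$, which is \emph{normal} in $\Omega$; there is then no need for the normal core $\Omega''$ or for Frobenius reciprocity and the induced representation. With $s$ such that $\Omega_s=\Omega_q\cap\Omega_{q'}$, one has $\Omega_s\subset\Omega_q\cap\Omega'=\Omega'_q\subset\ker\sigma$, and since $\Omega_s\trianglelefteq\Omega$, the $\Omega_s$\dash{}fixed vectors form a nonzero $\Omega$\dash{}invariant subspace of $V_\rho$, whence $\Omega_s\subset\ker\rho$ by irreducibility. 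Your computation of $\ker(\mathrm{Ind}_{\Omega'}^\Omega\sigma)$ recovers precisely this, but through extra scaffolding.

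One small imprecision: your justification ``provenant de l'injection $\Omega'/\Omega'_q\hookrightarrow\Omega/\Omega_q$'' for $[\Omega:\Omega_q]\leq[\Omega:\Omega']\cdot[\Omega':\Omega'_q]$ gives the wrong direction on its own --- the injection yields $[\Omega':\Omega'_q]\leq[\Omega:\Omega_q]$. What you actually need is that the image $\Omega'\Omega_q/\Omega_q$ has index at most $[\Omega:\Omega']$ in $\Omega/\Omega_q$, i.e.\ $[\Omega:\Omega_q]=[\Omega:\Omega'\Omega_q]\cdot[\Omega':\Omega'_q]\leq[\Omega:\Omega']\cdot[\Omega':\Omega'_q]$. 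The inequality is true and the rest of the bookkeeping is fine.
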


\begin{proof}
Comme $\Omega'$ est fermé et d'indice fini, il est ouvert dans $\Omega$, et contient donc $\Omega_{q'}$ pour certain $q' \in \N^*$.

Soit $(\rho,V_\rho)$ une représentation irréductible unitaire de $\Omega$ et $(\rho',V_{\rho'}) \in \hat\Omega'$ une sous-représentation irréductible de la restriction de $\rho$ à $\Omega'$.
D'après la propriété quasi-aléatoire de $\Omega'$, il existe $\kappa> 0$ indépendant de $\rho$ tel qu'il existe $q \in \N^*$ vérifiant $\Omega'_q \subset \ker \rho'$ et $\dim V_{\rho'} \geq \kappa {[\Omega' : \Omega'_q]}^\kappa$. 
En posant $s = \pgcd(q,q')$, on a
\[\Omega_{s} = \Omega_q \cap \Omega_{q'} \subset \Omega_q \cap \Omega' = \Omega'_q \subset \ker \rho.\] 
Par conséquent,
\begin{align*}
{[\Omega: \Omega_s]} & = {[\Omega: \Omega'_q]} {[\Omega'_q : \Omega_s]}\\
& \leq {[\Omega: \Omega'_q]} {[\Omega'\cap\Omega_q : \Omega_{q'}\cap\Omega_q]}
\leq {[\Omega: \Omega_{q'}]}  {[\Omega': \Omega'_q]}
\end{align*}
et enfin,
\[\dim V_\rho \geq \dim V_{\rho'} \geq \kappa {[\Omega' : \Omega'_q]}^\kappa \geq \kappa\frac{{[\Omega: \Omega_s]}^\kappa}{{[\Omega: \Omega_{q'}]}^\kappa} .\]
Cela montre que $\Omega$ est quasi-aléatoire par rapport à la famille $(\Omega_q)_{q \in \N^*}$.
\end{proof}

\begin{proof}[Démonstration de la proposition~\ref{qr}]
D'après le théorème~\ref{thm:Nori} il existe un entier $s = \prod p^{k_p}$ tel que $\Omega$ contienne le sous-groupe de congruence $\ker \pi_s\subset G(\ZZ)$.
Par le lemme~\ref{lm:qrsousgroupe}, on peut supposer $\Omega$ égal à ce sous-groupe de congruence.
Alors, $\Omega$ s'écrit comme un produit direct
\[\Omega = \prod H_{p,k_p},\]
où $H_{p,k_p} = \ker\pi_p\subset G(\Z_p)$.

Soit $(\rho,V) \in \hat{\Omega}$ une représentation unitaire irréductible, et $q$ le multiple minimal de $s$ tel que $\Omega_q \subset \ker \rho$.
Écrivons $q = \prod p^{m_p}$.
Comme $\Omega$ est un produit direct, $(\rho,V)$ s'écrit comme un produit tensoriel
\[
(\rho, V) = \bigotimes_{p|q} (\rho_p,V_p),
\]
où pour tout facteur premier $p$ de $q$, $(\rho_p,V_p)$ est une représentation unitaire irréductible de $H_{p,k_p}$. 
De plus, pour tout $p$, $m_p$ est l'entier minimal tel que $\rho_p$ soit trivial sur $H_{p,m_p}$.

Soit $M\geq 0$ tel que pour tout $p \geq M$, la conclusion de la proposition~\ref{qr,pm} soit valable, et qu'en outre $k_p = 0$ et $\frac{p-1}{2} \geq p^{1/2}$.
On partitionne les nombres premiers en trois parties en posant
\begin{align*}
I_1 &= \{\,p \mid p \leq M \text{ et } m_p \leq 5\,\},\\
I_2 &= \{\,p \mid p \geq M \text{ et } m_p \leq 5\,\},\\
I_3 &= \{\,p \mid m_p \geq 6\,\}.
\end{align*}
Évidemment,
\[\prod_{p \in I_1} \dim V_p \geq 1.\]
De plus, si $p \in I_2$, alors $\rho_p$ est une représentation unitaire de $G(\Z_p)$, et d'après la proposition~\ref{qr,pm},
\[
\prod_{p \in I_2} \dim V_p \geq \prod_{p \in I_2} \frac{p-1}{2} \geq \prod_{p \in I_2} p^{m_p/10}.
\]
Enfin, si $p \in I_3$, alors $\floor{m_p/2} \geq m_p/3$, et la proposition~\ref{qr,padic} permet de minorer
\[
\prod_{p \in I_3} \dim V_p \geq \prod_{p \in I_3} p^{m_p/3 - k_p - o_p}.
\]
Mis bout à bout, cela donne
\[
\dim V = \prod_{p|q} \dim V_p \geq \frac{q^{1/10}}{C},
\]
avec $C = (\prod_{p \in I_1} p^{1/2})(\prod_{p\in I_3} p^{k_p + o_p})$.
\end{proof}

\noindent\textbf{Remerciements.}
Nous remercions Emmanuel Breuillard et Péter Varjú pour leur encouragement à rédiger en détail les démonstrations présentées ici.

\smallskip

W.H. est supporté par ERC 2020 grant HomDyn (grant no.~833423) à Hebrew University of Jerusalem et par KIAS Individual Grant (no.~MG080401) à Korea Institute for Advanced Study.

\bibliographystyle{plain}
\bibliography{bib_sg}

\bigskip

{\small
\noindent Korea Institute for Advanced Study, Seoul 02455, Republic of Korea.\\
\noindent CNRS -- Université Paris 13, LAGA, 93430 Villetaneuse, France.
}

\end{document}